\newtheorem{thm}{Theorem}[section]
\newtheorem{lem}[thm]{Lemma}
\newtheorem{cor}[thm]{Corollary}
\newtheorem{defi}[thm]{Definition}
\newtheorem{nota}[thm]{Notation}
\newtheorem{quest}[thm]{Question}
\newtheorem{ex}[thm]{Example}
\newtheorem{fact}[thm]{Fact}
\numberwithin{equation}{section}
\begin{document}
	
\title{$C^{\ast}$-algebraic approach to the principal symbol. III}

\author{Y. Kordyukov}
\address{Institute of Mathematics, Ufa Federal Research Centre, Russian Academy of Sciences, 112~Chernyshevsky str., 450008 Ufa, Russia.} \email{yurikor@matem.anrb.ru}

\author{F. Sukochev}
\address{School of Mathematics and Statistics, University of New South Wales, Kensington, 2052, NSW, Australia.}
\email{f.sukochev@unsw.edu.au}

\author{D. Zanin}
\address{School of Mathematics and Statistics, University of New South Wales, Kensington, 2052, NSW, Australia.}
\email{d.zanin@unsw.edu.au}

\subjclass{47L20,47L80}

\keywords{principal symbol, smooth manifold, singular traces}

\date{}

\begin{abstract} We treat the notion of principal symbol mapping on a compact smooth manifold as a $\ast$-homomorphism of $C^{\ast}$-algebras. Principal symbol mapping is built from the ground, without referring to the pseudodifferential calculus on the manifold. Our concrete approach allows us to extend Connes Trace Theorem for compact Riemannian manifolds.
\end{abstract}

\maketitle

\section{Introduction}

This paper is motivated by the theory of pseudodifferential operators. A central notion of that theory is that of a principal symbol, which is roughly a homomorphism from the algebra of pseudo-differential operators into an algebra of functions, \cite[Lemma 5.1]{Kohn-Nirenberg}, \cite[Theorem 5.5]{Joshi}, \cite[pp. 54-55]{Treves1}. Usually, it is defined in a manner inhospitable for operator theorists. However, in \cite{DAO1}, a new approach to a principal symbol mapping on a certain $C^{\ast}$-subalgebra $\Pi$ in $B(L_2(\mathbb{R}^d))$ is proposed; this mapping turns out to be a $\ast$-homomorphism from $\Pi$ into a commutative $C^{\ast}$-algebra. The $C^{\ast}$-algebra $\Pi$ contains all classical compactly based pseudodifferential operators. This provides a very simple and algebraic approach to the theory.

Whereas our approach is more elementary than the classical approach, the $C^{\ast}$-algebra $\Pi$ introduced in \cite{DAO1} (see also \cite{DAO2}) is much wider than the class of a classical compactly based pseudo-differential operators of order $0$ on $\mathbb{R}^d.$  The aim of this paper is to extend this $C^{\ast}$-algebraic approach to the setting of smooth compact manifolds.

The $C^{\ast}$-algebra $\Pi$ in the Definition \ref{pis def} below is the closure (in the uniform norm) of the $\ast$-algebra of all compactly supported \textit{classical} pseudodifferential operators of order $0.$ However, we use an elementary definition of $\Pi$ which does not involve pseudodifferential operators. The idea to consider this closure may be discerned yet in \cite{Atiyah-Singer} (see Proposition 5.2 on p.512). For the recent development of this idea we refer to \cite{DAO1,DAO2}.

Let $D_k=\frac{\partial}{i\partial t_k}$ be the $k-$th partial derivative operator on $\mathbb{R}^d$ (these are unbounded self-adjoint operators on $L_2(\mathbb{R}^d)$). In what follows, $\nabla=(D_1,\cdots,D_d)$ and $\Delta=\sum_{k=1}^d\frac{\partial^2}{\partial^2t_k}=-\sum_{k=1}^dD_k^2.$ Let the $d-$dimensional vector $\frac{\nabla}{(-\Delta)^{\frac12}}$ be defined by the functional calculus. Let $M_f$ be the multiplication operator by the function $f.$

\begin{defi}\label{pis def} Let $\pi_1:L_{\infty}(\mathbb{R}^d)\to B(L_2(\mathbb{R}^d)),$ $\pi_2:L_{\infty}(\mathbb{S}^{d-1})\to B(L_2(\mathbb{R}^d))$ be defined by setting
\[\pi_1(f)=M_f,\quad \pi_2(g)=g(\frac{\nabla}{\sqrt{-\Delta}}),\quad f\in L_{\infty}(\mathbb{R}^d),\quad g\in L_{\infty}(\mathbb{S}^{d-1}).\]
Let $\mathcal{A}_1=\mathbb{C}+C_0(\mathbb{R}^d)$ and $\mathcal{A}_2=C(\mathbb{S}^{d-1}).$ Let $\Pi$ be the $C^{\ast}$-subalgebra in $B(L_2(\mathbb{R}^d))$ generated by the algebras $\pi_1(\mathcal{A}_1)$ and $\pi_2(\mathcal{A}_2).$
\end{defi}

According to \cite{DAO1}, there exists an $\ast$-homomorphism
\begin{equation}\label{euclidean symbol definition}
\mathrm{sym}:\Pi\to\mathcal{A}_1\otimes_{\mathrm{ min}}\mathcal{A}_2\simeq C(\mathbb{S}^{d-1},\mathbb{C}+C_0(\mathbb{R}^d))
\end{equation}
such that
\[\mathrm{sym}(\pi_1(f))=f\otimes 1,\quad \mathrm{sym}(\pi_2(g))=1\otimes g.\]
Here, $\mathcal{A}_1\otimes_{\mathrm{min}}\mathcal{A}_2$ is the minimal tensor product of the $C^{\ast}$-algebras $\mathcal{A}_1$ and $\mathcal{A}_2$ (see Propositions 1.22.2 and 1.22.3 in \cite{Sakai-book}). Elements of $\mathcal{A}_1\otimes_{\mathrm{min}}\mathcal{A}_2$ are identified with continuous functions on $\mathbb{R}^d\times\mathbb{S}^{d-1}.$ This $\ast$-homomorphism is called a principal symbol mapping. It properly extends the notion of the principal symbol of the classical pseudodifferential operator.

%

It is natural to ask whether $C^{\ast}$-algebraic approach works in the general setting of smooth compact manifolds. It makes sense to de-manifoldize the question and reformulate it in a purely Euclidean fashion. We begin with the natural question on the properties of the $C^{\ast}$-algebra $\Pi.$

\begin{quest}\label{higson question} The natural unitary action of the group of diffeomorphisms on $\mathbb{R}^d$ is defined as follows. Let $\Phi:\mathbb{R}^d\to\mathbb{R}^d$ be a diffeomorphism. Let $U_{\Phi}\in B(L_2(\mathbb{R}^d))$ be a unitary operator given by setting
\[U_{\Phi}\xi=|\mathrm{det}(J_{\Phi})|^{\frac12}\cdot (\xi\circ\Phi),\quad \xi\in L_2(\mathbb{R}^d).\]	
Here, $J_{\Phi}$ is the Jacobian matrix of $\Phi.$	

Is the $C^{\ast}$-algebra $\Pi$ invariant under the action $T\to U_{\Phi}^{-1}TU_{\Phi}?$ Does the $\ast$-homomorphism $\mathrm{sym}$ behave equivariantly under this action?
\end{quest}

Theorem \ref{algebra invariance special} provides a positive answer to Question \ref{higson question} (under the additional requirement that $\Phi$ is affine outside of some ball). This additional assumption yields, in particular, that $\Phi$ extends to a diffeomorphism of the projective space $P^d(\mathbb{R}).$ We emphasise that the Question \ref{higson question} in full generality remains open. Furthermore, Theorem \ref{algebra invariance theorem} proves an invariance of $\Pi$ and equivariance of $\mathrm{sym}$ under local diffeomorphisms. 

The resolution of Question \ref{higson question} has opened an avenue for the definition of the $C^{\ast}$-algebra $\Pi_X$ associated with an arbitrary compact smooth manifold $X.$ This $C^{\ast}$-algebra has a remarkable property: it admits a $\ast$-homomorphism $\mathrm{sym}_X:\Pi_X\to C(S^{\ast}X),$ where $S^{\ast}X$ is the cosphere bundle of $X$ (see the Subsection \ref{cosphere subsection}). If $X=\mathbb{R}^d,$ then $\mathrm{sym}_X$ coincides with the mapping $\mathrm{sym}$ above. Every \textit{classical} order $0$ pseudodifferential operator $T$ on $X$ belongs to $\Pi_X$ and its principal symbol in the sense of pseudodifferential operators equals $\mathrm{sym}_X(T).$ On the other hand, not every element of $\Pi_X$ is pseudodifferential (e.g. because principal symbol of a pseudodifferential operator is necessarily smooth, while that of element of $\Pi_X$ is only continuous). An approach to pseudodifferential calculi based on $C^*$-algebras theory was first suggested by H.O. Cordes \cite{Cordes87} (see \cite{melo05} for the case of a closed manifold).

Below, we briefly describe the construction of $\Pi_X$ via the patching process (see more precise description in Subsection \ref{construction subsection}).

Let $X$ be a compact smooth manifold with an atlas $(\mathcal{U}_i,h_i)_{i\in\mathbb{I}}.$ We will fix a sufficiently good measure $\nu$ on $X$, given by a continuous positive density (see Definition \ref{condition on nu}). If $T\in B(L_2(X,\nu))$ is compactly supported in some chart $(\mathcal{U}_i,h_i)$ (i.e., there exists $\phi\in C^{\infty}_c(\mathcal{U}_i)$ such that $T=TM_{\phi}=M_{\phi}T$), then, by composing with $h_i,$ we can transfer $T$ to an operator on $L_2(\mathbb{R}^d).$

\begin{defi}\label{rough principal symbol for compact manifolds} Let $X$ be a compact smooth manifold equipped with a continuous positive density $\nu$ and let $T\in B(L_2(X,\nu)).$ We say that $T\in\Pi_X$ if
\begin{enumerate}
\item for every $i\in\mathbb{I}$ and for every $\phi\in C_c(\mathcal{U}_i),$ the operator $M_{\phi}TM_{\phi}$ transferred to an operator on $L_2(\mathbb{R}^d)$ belongs to $\Pi;$
\item for every $\psi\in C(X),$ the operator $[T,M_{\psi}]$ is compact.
\end{enumerate}
\end{defi}

\begin{thm}\label{symbol manifold intro thm} If $X$ is a smooth compact manifold and if $\nu$ is a continuous positive density on $X,$ then $\Pi_X$ is a $C^{\ast}$-algebra and there exists (see Definition \ref{principal symbol definition thm}) a surjective $\ast$-homomorphism \[\mathrm{sym}_X:\Pi_X\to C(S^{\ast}X)\]
such that
\[\mathrm{ker}(\mathrm{sym}_X)=\mathcal{K}(L_2(X,\nu)).\]
\end{thm}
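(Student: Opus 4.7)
The plan is to reduce everything to the Euclidean case via a finite partition of unity, patching local symbol maps together, with the gluing made coherent by the diffeomorphism-equivariance of the Euclidean symbol (Theorem \ref{algebra invariance theorem}). The first step is to verify that $\Pi_X$ is a $C^{\ast}$-algebra. Closure under $\ast$, sums, scalar multiples and norm limits is immediate, since $\Pi$ is norm-closed and the compact operators form a norm-closed ideal. For products, given $T,S\in\Pi_X$ and $\phi\in C_c(\mathcal{U}_i)$, pick $\tilde\phi\in C_c(\mathcal{U}_i)$ with $\tilde\phi\equiv 1$ near $\mathrm{supp}(\phi)$ and write
\[
M_\phi TS M_\phi = (M_\phi T M_{\tilde\phi})(M_{\tilde\phi} S M_\phi) + M_\phi T[S,M_{\tilde\phi}]M_\phi.
\]
After transferring to $L_2(\mathbb{R}^d)$ the first summand is a product of two elements of $\Pi$ and the second is compact by condition (2) for $S$, hence in $\Pi$. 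The Leibniz rule $[TS,M_\psi]=T[S,M_\psi]+[T,M_\psi]S$ handles condition (2) for $TS$.

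To construct $\mathrm{sym}_X$, fix a smooth partition of unity $(\chi_i)_{i=1}^N$ subordinate to the atlas. For $T\in\Pi_X$, the operator $M_{\chi_i^{1/2}}TM_{\chi_i^{1/2}}$ is compactly supported in $\mathcal{U}_i$; transferring to $L_2(\mathbb{R}^d)$ via $U_{h_i}$ yields an element of $\Pi$ whose Euclidean symbol is a continuous function $\sigma_i$ on $h_i(\mathcal{U}_i)\times\mathbb{S}^{d-1}$, supported in $h_i(\mathrm{supp}(\chi_i))\times\mathbb{S}^{d-1}$. The differential $dh_i$ induces a canonical identification $S^{\ast}\mathcal{U}_i\simeq h_i(\mathcal{U}_i)\times\mathbb{S}^{d-1}$ (push cotangent vectors forward by $(dh_i^{-1})^{\ast}$ and normalise by positive scaling), which I use to pull $\sigma_i$ back to a continuous function on $S^{\ast}\mathcal{U}_i$, extended by zero to $S^{\ast}X$. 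I would then set $\mathrm{sym}_X(T):=\sum_i(\text{pullback of }\sigma_i)$. Independence of the auxiliary cut-off in Definition \ref{rough principal symbol for compact manifolds} follows from condition (2), because any commutator of $T$ with a multiplication operator is compact and hence contributes zero to the Euclidean symbol.

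The main obstacle is showing that this sum is a well-defined continuous function on all of $S^{\ast}X$, i.e., that the local pieces $\sigma_i$ and $\sigma_j$ transform consistently across chart overlaps. This is precisely the content of Theorem \ref{algebra invariance theorem}: on $\mathcal{U}_i\cap\mathcal{U}_j$ the transition map $\Phi=h_j\circ h_i^{-1}$ is a local diffeomorphism, and equivariance of the Euclidean symbol under conjugation by $U_\Phi$ implements the transformation $(x,\xi)\mapsto(\Phi(x),(J_\Phi(x))^{-t}\xi/\|(J_\Phi(x))^{-t}\xi\|)$, which is exactly the cotangent-bundle transition law that defines $S^{\ast}X$. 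The $|\det J_\Phi|^{1/2}$ factor in the definition of $U_\Phi$ is what makes the construction compatible with the Hilbert space $L_2(X,\nu)$ for a continuous positive density $\nu$ (Definition \ref{condition on nu}). Once well-definedness is established, the $\ast$-homomorphism property and norm-continuity of $\mathrm{sym}_X$ are inherited locally from the Euclidean $\mathrm{sym}$ and from the multiplicativity argument used for closure of $\Pi_X$ under products.

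Finally, for the kernel: if $\mathrm{sym}_X(T)=0$ then each transferred operator $U_{h_i}M_{\chi_i^{1/2}}TM_{\chi_i^{1/2}}U_{h_i}^{-1}$ lies in the Euclidean kernel $\mathcal{K}(L_2(\mathbb{R}^d))$, hence each $M_{\chi_i^{1/2}}TM_{\chi_i^{1/2}}$ is compact on $L_2(X,\nu)$; writing $T=\sum_{i,j}M_{\chi_i^{1/2}}TM_{\chi_j^{1/2}}$ and using condition (2) to commute $T$ past the cut-offs modulo compact errors, $T$ itself is compact. The reverse inclusion $\mathcal{K}(L_2(X,\nu))\subseteq\mathrm{ker}(\mathrm{sym}_X)$ is routine: compact operators satisfy both conditions in Definition \ref{rough principal symbol for compact manifolds}, and their Euclidean symbol vanishes. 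For surjectivity, elementary symbols of the form $(x,\xi)\mapsto f(x)g(\xi)$ with $f\in C_c(\mathcal{U}_i)$ and $g\in C(\mathbb{S}^{d-1})$ are realised as $\mathrm{sym}_X$ of operators obtained by transferring $M_{f\circ h_i^{-1}}\pi_2(g)$ back from $\mathbb{R}^d$ via $U_{h_i}$; such symbols separate the points of $S^{\ast}X$, so a Stone--Weierstrass argument on each chart together with the partition of unity gives density in $C(S^{\ast}X)$, which suffices since the image of a $\ast$-homomorphism between $C^{\ast}$-algebras is automatically closed.
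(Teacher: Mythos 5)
Your proposal follows essentially the same architecture as the paper's: localise via a partition of unity, use Theorem \ref{algebra invariance theorem} to ensure the Euclidean symbols are consistent across chart overlaps, identify the kernel with the compacts chart by chart, and prove surjectivity by realising a norm-dense subalgebra of $C(S^{\ast}X)$. The only structural difference is that the paper packages the gluing into an abstract Globalisation Theorem (Theorem \ref{globalisation theorem}, applied to the local data $(\Pi_i,\mathrm{sym}_i)$ after the verifications in Theorem \ref{existence of principal symbol thm}), whereas you unroll that abstraction into a direct argument; this is a stylistic choice and does not change the mathematics.

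There is one genuine gap: you dismiss closure of $\Pi_X$ under $\ast$ as ``immediate,'' but it is not. The transfer map $W_i:L_2(\mathcal{U}_i,\nu)\to L_2(\Omega_i,\nu\circ h_i^{-1})$ is an isometry for the pushed-forward measure, which has Radon--Nikodym density $a_i$ with respect to Lebesgue measure, while the Euclidean $\mathrm{sym}$ lives on $B(L_2(\mathbb{R}^d))$ with Lebesgue measure. Taking adjoints therefore does \emph{not} commute with transfer: one has
\[
W_iT^{\ast}W_i^{-1}=M_{a_i^{-1}}\,(W_iTW_i^{-1})^{\ast}\,M_{a_i},
\]
where the adjoint on the right is the Lebesgue one. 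To conclude that $\mathrm{Ext}_{\Omega_i}(W_iT^{\ast}W_i^{-1})\in\Pi$ you must use that $T$ is compactly supported, so $a_i$ and $a_i^{-1}$ may be replaced by compactly supported continuous cut-offs, whose multiplication operators lie in $\Pi$; applying $\mathrm{sym}$ to the resulting identity then shows the two density factors cancel, giving $\mathrm{sym}_i(T^{\ast})=\mathrm{sym}_i(T)^{\ast}$. This is precisely what Lemma \ref{pii is staralgebra} does. Without it, your argument would only be valid when the density $a_i$ is locally constant. A secondary, more minor point: your claim that the $\ast$-homomorphism property of $\mathrm{sym}_X$ is ``inherited locally'' is right in spirit, but verifying multiplicativity of the partition-of-unity sum requires the careful cut-off and commutator manoeuvres of the paper's Lemmas \ref{zeroth homomorphism lemma}--\ref{second homomorphism lemma}; it is worth at least sketching why cross terms between different charts contribute only compact errors.
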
  

In other words, we have a short exact sequence
\[0\to\mathcal{K}(L_2(X,\nu))\stackrel{\mathrm{id}}{\to}\Pi_X\stackrel{\mathrm{sym}_X}{\to} C(S^{\ast}X)\to0.\]

This short exact sequence first appeared in \cite{Atiyah-Singer} (see Proposition 5.2 on p.512) and plays an important role in index theory (see, for instance, \cite[Section 24.1.8]{Blackadar-book} or \cite[Section 2]{Baum-Douglas}).  It is essentially equivalent to the fact that for any operator $T\in \Pi_X$ with principal symbol $a\in C(S^{\ast}X)$,
\[\inf\{\|T+K\|_{\infty}:\ K \in \mathcal{K}(L_2(X,\nu))\}=\|a\|_{C(S^{\ast}X)}.\]
For singular integral operators this result was proved by Gohberg \cite{Gohberg} and Seeley \cite{Seeley}. Proofs in the language of pseudodifferential operators have been given in \cite{Hormander67,Kohn-Nirenberg}. It should be noted that the definition given in \cite{Atiyah-Singer} is somewhat imprecise (see \cite{melo05}, in particular, a discussion on p. 329). 


As a corollary of Theorem \ref{symbol manifold intro thm}, we provide a version of Connes Trace Theorem (see Theorem \ref{ctt manifold} below). As stated, it extends Theorem 1 in \cite{Connes-action}. Connes Trace Theorem is ubiquitous in Non-commutative Geometry. It serves as a ground for defining a general notion of the non-commutative integral and non-commutative Yang-Mills action (that is, Theorem 14 in \cite{Connes-action} is taken as a definition in the non-commutative setting). 

We now compare our Theorem \ref{ctt manifold} with various versions of Connes Trace Theorem available in the literature. Original proof of Connes was, according to \cite{GVF-book} "somewhat telegraphic". For example, it was not mentioned in \cite{Connes-action} that the manifold is Riemannian and that pseudodifferential operator featuring in Theorem 1 in \cite{Connes-action} is classical. Two proofs are given in \cite{GVF-book} (Theorem 7.18 on p.293) and both of them rely on the assumption of ellipticity of the underlying pseudo-differential operator (this assumption is redundant as demonstrated in our approach). Despite their critique of Connes exposition, the authors of \cite{GVF-book} also do not mention the classicality of their pseudodifferential operator. Another two proofs are given in \cite{AM-book}. As authors of \cite{AM-book} admit, their proofs are quite sketchy, however, they provide a correct statement. The advantage of our approach is threefold: (a) we consider a strictly larger class of operators (b) we consider a strictly larger class of traces (c) we work in a convenient category of $C^{\ast}$-algebras (i.e., non-commutative topological spaces) and not in a category of classical pseudodifferential operators which does not have a natural counterpart in Non-commutative Geometry.

\begin{thm}\label{ctt manifold} Let $\varphi$ be a normalised continuous trace on $\mathcal{L}_{1,\infty}.$ Let $(X,G)$ be a compact Riemannian manifold and let $\nu$ be the Riemannian volume. If $T\in\Pi_X,$ then
\[\varphi(T(1-\Delta_G)^{-\frac{d}{2}})=c_d\int_{T^{\ast}X}\mathrm{sym}_X(T)e^{-q_X}d\lambda,\]
where $\lambda$ is the Liouville measure on $T^{\ast}X$ and $e^{-q_X}$ is the canonical weight of the Riemannian manifold (as defined in Subsection \ref{canonical weight subsection}). 
\end{thm}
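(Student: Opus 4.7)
The plan is to view both sides of the identity as continuous linear functionals on $\Pi_X$ that annihilate $\mathcal{K}(L_2(X,\nu))$, so that by the short exact sequence of Theorem \ref{symbol manifold intro thm} they descend to bounded functionals on $C(S^{\ast}X)$, i.e.\ to Radon measures on $S^{\ast}X$, and then to identify those two measures by a chart-by-chart reduction to the Euclidean Connes Trace Theorem of \cite{DAO1,DAO2}.

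First I would verify that $(1-\Delta_G)^{-d/2}\in\mathcal{L}_{1,\infty}$ by Weyl's asymptotic law on the compact Riemannian manifold $(X,G)$. The functional $F_{\varphi}(T):=\varphi(T(1-\Delta_G)^{-d/2})$ is then continuous on $\Pi_X$, and for $T\in\mathcal{K}(L_2(X,\nu))$ the estimate $\mu_{2n}(T(1-\Delta_G)^{-d/2})\le\mu_n(T)\mu_n((1-\Delta_G)^{-d/2})=o(1/n)$ places the product in the separable part of $\mathcal{L}_{1,\infty}$, on which every continuous normalised trace vanishes. Hence $F_{\varphi}$ factors through $C(S^{\ast}X)$ and determines a Radon measure $\mu_{\varphi}$ on $S^{\ast}X$. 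The right-hand side obviously defines a second Radon measure $\mu_{\mathrm{rhs}}$, obtained by integrating the degree-zero radial extension of $\mathrm{sym}_X(T)$ against $e^{-q_X}\,d\lambda$ fibrewise over $T^{\ast}X$.

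To prove $\mu_{\varphi}=\mu_{\mathrm{rhs}}$ I would fix a smooth partition of unity $\{\phi_i^2\}_{i\in\mathbb{I}}$ subordinate to the atlas $(\mathcal{U}_i,h_i)$. Since $T\in\Pi_X$ satisfies $[T,M_{\phi_i}]\in\mathcal{K}$, one has $T\equiv\sum_i M_{\phi_i}TM_{\phi_i}$ modulo compact operators, so each side decomposes as a sum of contributions from operators compactly supported in a single chart. Via $h_i$, and using the equivariance of $\Pi$ and $\mathrm{sym}$ established in Theorems \ref{algebra invariance special} and \ref{algebra invariance theorem}, the operator $M_{\phi_i}TM_{\phi_i}$ transfers to an element $T_i\in\Pi\subset B(L_2(\mathbb{R}^d))$ whose Euclidean principal symbol corresponds to the restriction of $\mathrm{sym}_X(T)$ to $S^{\ast}\mathcal{U}_i$. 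Invoking the Euclidean Connes Trace Theorem for $T_i$ paired with the Euclidean model $(1-\Delta_{G_i})^{-d/2}$ (where $G_i$ is the pushed-forward metric) yields a local integral of the symbol against the Gaussian weight of $G_i$; summing over $i$ and applying the natural change of variables, the defining property of the canonical weight $e^{-q_X}$ assembles these into the global integral $c_d\int_{T^{\ast}X}\mathrm{sym}_X(T)e^{-q_X}\,d\lambda$.

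The main obstacle is the replacement, in each chart, of $(1-\Delta_G)^{-d/2}$ by the Euclidean model $(1-\Delta_{G_i})^{-d/2}$. The key analytic input is to show that, after left-multiplication by $M_{\phi_i}TM_{\phi_i}$, the difference lies in the kernel of every continuous trace on $\mathcal{L}_{1,\infty}$. This is where pseudolocality of the resolvent and the improvement in ideal order (the difference being of strictly lower singular-value decay, hence in some $\mathcal{L}_{p,\infty}$ with $p<1$, which is automatically annihilated) come into play; producing these estimates entirely within the $C^{\ast}$-algebraic framework of the paper, rather than through the full pseudodifferential calculus, is the delicate technical point of the proof.
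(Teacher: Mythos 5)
Your overall strategy is the one the paper follows: reduce by a partition of unity to operators compactly supported in a single chart (using that commutators with $C(X)$ multipliers are compact), transfer to $\mathbb{R}^d$ via the chart, compare against a Euclidean model operator in a trace-vanishing ideal, invoke the Euclidean Connes Trace Theorem, and reassemble using the Liouville measure and the canonical weight. Your observation that $\varphi$ kills $\mathcal{K}\cdot\mathcal{L}_{1,\infty}\subset(\mathcal{L}_{1,\infty})_0$ is also how the contribution of the compact piece is dismissed.

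However, there are two places where you wave past steps that carry most of the weight, and one of them conceals a genuine imprecision. First, the replacement of $(1-\Delta_G)^{-d/2}$ by a chart-local Euclidean model is not a one-line pseudolocality assertion in this framework: the paper must first replace $M_\psi^d(1-\Delta_G)^{-d/2}M_\psi^d$ by $\bigl(M_\psi^2(1-\Delta_G)^{-1}M_\psi^2\bigr)^{d/2}$ modulo $\mathcal{L}_{2d/(2d+1),\infty}$ (Lemma \ref{ctt power lemma}), then compare $M_\psi^2(1-\Delta_G)^{-1}M_\psi^2$ with its Euclidean counterpart built from $\Delta_{\hat g_i}$ modulo $\mathcal{L}_{d/3,\infty}$ (Lemma \ref{ctt simplification lemma}), and finally use the Birman--Koplienko--Solomyak inequality to pass to the $d/2$-th powers. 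None of this is an automatic consequence of ``improvement in ideal order''; the algebra with commutators $[\Delta_G,M_\psi^2]$ and the factorizations through $(1-\Delta_G)^{-1/2}$ are exactly the content you defer. Second, and more importantly, you cannot ``invoke the Euclidean Connes Trace Theorem for $T_i$ paired with $(1-\Delta_{G_i})^{-d/2}$'': Theorem \ref{ctt rd} is stated (and proved in \cite{DAO1}) for the \emph{flat} Laplacian $\Delta$, i.e.\ for $\varphi(T(1-\Delta)^{-d/2})$. The paper bridges this gap by writing $T_i(1-\Delta_{\hat g_i})^{-d/2}M_\psi^d=T_i(1-\Delta)^{-d/2}X_i$ with $X_i=(1-\Delta)^{d/2}(1-\Delta_{\hat g_i})^{-d/2}M_{\psi\circ h_i^{-1}}^d$, proving $X_i\in\Pi$ with $\mathrm{sym}(X_i)(t,s)=\psi(h_i^{-1}(t))^d\langle g_i(t)^{-1}s,s\rangle^{-d/2}$ (Lemma \ref{do power psymbol lemma}), and then applying the tracial property and Theorem \ref{ctt rd} to $X_iT_i$; passing to spherical coordinates in the resulting integral produces the Gaussian weight $e^{-q_i}$. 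This chain of substitutions is where the canonical weight $e^{-q_X}$ actually enters, and it is not visible from your sketch.
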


When $T$ is a classical pseudodifferential operator, the right hand side coincides with Wodzicki residue of $T(1-\Delta_G)^{-\frac{d}{2}}.$ We refer the reader to the extensive discussion of this matter in \cite{LSZ-obzor}.

One should note a sharp contrast between the setting of Theorem \ref{ctt manifold} and that of Theorem \ref{symbol manifold intro thm}. Indeed, in the latter theorem, the (smooth compact) manifold is rather arbitrary, while in the former it is Riemannian. The Riemannian structure of $X$ in Theorem \ref{ctt manifold} is needed in two places: (a) there is no natural measure on the cosphere bundle of an arbitrary smooth manifold (but such a measure arises naturally if the manifold is Riemannian) (b) Riemannian structure provides us with a natural second order differential operator (i.e, Laplace-Beltrami operator). In the setting of a general smooth manifoild, the second issue can be circumvented by replacing $\Delta_G$ with an arbitrary elliptic second order differential operator (whose resolvent falls into the ideal $\mathcal{L}_{d,\infty}$). However, the lack of a natural measure on $S^{\ast}X$ prevents us from stating Theorem \ref{ctt manifold} in that generality.

We now briefly describe the structure of the paper. Section \ref{prelim section} collects known facts used further in the text. Theorems \ref{algebra invariance special} and \ref{algebra invariance theorem} in Section \ref{diff invar section} assert equivariant behavior of the principal symbol mapping in Euclidean setting under the action of diffeomorphisms. Theorem \ref{algebra invariance special} is proved in Section \ref{ais section}. Theorem \ref{algebra invariance theorem} is proved in Section \ref{ait section}. Our main result, Theorem \ref{symbol manifold intro thm} is proved in Section \ref{symbol construction section} with the help of Globalisation Theorem from Subsection \ref{globalisation subsection} (proved in Appendix \ref{appendix section}). Finally, Connes Trace Theorem on compact Riemannian manifolds (that is, Theorem \ref{ctt manifold}) is proved in Section \ref{ctt section}.

\section{Preliminaries and notations}\label{prelim section}

As usual, $B(H)$ denotes the $\ast$-algebra of all bounded operators on the Hilbert space $H$ and  $\mathcal{K}(H)$ denotes the ideal of all compact operators in $B(H).$ As usual, Euclidean length of a vector $t\in\mathbb{R}^d$ is denoted by $|t|.$

We frequently use the equality
\begin{equation}\label{dk mf commutator eq}
[D_k,M_f]=M_{D_kf},\quad f\in C^{\infty}(\mathbb{R}^d).
\end{equation}

\subsection{Principal ideals in $B(H)$}

It is well known that every ideal in $B(H)$ consists of compact operators. 

Undoubtedly, the most important ideals are the principal ones. Among them, a special role is played by the ideal $\mathcal{L}_{p,\infty},$ a principal ideal generated by the operator $\mathrm{diag}(((k+1)^{-\frac1p})_{k\geq0}).$ We frequently use the following property (related to the H\"older inequality) of this scale of ideals 
\[\mathcal{L}_{p,\infty}\cdot\mathcal{L}_{q,\infty}=\mathcal{L}_{r,\infty},\quad \frac1r=\frac1p+\frac1q.\]
We mention in passing that $\mathcal{L}_{p,\infty}$ is quasi-Banach for every $p>0$ (however, we do not need the quasi-norms in this text).

\subsection{Traces on $\mathcal{L}_{1,\infty}$}

\begin{defi}\label{trace def} If $\mathcal{I}$ is an 
ideal in $B(H),$ then a unitarily invariant linear functional $\varphi:\mathcal{I}\to\mathbb{C}$ is said to be a trace.
\end{defi}
Since $U^{-1}TU-T=[U^{-1},TU]$ for all $T\in\mathcal{I}$ and 
for all unitaries $U\in B(H),$ and since the unitaries 
span $B(H),$ it follows that traces are precisely 
the linear functionals on $\mathcal{I}$ satisfying the condition
\[\varphi(TS)=\varphi(ST),\quad T\in\mathcal{I}, S\in B(H).\]
The latter may be reinterpreted as the vanishing of the 
linear functional $\varphi$ on the commutator 
subspace which is denoted $[\mathcal{I},B(H)]$ and defined to be the linear 
span of all commutators $[T,S]:\ T\in\mathcal{I},$ $S\in B(H).$ 
Note that  
$\varphi(T_1)=\varphi(T_2)$ whenever 
$0\leq T_1,T_2\in\mathcal{I}$ are such that the singular value 
sequences $\mu(T_1)$ and $\mu(T_2)$ coincide.
For $p>1,$ the ideal $\mathcal{L}_{p,\infty}$ does not admit a 
non-zero trace while for $p=1,$ 
there exists a plethora of traces on $\mathcal{L}_{1,\infty}$ 
(see e.g. \cite{LSZ-book}). An example of  a trace on 
$\mathcal{L}_{1,\infty}$ is the Dixmier trace introduced in \cite{Dixmier}
that we now explain.

\begin{ex} Let $\omega$ be an extended limit.
Then the functional $\mathrm{Tr}_{\omega}:\mathcal{L}_{1,\infty}^+\to\mathbb{R}_+$ defined by setting
\[\mathrm{Tr}_{\omega}(A)=\omega\Big(\Big\{\frac1{\log(2+n)}\sum_{k=0}^n\mu(k,A)\Big\}_{n\geq0}\Big),
\quad 0\leq A\in\mathcal{L}_{1,\infty},\]
is additive and, therefore, extends to a trace on $\mathcal{L}_{1,\infty}.$ We call such traces  \textit{Dixmier traces}. These traces clearly depend on the choice of the functional $\omega$ on $l_\infty.$
\end{ex}

An extensive discussion of traces, and more recent developments in the theory,
may be found in \cite{LSZ-book} including a discussion of the following facts.
\begin{enumerate}
\item All Dixmier traces on $\mathcal{L}_{1,\infty}$ are positive.
\item All positive traces on $\mathcal{L}_{1,\infty}$ are continuous in the quasi-norm topology.
\item There exist positive traces on $\mathcal{L}_{1,\infty}$ which are not Dixmier traces.
\item There exist traces on $\mathcal{L}_{1,\infty}$ which fail to be continuous.
\end{enumerate}

We are mostly interested in \textit{normalised traces} $\varphi:\mathcal{L}_{1,\infty}\to\mathbb{C},$ that is, satisfying $\varphi(T)=1$ whenever $0\leq T$ is such that $\mu(k,T)=\frac1{k+1}$ for all $k\geq0.$

Traces on $\mathcal{L}_{1,\infty}$ play a fundamental role in Non-commutative Geometry. For example, they allow to write Connes Character Formula (we refer the reader to Section 5.3 in \cite{LSZ-obzor} and references therein).

\subsection{Sobolev spaces}

Sobolev space $W^{m,2}(\mathbb{R}^d),$ $m\in \mathbb{Z}_+,$ consists of all distributions $f\in L_2(\mathbb{R}^d)$ such that every distributional derivative $D^{\alpha}f,$ $\alpha\in \mathbb{Z}_+^d,$ of order 
$|\alpha|_1=\sum_{k=1}^d\alpha_k\leq m$ also belongs to $L_2(\mathbb{R}^d).$ 

The importance of Sobolev spaces in the theory of differential operators can be seen e.g. from the fact that $W^{1,2}(\mathbb{R}^d)$ is the domain of the self-adjoint tuple $\nabla.$ Also, $W^{2,2}(\mathbb{R}^d)$ is the domain of the self-adjoint positive operator $-\Delta.$

We refer the reader to the books \cite{Adams-book}, \cite{Taylor-book} for further information on Sobolev spaces.

Further we need the following standard result (see e.g. p.322 in \cite{Taylor-book}).

\begin{thm}\label{322 taylor} Sobolev space $W^{m,2}(\mathbb{R}^d),$ $m\in\mathbb{Z}_+,$ is invariant under diffeomorphisms which are affine outside of some ball.
\end{thm}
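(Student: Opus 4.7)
The plan is to show that the composition operator $f \mapsto f \circ \Phi$ is bounded on $W^{m,2}(\mathbb{R}^d);$ applying the same fact to $\Phi^{-1}$ (which is again affine outside a ball, since the inverse of an invertible affine map is affine) then gives that the operator is a linear homeomorphism of $W^{m,2}(\mathbb{R}^d)$ onto itself, which is the asserted invariance. By density of $C^{\infty}_c(\mathbb{R}^d)$ in $W^{m,2}(\mathbb{R}^d),$ it suffices to establish the estimate $\|f \circ \Phi\|_{W^{m,2}} \leq C \|f\|_{W^{m,2}}$ for $f \in C^{\infty}_c(\mathbb{R}^d).$

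For the $L_2$ part of this estimate, the change of variables $y = \Phi(x)$ yields
\[\int_{\mathbb{R}^d}|f(\Phi(x))|^2\,dx = \int_{\mathbb{R}^d}|f(y)|^2\,|\det J_{\Phi^{-1}}(y)|\,dy.\]
Since $\Phi$ is affine outside some ball $B(0,R),$ the function $|\det J_{\Phi^{-1}}|$ is continuous on $\mathbb{R}^d$ and equals the nonzero constant $|\det A^{-1}|$ outside a sufficiently large ball (with $A$ denoting the invertible linear part of $\Phi$); hence it is uniformly bounded and $\|f \circ \Phi\|_2 \leq C_0 \|f\|_2.$

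The heart of the argument is the treatment of higher-order derivatives via the Fa\`a di Bruno formula, which produces, for every multi-index $\alpha$ with $|\alpha| \leq m,$ an identity of the form
\[\partial^{\alpha}(f \circ \Phi) = \sum_{1 \leq |\beta| \leq |\alpha|} Q_{\alpha,\beta}(\Phi)\cdot (\partial^{\beta}f) \circ \Phi,\]
where each $Q_{\alpha,\beta}(\Phi)$ is a polynomial in the partial derivatives $\partial^{\gamma}\Phi_i$ with $1 \leq |\gamma| \leq |\alpha|.$ Outside $B(0,R)$ the affine hypothesis forces $\partial^{\gamma}\Phi_i$ to vanish for $|\gamma| \geq 2$ and to be constant for $|\gamma| = 1;$ inside $B(0,R)$ smoothness and compactness deliver boundedness. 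Consequently each $Q_{\alpha,\beta}(\Phi)$ is bounded on $\mathbb{R}^d,$ and applying the $L_2$ bound already established to $\partial^{\beta}f$ (which lies in $L_2(\mathbb{R}^d)$ whenever $f \in W^{m,2}(\mathbb{R}^d)$) gives $\|\partial^{\alpha}(f \circ \Phi)\|_2 \leq C_{\alpha}\|f\|_{W^{m,2}}.$ Summing over $|\alpha| \leq m$ finishes the estimate.

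The main obstacle is the combinatorial bookkeeping in the chain-rule expansion; the role of the hypothesis \emph{affine outside a ball} is precisely to guarantee the simultaneous uniform boundedness on all of $\mathbb{R}^d$ of every partial derivative of $\Phi$ and of $|\det J_{\Phi^{-1}}|,$ a property that would in general fail for a diffeomorphism of $\mathbb{R}^d$ with no control at infinity, and it is this uniformity that makes the otherwise purely local chain-rule bound global.
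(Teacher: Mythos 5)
The paper does not prove Theorem \ref{322 taylor}; it cites it as a standard fact from Taylor's book (``see e.g.\ p.~322 in \cite{Taylor-book}''), so there is no in-paper argument to compare against. Your proof is correct and follows the standard route: a change-of-variables estimate for the $L_2$ level, and the multivariate chain rule (Fa\`a di Bruno) to reduce $\partial^{\alpha}(f\circ\Phi)$ to terms $Q_{\alpha,\beta}(\Phi)\cdot(\partial^{\beta}f)\circ\Phi,$ $1\leq|\beta|\leq|\alpha|,$ with $Q_{\alpha,\beta}(\Phi)$ polynomial in the derivatives of $\Phi.$ You identify exactly where the ``affine outside a ball'' hypothesis enters: it makes $|\det J_{\Phi^{-1}}|$ bounded and makes every $\partial^{\gamma}\Phi_i$ either vanish or be constant outside a compact set, while continuity on the closed ball handles the interior, giving the uniform bounds that a general diffeomorphism of $\mathbb{R}^d$ would not supply.

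Two small points worth making explicit. First, the reduction to $\Phi^{-1}$ uses that $\Phi^{-1}$ is again affine outside \emph{some} ball: since $\Phi$ equals $x\mapsto Ax+b$ with $A$ invertible outside $B(0,R),$ it maps the exterior of $B(0,R)$ to the exterior of the ellipsoid $A(B(0,R))+b,$ so one should take $R'$ with $A(B(0,R))+b\subset B(0,R')$ and observe that $\Phi^{-1}(y)=A^{-1}(y-b)$ for $|y|>R'.$ Second, in the density step, after proving the a priori bound on $C^{\infty}_c(\mathbb{R}^d)$ one should note that the continuous extension to $W^{m,2}(\mathbb{R}^d)$ is the pointwise composition $f\circ\Phi,$ which follows because the $W^{m,2}$-limit and the $L_2$-limit of $f_n\circ\Phi$ must coincide and the $L_2$-boundedness of composition already identifies the latter with $f\circ\Phi.$ Both points are routine and your outline is a complete and correct proof of the theorem.
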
 

\subsection{Pseudodifferential operators}\label{psdo subsection}

If $p\in C^{\infty}(\mathbb{R}^d\times\mathbb{R}^d)$ (that is, bounded smooth function whose derivatives are also bounded functions), then the Calderon-Vaillancourt theorem (see e.g. unnumbered proposition on p.282 in \cite{Stein-book}) the operator $\mathrm{Op}(p)$ defined by the formula (here $\mathcal{F}$ is Fourier transform on $\mathbb{R}^d$)
\begin{equation}\label{psdo def}
(\mathrm{Op}(p)\xi)(t)=(2\pi)^{-\frac{d}{2}}\int_{\mathbb{R}^d}e^{i\langle t,s\rangle}p(t,s)(\mathcal{F}\xi)(s)ds,\quad \xi\in L_2(\mathbb{R}^d),
\end{equation}
is bounded in $L_2(\mathbb{R}^d).$

If $m\in\mathbb{Z},$ $m\leq 0$ is such that
\begin{equation}\label{psim p condition}
\sup_{t,s\in\mathbb{R}^d}(1+|s|^2)^{\frac{|\beta|_1-m}{2}}|D_t^{\alpha}D_s^{\beta}p(t,s)|<\infty,\quad \alpha,\beta\in\mathbb{Z}_+^d,
\end{equation}
then we say that $\mathrm{Op}(p)\in\Psi^m(\mathbb{R}^d).$ For $m>0,$ the class $\Psi^m(\mathbb{R}^d)$ is defined by the same formula. The difference is that, for $m>0,$ operators in $\Psi^m(\mathbb{R}^d)$ are no longer bounded as operators from $L_2(\mathbb{R}^d)\to L_2(\mathbb{R}^d);$ instead, they are bounded operators from $W^{m,2}(\mathbb{R}^d)$ to $L_2(\mathbb{R}^d).$

The key property is that
\begin{equation}\label{psi product eq}
\Psi^{m_1}(\mathbb{R}^d)\cdot\Psi^{m_2}(\mathbb{R}^d)\subset\Psi^{m_1+m_2}(\mathbb{R}^d),\quad m_1,m_2\in\mathbb{Z}.
\end{equation}
Moreover, by Theorem 2.5.1 in \cite{RT-book}, we have
\begin{equation}\label{op product eq}
\mathrm{Op}(p_1)\cdot\mathrm{Op}(p_2)\in \mathrm{Op}(p_1p_2)+\Psi^{m_1+m_2-1}(\mathbb{R}^d),
\end{equation}
whenever $\mathrm{Op}(p_1)\in\Psi^{m_1}(\mathbb{R}^d)$ and $\mathrm{Op}(p_2)\in\Psi^{m_2}(\mathbb{R}^d).$ The next lemma follows immediately from \eqref{psi product eq} and \eqref{op product eq}.

\begin{lem}\label{psi product lemma} If $m_1,m_2\in\mathbb{Z},$
\[T_l\in \mathrm{Op}(p_l)+\Psi^{m_l-1}(\mathbb{R}^d),\quad \mathrm{Op}(p_l)\in\Psi^{m_l}(\mathbb{R}^d),\quad l=1,2,\]
then
\[T_1T_2-\mathrm{Op}(p_1p_2)\in\Psi^{m_1+m_2-1}(\mathbb{R}^d).\]
\end{lem}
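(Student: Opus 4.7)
The plan is to decompose and expand. Write $T_l = \mathrm{Op}(p_l) + R_l$ with $R_l \in \Psi^{m_l-1}(\mathbb{R}^d)$ for $l=1,2$, and then expand bilinearly:
\[
T_1 T_2 = \mathrm{Op}(p_1)\mathrm{Op}(p_2) + \mathrm{Op}(p_1)R_2 + R_1\mathrm{Op}(p_2) + R_1 R_2.
\]
The main term is handled by \eqref{op product eq}, which says $\mathrm{Op}(p_1)\mathrm{Op}(p_2) \in \mathrm{Op}(p_1 p_2) + \Psi^{m_1+m_2-1}(\mathbb{R}^d)$; so subtracting $\mathrm{Op}(p_1 p_2)$ from this term leaves a remainder in $\Psi^{m_1+m_2-1}(\mathbb{R}^d)$.

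The three cross terms are disposed of by the composition rule \eqref{psi product eq}. Namely, $\mathrm{Op}(p_1)R_2 \in \Psi^{m_1}(\mathbb{R}^d)\cdot\Psi^{m_2-1}(\mathbb{R}^d) \subset \Psi^{m_1+m_2-1}(\mathbb{R}^d)$, symmetrically $R_1\mathrm{Op}(p_2) \in \Psi^{m_1+m_2-1}(\mathbb{R}^d)$, and $R_1 R_2 \in \Psi^{m_1+m_2-2}(\mathbb{R}^d) \subset \Psi^{m_1+m_2-1}(\mathbb{R}^d)$. Summing all four pieces and subtracting $\mathrm{Op}(p_1 p_2)$ yields a representative in $\Psi^{m_1+m_2-1}(\mathbb{R}^d)$, as required.

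I do not expect any real obstacle here: the claim is purely an organizational consequence of the two cited inclusions, consistent with the author's own remark that the lemma "follows immediately." The only trivial point to keep in mind is the monotonicity $\Psi^{m-1}(\mathbb{R}^d) \subset \Psi^m(\mathbb{R}^d)$ used for the $R_1 R_2$ term, which is immediate from the symbol estimates \eqref{psim p condition} since increasing $m$ only weakens the decay requirement in $s$.
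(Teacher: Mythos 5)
Your proof is correct and is precisely the expansion the paper leaves implicit when it says the lemma ``follows immediately from \eqref{psi product eq} and \eqref{op product eq}'': decompose $T_l=\mathrm{Op}(p_l)+R_l$, treat the main term by \eqref{op product eq}, and absorb the three cross terms via \eqref{psi product eq} and the monotonicity $\Psi^{m-1}(\mathbb{R}^d)\subset\Psi^{m}(\mathbb{R}^d)$, which you correctly justify from \eqref{psim p condition}. No issues.
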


Let $T\in\Psi^m(\mathbb{R}^d),$ $m<0,$ and let $\psi\in C^{\infty}_c(\mathbb{R}^d)$ be such that $T=M_{\psi}T.$ Recall that the operator $M_{\psi}(1-\Delta)^{\frac{m}{2}}$ is compact (see e.g. Theorem 4.1 in \cite{Simon-book}). Thus,
\begin{equation}\label{compactly supported are compact}
T=M_{\psi}(1-\Delta)^{\frac{m}{2}}\cdot (1-\Delta)^{-\frac{m}{2}}T\in \mathcal{K}(L_2(\mathbb{R}^d))\cdot B(L_2(\mathbb{R}^d))=\mathcal{K}(L_2(\mathbb{R}^d)).
\end{equation}

Differential operators of order $m\geq0$ with smooth bounded coefficients (all derivatives of the coefficients are also assumed bounded) belong to $\Psi^m(\mathbb{R}^d).$ Indeed, it follows directly from \eqref{psdo def} that
\begin{equation}\label{do vs psdo}
\sum_{|\alpha|_1\leq m}M_{f_{\alpha}}D^{\alpha}=\mathrm{Op}(p),\quad p(t,s)=\sum_{|\alpha|_1\leq m}f_{\alpha}(t)s^{\alpha},\quad t,s\in\mathbb{R}^d.
\end{equation}

The following standard result is available, e.g. in Theorem 1.6.20 in \cite{LMSZ-book}. 

\begin{thm}\label{complex power thm} Let $T\in\Psi^m(\mathbb{R}^d),$ $m\geq 0,$ extend to a self-adjoint positive operator $T:W^{m,2}(\mathbb{R}^d)\to L_2(\mathbb{R}^d).$ For every $z\in\mathbb{C},$ we have $(T+1)^z\in\Psi^{m\Re(z)}(\mathbb{R}^d).$
	
If, in addition, $T$ is a differential operator with positive principal symbol $p,$ then
\[(T+1)^z-\mathrm{Op}((p+1)^z)\in\Psi^{m\Re(z)-1}(\mathbb{R}^d).\]	
\end{thm}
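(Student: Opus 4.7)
The plan is to construct complex powers via the Riesz--Dunford functional calculus and to analyze the resolvent as a pseudodifferential operator with parameter. Throughout, $T+1$ is self-adjoint positive with spectrum in $[1,\infty)$, and it is enough to treat $\Re(z)<0$ first, extending to general $z$ by multiplication.

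First, I would define
\[
(T+1)^z = \frac{1}{2\pi i}\int_\Gamma \lambda^z(\lambda-(T+1))^{-1}\,d\lambda,
\]
where $\Gamma$ is a Hankel-type contour encircling $[1,\infty)$ counter-clockwise and avoiding the origin. Convergence in operator norm for $\Re(z)<0$ follows from the spectral bound $\|(\lambda-(T+1))^{-1}\|\leq\mathrm{dist}(\lambda,[1,\infty))^{-1}$, and agreement with the spectral-calculus definition is immediate. The task reduces to showing that this integral, rather than merely yielding a bounded operator, lies in $\Psi^{m\Re(z)}(\mathbb{R}^d)$, with the advertised leading symbol.

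Second, I would construct a parametrix with parameter for $\lambda-(T+1)$. By hypothesis (and by \eqref{do vs psdo} in the differential case), $T+1\in\Psi^m(\mathbb{R}^d)$ with elliptic-like principal symbol $p+1>0$. Starting from the leading guess $q_{\lambda,0}(t,s)=(\lambda-p(t,s)-1)^{-1}$ and iteratively correcting via \eqref{op product eq}, one obtains $Q_\lambda=\mathrm{Op}(q_\lambda)\in\Psi^{-m}$ with $(\lambda-(T+1))Q_\lambda - I\in\Psi^{-N}$ for every $N$. A Neumann-series argument then identifies $(\lambda-(T+1))^{-1}$ with $Q_\lambda$ modulo an operator of much lower order. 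Substituting into the contour integral gives
\[
(T+1)^z = \mathrm{Op}\bigl(\tfrac{1}{2\pi i}\int_\Gamma \lambda^z q_\lambda\,d\lambda\bigr) + R_z,
\]
and Cauchy's formula identifies the leading piece of the resulting symbol as $(p(t,s)+1)^z$, while $R_z\in\Psi^{m\Re(z)-1}(\mathbb{R}^d)$. This proves both claims of the theorem in the range $\Re(z)<0$.

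For arbitrary $z$, I would pick $k\in\mathbb{Z}_+$ with $k>\Re(z)$ and factor $(T+1)^z=(T+1)^k(T+1)^{z-k}$. The integer power $(T+1)^k$ lies in $\Psi^{mk}(\mathbb{R}^d)$ by \eqref{psi product eq}; in the differential case its leading symbol is $(p+1)^k$ by \eqref{do vs psdo}. Combining with the previous step and invoking Lemma \ref{psi product lemma} together with the identity $(p+1)^{k}(p+1)^{z-k}=(p+1)^z$ yields both the symbol class $\Psi^{m\Re(z)}(\mathbb{R}^d)$ and, in the differential setting, the refinement modulo $\Psi^{m\Re(z)-1}(\mathbb{R}^d)$.

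The main technical obstacle I expect is the uniform control of the symbol seminorms \eqref{psim p condition} along the unbounded contour $\Gamma$. Each $s$-derivative of $q_{\lambda,0}=(\lambda-p(t,s)-1)^{-1}$ trades off between a power of $(1+|s|^2)^{m/2}$ and a power of $|\lambda|^{-1}$, and establishing that the Bochner integral converges in every seminorm appearing in \eqref{psim p condition} (with $m$ replaced by $m\Re(z)$) requires balancing these two growth rates carefully. The standing hypothesis $m\geq 0$ together with ellipticity $p+1\geq c>0$ is precisely what makes this bookkeeping close; without them the contour integral either diverges or falls outside the symbol class.
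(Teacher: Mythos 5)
The paper does not prove Theorem \ref{complex power thm}; it labels it ``standard'' and cites Theorem 1.6.20 of \cite{LMSZ-book} (a volume marked ``to appear''). Your proposal is the classical Seeley construction of complex powers --- Hankel contour integral of the resolvent, parametrix with parameter built from $q_{\lambda,0}=(\lambda-p-1)^{-1}$, Cauchy's formula for the leading symbol, and reduction to $\Re(z)<0$ by factoring out an integer power --- and this is exactly the standard route that the cited reference would take. The outline is sound. Two points deserve tightening. First, the ellipticity bound you invoke at the end, $p+1\geq c>0$, is too weak to close the symbol estimates: a degree-$m$ polynomial with $p\geq 0$ that degenerates along a codimension-one cone in the fibre variable (e.g.\ $p(t,s)=s_1^2$ on $\mathbb{R}^2$ with $m=2$) satisfies $p+1\geq 1$ yet $(p+1)^{-1/2}$ is not of class $\Psi^{-1}$. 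What is needed is $p(t,s)+1\geq c\,(1+|s|^2)^{m/2}$ uniformly; this is where the hypothesis that $T$ extend to a self-adjoint operator \emph{with domain equal to} $W^{m,2}(\mathbb{R}^d)$ enters --- for a non-elliptic $T$ the natural self-adjoint domain is strictly larger than $W^{m,2}$, so the stated hypothesis is precisely an ellipticity assumption and should be quoted as such. Second, for the first claim $T$ is merely in $\Psi^m$, not differential, so the parametrix must be built from the full symbol of $T+1$ rather than from a principal symbol; the parametric estimates $|\partial_s^\beta\partial_t^\alpha q_\lambda(t,s)|\lesssim (|\lambda|+(1+|s|^2)^{m/2})^{-1-|\beta|_1/m}$ uniformly on $\Gamma$ are the genuine technical content, which you correctly flag but do not carry out. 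With those two adjustments the argument is complete and matches the standard treatment.
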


\subsection{Pseudodifferential-like operators in $\Pi$}\label{psdo-like subsection}

If $q\in C^{\infty}_c(\mathbb{R}^d\times\mathbb{S}^{d-1}),$ then we set
\begin{equation}\label{psdo-like operator def}
(T_q\xi)(t)=(2\pi)^{-\frac{d}{2}}\int_{\mathbb{R}^d}e^{i\langle t,s\rangle}q(t,\frac{s}{|s|})(\mathcal{F}\xi)(s)ds,\quad\xi\in L_2(\mathbb{R}^d).
\end{equation}

\begin{lem}[Lemma 8.1 in \cite{DAO1}]\label{81 lemma} For every $q\in C^{\infty}_c(\mathbb{R}^d\times\mathbb{S}^{d-1}),$ we have  $T_q\in \Pi$ and $\mathrm{sym}(T_q)=q.$
\end{lem}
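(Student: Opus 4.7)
The plan is to reduce the statement to the elementary case of tensor-product symbols by approximation. If $q(t,\theta) = f(t)g(\theta)$ with $f \in C^\infty_c(\mathbb{R}^d)$ and $g \in C^\infty(\mathbb{S}^{d-1})$, then the Fourier realisation of the functional calculus gives
\[
g\Bigl(\frac{\nabla}{\sqrt{-\Delta}}\Bigr)\xi = (2\pi)^{-d/2}\int_{\mathbb{R}^d} e^{i\langle t,s\rangle} g\Bigl(\frac{s}{|s|}\Bigr)(\mathcal{F}\xi)(s)\,ds,
\]
so that $T_q = M_f\cdot g(\nabla/\sqrt{-\Delta}) = \pi_1(f)\pi_2(g)$ lies in $\Pi$ and has $\mathrm{sym}(T_q)=f\otimes g = q$ by the defining properties of $\mathrm{sym}$. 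Finite linear combinations of such tensor products are therefore handled by the definition.

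Next I would approximate a general $q \in C^\infty_c(\mathbb{R}^d \times \mathbb{S}^{d-1})$ by such finite sums. The cleanest way is to expand in spherical harmonics on the fibre,
\[
q(t,\theta) = \sum_{\ell \geq 0,\,m} q_{\ell,m}(t) Y_{\ell,m}(\theta),\qquad q_{\ell,m}(t) = \int_{\mathbb{S}^{d-1}} q(t,\theta)\overline{Y_{\ell,m}(\theta)}\,d\theta,
\]
with all $q_{\ell,m}$ supported in a fixed compact set $K$ containing the $t$-projection of $\mathrm{supp}(q)$. The smoothness of $q$ forces rapid decay of $\|q_{\ell,m}\|_{C^N}$ in $\ell$ for each $N$, so the partial sums $q_n$ converge to $q$ in every $C^N$-norm. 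Each $T_{q_n}$ is a finite sum of products $\pi_1(q_{\ell,m})\pi_2(Y_{\ell,m})$, hence lies in $\Pi$ with $\mathrm{sym}(T_{q_n})=q_n$.

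The main obstacle is the operator-norm convergence $T_{q_n}\to T_q$, which I would derive from an estimate of the form $\|T_q\|_{B(L_2)} \leq C\|q\|_{C^N}$ applied to $q-q_n$. Fix $\chi \in C^\infty(\mathbb{R}^d)$ vanishing on $\{|s|\leq 1\}$ and equal to $1$ on $\{|s|\geq 2\}$, and split the full symbol $p(t,s):= q(t,s/|s|)$ as $\chi(s)p(t,s) + (1-\chi(s))p(t,s)$. Since $s\mapsto s/|s|$ is $0$-homogeneous and smooth off the origin, $\chi p$ satisfies the H\"ormander estimates \eqref{psim p condition} for $m=0$, with seminorms dominated by a fixed $C^N$-norm of $q$; Calder\'on--Vaillancourt (as used in Subsection \ref{psdo subsection}) then bounds $\|\mathrm{Op}(\chi p)\|$ by the same quantity. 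The complementary symbol $(1-\chi)p$ is bounded and supported in $K\times\{|s|\leq 2\}$, hence in $L_2(\mathbb{R}^{2d})$ with norm $\leq C\|q\|_\infty$, and the corresponding operator is Hilbert--Schmidt (via Plancherel on the integral kernel) with the same bound. Adding the two contributions gives the required norm estimate and hence $T_{q_n}\to T_q$ in $B(L_2(\mathbb{R}^d))$.

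Finally I would conclude as follows. Because $\Pi$ is norm-closed, $T_q \in \Pi$. Because the $\ast$-homomorphism $\mathrm{sym}$ is automatically contractive, $\mathrm{sym}(T_q) = \lim_n \mathrm{sym}(T_{q_n}) = \lim_n q_n = q$ in $\mathcal{A}_1\otimes_{\min}\mathcal{A}_2$, which is exactly the desired identity.
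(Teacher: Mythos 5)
The paper states this result as Lemma 8.1 of \cite{DAO1} and does not reproduce a proof, so there is no in-text argument to compare against; your proposal should be judged as a stand-alone proof, and it is correct. The tensor case $q=f\otimes g$ is immediate (indeed $T_q=\pi_1(f)\pi_2(g)$), the spherical-harmonic expansion on the fibre reduces the general case to finite sums of such tensors, and the operator-norm estimate $\|T_r\|\le C_K\|r\|_{C^N}$ for symbols $r$ whose $t$-projection lies in a fixed compact set $K$ is exactly what is needed to pass to the limit. Your splitting into a H\"ormander-class piece (handled by Calder\'on--Vaillancourt, which the paper does cite in Subsection \ref{psdo subsection}) and a compactly-supported Hilbert--Schmidt remainder is a correct way to obtain this estimate. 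Two small points are worth spelling out in a write-up: (i) the Hilbert--Schmidt constant depends on $K$, which is harmless here because every $q-q_n$ shares the $t$-support of $q$; (ii) the convergence $q_n\to q$ in each $C^N(\mathbb{R}^d\times\mathbb{S}^{d-1})$-norm relies on the rapid $\ell$-decay of $\|q_{\ell,m}\|_{C^N(\mathbb{R}^d)}$ (obtained by pairing against powers of the spherical Laplacian, uniformly over the compact $t$-support) dominating both the polynomial growth of $\|Y_{\ell,m}\|_{C^N(\mathbb{S}^{d-1})}$ and the $O(\ell^{d-2})$ multiplicity of degree-$\ell$ harmonics; this is routine but should be stated. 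With these observations the final step, using norm-closedness of $\Pi$ and contractivity of the $\ast$-homomorphism $\mathrm{sym}$, is fine.
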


\begin{lem}[Lemma 8.2 in \cite{DAO1}]\label{82 lemma} Let $q\in C^{\infty}_c(\mathbb{R}^d\times\mathbb{S}^{d-1}).$ If  $\psi\in C^{\infty}_c(\mathbb{R}^d)$ equals $1$ near $0,$ then
\[\mathrm{Op}(p)-T_q\in\mathcal{K}(L_2(\mathbb{R}^d)),\]
where
\[p(t,s)=q(t,\frac{s}{|s|})\cdot (1-\psi(s)),\quad t,s\in\mathbb{R}^d.\]
\end{lem}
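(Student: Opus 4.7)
The plan is to show that $T_q-\mathrm{Op}(p)$ is Hilbert--Schmidt, which of course forces compactness. Unwinding \eqref{psdo def} and \eqref{psdo-like operator def}, one has $q(t,\tfrac{s}{|s|})-p(t,s)=q(t,\tfrac{s}{|s|})\psi(s)$ for all $s\neq 0$, so by linearity
\[((T_q-\mathrm{Op}(p))\xi)(t)=(2\pi)^{-d/2}\int_{\mathbb{R}^d} e^{i\langle t,s\rangle}\,r(t,s)(\mathcal{F}\xi)(s)\,ds,\qquad r(t,s):=q\!\left(t,\tfrac{s}{|s|}\right)\psi(s).\]
Crucially, $\psi$ is compactly supported and $q$ is bounded, so $r$ is bounded with compact support in the $s$-variable (uniformly in $t$); and since $q\in C^{\infty}_c$, it also has compact support in $t$ (uniformly in $s$).

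The next step is to express this operator with a Schwartz kernel. Substituting $(\mathcal{F}\xi)(s)=(2\pi)^{-d/2}\int e^{-i\langle u,s\rangle}\xi(u)\,du$ and applying Fubini (legitimate because $r$ is bounded with compact $s$-support), one gets a bona fide integral operator with kernel
\[K(t,u)=(2\pi)^{-d}\int_{\mathbb{R}^d} e^{i\langle t-u,s\rangle}\,r(t,s)\,ds.\]
For each fixed $t$, the function $s\mapsto r(t,s)$ lies in $L_2(\mathbb{R}^d)$ (it is bounded with compact support), so up to a constant and a translation in $u$ the map $u\mapsto K(t,u)$ is simply the Fourier transform of $r(t,\cdot)$.

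The final step is the Plancherel estimate in the $s$-variable:
\[\int_{\mathbb{R}^d}|K(t,u)|^2\,du=(2\pi)^{-d}\int_{\mathbb{R}^d}|r(t,s)|^2\,ds.\]
Integrating in $t$ and using that $r$ is bounded and compactly supported in $(t,s)$, we conclude $\iint |K(t,u)|^2\,dt\,du<\infty$, i.e.\ $K\in L_2(\mathbb{R}^d\times\mathbb{R}^d)$. Hence $T_q-\mathrm{Op}(p)$ is Hilbert--Schmidt and, in particular, compact.

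I don't foresee any significant obstacle: the role of $\psi$ is precisely to kill the homogeneous singularity of $q(t,s/|s|)$ at $s=0$ while the compact support of $\psi$ cuts off the $s$-integration, and the compact support of $q$ in $t$ gives the remaining integrability. The only care required is the Fubini step, which is transparent given that $r$ is a bounded, compactly supported function.
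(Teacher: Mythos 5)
Your argument is correct and is the natural route: the difference $T_q-\mathrm{Op}(p)$ has ``symbol'' $r(t,s)=q(t,s/|s|)\psi(s)$, which is bounded, supported in a fixed compact set in $t$ (since $q\in C^\infty_c$ in the first variable) and in a fixed compact set in $s$ (since $\psi\in C^\infty_c$), so the Schwartz kernel $K(t,u)=(2\pi)^{-d}\int e^{i\langle t-u,s\rangle}r(t,s)\,ds$ is square-integrable by Plancherel in the $s$-variable, yielding Hilbert--Schmidt and hence compactness. The paper itself does not reprove the statement --- it cites Lemma~8.2 of \cite{DAO1} --- so there is no in-text proof to compare against, but your Hilbert--Schmidt/Plancherel argument is exactly the standard proof of this class of remainder estimates.

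One small point worth tidying: the Fubini step as stated (``legitimate because $r$ is bounded with compact $s$-support'') is not quite enough, since you also integrate against $\xi(u)$ in $u$, which requires $\xi\in L_1$. The cleanest fix is to carry out the computation for $\xi$ in a dense subspace such as $C^\infty_c(\mathbb{R}^d)$ or the Schwartz class (where the iterated integral is absolutely convergent), show that on this subspace $T_q-\mathrm{Op}(p)$ agrees with the integral operator determined by the $L_2$ kernel $K$, and then conclude by density using that $T_q$ and $\mathrm{Op}(p)$ are already known to be bounded (the former by Lemma~\ref{81 lemma}, the latter by Calder\'on--Vaillancourt). Note also that $r$ fails to be continuous at $s=0$, but it remains bounded and measurable, so the Plancherel step is unaffected.
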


\subsection{Cotangent bundle}

\begin{nota}\label{manifold nota} Let $X$ be a smooth $d$-dimensional manifold with atlas $(\mathcal{U}_i,h_i)_{i\in\mathbb{I}},$ where $\mathbb{I}$ is an arbitrary set of indices.
\begin{enumerate}
\item We denote
\[\Omega_i=h_i(\mathcal{U}_i)\subset\mathbb{R}^d,\quad \Omega_{i,j}=h_i(\mathcal{U}_i\cap\mathcal{U}_j)\subset\mathbb{R}^d,\quad i,j\in\mathbb{I};\]
\item We denote by $\Phi_{i,j}:\Omega_{i,j}\to\Omega_{j,i}$ the diffeomorphism given by the formula
\[\Phi_{i,j}(t)=h_j(h_i^{-1}(t)),\quad t\in\Omega_{i,j}.\]
\end{enumerate} 
\end{nota}

In the next fact, we recall the manifold structure of $T^{\ast}X.$

\begin{fact}\label{cotangent bundle mapping properties thm} Let $X$ be a $d$-dimensional manifold with an atlas $\{(\mathcal{U}_i,h_i)\}_{i\in\mathbb{I}}.$ Let $T^{\ast}X$ be the cotangent bundle of $X$ and let $\pi:T^{\ast}X\to X$ be the canonical projection. There exists an atlas $\{\pi^{-1}(\mathcal{U}_i),H_i\}_{i\in\mathbb{I}}$ of $T^{\ast}X$ such that
\begin{enumerate}
\item for every $i\in\mathbb{I},$ $H_i:\pi^{-1}(\mathcal{U}_i)\to \Omega_i\times\mathbb{R}^d$ is a homeomorphism;
\item for every $i,j\in\mathbb{I}$ such that $\mathcal{U}_i\cap \mathcal{U}_j\neq\varnothing,$ we have
\[(H_j\circ H_i^{-1})(t,s)=(\Phi_{i,j}(t),(J^{\ast}_{\Phi_{i,j}}(t))^{-1}s),\quad t\in \Omega_{i,j},\quad s\in\mathbb{R}^d.\]
\end{enumerate}
\end{fact}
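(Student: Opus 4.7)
The plan is to build the chart maps $H_i$ directly from $h_i$ using the canonical coordinate coframe on the cotangent spaces, and then verify the stated transition formula by the chain rule.

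First, for each $i\in\mathbb{I}$ and each $x\in\mathcal{U}_i$, the component functions $h_i^1,\ldots,h_i^d$ give a basis $dh_i^1(x),\ldots,dh_i^d(x)$ of the cotangent space $T_x^{\ast}X$. Every covector $\xi\in T_x^{\ast}X$ therefore has a unique expansion $\xi=\sum_{k=1}^d s_k\,dh_i^k(x)$ with $s=(s_1,\ldots,s_d)\in\mathbb{R}^d$, and I define
\[H_i(\xi)=(h_i(x),s),\qquad \xi\in\pi^{-1}(\mathcal{U}_i).\]
By construction $H_i$ is a bijection onto $\Omega_i\times\mathbb{R}^d$. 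I equip $T^{\ast}X$ with the topology generated by pulling back the product topologies through the $H_i$, so each $H_i$ becomes a homeomorphism once we check compatibility on overlaps, which reduces precisely to verifying (2).

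For (2), fix $i,j$ with $\mathcal{U}_i\cap\mathcal{U}_j\neq\varnothing$, pick $t\in\Omega_{i,j}$ and $s\in\mathbb{R}^d$, and set $\xi=H_i^{-1}(t,s)$, so $x:=\pi(\xi)=h_i^{-1}(t)$ and $\xi=\sum_k s_k\,dh_i^k(x)$. On $\mathcal{U}_i\cap\mathcal{U}_j$ we have $h_i=\Phi_{j,i}\circ h_j$ and $h_j(x)=\Phi_{i,j}(t)$, so the chain rule gives $dh_i^k(x)=\sum_{l=1}^d\frac{\partial\Phi_{j,i}^k}{\partial u_l}(h_j(x))\,dh_j^l(x)$. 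Substituting and collecting, the expansion of $\xi$ in the $h_j$-coframe reads
\[\xi=\sum_{l=1}^d\bigl[(J_{\Phi_{j,i}}(h_j(x)))^{T}s\bigr]_l\,dh_j^l(x),\]
so the $h_j$-coordinates of $\xi$ are $s'=(J_{\Phi_{j,i}}(h_j(x)))^{T}s$. Since $\Phi_{j,i}=\Phi_{i,j}^{-1}$, differentiating the identity $\Phi_{j,i}\circ\Phi_{i,j}=\mathrm{id}$ at $t$ gives $J_{\Phi_{j,i}}(\Phi_{i,j}(t))=(J_{\Phi_{i,j}}(t))^{-1}$, whence $s'=\bigl((J_{\Phi_{i,j}}(t))^{-1}\bigr)^{T}s=(J_{\Phi_{i,j}}^{\ast}(t))^{-1}s$. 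Therefore $(H_j\circ H_i^{-1})(t,s)=(\Phi_{i,j}(t),(J^{\ast}_{\Phi_{i,j}}(t))^{-1}s)$, which is the required formula.

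The substantive content is entirely in the chain-rule computation above; once it is done, smoothness of the transitions follows from smoothness of $\Phi_{i,j}$ and of matrix inversion, and the topology on $T^{\ast}X$ induced by the $H_i$ is Hausdorff and second countable (since $X$ is). The main obstacle is purely notational: keeping track of which direction is the transpose and which is the inverse, and correctly identifying the point $h_j(x)=\Phi_{i,j}(t)$ at which the Jacobian of $\Phi_{j,i}$ is evaluated. Conceptually the statement is the standard construction of the manifold structure on $T^{\ast}X$, and no nontrivial input beyond the inverse function theorem (applied to $\Phi_{i,j}$) is needed.
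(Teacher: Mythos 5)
The paper records this statement as a \emph{Fact} without proof, treating it as standard background on the manifold structure of the cotangent bundle, so there is no in-paper argument to compare against. Your proof is correct and is precisely the textbook construction: you define $H_i$ by reading off the coefficients of a covector in the coordinate coframe $dh_i^1,\ldots,dh_i^d$, and the chain rule applied to $h_i=\Phi_{j,i}\circ h_j$ together with $J_{\Phi_{j,i}}(\Phi_{i,j}(t))=(J_{\Phi_{i,j}}(t))^{-1}$ yields exactly the stated transition formula, with the transpose and inverse landing on the right sides and the Jacobian evaluated at the right point. The only caveat worth flagging is a presentational one: since $T^{\ast}X$ is already a smooth manifold, a cleaner phrasing is to verify that the $H_i$ so defined are charts compatible with the existing smooth structure and satisfy (1)--(2), rather than to ``equip $T^{\ast}X$ with the topology generated by pulling back'' through the $H_i$ --- the latter re-derives the manifold structure, which is fine but slightly at odds with the statement's premise that $T^{\ast}X$ and $\pi$ are already given.
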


In the next fact, we identify functions on the $T^{\ast}X$ and their local representations. It is important that this identification preserves continuity.

\begin{fact}\label{germ def} Let $F_i:\Omega_i\times\mathbb{R}^d\to\mathbb{C}$ for every $i\in\mathbb{I}.$ If
\[F_i\circ H_i=F_j\circ H_j\mbox{ on }\pi^{-1}(\mathcal{U}_i\cap\mathcal{U}_j)\mbox{ for every }i,j\in\mathbb{I},\]
then there exists a unique function  $F:T^{\ast}X\to\mathbb{C}$ such that
\[F_i=F\circ H_i^{-1}\mbox{ on } \Omega_i\times \mathbb{R}^d,\quad i\in\mathbb{I}.\]
\end{fact}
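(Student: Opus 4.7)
The plan is to construct $F$ by the standard patching recipe: for $\xi\in T^{\ast}X$, pick any $i\in\mathbb{I}$ with $\xi\in\pi^{-1}(\mathcal{U}_i)$, and set
\[
F(\xi)=(F_i\circ H_i)(\xi).
\]
The only thing to check here is that this value is independent of the choice of chart, and that is precisely the content of the hypothesis $F_i\circ H_i=F_j\circ H_j$ on $\pi^{-1}(\mathcal{U}_i\cap\mathcal{U}_j)$. Since the charts $\{\mathcal{U}_i\}_{i\in\mathbb{I}}$ cover $X$, the sets $\pi^{-1}(\mathcal{U}_i)$ cover $T^{\ast}X$, so $F$ is well-defined on all of $T^{\ast}X$. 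By construction, if $(t,s)\in\Omega_i\times\mathbb{R}^d$ then $H_i^{-1}(t,s)\in\pi^{-1}(\mathcal{U}_i)$, and computing $F$ in the chart $i$ gives $F(H_i^{-1}(t,s))=F_i(t,s)$, i.e. $F\circ H_i^{-1}=F_i$ on $\Omega_i\times\mathbb{R}^d$.

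For uniqueness, suppose $G:T^{\ast}X\to\mathbb{C}$ also satisfies $G\circ H_i^{-1}=F_i$ for every $i\in\mathbb{I}$. Composing on the right with $H_i$ yields $G=F_i\circ H_i=F$ on $\pi^{-1}(\mathcal{U}_i)$; as the sets $\pi^{-1}(\mathcal{U}_i)$ exhaust $T^{\ast}X$, this forces $G=F$.

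Finally, for the claim (emphasised in the surrounding text) that the identification preserves continuity, one observes that continuity is a local property on $T^{\ast}X$. On each open subset $\pi^{-1}(\mathcal{U}_i)$, the function $F$ agrees with $F_i\circ H_i$; since $H_i$ is a homeomorphism from $\pi^{-1}(\mathcal{U}_i)$ onto $\Omega_i\times\mathbb{R}^d$ by Fact \ref{cotangent bundle mapping properties thm}, continuity of $F_i$ transfers to continuity of $F$ on $\pi^{-1}(\mathcal{U}_i)$, and these open sets cover $T^{\ast}X$.

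There is no genuine obstacle in this argument: the statement is essentially the sheaf-theoretic gluing axiom for functions on the manifold $T^{\ast}X$ constructed in Fact \ref{cotangent bundle mapping properties thm}. The only point that deserves care is the verification that the compatibility hypothesis gives independence of the chart, but this is immediate from the displayed equation, and the rest of the proof is purely formal.
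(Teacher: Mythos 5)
Your proof is correct and complete. The paper states this as a Fact without supplying a proof, and your argument is exactly the standard gluing construction one would expect: define $F$ chartwise, use the compatibility hypothesis for well-definedness, read off the defining property and uniqueness from the chart covering, and observe that continuity passes through because each $H_i$ is a homeomorphism (Fact \ref{cotangent bundle mapping properties thm}). Nothing is missing.
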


\subsection{Cosphere bundle}\label{cosphere subsection}

\begin{defi}\label{dilation action} Define a dilation action $\lambda\to\sigma_{\lambda}$ of $(0,\infty)$ on each $\Omega_i\times\mathbb{R}^d$ by setting
\[\sigma_{\lambda}:(t,s)\to (t,\lambda s),\quad t\in 
\Omega_i, \quad s\in\mathbb{R}^d.\]
This action lifts down to an action on $T^{\ast}X$ (also denoted by $\sigma_{\lambda}$).

A function on $T^{\ast}X$ invariant with respect to this action is called dilation invariant.
\end{defi}

\begin{defi}\label{cosphere bundle def} Let $X$ be a compact manifold. $C^{\ast}$-algebra of all continuous dilation invariant functions on $T^{\ast}X\backslash 0_{T^{\ast}X}$ (here, $0_{T^{\ast}X}$ is the zero section of $T^{\ast}X$) is denoted by $C(S^{\ast}X)$ and is called the algebra of continuous functions on the cosphere bundle of $X.$ 
\end{defi}

\subsection{Canonical weight of Riemannian manifold}\label{canonical weight subsection}

If $X$ is a smooth $d$-dimensional manifold, then $T^{\ast}X$ has a canonical symplectic structure. The corresponding Liouville measure $\lambda$ on $T^{\ast}X$ satisfies the following property (see, for instance, \cite{Chavel}):
\begin{equation}\label{liouville measure defining eq}
\int_{\mathbb{R}^d\times\mathbb{R}^d}fdm=\int_{T^{\ast}X}(f\circ H_i)d\lambda,\quad f\in C_c(\Omega_i\times\mathbb{R}^d),\quad i\in\mathbb{I}.
\end{equation}
Here, $f\circ H_i$ denotes a function on $T^{\ast}X$ which equals $f\circ H_i$ on $\pi^{-1}(\mathcal{U}_i)$ and which vanishes outside $\pi^{-1}(\mathcal{U}_i).$

However, there is no canonical way to equip the cosphere bundle $S^{\ast}X$ of a smooth manifold $X$ with a measure. The following class of measures is of particular interest: if $w\in L_1(T^{\ast}X,\lambda),$ then the functional
\[f\to \int_{T^{\ast}X}fwd\lambda,\quad f\in C(S^{\ast}X),\]
generates a measure on $S^{\ast}X$ by the Riesz-Markov theorem. However, there is no canonical way to select an integrable function $w$ on $T^{\ast}X.$ This choice becomes possible if we assume in addition a Riemannian structure on $X.$

Let $G$ be a Riemannian metric on $X$. For any $i\in\mathbb{I}$, the components of the metric $G$ in the chart $(\mathcal{U}_i,h_i)$ give rise to a smooth mapping $G_i:\mathcal{U}_i\to \mathrm{GL}^+(d,\mathbb{R})$. (In what follows, $\mathrm{GL}^+(d,\mathbb{R})$ stands for the set of all positive elements in $\mathrm{GL}(d,\mathbb{R}).$)
For any $i,j\in\mathbb{I}$ such that $\mathcal{U}_i\cap \mathcal{U}_j\neq\varnothing,$ we have
\[G_j(t)=J_{\Phi_{j,i}}^{\ast}(h_j(t))\cdot G_i(t)\cdot J_{\Phi_{j,i}}(h_j(t)),\quad t\in\mathcal{U}_i\cap\mathcal{U}_j.\]
Here, $\Phi_{j,i}$ are given in Notation \ref{manifold nota}.

\begin{nota}\label{qi nota} For every $i\in\mathbb{I},$ let $\Omega_i$ be as in Notation \ref{manifold nota} and set $g_i=G_i\circ h_i^{-1}:\Omega_i\to \mathrm{GL}^+(d,\mathbb{R}).$ We also set
\[q_i(t,s)=\langle g_i(t)^{-1}s,s\rangle,\quad t\in\Omega_i,\quad s\in\mathbb{R}^d.\]
\end{nota}

It can be easily verified by a direct calculation that, for every $i,j\in\mathbb{I}$, we have
\[q_i\circ H_i=q_j\circ H_j\mbox{ on } \pi^{-1}(\mathcal{U}_i\cap\mathcal{U}_j).\]
By Fact \ref{germ def}, there exists a function $q_X$ on $T^{\ast}X$ such that 
\begin{equation}
\label{qi comptibility riemann lemma}
q_i=q_X\circ H_i^{-1}\mbox{ on } \Omega_i\times \mathbb R^d,\quad i\in\mathbb{I}. 
\end{equation}
The function $q_X$ is the square of the length function on $T^{\ast}X$ defined by the induced Riemannian metric on the cotangent bundle $T^*X$.  

\begin{defi} The function $e^{-q_X}$ on $T^{\ast}X$ is called the canonical weight of the Riemannian manifold $(X,G)$.
\end{defi}

If $X$ is compact, then $e^{-q_X}\in L_1(T^{\ast}X,\lambda).$ The functional on $C(S^{\ast}X)$ given by the formula
\[f\to\int_{T^{\ast}X}fe^{-q_X}d\lambda\]
plays a crucial role. It defines the natural measure on $S^{\ast}X.$ We note that the latter functional coincides (modulo a constant factor) with integration with respect to the kinematic density on $S^{\ast}X$ (see p.318 in \cite{Chavel}).

\subsection{Laplace-Beltrami operator on compact Riemannian manifold}

\begin{nota}\label{lb nota} Let $\Omega\subset\mathbb{R}^d$ be connected and open. Let $g:\Omega\to\mathrm{GL}^+(d,\mathbb{R})$ be a smooth mapping. Laplace-Beltrami operator $\Delta_g:C^{\infty}_c(\Omega)\to C^{\infty}_c(\Omega)$ is defined by the formula
\[\Delta_g=M_{\mathrm{det}(g)^{-\frac12}}\sum_{k,l=1}^dD_kM_{\mathrm{det}(g)^{\frac12}\cdot (g^{-1})_{k,l}}D_l.\]
\end{nota}

\begin{defi}\label{good partition of unity} Let $(\phi_n)_{n=1}^N\subset C^{\infty}(X)$ be a finite partition of unity. We call it good if each $\phi_n$ is compactly supported in some chart.
\end{defi}

Obviously, good partitions of unity exist only on compact manifolds. 

\begin{defi}\label{lb manifold def} Let $(X,G)$ be a compact Riemannian manifold. Let $\Omega_i$ be as in Notation \ref{manifold nota} and let $g_i:\Omega_i\to \mathrm{GL}^+(d,\mathbb{R})$ be as in Notation \ref{qi nota}.  Let $\Delta_{g_i}:C^{\infty}_c(\Omega_i)\to C^{\infty}_c(\Omega_i),$ $i\in\mathbb{I},$ be the Laplace-Beltrami operator as in Notation \ref{lb nota}. Let $(\phi_n)_{n=1}^N$ be a good partition of unity.
	
Laplace-Beltrami operator $\Delta_G:C^{\infty}(X)\to C^{\infty}(X)$ is defined by the formula
\[\Delta_Gf=\sum_{n=1}^N\Big(\Delta_{g_{i_n}}\big((f\phi_n)\circ h_{i_n}^{-1}\big)\Big)\circ h_{i_n},\quad f\in C^{\infty}(X).\]
Here, $i_n\in\mathbb{I}$ is chosen such that $\phi_n$ is compactly supported in $\mathcal{U}_{i_n}.$
\end{defi}

Though Definition \ref{lb manifold def} involves good partition of unity, the operator $\Delta_G$ does not actually depend on the particular choice of a good partition of unity.

Theorem 2.4 in \cite{Strichartz} yields the following results. The first one is of conceptual importance. The second one is used in the proof of Theorem \ref{ctt manifold}.

\begin{thm} Let $(X,G)$ be a compact Riemannian manifold. Laplace-Beltrami operator admits a self-adjoint extension $\Delta_G:W^{2,2}(X)\to L_2(X).$
\end{thm}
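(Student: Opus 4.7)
The plan is to verify the two prerequisites — symmetry and uniform ellipticity — that are needed to feed $\Delta_G$ into the general theory quoted from \cite{Strichartz}, and then to invoke that theory both to obtain self-adjointness and to identify the domain as $W^{2,2}(X)$.

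First I would check that $\Delta_G$, defined a priori only on $C^\infty(X)$, is a well-defined symmetric operator on $L_2(X,\nu_G)$, where $\nu_G$ is the Riemannian volume measure. The key is the local identity: for $u,v\in C^\infty_c(\Omega_i)$, integration by parts (using that $D_k=-i\partial_k$ is formally self-adjoint and that the coefficients $\det(g_i)^{1/2}(g_i^{-1})_{k,l}$ are smooth and symmetric in $k,l$) yields
\[\int_{\Omega_i}(\Delta_{g_i}u)\,\bar v\,\det(g_i)^{1/2}\,dt=\sum_{k,l}\int_{\Omega_i}\det(g_i)^{1/2}(g_i^{-1})_{k,l}(D_lu)\,\overline{D_kv}\,dt,\]
which is manifestly Hermitian in $(u,v)$. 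The partition-of-unity formula in Definition \ref{lb manifold def} is designed precisely so that these local symmetries patch to symmetry of $\Delta_G$ on $C^\infty(X)$; a parallel computation shows that $\Delta_G$ is independent of the particular good partition.

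Next I would note that, in each chart, $\Delta_{g_i}$ is a second order differential operator with smooth coefficients on $\Omega_i$, and its principal symbol $(t,s)\mapsto\langle g_i(t)^{-1}s,s\rangle=q_i(t,s)$ is positive-definite off the zero section by positivity of $g_i$. By compactness of $X$ and the compatibility relation $q_i\circ H_i=q_j\circ H_j$ from \eqref{qi comptibility riemann lemma}, this gives a uniform ellipticity bound on $\Delta_G$, which yields the standard G\aa rding/elliptic regularity estimate $\|u\|_{W^{2,2}(X)}\lesssim\|\Delta_Gu\|_{L_2(X)}+\|u\|_{L_2(X)}$ for $u\in C^\infty(X)$.

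With these two ingredients, the result is exactly the content of Theorem 2.4 in \cite{Strichartz} applied on the complete (in fact compact) Riemannian manifold $(X,G)$: the closure of $(\Delta_G,C^\infty(X))$ in $L_2(X,\nu_G)$ is self-adjoint, and the ellipticity estimate identifies its domain with $W^{2,2}(X)$. I do not anticipate a significant obstacle here; the substantive analytic work is done in \cite{Strichartz}, and the only care required is bookkeeping — verifying coordinate-invariance of the definition of $\Delta_G$ and of $W^{2,2}(X)$ on the compact manifold $X$, which is standard.
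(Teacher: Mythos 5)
Your proposal is correct and follows the same route the paper takes: the paper does not present an argument for this theorem but simply records it as a consequence of Theorem~2.4 in \cite{Strichartz}, and you do the same, while also spelling out the standard verifications (symmetry via local integration by parts, uniform ellipticity on the compact manifold, and elliptic regularity to identify the domain as $W^{2,2}(X)$) that justify applying that result.
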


\begin{thm}\label{lb rd self-adjoint} Let $g:\mathbb{R}^d\to\mathrm{GL}^+(d,\mathbb{R}).$ Suppose that
\begin{enumerate}
\item $g\in C^{\infty}(\mathbb{R}^d,M_d(\mathbb{C}))$ (that is, $g$ is smooth and all derivatives are bounded);
\item $\mathrm{det}(g)\geq c$ for some $0<c\in\mathbb{R};$
\end{enumerate}
The operator $\Delta_g:W^{2,2}(\mathbb{R}^d)\to L_2(\mathbb{R}^d)$ is self-adjoint.
\end{thm}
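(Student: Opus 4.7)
The plan is to view $\Delta_g$ as the Laplace--Beltrami operator of the Riemannian manifold $(\mathbb{R}^d,g)$, invoke Strichartz's essential self-adjointness theorem (Theorem 2.4 of \cite{Strichartz}), and then use global elliptic regularity to identify the domain of the closure as $W^{2,2}(\mathbb{R}^d)$.

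First, I would extract uniform ellipticity. Boundedness of the entries of $g$ together with $\det(g)\geq c>0$ gives, via Cramer's rule, uniform two-sided bounds $c_1 I\leq g(t)\leq c_2 I$ for all $t\in\mathbb{R}^d$, so the coefficients $(g^{-1})_{k,l}$ and $\det(g)^{\pm1/2}$ of $\Delta_g$ lie in $C^\infty_b(\mathbb{R}^d)$, and the Riemannian distance $d_g$ is bi-Lipschitz equivalent to the Euclidean distance on $\mathbb{R}^d$. In particular $(\mathbb{R}^d,g)$ is a complete Riemannian manifold, and the natural weighted spaces $L_2(\mathbb{R}^d,\det(g)^{1/2}dm)$ and the corresponding weighted $W^{2,2}$ coincide, as topological vector spaces, with the unweighted ones. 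The operator $\Delta_g|_{C^\infty_c(\mathbb{R}^d)}$ is symmetric on $L_2(\mathbb{R}^d,\det(g)^{1/2}dm)$ by integration by parts---this is the purpose of the prefactor $\det(g)^{-1/2}$ and the inner $\det(g)^{1/2}$ in Notation~\ref{lb nota}---and Strichartz's theorem, applied to the complete manifold $(\mathbb{R}^d,g)$, then yields essential self-adjointness of this restriction, with the closure equal to the maximal operator.

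Second, I would identify the maximal domain with $W^{2,2}(\mathbb{R}^d)$. Because $\Delta_g$ is a uniformly elliptic second-order operator on $\mathbb{R}^d$ with coefficients in $C^\infty_b$, a standard global G\aa rding-type inequality gives the a priori estimate
\[\|f\|_{W^{2,2}(\mathbb{R}^d)}\lesssim \|f\|_{L_2(\mathbb{R}^d)}+\|\Delta_g f\|_{L_2(\mathbb{R}^d)},\qquad f\in C^\infty_c(\mathbb{R}^d),\]
and the reverse inequality is immediate from the boundedness of the coefficients. Since $C^\infty_c(\mathbb{R}^d)$ is dense in $W^{2,2}(\mathbb{R}^d)$ in the Sobolev norm, hence in the graph norm, the closure $\overline{\Delta_g|_{C^\infty_c}}$ has domain exactly $W^{2,2}(\mathbb{R}^d)$ and agrees with $\Delta_g$ there, completing the proof.

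The hard part will be running the global elliptic estimate cleanly, since G\aa rding's inequality is a priori local and assembling a global constant requires exploiting uniformity of the ellipticity bounds and the $C^\infty_b$-norms of the coefficients across all of $\mathbb{R}^d$. A slick shortcut is to test $\langle \Delta_g f,f\rangle$ in the weighted inner product and integrate by parts to obtain $\sum_k\|D_k f\|_{L_2}^2\lesssim \|f\|_{L_2}^2+\|\Delta_g f\|_{L_2}\|f\|_{L_2}$ directly from uniform ellipticity, then repeat the argument with $D_j f$ in place of $f$ (commuting $D_j$ past $\Delta_g$ produces only lower-order terms with bounded coefficients) to upgrade from $W^{1,2}$ to $W^{2,2}$ without any cutoffs.
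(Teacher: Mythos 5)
The paper supplies no proof of its own here: it attributes both this theorem and the preceding one to Theorem 2.4 in \cite{Strichartz}, so your route via Strichartz plus a global a priori elliptic estimate is the intended one. The substance of your argument is correct, including the key observation that symmetry holds with respect to the weighted inner product on $L_2(\mathbb{R}^d,\det(g)^{1/2}dm)$ (the prefactor $M_{\det(g)^{-1/2}}$ in Notation \ref{lb nota} is exactly what makes the integration by parts work there), and that the uniform ellipticity extracted from the boundedness and the determinant lower bound makes $(\mathbb{R}^d,g)$ complete and forces the weighted and Euclidean Sobolev norms to be equivalent.

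One inaccuracy in the ``slick shortcut'' needs fixing. The commutator $[D_j,\Delta_g]$ is \emph{not} lower order: since $\Delta_g$ has variable coefficients, $[D_j,\Delta_g]$ is again a second-order differential operator, with leading part (up to the density factor) $\sum_{k,l}\bigl(D_j(g^{-1})_{kl}\bigr)D_kD_l$. Consequently, when you test $\Delta_g(D_jf)$ against $D_jf$, the commutator contributes a term of size $\lesssim\|f\|_{W^{2,2}}\,\|D_jf\|_{L_2}$, which involves precisely the seminorm you are trying to control, not a genuinely subprincipal quantity. The estimate still closes, but you must absorb this term into the left-hand side with Young's inequality $ab\leq\varepsilon a^2+C_\varepsilon b^2$, taking $\varepsilon$ small compared to the ellipticity constant; you cannot simply discard it as lower order. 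With that patch, the a priori inequality, the equivalence of the graph and $W^{2,2}$ norms, and the identification of the domain of the closure with $W^{2,2}(\mathbb{R}^d)$ all go through, and the proof is complete.
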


\subsection{Density on a manifold}

Let $\mathfrak{B}$ be the Borel $\sigma$-algebra on the manifold $X.$ We need the notion of density on a manifold available, e.g. in \cite[p.87]{michor}.

\begin{defi}\label{condition on nu} Let $\nu$ be a countably additive measure on $\mathfrak{B}$. We assume that, for every $i\in\mathbb{I},$ the measure $\nu\circ h_i^{-1}$ on $\Omega_i$ is absolutely continuous with respect to the Lebesgue measure on $\Omega_i,$ and its Radon-Nikodym derivative $a_i$ is strictly positive and continuous on $\Omega_i$. 
	
In this case, we say that $\nu$ is a continuous positive density on $X.$
\end{defi}

\section{Invariance of principal symbol under diffeomorphisms}\label{diff invar section}

In this section, we formulate a theorem which provides a partial positive answer to Question \ref{higson question}. This result is stated in two versions: Theorem \ref{algebra invariance special} for diffeomorphisms of $\mathbb{R}^d$ (which a core technical difficulty) and Theorem \ref{algebra invariance theorem} for local diffeomorphisms (the result which would be actually used).

\subsection{Invariance under diffeomorphisms of $\mathbb{R}^d$}

We need the following notations. Recall that $\mathrm{GL}(d,\mathbb{R})$ stands for the group of inverible real $d\times d$ matrices.  

\begin{nota}\label{s2 notation} Let $\Phi:\mathbb{R}^d\to\mathbb{R}^d$ be a diffeomorphism.
\begin{enumerate}
\item Let $J_{\Phi}:\mathbb{R}^d\to\mathrm{GL}(d,\mathbb{R})$ be the Jacobian matrix of $\Phi;$
\item Let unitary operator $U:L_2(\mathbb{R}^d)\to L_2(\mathbb{R}^d)$ be defined by setting
\[(U_{\Phi}\xi)(t)=|\mathrm{det}(J_{\Phi})|^{\frac12}(t)\xi(\Phi(t)),\quad \xi\in L_2(\mathbb{R}^d),\quad t\in \mathbb{R}^d.\]
\item Let $\Theta_{\Phi}:\mathbb{R}^d\times\mathbb{S}^{d-1}\to \mathbb{R}^d\times\mathbb{S}^{d-1}$ be defined by setting
\[\Theta_{\Phi}(t,s)=(\Phi^{-1}(t),O_{J_{\Phi}^{\ast}(\Phi^{-1}(t))}s),\quad t\in \mathbb{R}^d,\quad s\in\mathbb{S}^{d-1},\]
where $J_{\Phi}^{\ast}$ is the adjoint to the Jacobi matrix.

Here, for $A\in \mathrm{GL}(d,\mathbb{R}),$ we set
\[O_As=\frac{As}{|As|},\quad s\in\mathbb{S}^{d-1}.\]
\item Let $\Xi_{\Phi}:\mathbb{R}^d\times\mathbb{R}^d\to\mathbb{R}^d\times\mathbb{R}^d$ be defined by setting
\[\Xi_{\Phi}(t,s)=(\Phi^{-1}(t),J_{\Phi}^{\ast}(\Phi^{-1}(t))s),\quad t,s\in \mathbb{R}^d.\]
\end{enumerate}
\end{nota}

We frequently need the following compatibility lemma.

\begin{lem}\label{theta composition lemma} Let $\Phi_1,\Phi_2:\mathbb{R}^d\to\mathbb{R}^d$ be diffeomorphisms. We have
\[\Theta_{\Phi_1\circ\Phi_2}=\Theta_{\Phi_2}\circ\Theta_{\Phi_1}.\]
\end{lem}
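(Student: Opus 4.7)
The plan is a direct computation on both sides, reducing everything to two basic identities: the chain rule for Jacobians and the multiplicativity of the normalising map $O_A : \mathbb{S}^{d-1} \to \mathbb{S}^{d-1}$.

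First I would record the auxiliary fact that for any $A, B \in \mathrm{GL}(d,\mathbb{R})$ and $s \in \mathbb{S}^{d-1}$,
\[O_{AB}s = O_A(O_B s),\]
which is immediate from $O_A O_B s = \frac{A(Bs/|Bs|)}{|A(Bs/|Bs|)|} = \frac{ABs}{|ABs|}$ after cancelling $|Bs|$ from numerator and denominator. In other words, $A \mapsto O_A$ is a group homomorphism into the self-maps of $\mathbb{S}^{d-1}$.

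Next I would expand the right-hand side. Setting $u = \Phi_1^{-1}(t)$ and $v = O_{J_{\Phi_1}^{\ast}(u)} s$, the definition gives
\[(\Theta_{\Phi_2} \circ \Theta_{\Phi_1})(t,s) = \Theta_{\Phi_2}(u,v) = \bigl(\Phi_2^{-1}(u),\, O_{J_{\Phi_2}^{\ast}(\Phi_2^{-1}(u))} v\bigr),\]
so the first coordinate is $\Phi_2^{-1}(\Phi_1^{-1}(t))$ and the second is $O_{J_{\Phi_2}^{\ast}(\Phi_2^{-1}(\Phi_1^{-1}(t)))} \circ O_{J_{\Phi_1}^{\ast}(\Phi_1^{-1}(t))}\, s$.

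Finally I would expand the left-hand side. Since $(\Phi_1 \circ \Phi_2)^{-1} = \Phi_2^{-1}\circ \Phi_1^{-1}$, the first coordinates agree. For the second coordinate I apply the chain rule
\[J_{\Phi_1 \circ \Phi_2}(x) = J_{\Phi_1}(\Phi_2(x))\cdot J_{\Phi_2}(x),\]
take adjoints to get $J_{\Phi_1\circ\Phi_2}^{\ast}(x) = J_{\Phi_2}^{\ast}(x)\cdot J_{\Phi_1}^{\ast}(\Phi_2(x))$, and evaluate at $x = \Phi_2^{-1}(\Phi_1^{-1}(t))$, so that $\Phi_2(x) = \Phi_1^{-1}(t)$. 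Combining this with the multiplicativity of $A \mapsto O_A$ from the first step yields exactly the expression obtained for $(\Theta_{\Phi_2}\circ\Theta_{\Phi_1})(t,s)$, completing the proof. There is no real obstacle here; the only thing to be careful about is the order reversal (forced by the presence of $\Phi^{-1}$ in the definition of $\Theta_\Phi$) and making sure $O$ is evaluated at the correct base point after applying the chain rule.
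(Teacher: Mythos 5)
Your proof is correct and follows the same route as the paper: both reduce the claim to the multiplicativity $O_{AB}=O_A\circ O_B$ together with the chain rule for Jacobians (taken adjoint and evaluated at $\Phi_2^{-1}(\Phi_1^{-1}(t))$). The paper packages the computation via the auxiliary notation $A^{\Phi}(t)=J_{\Phi}^{\ast}(\Phi^{-1}(t))$, but the content and order of the key steps are the same.
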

\begin{proof} Indeed,
\[\Theta_{\Phi}(t,s)=(\Phi^{-1}(t),O_{A^{\Phi}(t)}s),\]
where
\[A^{\Phi}(t)=J_{\Phi}^{\ast}(\Phi^{-1}(t)).\]
	
We have
\[(\Theta_{\Phi_2}\circ\Theta_{\Phi_1})(t,s)=\Theta_{\Phi_2}(t',s'),\quad (t',s')=(\Phi_1^{-1}(t),O_{A^{\Phi_1}(t)}s).\]
Thus,
\[(\Theta_{\Phi_2}\circ\Theta_{\Phi_1})(t,s)=(\Phi_2^{-1}(t'),O_{A^{\Phi_2}(t')}s')=(\Phi_2^{-1}(\Phi_1^{-1}(t)), O_{A^{\Phi_2}(\Phi_1^{-1}(t))}O_{A^{\Phi_1}(t)}s).\]
Note that
\[(O_{A_1}\circ O_{A_2})s=\frac{A_1(O_{A_2}s)}{|A_1(O_{A_2}s)|}=\frac{\frac{A_1A_2s}{|A_2s|}}{\frac{|A_1A_2s|}{|A_2s|}}=\frac{A_1A_2s}{|A_1A_2s|}=O_{A_1\cdot A_2}s,\quad s\in \mathbb{S}^{d-1}.\]
Since $O_{A_1}\circ O_{A_2}=O_{A_1\cdot A_2},$ it follows that
\[(\Theta_{\Phi_2}\circ\Theta_{\Phi_1})(t,s)=((\Phi_1\circ\Phi_2)^{-1}(t), O_{A^{\Phi_2}(\Phi_1^{-1}(t))\cdot A^{\Phi_1}(t)}s).\]
At the same time,
\[\Theta_{\Phi_1\circ\Phi_2}=((\Phi_1\circ\Phi_2)^{-1}(t), O_{A^{\Phi_1\circ\Phi_2}(t)}s).\]
It suffices to show that
\[A^{\Phi_1\circ\Phi_2}(t)=A^{\Phi_2}(\Phi_1^{-1}(t)) \cdot A^{\Phi_1}(t).\]
The latter equality is written as
\[J_{\Phi_1\circ\Phi_2}^{\ast}((\Phi_1\circ\Phi_2)^{-1}(t))=J_{\Phi_2}^{\ast}(\Phi_2^{-1}(\Phi_1^{-1}(t))) \cdot J_{\Phi_1}^{\ast}(\Phi_1^{-1}(t)).\]
Replacing $t$ with $(\Phi_1\circ\Phi_2)(t),$ we need to verify that
\[J_{\Phi_1\circ\Phi_2}^{\ast}(t)=J_{\Phi_2}^{\ast}(t) \cdot J_{\Phi_1}^{\ast}(\Phi_2(t)).\]
In other words,
\[J_{\Phi_1\circ\Phi_2}(t)=J_{\Phi_1}(\Phi_2(t))\cdot J_{\Phi_2}(t).\]
This is the chain rule property.	
\end{proof}

\begin{cor} $\Theta_{\Phi}:\mathbb{R}^d\times\mathbb{S}^{d-1}\to \mathbb{R}^d\times\mathbb{S}^{d-1}$ is a diffeomorphism.
\end{cor}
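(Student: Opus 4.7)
The plan is to read off the inverse of $\Theta_{\Phi}$ directly from the composition identity of Lemma \ref{theta composition lemma}, and then note that smoothness is visible on the nose from the defining formula. Taking $\Phi_1=\Phi$ and $\Phi_2=\Phi^{-1}$ in the lemma gives
\[\Theta_{\mathrm{id}}=\Theta_{\Phi\circ\Phi^{-1}}=\Theta_{\Phi^{-1}}\circ\Theta_{\Phi},\]
and swapping the roles gives the opposite composition. One quick check that $\Theta_{\mathrm{id}}$ really is the identity on $\mathbb{R}^d\times\mathbb{S}^{d-1}$ (namely $J_{\mathrm{id}}\equiv I$, so $O_{J_{\mathrm{id}}^{\ast}(t)}s=O_Is=s$) finishes the bijection part and furnishes the inverse $\Theta_{\Phi}^{-1}=\Theta_{\Phi^{-1}}$.

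For the smoothness part, I would just inspect the three ingredients assembling $\Theta_{\Phi}(t,s)=(\Phi^{-1}(t),O_{J_{\Phi}^{\ast}(\Phi^{-1}(t))}s)$. The map $t\mapsto\Phi^{-1}(t)$ is smooth because $\Phi$ is a diffeomorphism; the map $t\mapsto J_{\Phi}^{\ast}(t)$ takes values in $\mathrm{GL}(d,\mathbb{R})$ and is smooth because $\Phi$ is; and the normalisation $(A,s)\mapsto As/|As|$ is smooth on $\mathrm{GL}(d,\mathbb{R})\times\mathbb{S}^{d-1}$ because $As\neq 0$ there. Composition of smooth maps yields that $\Theta_{\Phi}$ is smooth, and the same argument applied to $\Phi^{-1}$ shows $\Theta_{\Phi^{-1}}=\Theta_{\Phi}^{-1}$ is smooth.

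There is no real obstacle here: once Lemma \ref{theta composition lemma} is in hand, the corollary reduces to the trivial verification that $\Theta_{\mathrm{id}}=\mathrm{id}$ together with the elementary regularity observation above. The only thing worth writing out carefully is the identity case, since it is what makes the composition formula actually produce a two-sided inverse rather than merely a left or right inverse.
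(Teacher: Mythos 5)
Your proof is correct and takes essentially the same route as the paper: smoothness is read off from the defining formula (smoothness of $\Phi^{-1}$, of $J_{\Phi}^{\ast}$, and of $(A,s)\mapsto As/|As|$ on $\mathrm{GL}(d,\mathbb{R})\times\mathbb{S}^{d-1}$), and the inverse is identified as $\Theta_{\Phi^{-1}}$ via Lemma \ref{theta composition lemma}. Your explicit check that $\Theta_{\mathrm{id}}=\mathrm{id}$ is a small welcome addition that the paper leaves implicit.
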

\begin{proof} Obviously, $J_{\Phi}^{\ast}\circ\Phi^{-1}:\mathbb{R}^d\to \mathrm{GL}(d,\mathbb{R})$ is a smooth mapping. For every smooth mapping $A:\mathbb{R}^d\to\mathrm{GL}(d,\mathbb{R}),$ the mapping
\[(t,s)\to O_{A(t)}s,\quad t\in\mathbb{R}^d,\quad s\in\mathbb{S}^{d-1},\]
is smooth. Thus, $\Theta_{\Phi}$ is smooth.

By Lemma \ref{theta composition lemma}, its inverse is $\Theta_{\Phi^{-1}}$ which is also a smooth mapping.
\end{proof}

We are now ready to state the main result in this subsection.

\begin{defi}\label{comp supp def} We say that $T\in\Pi$ is compactly supported if there exists $\phi\in C^{\infty}_c(\mathbb{R}^d)$ such that $T=T\pi_1(\phi)=\pi_1(\phi)T.$ 
\end{defi}

\begin{thm}\label{algebra invariance special} Let $\Phi:\mathbb{R}^d\to\mathbb{R}^d$ be a diffeomorphism such that $\Phi$ is affine outside of some ball. If $T\in \Pi$ is compactly supported, then $U_{\Phi}^{-1}TU_{\Phi}\in \Pi.$ Furthermore,
\[\mathrm{sym}\Big(U_{\Phi}^{-1}TU_{\Phi}\Big)=\mathrm{sym}(T)\circ\Theta_{\Phi}.\]
If we view symbols as homogeneous functions\footnote{A function $f:\mathbb{R}^d\times\mathbb{S}^{d-1}$ can be uniquely extended to a homogeneous function on $\mathbb{R}^d\times(\mathbb{R}^d\backslash\{0\})$ by setting
\[f(t,s)\stackrel{def}{=}f(t,\frac{s}{|s|}),\quad t,s\in\mathbb{R}^d.\]

If $f$ is homogeneous, then
\[(f\circ\Theta_{\Phi})(t,s)=f(\Phi^{-1}(t),\frac{J_{\Phi}^{\ast}(\Phi^{-1}(t))s}{|J_{\Phi}^{\ast}(\Phi^{-1}(t))s|})=f(\Phi^{-1}(t),J_{\Phi}^{\ast}(\Phi^{-1}(t))s)=(f\circ\Xi_{\Phi})(t,s).\]
} on $\mathbb{R}^d\times(\mathbb{R}^d\backslash\{0\}),$ then
\[\mathrm{sym}\Big(U_{\Phi}^{-1}TU_{\Phi}\Big)=\mathrm{sym}(T)\circ \Xi_{\Phi}.\]
\end{thm}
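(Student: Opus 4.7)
My strategy is to reduce the problem to the classical pseudodifferential calculus on $\mathbb{R}^d$, where the transformation rule for principal symbols under a change of variables is classical. The preliminary move is the easy observation that $U_\Phi^{-1}\pi_1(f)U_\Phi=\pi_1(f\circ\Phi^{-1})$, which is an immediate unwinding of the definition of $U_\Phi$. Combined with the hypothesis $T=\pi_1(\phi)T\pi_1(\phi)$ for some $\phi\in C^\infty_c(\mathbb{R}^d)$, this yields
\[U_\Phi^{-1}TU_\Phi=\pi_1(\phi\circ\Phi^{-1})(U_\Phi^{-1}TU_\Phi)\pi_1(\phi\circ\Phi^{-1}),\]
so we only ever need to track compactly supported operators throughout.

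Next, I approximate the compactly supported $T$ in operator norm by operators of the form $\pi_1(\phi)A\pi_1(\phi)$, where $A$ is a finite sum of finite products of generators $\pi_1(f_i)$ and $\pi_2(g_i)$. Applying Lemma \ref{82 lemma} to each $\pi_2(g_i)$ factor and then iterating Lemma \ref{psi product lemma}, together with \eqref{compactly supported are compact} (compactly supported operators in $\Psi^{-1}(\mathbb{R}^d)$ are compact), shows that every such $\pi_1(\phi)A\pi_1(\phi)$ differs from a compactly supported $\mathrm{Op}(p)\in\Psi^0(\mathbb{R}^d)$ by a compact operator, with principal symbol restricted to $|s|=1$ matching $\mathrm{sym}(\pi_1(\phi)A\pi_1(\phi))$. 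It therefore suffices to prove the theorem for compactly supported $\mathrm{Op}(p)\in\Psi^0(\mathbb{R}^d)$.

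The crux is then the following claim: for such an $\mathrm{Op}(p)$, the conjugate $U_\Phi^{-1}\mathrm{Op}(p)U_\Phi$ is again in $\Psi^0(\mathbb{R}^d)$, and its principal symbol equals $p\circ\Xi_\Phi$. This is carried out by direct substitution $u=\Phi^{-1}(t)$ in the oscillatory-integral representation \eqref{psdo def}, Taylor expansion of the phase about the new base point, and absorption of the resulting quadratic fluctuation via stationary phase. The assumption that $\Phi$ is affine outside some ball is indispensable: it guarantees that $\Phi$, $\Phi^{-1}$, $J_\Phi$, and $J_\Phi^{-1}$ together with all of their derivatives are globally bounded on $\mathbb{R}^d$, which is exactly what is needed to keep the transformed symbol in the class defined by \eqref{psim p condition}. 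Theorem \ref{322 taylor} is the underlying reason the Sobolev (and hence the pseudodifferential) structure survives such $\Phi$. Once this is established, applying Lemmas \ref{82 lemma} and \ref{81 lemma} in reverse identifies $U_\Phi^{-1}\mathrm{Op}(p)U_\Phi$ modulo $\mathcal{K}(L_2(\mathbb{R}^d))$ with $T_{q'}\in\Pi$ for $q'=\mathrm{sym}(\mathrm{Op}(p))\circ\Theta_\Phi$. Taking the norm limit through the approximation in Step 2 and using that $\mathrm{sym}$ is norm-continuous then yields both $U_\Phi^{-1}TU_\Phi\in\Pi$ and the symbol identity. The equivalence $\mathrm{sym}(T)\circ\Theta_\Phi=\mathrm{sym}(T)\circ\Xi_\Phi$, when the symbol is viewed as a homogeneous function, is exactly the footnote calculation.

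The main obstacle is the third step: the global transformation rule for pseudodifferential operators on all of $\mathbb{R}^d$. Local transformation of symbols under a diffeomorphism is textbook, but extending this to a statement about elements of $\Psi^0(\mathbb{R}^d)$ with the uniform estimates \eqref{psim p condition} requires simultaneous control on the remainder terms in the stationary-phase expansion at every base point. This is precisely where the hypothesis "$\Phi$ is affine outside a ball" enters, and where the most careful bookkeeping is required.
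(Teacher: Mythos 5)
Your proposal takes a genuinely different route from the paper. The paper never invokes the global change-of-variables theorem for pseudodifferential operators on $\mathbb{R}^d$. Instead, it proceeds by conjugating only the \emph{differential} operators $D_k$ and $\Delta$ with $U_\Phi$, which can be computed explicitly by the chain rule (Lemma \ref{uphi conjugation explicit lemma}), and then handles the non-local factor $(1-\Delta)^{-1/2}$ via the functional-calculus result for complex powers of self-adjoint differential operators (Theorem \ref{complex power thm}). The assembly step from single factors $M_\phi D_k/\sqrt{-\Delta}$ up to general $T\in\Pi$ is done algebraically via Lemma \ref{ber request lemma1}, monomial factorization (Lemma \ref{final lemma}), and a density argument. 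The benefit is that the only pseudodifferential input needed is for differential operators and their complex powers, which is considerably tamer than the full $\Psi^0(\mathbb{R}^d)$ calculus.

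Your strategy is the more classical one: approximate $T$ by compactly supported $\mathrm{Op}(p)\in\Psi^0(\mathbb{R}^d)$, apply a global coordinate-change formula for $\Psi^0(\mathbb{R}^d)$, then pass back to $\Pi$ via Lemmas \ref{81 lemma} and \ref{82 lemma}. In outline this is sound, and you correctly identify $p\circ\Xi_\Phi$ as the transformed symbol (which matches what the paper computes for $D_k$ in Lemma \ref{uphi conjugation explicit lemma}). However, the step you acknowledge as "the main obstacle" --- the invariance of the uniform H\"{o}rmander class $\Psi^0(\mathbb{R}^d)$ (in the sense of \eqref{psim p condition}) under diffeomorphisms of $\mathbb{R}^d$ that are affine outside a ball, with stationary-phase control uniform over the noncompact base --- is precisely the theorem the paper's entire architecture is designed to avoid. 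It is not in the standard references (which treat either local coordinate charts, properly supported operators, or closed manifolds), and invoking Theorem \ref{322 taylor} about Sobolev invariance is not a substitute for it; that result controls the $L_2$-theory but says nothing directly about preservation of the symbol estimates \eqref{psim p condition} under conjugation. So as written this is a genuine gap: your proof outsources its hardest step to a result you would still need to prove essentially from scratch, and its proof would not be materially shorter than the paper's chosen path. A secondary, smaller omission: the generating functions $f\in\mathcal{A}_1$, $g\in\mathcal{A}_2$ are merely continuous, so before invoking Lemma \ref{82 lemma} or any $\Psi^0$ calculus you must first smooth them; this needs to be mentioned, even though it is routine.
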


We prove Theorem \ref{algebra invariance special} in Section \ref{ais section}.

There are two reasons for us to require that $\Phi$ is affine outside of some ball. The first reason is that having equivariant behavior of the principal symbol under such diffeomorphisms is sufficient in the proof of Theorem \ref{algebra invariance theorem} below. The second reason is that in the proof of Theorem \ref{algebra invariance special} we conjugate the Laplacian with $U_{\Phi}.$ Hence, it is of crucial importance that $U_{\Phi}$ preserves the domain of Laplacian. Recall that the domain of Laplacian is a Sobolev spaces $W^{2,2}(\mathbb{R}^d)$ and, by Theorem \ref{322 taylor}, $U_{\Phi}$ leaves the domain of Laplacian invariant.

\subsection{Invariance under local diffeomorphisms}

One may ask how does the algebra and the principal symbol mapping (locally) behaves under the change of coordinates. We need the following notations.

\begin{nota}\label{s3 first nota} Let $H$ be a Hilbert space and let $p\in B(H)$ be a projection.
\begin{enumerate}
\item If $T\in B(H)$ is such that $T=pTp,$ then we define the operator $\mathrm{Rest}_p(T)\in B(pH)$ by setting $\mathrm{Rest}_p(H)=T|_{pH}.$
\item If $T\in B(pH),$ then we define $\mathrm{Ext}_p(T)\in B(H)$ by setting $\mathrm{Ext}_p(T)=T\circ p.$
\end{enumerate}
\end{nota}

\begin{nota}\label{s3 second nota} Let $\Omega\subset\mathbb{R}^d.$ If $T\in B(L_2(\mathbb{R}^d))$ is such that $T=M_{\chi_{\Omega}}TM_{\chi_{\Omega}},$ then $\mathrm{Rest}_{\Omega}(T)\in B(L_2(\Omega))$ is a shorthand for $\mathrm{Rest}_{M_{\chi_{\Omega}}}(T).$ If $T\in B(L_2(\Omega)),$ then $\mathrm{Ext}_{\Omega}(T)\in B(L_2(\mathbb{R}^d))$ is a shorthand for $\mathrm{Ext}_{M_{\chi_{\Omega}}}(T).$
\end{nota}

\begin{nota}\label{s3 notation} Let $\Omega,\Omega'\subset\mathbb{R}^d$ be open sets and let $\Phi:\Omega\to\Omega'$ be a diffeomorphism.
\begin{enumerate}
\item Let $J_{\Phi}:\Omega\to \mathrm{GL}(d,\mathbb{R})$ be the Jacobian matrix of $\Phi;$
\item Let unitary operator $U_{\Phi}:L_2(\Omega')\to L_2(\Omega)$ be defined by setting
\[(U_{\Phi}\xi)(t)=|\mathrm{det}(J_{\Phi})|^{\frac12}(t)\xi(\Phi(t)),\quad \xi\in L_2(\Omega'),\quad t\in \Omega;\]
\item Let $\Theta_{\Phi}:\Omega'\times\mathbb{S}^{d-1}\to \Omega\times\mathbb{S}^{d-1}$ be defined by setting
\[\Theta_{\Phi}(t,s)=(\Phi^{-1}(t),O_{J_{\Phi}^{\ast}(\Phi^{-1}(t))}s),\quad t\in \Omega',\quad s\in\mathbb{S}^{d-1}.\]
\item Let $\Xi_{\Phi}:\Omega'\times\mathbb{R}^d\to\Omega\times\mathbb{R}^d$ be defined by setting
\[\Xi_{\Phi}(t,s)=(\Phi^{-1}(t),J_{\Phi}^{\ast}(\Phi^{-1}(t))s),\quad t\in\Omega',\quad s\in \mathbb{R}^d.\]
\end{enumerate}
\end{nota}

\begin{lem}\label{theta domain composition lemma} Let $\Phi_1:\Omega\to\Omega'$ and $\Phi_2:\Omega''\to\Omega$ be diffeomorphisms. We have
\[\Theta_{\Phi_1\circ\Phi_2}=\Theta_{\Phi_2}\circ\Theta_{\Phi_1}.\]
\end{lem}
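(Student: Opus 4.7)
The plan is to mimic the proof of Lemma \ref{theta composition lemma} almost verbatim, the only novelty being bookkeeping about domains. First I would verify that both sides of the proposed identity are well-defined maps $\Omega'\times\mathbb{S}^{d-1}\to\Omega''\times\mathbb{S}^{d-1}$. Since $\Phi_1\circ\Phi_2:\Omega''\to\Omega'$ is a diffeomorphism between open subsets of $\mathbb{R}^d$, Notation \ref{s3 notation} assigns to it a $\Theta_{\Phi_1\circ\Phi_2}$ with precisely that domain and codomain. On the other hand $\Theta_{\Phi_1}:\Omega'\times\mathbb{S}^{d-1}\to\Omega\times\mathbb{S}^{d-1}$ composes correctly with $\Theta_{\Phi_2}:\Omega\times\mathbb{S}^{d-1}\to\Omega''\times\mathbb{S}^{d-1}$, since $\Phi_2:\Omega''\to\Omega$ and $\Phi_1:\Omega\to\Omega'$.

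Next I would carry out the pointwise computation exactly as in Lemma \ref{theta composition lemma}. Fix $(t,s)\in\Omega'\times\mathbb{S}^{d-1}$ and abbreviate $A^{\Phi}(t)=J_\Phi^{\ast}(\Phi^{-1}(t))$ on the appropriate open set. Unwinding the definitions gives
\[(\Theta_{\Phi_2}\circ\Theta_{\Phi_1})(t,s)=\bigl(\Phi_2^{-1}(\Phi_1^{-1}(t)),\ O_{A^{\Phi_2}(\Phi_1^{-1}(t))}\circ O_{A^{\Phi_1}(t)}s\bigr).\]
Using the pointwise identity $O_{A_1}\circ O_{A_2}=O_{A_1\cdot A_2}$ established inside the proof of Lemma \ref{theta composition lemma}, the second coordinate collapses to $O_{A^{\Phi_2}(\Phi_1^{-1}(t))\cdot A^{\Phi_1}(t)}s$. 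The task then reduces to verifying
\[J_{\Phi_1\circ\Phi_2}^{\ast}(t)=J_{\Phi_2}^{\ast}(t)\cdot J_{\Phi_1}^{\ast}(\Phi_2(t)),\quad t\in\Omega'',\]
which is the transpose of the chain rule $J_{\Phi_1\circ\Phi_2}(t)=J_{\Phi_1}(\Phi_2(t))\cdot J_{\Phi_2}(t)$, valid pointwise on $\Omega''$.

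There is no genuine obstacle here: the whole argument is pointwise, so restricting each diffeomorphism to an open subset of $\mathbb{R}^d$ changes nothing essential. The only reason to state the lemma separately from Lemma \ref{theta composition lemma}, rather than as an immediate corollary, is the need to track which open set plays the role of domain for each of $\Phi_1$, $\Phi_2$, and their composition, and to confirm that the compositions $\Theta_{\Phi_2}\circ\Theta_{\Phi_1}$ and $\Phi_2^{-1}\circ\Phi_1^{-1}$ are actually defined on the correct sets --- which is exactly what the first paragraph of the plan does.
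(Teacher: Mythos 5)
Your proposal is correct and takes exactly the approach the paper does: the paper's proof is the one-liner ``The proof is identical to that of Lemma \ref{theta composition lemma}.'' Your writeup spells out the same pointwise computation and adds the domain/codomain bookkeeping that the paper leaves implicit, which is a reasonable bit of diligence but not a new idea.
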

\begin{proof} The proof is identical to that of Lemma \ref{theta composition lemma}.
\end{proof}

\begin{cor} $\Theta_{\Phi}:\Omega'\times\mathbb{S}^{d-1}\to \Omega\times\mathbb{S}^{d-1}$ is a diffeomorphism.
\end{cor}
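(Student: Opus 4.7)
The plan is to mirror the proof of the $\mathbb{R}^d$-version of this corollary (the one that follows Lemma \ref{theta composition lemma}), merely tracking the domains. There is no new mathematical content; all the ingredients already appear in the earlier corollary, and Lemma \ref{theta domain composition lemma} is the local analogue of Lemma \ref{theta composition lemma} that we need to close the loop.

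First I would verify smoothness of $\Theta_{\Phi}:\Omega'\times\mathbb{S}^{d-1}\to\Omega\times\mathbb{S}^{d-1}$. Since $\Phi:\Omega\to\Omega'$ is a diffeomorphism, $\Phi^{-1}:\Omega'\to\Omega$ is smooth and the Jacobian $J_{\Phi}:\Omega\to\mathrm{GL}(d,\mathbb{R})$ is smooth, so the composition $A:=J_{\Phi}^{\ast}\circ\Phi^{-1}:\Omega'\to\mathrm{GL}(d,\mathbb{R})$ is smooth. For any smooth $A:\Omega'\to\mathrm{GL}(d,\mathbb{R})$, the map $(t,s)\mapsto O_{A(t)}s=A(t)s/|A(t)s|$ is well-defined (since $A(t)$ is invertible and $s\neq 0$) and smooth on $\Omega'\times\mathbb{S}^{d-1}$. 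Combining these two components gives smoothness of $\Theta_{\Phi}$.

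Next I would produce the inverse. Applying Lemma \ref{theta domain composition lemma} with $\Phi_1=\Phi$ and $\Phi_2=\Phi^{-1}$ (so that $\Phi_1\circ\Phi_2=\mathrm{id}_{\Omega'}$ and the domain conventions match) gives $\Theta_{\mathrm{id}_{\Omega'}}=\Theta_{\Phi^{-1}}\circ\Theta_{\Phi}$, and from the definition of $\Theta$ applied to the identity map one reads off $\Theta_{\mathrm{id}_{\Omega'}}=\mathrm{id}_{\Omega'\times\mathbb{S}^{d-1}}$. Swapping the roles of $\Phi$ and $\Phi^{-1}$ gives the other composition, so $\Theta_{\Phi^{-1}}$ is a two-sided inverse for $\Theta_{\Phi}$. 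The first step, applied to $\Phi^{-1}$ in place of $\Phi$, shows $\Theta_{\Phi^{-1}}$ is also smooth, completing the proof that $\Theta_{\Phi}$ is a diffeomorphism.

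There is no real obstacle here: the only point to be slightly careful about is that the domains $\Omega,\Omega'$ are arbitrary open subsets of $\mathbb{R}^d$ rather than all of $\mathbb{R}^d$, but since every operation used (composition of $\Phi^{-1}$ with $J_{\Phi}^{\ast}$, and normalisation $s\mapsto A(t)s/|A(t)s|$) is a pointwise local construction, the $\mathbb{R}^d$-argument transports without change. In particular, the proof of Lemma \ref{theta domain composition lemma} (which is stated to be identical to that of Lemma \ref{theta composition lemma}) is the only ingredient that needed checking, and the surrounding text has already indicated it goes through verbatim.
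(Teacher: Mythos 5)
Your proof is correct and follows essentially the same route as the paper: smoothness of each component, then Lemma \ref{theta domain composition lemma} to exhibit $\Theta_{\Phi^{-1}}$ as the (smooth) inverse. You spell out the verification that $\Theta_{\mathrm{id}}=\mathrm{id}$ and that the two compositions collapse, which the paper leaves implicit, but there is no substantive difference in method.
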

\begin{proof} Obviously, $J_{\Phi}^{\ast}\circ\Phi^{-1}:\Omega'\to \mathrm{GL}(d,\mathbb{R})$ is a smooth mapping. For every smooth mapping $A:\Omega\to\mathrm{GL}(d,\mathbb{R}),$ the mapping
\[(t,s)\to O_{A(t)}s,\quad t\in\Omega,\quad s\in\mathbb{S}^{d-1},\]
is smooth. Thus, $\Theta_{\Phi}$ is smooth. By Lemma \ref{theta domain composition lemma}, its inverse is $\Theta_{\Phi^{-1}}$ which is also a smooth mapping.
\end{proof}

\begin{thm}\label{algebra invariance theorem} Let $\Omega,\Omega'\subset\mathbb{R}^d$ be open sets and let $\Phi:\Omega\to\Omega '$ be a diffeomorphism. If $T\in\Pi$ is compactly supported in $\Omega,$ then
\[\mathrm{Ext}_{\Omega'}\Big(U_{\Phi}^{-1}\cdot \mathrm {Rest}_{\Omega}(T)\cdot U_{\Phi}\Big)\in\Pi.\]
Furthermore,
\[\mathrm{sym}\Big(\mathrm{Ext}_{\Omega'}\Big(U_{\Phi}^{-1}\cdot \mathrm{Rest}_{\Omega}(T)\cdot U_{\Phi}\Big)\Big)=\mathrm{sym}(T)\circ\Theta_{\Phi}.\]
If we view symbols as homogeneous functions on $\mathbb{R}^d\times(\mathbb{R}^d\backslash\{0\}),$ then
\[\mathrm{sym}\Big(\mathrm{Ext}_{\Omega'}\Big(U_{\Phi}^{-1}\cdot \mathrm{Rest}_{\Omega}(T)\cdot U_{\Phi}\Big)\Big)=\mathrm{sym}(T)\circ \Xi_{\Phi}.\]
\end{thm}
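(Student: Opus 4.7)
My strategy is to reduce Theorem \ref{algebra invariance theorem} to Theorem \ref{algebra invariance special} by extending $\Phi$ to a global diffeomorphism of $\mathbb{R}^d$ that is affine outside a ball. Let $\phi \in C^\infty_c(\Omega)$ be a witness of compact support, so $T = M_\phi T = T M_\phi$, and set $K := \mathrm{supp}(\phi)$.

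First, I decompose $T$ via a partition of unity. Choose a finite open cover of $K$ by balls $B_j$ whose closures lie in $\Omega$, together with a subordinate smooth partition of unity $\{\phi_j\}$ on $K$. Setting $\phi_j' := \phi \phi_j \in C^\infty_c(\Omega)$, we have $M_\phi = \sum_j M_{\phi_j'}$ and hence
\[T = \sum_{j,k} M_{\phi_j'}\, T\, M_{\phi_k'}.\]
Each summand $T_{j,k} := M_{\phi_j'} T M_{\phi_k'}$ lies in $\Pi$ and is compactly supported in $B_j \cup B_k$, and by linearity of both sides of the claimed identity it suffices to treat each $T_{j,k}$ separately. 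I refine the cover so that each $\overline{B_j \cup B_k}$ has small diameter compared to the scale on which $\Phi$ is $C^1$-close to its affine linearization $A_{j,k}$ at a chosen reference point.

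For each pair $(j,k)$, I construct a global diffeomorphism $\tilde{\Phi}_{j,k}: \mathbb{R}^d \to \mathbb{R}^d$ by interpolating, via a bump function $\eta_{j,k}$, between $\Phi$ on a neighborhood of $\overline{B_j \cup B_k}$ and $A_{j,k}$ outside a larger ball: $\tilde{\Phi}_{j,k} := \eta_{j,k} \Phi + (1 - \eta_{j,k}) A_{j,k}$. Since the cover is fine, $\|\Phi - A_{j,k}\|_{C^1}$ is small on the interpolation region, so the Jacobian of $\tilde{\Phi}_{j,k}$ remains invertible everywhere; combined with properness (coming from the affine part at infinity, which makes a proper local diffeomorphism of the simply connected space $\mathbb{R}^d$ a global one), this yields a global diffeomorphism affine outside a ball, satisfying $\tilde{\Phi}_{j,k} = \Phi$ on a neighborhood of $\mathrm{supp}(\phi_j') \cup \mathrm{supp}(\phi_k')$. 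A direct pointwise computation using $T_{j,k} = M_{\phi_j'} T_{j,k} M_{\phi_k'}$ and the definitions in Notations \ref{s2 notation}, \ref{s3 notation} then gives the operator identity
\[U_{\tilde{\Phi}_{j,k}}^{-1}\, T_{j,k}\, U_{\tilde{\Phi}_{j,k}} = \mathrm{Ext}_{\Omega'}\!\left(U_\Phi^{-1}\, \mathrm{Rest}_\Omega(T_{j,k})\, U_\Phi\right).\]

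Theorem \ref{algebra invariance special} applied to $\tilde{\Phi}_{j,k}$ shows this operator lies in $\Pi$ with symbol $\mathrm{sym}(T_{j,k}) \circ \Theta_{\tilde{\Phi}_{j,k}}$. Since $\mathrm{sym}(T_{j,k})$ has compact support in $\Omega \times \mathbb{S}^{d-1}$ and $\Theta_{\tilde{\Phi}_{j,k}} = \Theta_\Phi$ on the preimage of that support (where $\tilde{\Phi}_{j,k}$ coincides with $\Phi$), the symbol equals $\mathrm{sym}(T_{j,k}) \circ \Theta_\Phi$, extended by zero from $\Omega' \times \mathbb{S}^{d-1}$ to $\mathbb{R}^d \times \mathbb{S}^{d-1}$; this extension is genuinely continuous because the support is compactly contained in $\Omega' \times \mathbb{S}^{d-1}$. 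Summing over $j,k$ yields the full statement, and the homogeneous formulation with $\Xi_\Phi$ follows by the same identification explained in the footnote to Theorem \ref{algebra invariance special}. The main obstacle is the construction of $\tilde{\Phi}_{j,k}$ as a bona fide global diffeomorphism, which is exactly what dictates the partition-of-unity refinement; once this is in hand, the operator identity and the symbol computation reduce to routine bookkeeping via the chain rule.
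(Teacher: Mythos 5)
Your overall strategy — extending $\Phi$ to a global diffeomorphism affine outside a ball so that Theorem~\ref{algebra invariance special} applies, with a partition of unity to localise — is exactly the paper's strategy, and your operator identity for a diffeomorphism agreeing with $\Phi$ on the support is exactly Lemma~\ref{passing from open set to rd}. However, there is a genuine gap in how you decompose $T$.

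You write $T=\sum_{j,k}M_{\phi_j'}TM_{\phi_k'}$ and then claim you can ``refine the cover so that each $\overline{B_j\cup B_k}$ has small diameter.'' This is impossible: no matter how fine the cover, there exist pairs $(j,k)$ with $B_j$ and $B_k$ near opposite ends of $K$, so $\overline{B_j\cup B_k}$ has diameter comparable to $\mathrm{diam}(K)$. For such a pair there is no single affine map $A_{j,k}$ to which $\Phi$ is $C^1$-close on a neighbourhood of $\overline{B_j\cup B_k}$, so the Jacobian of your interpolant $\eta_{j,k}\Phi+(1-\eta_{j,k})A_{j,k}$ can degenerate and $\tilde\Phi_{j,k}$ need not be a diffeomorphism. (A separate, milder, point: Lemma~\ref{passing from open set to rd} is stated for an operator supported in a \emph{ball}; your $T_{j,k}$ is supported in the non-convex set $B_j\cup B_k$, so you would at least need to observe that the lemma's proof works verbatim for any set on which $\tilde\Phi_{j,k}=\Phi$.)

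This can be patched: when $B_j\cap B_k=\varnothing,$ $\phi_j'\phi_k'=0,$ so $T_{j,k}=M_{\phi_j'}TM_{\phi_k'}=M_{\phi_j'}[T,M_{\phi_k'}]$ is compact by Lemma~\ref{commutator is compact}, and both sides of the claimed identity for $T_{j,k}$ collapse to the compact/zero case via Lemma~\ref{compact operators lemma}; when $B_j\cap B_k\neq\varnothing$ a uniformly fine cover does make $\mathrm{diam}(B_j\cup B_k)$ small, and your construction of $\tilde\Phi_{j,k}$ goes through. But your proposal contains neither the case split nor the compactness observation for the disjoint cross-terms, so as written the ``refine the cover'' step fails. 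The paper avoids the issue altogether with a cleaner single-index decomposition: taking a partition $\sum_n\phi_n^2=1$ on $K$ subordinate to small balls, it writes $T=T_0+\sum_nM_{\phi_n}TM_{\phi_n}$ where $T_0=\sum_mM_{\phi_m}[M_{\phi_m},T]$ collects \emph{all} cross-terms into one compact operator, and each remaining $M_{\phi_n}TM_{\phi_n}$ is supported in a single ball to which Palais's local extension (Lemma~\ref{palais lemma}, which is essentially your $\tilde\Phi$ construction, but only ever needed on a small ball) and Lemma~\ref{passing from open set to rd} apply directly.
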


Theorem \ref{algebra invariance theorem} is proved in Section \ref{ait section} as a corollary of Theorem \ref{algebra invariance special}.

\section{Conjugation of differential operators with $U_{\Phi}$}

In this section, we examine the operators $U_{\Phi}^{-1}D_kU_{\Phi}:W^{1,2}(\mathbb{R}^d)\to L_2(\mathbb{R}^d)$ and $U_{\Phi}^{-1}\Delta U_{\Phi}:W^{2,2}(\mathbb{R}^d)\to L_2(\mathbb{R}^d)$ and show that they may be viewed as differential operators.

\begin{lem}\label{vphi bdd lemma} Let $\Phi:\mathbb{R}^d\to\mathbb{R}^d$ be a diffeomorphism such that $\Phi$ is affine outside of some ball. Mapping $V_{\Phi}$ by setting $V_{\Phi}:\xi\to\xi\circ\Phi$ is bounded on $L_2(\mathbb{R}^d)$ and so is $V_{\Phi}^{-1}.$
\end{lem}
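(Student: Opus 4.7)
The plan is to use the change-of-variables formula directly. Writing
\[
\|V_{\Phi}\xi\|_2^2 = \int_{\mathbb{R}^d} |\xi(\Phi(t))|^2\, dt,
\]
I would substitute $s=\Phi(t)$, which gives $\|V_{\Phi}\xi\|_2^2 = \int_{\mathbb{R}^d} |\xi(s)|^2\, |\mathrm{det}(J_{\Phi^{-1}}(s))|\, ds$. So boundedness of $V_{\Phi}$ reduces to a uniform bound on $|\mathrm{det}(J_{\Phi^{-1}})|$ on $\mathbb{R}^d$.

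To establish this bound, I would use the hypothesis that $\Phi$ is affine outside some ball $B(0,R)$. Outside $B(0,R)$, $\Phi$ coincides with an affine map $t\mapsto At+b$ with $A\in\mathrm{GL}(d,\mathbb{R})$, so $J_\Phi$ is the constant matrix $A$ there, and consequently $J_{\Phi^{-1}}=A^{-1}$ is constant on $\Phi(\mathbb{R}^d\setminus B(0,R))=\mathbb{R}^d\setminus \Phi(B(0,R))$. Since $\Phi$ is a homeomorphism and $\overline{B(0,R)}$ is compact, $\Phi(\overline{B(0,R)})$ is a compact set on which the continuous function $|\mathrm{det}(J_{\Phi^{-1}})|$ is bounded. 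Combining the two regions yields a finite uniform bound $C$ on $|\mathrm{det}(J_{\Phi^{-1}})|$, whence $\|V_\Phi\xi\|_2\leq C^{1/2}\|\xi\|_2$.

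For the inverse, I would simply note that $V_{\Phi}^{-1}=V_{\Phi^{-1}}$ and verify that $\Phi^{-1}$ also satisfies the hypothesis. Indeed, on the complement of the compact set $\Phi(\overline{B(0,R)})$, the map $\Phi^{-1}$ coincides with the affine map $s\mapsto A^{-1}(s-b)$; hence $\Phi^{-1}$ is affine outside any ball containing $\Phi(\overline{B(0,R)})$. Applying the argument of the previous paragraph with $\Phi$ replaced by $\Phi^{-1}$ then yields boundedness of $V_{\Phi^{-1}}$.

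I do not anticipate a genuine obstacle here; the only subtlety is observing that the image of the ball $B(0,R)$ under $\Phi$ is compact (so that $\Phi^{-1}$ is affine outside a \emph{bounded} set), which is immediate from the continuity of $\Phi$.
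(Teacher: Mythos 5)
Your proof is correct, and it is precisely the elementary ``verified by hands'' route that the paper's one-line proof parenthetically mentions as an alternative. The paper's primary move is to cite Theorem~\ref{322 taylor} (Sobolev-space invariance under diffeomorphisms affine outside a ball) with $m=0$, whereas you give the self-contained change-of-variables calculation: a uniform bound on $|\det J_{\Phi^{-1}}|$, obtained by splitting $\mathbb{R}^d$ into the compact set $\Phi(\overline{B(0,R)})$ (continuity) and its complement (where $J_{\Phi^{-1}}$ is constant). For $V_\Phi^{-1}$ you could slightly shorten the argument by noting that, by the same split, $|\det J_\Phi|$ is also uniformly bounded, which directly gives $\|V_{\Phi^{-1}}\xi\|_2^2=\int |\xi|^2\,|\det J_\Phi|$ bounded; your route via verifying that $\Phi^{-1}$ itself is affine outside a ball is also fine. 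Both approaches are sound; yours has the advantage of requiring no reference to the Sobolev-space machinery for the $L_2$ case.
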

\begin{proof} This is a special case of Theorem \ref{322 taylor} (or it can be verified by hands).
\end{proof}

\begin{lem}\label{pi1u lemma} Let $\Phi:\mathbb{R}^d\to\mathbb{R}^d$ be a diffeomorphism such that $\Phi$ is affine outside of some ball. We have
\[U_{\Phi}^{-1}\pi_1(f)U_{\Phi}=V_{\Phi}^{-1}\pi_1(f)V_{\Phi}=\pi_1(f\circ\Phi^{-1}).\]
\end{lem}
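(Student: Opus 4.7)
The plan is to reduce the identity to a one-line pointwise calculation once the multiplicative structure of $U_\Phi$ is isolated. Observe that $U_\Phi$ factors as $U_\Phi = M_{|\det(J_\Phi)|^{1/2}} V_\Phi$, where $V_\Phi$ is the bounded operator of Lemma \ref{vphi bdd lemma}. Since $M_{|\det(J_\Phi)|^{1/2}}$ is a multiplication operator, it commutes with $\pi_1(f)=M_f$. Conjugation by a commuting factor is trivial, so
\[
U_\Phi^{-1}\pi_1(f)U_\Phi = V_\Phi^{-1} M_{|\det(J_\Phi)|^{-1/2}} M_f M_{|\det(J_\Phi)|^{1/2}} V_\Phi = V_\Phi^{-1}\pi_1(f)V_\Phi,
\]
giving the first equality.

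For the second equality, I would carry out a direct computation on $\xi\in L_2(\mathbb{R}^d)$. Since $V_\Phi^{-1}\eta=\eta\circ\Phi^{-1}$, for every $t\in\mathbb{R}^d$,
\[
(V_\Phi^{-1}M_fV_\Phi \xi)(t) = (M_fV_\Phi\xi)(\Phi^{-1}(t)) = f(\Phi^{-1}(t))\,\xi(\Phi(\Phi^{-1}(t))) = (f\circ\Phi^{-1})(t)\,\xi(t),
\]
so $V_\Phi^{-1}M_fV_\Phi = M_{f\circ\Phi^{-1}} = \pi_1(f\circ\Phi^{-1})$.

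There is no real obstacle here. The only minor point worth noting is that the formula $f\circ\Phi^{-1}$ lies in $L_\infty(\mathbb{R}^d)$ with $\|f\circ\Phi^{-1}\|_\infty=\|f\|_\infty$ (as $\Phi^{-1}$ is a diffeomorphism and hence preserves null sets), so the right-hand side is well-defined, and the assumption that $\Phi$ is affine outside a ball is used only implicitly via Lemma \ref{vphi bdd lemma} to ensure that $V_\Phi$ and $V_\Phi^{-1}$ are bounded on $L_2(\mathbb{R}^d)$.
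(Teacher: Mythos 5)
Your proof is correct and follows essentially the same route as the paper's: factor $U_\Phi = M_{|\det(J_\Phi)|^{1/2}}V_\Phi$, cancel the commuting multiplication operator, and compute $V_\Phi^{-1}M_fV_\Phi = M_{f\circ\Phi^{-1}}$ pointwise. The only cosmetic difference is that you spell out the pointwise verification of the second equality, which the paper leaves implicit.
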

\begin{proof} By definition of $U_{\Phi}$ (in Notation \ref{s2 notation}) and of $V_{\Phi}$ (in Lemma \ref{vphi bdd lemma}), we have
\[U_{\Phi}=M_{|\mathrm{det}(J_{\Phi})|^{\frac12}}V_{\Phi}.\]
It is immediate that
\[U_{\Phi}^{-1}M_fU_{\Phi}=V_{\Phi}^{-1}M_{|\mathrm{det}(J_{\Phi})|^{-\frac12}}M_fM_{|\mathrm{det}(J_{\Phi})|^{\frac12}}V_{\Phi}.\]
Since
\[M_{|\mathrm{det}(J_{\Phi})|^{-\frac12}}M_fM_{|\mathrm{det}(J_{\Phi})|^{\frac12}}=M_f,\]
it follows that
\[U_{\Phi}^{-1}M_fU_{\Phi}=V_{\Phi}^{-1}M_fV_{\Phi}=M_{f\circ\Phi^{-1}}.\]
The assertion of the lemma now follows from the Definition \ref{pis def}.
\end{proof}

\begin{nota}\label{phi conjugate notation} Let $\Phi:\mathbb{R}^d\to\mathbb{R}^d$ be a diffeomorphism. Denote
\[(a_{k,l}^{\Phi})_{k,l=1}^d=J_{\Phi}^{\ast}\circ\Phi^{-1},\quad (b_{k,l}^{\Phi})_{k,l=1}^d=|J_{\Phi}^{\ast}\circ\Phi^{-1}|^2.\]
\end{nota}

\begin{lem}\label{musor} Functions
\[a_k^{\Phi}=\Big(|\mathrm{det}(J_{\Phi})|^{-\frac12}\cdot D_k(|\mathrm{det}(J_{\Phi})|^{\frac12})\Big)\circ\Phi^{-1},\quad 1\leq k\leq d,\]
\[b_l^{\Phi}=2\sum_{k=1}^d\Re(\bar{a}_k^{\Phi}\cdot a_{k,l}^{\Phi}),\quad 1\leq l\leq d,\]
\[b^{\Phi}=\sum_{k=1}^d\sum_{l=1}^dD_l(\bar{a}_{k,l}^{\Phi}\cdot a_k^{\Phi})+\sum_{k=1}^d|a_k^{\Phi}|^2,\]
belong to $C^{\infty}_c(\mathbb{R}^d).$
\end{lem}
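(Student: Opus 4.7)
The plan is to verify separately that each of the three functions is smooth and that each is compactly supported, both facts resting on the (implicit, context-dictated) hypothesis that $\Phi$ is affine outside of some ball $B \subset \mathbb{R}^d$.

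For smoothness, I would first note that since $\Phi$ is a diffeomorphism of the connected space $\mathbb{R}^d$, $\det(J_\Phi)$ has constant sign and is nowhere zero; hence $|\det(J_\Phi)|^{1/2}$ is smooth, and so is $|\det(J_\Phi)|^{-1/2} \cdot D_k(|\det(J_\Phi)|^{1/2})$. Composing with the smooth map $\Phi^{-1}$ gives smoothness of $a_k^\Phi$. The entries $a_{k,l}^\Phi$ are smooth by smoothness of $\Phi^{-1}$ and $J_\Phi^*$, whence $b_l^\Phi$ and $b^\Phi$ are smooth as polynomial/differential combinations.

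The main point is compact support. Here I would use the hypothesis that $\Phi$ is affine outside of $B$: on $\mathbb{R}^d \setminus B$, $J_\Phi$ is a constant matrix, so $|\det(J_\Phi)|^{1/2}$ is a constant, and therefore $D_k(|\det(J_\Phi)|^{1/2})$ vanishes identically outside $B$. Thus the function inside the parentheses defining $a_k^\Phi$ is supported in $B$, and pulling back by $\Phi^{-1}$ gives $\mathrm{supp}(a_k^\Phi) \subset \Phi(B)$, which is bounded since $\Phi$ is continuous on the bounded set $\overline{B}$. Therefore $a_k^\Phi \in C_c^\infty(\mathbb{R}^d)$.

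From here, compact support of $b_l^\Phi$ and $b^\Phi$ is a routine consequence: $b_l^\Phi = 2\sum_k \Re(\bar{a}_k^\Phi a_{k,l}^\Phi)$ is a product of a compactly supported smooth function with a bounded smooth function, hence compactly supported in $\Phi(B)$; the first summand of $b^\Phi$ is the derivative $D_l$ of $\bar{a}_{k,l}^\Phi \cdot a_k^\Phi$, again compactly supported in $\Phi(B)$ by the same reasoning (derivatives preserve support); and $|a_k^\Phi|^2$ is clearly supported in $\Phi(B)$. The only place where any subtlety arises at all is the verification that $\Phi(B)$ is bounded, which I expect to be the least trivial (but still routine) step, since it uses only that affine maps send bounded sets to bounded sets and that $\Phi$ agrees with such an affine map outside $B$.
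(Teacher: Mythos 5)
Your proof is correct and follows essentially the same approach as the paper: smoothness comes from $\Phi$ being a diffeomorphism, and compact support comes from $J_\Phi$ being constant outside a ball so that $D_k(|\det(J_\Phi)|^{1/2})$ vanishes there. You supply a few details the paper leaves implicit—in particular the fact that composing with $\Phi^{-1}$ carries the support to $\Phi(B)$, which is bounded—whereas the paper simply asserts in one line that each function vanishes outside some ball; both amount to the same argument.
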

\begin{proof} Since $\Phi$ is a diffeomorphism, it follows that all those functions are smooth. Since $\Phi$ is affine outside of some ball, it follows that $J_{\Phi}$ is constant outside of some ball. Thus, $D_k(|\mathrm{det}(J_{\Phi})|^{\frac12})=0$ outside of some ball. Using the definition of $a_k^{\Phi},$ we now see that it vanishes outside of some ball. Using the definition of $b_l^{\Phi}$ and $b^{\Phi},$ we now see that it vanishes outside of some ball.
\end{proof}

\begin{lem}\label{vphi conjugation explicit lemma} Let $\Phi:\mathbb{R}^d\to\mathbb{R}^d$ be a diffeomorphism such that $\Phi$ is affine outside of some ball. We have
\[V_{\Phi}^{-1}D_kV_{\Phi}=\sum_{l=1}^dM_{a_{k,l}^{\Phi}}D_l,\quad 1\leq k\leq d,\]
Here, equalities are understood as equalities of differential operators acting from $W^{1,2}(\mathbb{R}^d)$ to $L_2(\mathbb{R}^d).$
\end{lem}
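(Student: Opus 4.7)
The plan is to verify the claimed identity first on the dense subspace $C^{\infty}_c(\mathbb{R}^d) \subset W^{1,2}(\mathbb{R}^d)$ by a direct chain-rule computation, and then extend it to all of $W^{1,2}(\mathbb{R}^d)$ by a boundedness-plus-density argument.

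For the pointwise computation, let $\xi \in C^{\infty}_c(\mathbb{R}^d)$. Since $\Phi$ is a diffeomorphism, $V_{\Phi}\xi = \xi\circ\Phi$ lies in $C^{\infty}_c(\mathbb{R}^d)$, and the classical chain rule gives
\[
D_k(V_{\Phi}\xi)(t) \;=\; \frac{1}{i}\sum_{l=1}^d (\partial_l\xi)(\Phi(t))\,\partial_k\Phi_l(t) \;=\; \sum_{l=1}^d (\partial_k\Phi_l)(t)\cdot (D_l\xi)(\Phi(t)).
\]
Since $V_{\Phi}^{-1}\eta = \eta\circ\Phi^{-1}$ on smooth functions and $(J_{\Phi}^{\ast})_{k,l} = \partial_k\Phi_l$, applying $V_{\Phi}^{-1}$ and substituting the definition of $a^{\Phi}_{k,l} = (J_{\Phi}^{\ast}\circ\Phi^{-1})_{k,l}$ from Notation \ref{phi conjugate notation} yields
\[
(V_{\Phi}^{-1}D_k V_{\Phi}\xi)(t) \;=\; \sum_{l=1}^d a^{\Phi}_{k,l}(t)\,(D_l\xi)(t),
\]
which is exactly the claimed identity for $\xi \in C^{\infty}_c(\mathbb{R}^d)$.

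To promote this to an identity of operators $W^{1,2}(\mathbb{R}^d) \to L_2(\mathbb{R}^d)$, the next step is to check that both sides are bounded mappings $W^{1,2}(\mathbb{R}^d)\to L_2(\mathbb{R}^d)$. For the left-hand side, Theorem \ref{322 taylor} gives that $V_{\Phi}$ is bounded on $W^{1,2}(\mathbb{R}^d)$ (since $\Phi$ is affine outside a ball); then $D_k$ maps $W^{1,2}(\mathbb{R}^d)$ into $L_2(\mathbb{R}^d)$ boundedly by definition, and $V_{\Phi}^{-1}$ is bounded on $L_2(\mathbb{R}^d)$ by Lemma \ref{vphi bdd lemma}. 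For the right-hand side, $a^{\Phi}_{k,l}$ is smooth and constant outside a ball (because $J_{\Phi}$ is constant outside a ball), hence bounded on $\mathbb{R}^d$, so $M_{a^{\Phi}_{k,l}}$ is bounded on $L_2(\mathbb{R}^d)$; combining with $D_l:W^{1,2}(\mathbb{R}^d)\to L_2(\mathbb{R}^d)$ gives boundedness. Finally, since $C^{\infty}_c(\mathbb{R}^d)$ is dense in $W^{1,2}(\mathbb{R}^d)$ and the two bounded operators coincide on this dense subspace, they agree on all of $W^{1,2}(\mathbb{R}^d)$.

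The only mildly delicate step is the boundedness of $V_{\Phi}$ on $W^{1,2}(\mathbb{R}^d)$, but this is a direct invocation of Theorem \ref{322 taylor}; the rest is routine chain rule and density. So the proof is essentially just a careful bookkeeping exercise, with no real obstacle.
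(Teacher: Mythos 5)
Your proof is correct and follows essentially the same route as the paper: a chain-rule computation followed by conjugating the resulting multiplication operator back through $V_{\Phi}$. The only difference is cosmetic — the paper applies the chain rule directly to $\xi\in W^{1,2}(\mathbb{R}^d)$ and then uses Lemma \ref{pi1u lemma} to rewrite $V_{\Phi}^{-1}M_{iD_k\Phi_l}V_{\Phi}$ as $M_{(iD_k\Phi_l)\circ\Phi^{-1}}$, whereas you verify the identity first on $C^{\infty}_c(\mathbb{R}^d)$ and extend by a boundedness-and-density argument; both are valid, and yours is marginally more careful about the functional-analytic details.
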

\begin{proof} By the chain rule, we have 
\[D_kV_{\Phi}\xi=D_k(\xi\circ\Phi)=\sum_{l=1}^d((D_l\xi)\circ\Phi)\cdot iD_k\Phi_l,\quad \xi\in W^{1,2}(\mathbb{R}^d).\]
Using the notations for $V_{\Phi}$ (in Lemma \ref{vphi bdd lemma}) and for the multiplication operator, we can rewrite this formula as follows: 
\[D_kV_{\Phi}=\sum_{l=1}^dM_{iD_k\Phi_l}V_{\Phi}D_l.\]
Thus, 
\[V_{\Phi}^{-1}D_kV_{\Phi}=\sum_{l=1}^dV_{\Phi}^{-1}M_{iD_k\Phi_l}V_{\Phi}\cdot D_l\stackrel{L.\ref{pi1u lemma}}{=}\sum_{l=1}^dM_{i(D_k\Phi_l)\circ\Phi^{-1}}D_l=\sum_{l=1}^dM_{a_{k,l}^{\Phi}}D_l,\]
where the last equality follows from the definition of $a_{k,l}^{\Phi}$ (in Notation \ref{phi conjugate notation}) and the fact that $J_{\Phi}=(iD_l\Phi_k)_{k,l=1}^d.$
\end{proof}

\begin{lem}\label{sobolev uphi invariance lemma} Let $\Phi:\mathbb{R}^d\to\mathbb{R}^d$ be a diffeomorphism such that $\Phi$ is affine outside of some ball. We have $U_{\Phi}:W^{m,2}(\mathbb{R}^d)\to W^{m,2}(\mathbb{R}^d),$ $m\in\mathbb{Z}_+.$
\end{lem}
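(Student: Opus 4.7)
The plan is to decompose $U_\Phi$ into two factors and show that each preserves $W^{m,2}(\mathbb{R}^d)$. As already observed in the proof of Lemma \ref{pi1u lemma}, we have the factorization
\[
U_{\Phi} = M_{|\det(J_{\Phi})|^{1/2}} V_{\Phi},
\]
where $V_{\Phi}\xi = \xi \circ \Phi$. Since $m = 0$ is trivial ($U_{\Phi}$ is unitary on $L_2$), we may assume $m \geq 1$.

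The first factor to analyse is $V_{\Phi}$. Its invariance on $W^{m,2}(\mathbb{R}^d)$ is precisely the content of Theorem \ref{322 taylor}, which states that Sobolev spaces are invariant under diffeomorphisms affine outside a ball. So no further work is required there.

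For the multiplication factor, the key point is that under our hypothesis $\Phi$ is affine outside some ball $B$, hence $J_\Phi$ is a constant matrix on $\mathbb{R}^d \setminus B$. Consequently $|\det(J_{\Phi})|^{1/2}$ is smooth on $\mathbb{R}^d$ and constant outside $B$, so all of its derivatives of order $\geq 1$ are compactly supported, and in particular bounded. Multiplication by such a function preserves $W^{m,2}(\mathbb{R}^d)$ by the standard Leibniz expansion
\[
D^{\alpha}(f\xi) = \sum_{\beta \leq \alpha} \binom{\alpha}{\beta} (D^{\beta} f)(D^{\alpha-\beta}\xi),
\]
each term of which lies in $L_2(\mathbb{R}^d)$ because $D^{\beta} f \in L_\infty$ and $D^{\alpha-\beta}\xi \in L_2$ for $|\alpha| \leq m$.

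Combining these two observations gives $U_{\Phi}(W^{m,2}(\mathbb{R}^d)) \subset W^{m,2}(\mathbb{R}^d)$. There is essentially no obstacle: the substantive input is Theorem \ref{322 taylor}, and the "affine outside a ball" hypothesis is used a second time, more concretely, to guarantee that the multiplier $|\det(J_{\Phi})|^{1/2}$ has bounded derivatives of all orders.
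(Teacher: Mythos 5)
Your proof is correct and takes essentially the same route as the paper: decompose $U_{\Phi}=M_{h_{\Phi}}V_{\Phi}$, invoke Theorem \ref{322 taylor} for $V_{\Phi}$, and observe that $h_{\Phi}=|\det(J_{\Phi})|^{1/2}$ is smooth and constant outside a ball so that $M_{h_{\Phi}}$ preserves $W^{m,2}(\mathbb{R}^d)$. The only difference is that you spell out the Leibniz-rule justification for the multiplier step, which the paper leaves implicit.
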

\begin{proof} By definition of $U_{\Phi}$ (in Notation \ref{s2 notation}) and of $V_{\Phi}$ (in Lemma \ref{vphi bdd lemma}), we have
\[U_{\Phi}=M_{h_{\Phi}}V_{\Phi},\quad h_{\Phi}=|\mathrm{det}(J_{\Phi})|^{\frac12}.\]
By Theorem \ref{322 taylor}, $V_{\Phi}:W^{m,2}(\mathbb{R}^d)\to W^{m,2}(\mathbb{R}^d).$ Since $\Phi$ is a diffeomorphism and since $\Phi$ is affine outside of some ball, it follows that $h_{\Phi}$ is a smooth function on $\mathbb{R}^d$ which is constant outside of some ball. It follows that $M_{h_{\Phi}}:W^{m,2}(\mathbb{R}^d)\to W^{m,2}(\mathbb{R}^d).$ A combination of those mappings yields the assertion.
\end{proof}

By Lemma \ref{sobolev uphi invariance lemma}, we have
\[W^{1,2}(\mathbb{R}^d)\stackrel{U_{\Phi}}{\rightarrow}W^{1,2}(\mathbb{R}^d)\stackrel{D_k}{\rightarrow}L_2(\mathbb{R}^d)\stackrel{U_{\Phi}^{-1}}{\rightarrow}L_2(\mathbb{R}^d),\]
\[W^{2,2}(\mathbb{R}^d)\stackrel{U_{\Phi}}{\rightarrow}W^{2,2}(\mathbb{R}^d)\stackrel{\Delta}{\rightarrow}L_2(\mathbb{R}^d)\stackrel{U_{\Phi}^{-1}}{\rightarrow}L_2(\mathbb{R}^d).\]
Hence, we may view $U_{\Phi}^{-1}D_kU_{\Phi}$ (respectively, $U_{\Phi}^{-1}\Delta U_{\Phi}$) as operators from $W^{1,2}(\mathbb{R}^d)$ (respectively, from $W^{2,2}(\mathbb{R}^d)$) to $L_2(\mathbb{R}^d).$

\begin{lem}\label{uphi conjugation explicit lemma} Let $\Phi:\mathbb{R}^d\to\mathbb{R}^d$ be a diffeomorphism such that $\Phi$ is affine outside of some ball. We have
\begin{enumerate}
\item\label{ucela} 
\[U_{\Phi}^{-1}D_kU_{\Phi}=\sum_{l=1}^dM_{a_{k,l}^{\Phi}}D_l+M_{a_k^{\Phi}},\]
\item\label{ucelb}
\[-U_{\Phi}^{-1}\Delta U_{\Phi}=\sum_{l_1,l_2=1}^dD_{l_1}M_{b_{l_1,l_2}^{\Phi}}D_{l_2}+\sum_{l=1}^dM_{b_l^{\Phi}}D_l+M_{b^{\Phi}}.\]
\end{enumerate}
Here, equalities are understood as equalities of linear operators acting from $W^{1,2}(\mathbb{R}^d)$ (respectively, from $W^{2,2}(\mathbb{R}^d)$) to $L_2(\mathbb{R}^d).$
\end{lem}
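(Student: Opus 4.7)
The plan is to handle the two parts in sequence: part (\ref{ucela}) follows from the factorisation $U_\Phi = M_{h_\Phi} V_\Phi$ (from the proof of Lemma \ref{sobolev uphi invariance lemma}) with $h_\Phi = |\det(J_\Phi)|^{1/2}$, and part (\ref{ucelb}) reduces to part (\ref{ucela}) via the identity $-\Delta = \sum_k D_k^\ast D_k$ together with unitarity of $U_\Phi$.

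For part (\ref{ucela}), I would begin with
\[U_\Phi^{-1} D_k U_\Phi = V_\Phi^{-1} M_{h_\Phi^{-1}} D_k M_{h_\Phi} V_\Phi,\]
and then apply the commutator identity \eqref{dk mf commutator eq} to obtain $M_{h_\Phi^{-1}} D_k M_{h_\Phi} = D_k + M_{h_\Phi^{-1} D_k h_\Phi}$. Conjugating by $V_\Phi$, I invoke Lemma \ref{vphi conjugation explicit lemma} for the first summand and Lemma \ref{pi1u lemma} for the second. The identity $(h_\Phi^{-1} D_k h_\Phi) \circ \Phi^{-1} = a_k^\Phi$, read directly from Lemma \ref{musor}, then yields the claimed formula.

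For part (\ref{ucelb}), Lemma \ref{sobolev uphi invariance lemma} guarantees that $U_\Phi$ preserves $W^{2,2}(\mathbb{R}^d)$, and since $-\Delta = \sum_k D_k^\ast D_k$ on $W^{2,2}(\mathbb{R}^d)$,
\[-U_\Phi^{-1}\Delta U_\Phi = \sum_{k=1}^d P_k^\ast P_k, \qquad P_k := U_\Phi^{-1} D_k U_\Phi = \sum_{l=1}^d M_{a^\Phi_{k,l}} D_l + M_{a_k^\Phi}.\]
Since $J_\Phi$ is real-valued, every $a^\Phi_{k,l}$ is real, so $P_k^\ast = \sum_l D_l M_{a^\Phi_{k,l}} + M_{\bar a_k^\Phi}$. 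I would expand $P_k^\ast P_k$ into its four constituent products, sum over $k$, and then use \eqref{dk mf commutator eq} to push the leftmost $D_l$ past the multiplication operator in the first-order terms. The second-order contribution is immediately $\sum_{l_1,l_2} D_{l_1} M_{b^\Phi_{l_1,l_2}} D_{l_2}$ once one reads off $b^\Phi_{l_1,l_2} = \sum_k a^\Phi_{k,l_1} a^\Phi_{k,l_2}$ from the definition $(b^\Phi_{k,l}) = |J_\Phi^\ast\circ\Phi^{-1}|^2$. The first-order coefficient becomes $\sum_k(a^\Phi_{k,l}a_k^\Phi + \bar a_k^\Phi a^\Phi_{k,l}) = 2\sum_k \Re(\bar a_k^\Phi a^\Phi_{k,l}) = b_l^\Phi$ (where the middle equality uses reality of $a^\Phi_{k,l}$), and the residual zero-order terms combine with $\sum_k |a_k^\Phi|^2$ to give exactly $\sum_{k,l} D_l(\bar a^\Phi_{k,l} a_k^\Phi) + \sum_k |a_k^\Phi|^2 = b^\Phi$.

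The main obstacle is purely bookkeeping: tracking signs and complex conjugates as commutators are expanded, and correctly identifying the lower-order coefficients with the specific expressions for $b_l^\Phi$ and $b^\Phi$ given in Lemma \ref{musor}. No new analytic input is needed, since Lemma \ref{sobolev uphi invariance lemma} ensures that all operator products appearing in the calculation are well-defined on $W^{2,2}(\mathbb{R}^d)$.
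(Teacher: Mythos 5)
Your proposal is correct and follows essentially the same route as the paper: part (\ref{ucela}) via the factorisation $U_\Phi=M_{h_\Phi}V_\Phi$ combined with Lemmas \ref{vphi conjugation explicit lemma} and \ref{pi1u lemma}, and part (\ref{ucelb}) by expanding $\sum_k P_k^\ast P_k$ where $P_k=U_\Phi^{-1}D_kU_\Phi$ (the paper phrases this as taking the adjoint of \eqref{ucela} and multiplying, which is the same computation since $P_k$ is self-adjoint). The only superficial difference is that you invoke reality of $a_{k,l}^\Phi$ to drop conjugation bars while the paper carries them through formally, but the expansion, the use of \eqref{dk mf commutator eq} to reorder $D_l$ and the multiplication operators, and the identification with $b_{l_1,l_2}^\Phi$, $b_l^\Phi$, $b^\Phi$ are identical.
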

\begin{proof} Repeating beginning of the proof of Lemma \ref{sobolev uphi invariance lemma}, we write
\[U_{\Phi}=M_{h_{\Phi}}V_{\Phi},\quad h_{\Phi}=|\mathrm{det}(J_{\Phi})|^{\frac12}.\]
It is immediate that
\begin{equation}\label{ucel eq0}
U_{\Phi}^{-1}D_kU_{\Phi}=V_{\Phi}^{-1}M_{h_{\Phi}^{-1}}D_kM_{h_{\Phi}}V_{\Phi}.
\end{equation}
Clearly,
\begin{equation}\label{ucel eq1}
M_{h_{\Phi}^{-1}}D_kM_{h_{\Phi}}=D_k+M_{h_{\Phi}^{-1}}\cdot [D_k,M_{h_{\Phi}}]=D_k+M_{h_{\Phi}^{-1}\cdot D_kh_{\Phi}}.
\end{equation}
Combining \eqref{ucel eq0} and \eqref{ucel eq1}, we obtain
\begin{equation}\label{ucel eq2}
U_{\Phi}^{-1}D_kU_{\Phi}=V_{\Phi}^{-1}D_kV_{\Phi}+V_{\Phi}^{-1}M_{h_{\Phi}^{-1}\cdot D_kh_{\Phi}}V_{\Phi}.
\end{equation}
It follows from Lemma \ref{pi1u lemma} and the definition of $a_k^{\Phi}$ (in Lemma \ref{musor}) that
\begin{equation}\label{ucel eq3}
V_{\Phi}^{-1}M_{h_{\Phi}^{-1}\cdot D_kh_{\Phi}}V_{\Phi}=M_{a_k^{\Phi}}.
\end{equation}
The equality \eqref{ucela} follows by combining Lemma \ref{vphi conjugation explicit lemma}, \eqref{ucel eq2} and \eqref{ucel eq3}.

Taking the adjoint of \eqref{ucela}, we write
\[U_{\Phi}^{-1}D_kU_{\Phi}=\sum_{l=1}^dD_lM_{\bar{a}_{k,l}^{\Phi}}+M_{\bar{a}_k^{\Phi}}.\]
Thus,
\[U_{\Phi}^{-1}D_k^2U_{\Phi}=\big(\sum_{l=1}^dD_lM_{\bar{a}_{k,l}^{\Phi}}+M_{\bar{a}_k^{\Phi}})\cdot\big(\sum_{l=1}^dM_{a_{k,l}^{\Phi}}D_l+M_{a_k^{\Phi}}\big)=\]
\[=\sum_{l_1,l_2=1}^dD_{l_1}M_{\bar{a}_{k,l_1}^{\Phi}}M_{a_{k,l_2}^{\Phi}}D_{l_2}+\sum_{l_1=1}^dD_{l_1}M_{\bar{a}_{k,l_1}^{\Phi}}M_{a_k^{\Phi}}+\]
\[+\sum_{l_2=1}^dM_{\bar{a}_k^{\Phi}}M_{a_{k,l_2}^{\Phi}}D_{l_2}+M_{|a_k^{\Phi}|^2}.\]
Clearly,
\[\sum_{l_1=1}^dD_{l_1}M_{\bar{a}_{k,l_1}^{\Phi}}M_{a_k^{\Phi}}+\sum_{l_2=1}^dM_{\bar{a}_k^{\Phi}}M_{a_{k,l_2}^{\Phi}}D_{l_2}=\sum_{l=1}^dD_lM_{\bar{a}_{k,l}^{\Phi}\cdot a_k^{\Phi}}+\sum_{l=1}^dM_{\bar{a}_k^{\Phi}\cdot a_{k,l}^{\Phi}}D_l=\]
\[=\sum_{l=1}^dM_{\bar{a}_{k,l}^{\Phi}\cdot a_k^{\Phi}}D_l+\sum_{l=1}^dM_{\bar{a}_k^{\Phi}\cdot a_{k,l}^{\Phi}}D_l+\sum_{l=1}^d[D_l,M_{\bar{a}_{k,l}^{\Phi}\cdot a_k^{\Phi}}]=\]
\[\stackrel{\eqref{dk mf commutator eq}}{=}2\sum_{l=1}^dM_{\Re(\bar{a}_k^{\Phi}\cdot a_{k,l}^{\Phi})}D_l+\sum_{l=1}^dM_{D_l(\bar{a}_{k,l}^{\Phi}\cdot a_k^{\Phi})}.\]
Thus,
\[-U_{\Phi}^{-1}\Delta U_{\Phi}=\sum_{k=1}^dU_{\Phi}^{-1}D_k^2U_{\Phi}=\sum_{k=1}^d\sum_{l_1,l_2=1}^dD_{l_1}M_{\bar{a}_{k,l_1}^{\Phi}\cdot a_{k,l_2}^{\Phi}}D_{l_2}+\]
\[+2\sum_{k=1}^d\sum_{l=1}^dM_{\Re(\bar{a}_k^{\Phi}\cdot a_{k,l}^{\Phi})}D_l+\sum_{k=1}^d\sum_{l=1}^dM_{D_l(\bar{a}_{k,l}^{\Phi}\cdot a_k^{\Phi})}+\sum_{k=1}^dM_{|a_k^{\Phi}|^2}.\]
By the definition on $b_l^{\Phi}$ and $b^{\Phi}$ (in Lemma \ref{musor}), we have
\[-U_{\Phi}^{-1}\Delta U_{\Phi}=\sum_{k=1}^d\sum_{l_1,l_2=1}^dD_{l_1}M_{\bar{a}_{k,l_1}^{\Phi}\cdot a_{k,l_2}^{\Phi}}D_{l_2}+\sum_{l=1}^dM_{b_l^{\Phi}}D_l+M_{b^{\Phi}}.\]
Consider now the highest order term. Recalling Notation \ref{phi conjugate notation}, we write
\[\sum_{k=1}^d\bar{a}_{k,l_1}^{\Phi}a_{k,l_2}^{\Phi}=\big(|(a_{k,l}^{\Phi})_{k,l=1}^d|^2\big)_{l_1,l_2}=\big(|J_{\Phi}^{\ast}\circ\Phi^{-1}|^2\big)_{l_1,l_2}=b_{l_1,l_2}^{\Phi}.\]
This delivers \eqref{ucelb}.
\end{proof}

\section{Proof of Theorem \ref{algebra invariance special}}\label{ais section}

The proof of Theorem \ref{algebra invariance special} is somewhat technical and is presented below in the series of lemmas. The strategy is as follows:
\begin{enumerate}
\item to show that every compact operator on $L_2(\mathbb{R}^d)$ belongs to $\Pi;$
\item to show that the conjugation of $M_{\phi}\frac{D_k}{\sqrt{1-\Delta}},$ $\phi\in C^{\infty}_c(\mathbb{R}^d),$ by $U_{\Phi}$ belongs to $\Pi$ modulo compact operators;
\item to conclude that the conjugation of $M_{\phi}\frac{D_k}{\sqrt{-\Delta}},$ $\phi\in C^{\infty}_c(\mathbb{R}^d),$ by $U_{\Phi}$ belongs to $\Pi;$ 
\item to conclude the argument in Theorem \ref{algebra invariance special};
\end{enumerate}

The following assertion is well-known (see e.g. Corollary 4.1.10 in \cite{Dixmier}).
 
\begin{lem}\label{from dixmier lemma} Let $\mathcal{A}$ be a $C^{\ast}$-algebra. Let $\pi:\mathcal{A}\to B(H)$ be an irreducible representation. One of the following mutually exclusive options holds:
\begin{enumerate}
\item $\pi(\mathcal{A})$ does not contain any compact operator (except for $0$);
\item $\pi(\mathcal{A})$ contains every compact operator. 
\end{enumerate}
\end{lem}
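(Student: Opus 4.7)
The plan is to prove the dichotomy together with its mutual exclusivity. Mutual exclusivity is immediate: for $H\neq 0,$ rank-one projections give nonzero compact operators, so (1) and (2) cannot both hold. The real content is to show that if $\pi(\mathcal{A})$ contains any nonzero compact operator then it contains all of $\mathcal{K}(H).$ A preliminary fact to use throughout is that the image $\pi(\mathcal{A})$ of a $\ast$-homomorphism of $C^{\ast}$-algebras is itself norm-closed, hence continuous functional calculus applied to self-adjoint elements of $\pi(\mathcal{A})$ stays inside $\pi(\mathcal{A}).$

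First, I would manufacture a nonzero finite-rank projection $P\in\pi(\mathcal{A}).$ Starting from any $0\neq T\in\pi(\mathcal{A})\cap\mathcal{K}(H),$ the operator $T^{\ast}T$ is nonzero, positive, and compact; by the spectral theorem it has a top eigenvalue $\lambda>0$ with finite-dimensional eigenspace. Taking a continuous $f:[0,\infty)\to[0,1]$ which equals $1$ at $\lambda$ and vanishes on a neighbourhood of the rest of the spectrum of $T^{\ast}T,$ the element $P=f(T^{\ast}T)$ is the corresponding spectral projection, and it lies in $\pi(\mathcal{A})$ by continuous functional calculus.

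Next, I would upgrade $P$ to all rank-one operators via Kadison's transitivity theorem, which applies because $\pi$ is irreducible. Fix an orthonormal basis $e_1,\dots,e_n$ of $PH.$ For any vectors $u,v\in H,$ transitivity produces $a,b\in\mathcal{A}$ with $\pi(a)e_1=u,$ $\pi(b)e_1=v,$ and $\pi(a)e_i=\pi(b)e_i=0$ for $i\geq 2.$ Then $\pi(a)P$ and $\pi(b)P$ vanish on $(PH)^{\perp}$ and on $PH$ agree with the rank-one operators $|u\rangle\langle e_1|$ and $|v\rangle\langle e_1|$ respectively; hence
\[
\pi(a)P\cdot P\pi(b)^{\ast}=|u\rangle\langle e_1|\cdot|e_1\rangle\langle v|=|u\rangle\langle v|\in\pi(\mathcal{A}).
\]
Since the rank-one operators span a norm-dense subspace of $\mathcal{K}(H)$ and $\pi(\mathcal{A})$ is norm-closed, we conclude $\mathcal{K}(H)\subseteq\pi(\mathcal{A}),$ delivering option (2).

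The only nontrivial input is Kadison's transitivity theorem, which is the standard engine behind Dixmier's Corollary 4.1.10 cited in the excerpt; this is the step to flag as the main substantive tool, though no genuine obstacle is anticipated since the argument is classical and the sketch above is essentially its skeleton.
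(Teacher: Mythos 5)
The paper does not prove this lemma; it cites it as Corollary~4.1.10 of Dixmier's book. Your argument (extract a nonzero finite-rank spectral projection from $T^{\ast}T$ via continuous functional calculus inside the norm-closed image $\pi(\mathcal{A}),$ then bootstrap to all rank-one operators via Kadison transitivity and conclude by density) is precisely the standard proof behind that cited corollary and is correct.
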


We now apply Lemma \ref{from dixmier lemma} to the $C^{\ast}$-algebra $\mathcal{A}=\Pi$ and infer that $\Pi$ contains the ideal $\mathcal{K}(L_2(\mathbb{R}^d)).$

\begin{lem}\label{compact operators lemma} The algebra $\mathcal{K}(L_2(\mathbb{R}^d))$ is contained in $\Pi$ and coincides with the kernel of the homomorphism $\mathrm{sym}.$
\end{lem}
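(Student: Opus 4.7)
The plan is to apply Lemma \ref{from dixmier lemma} to the defining representation $\Pi\hookrightarrow B(L_2(\mathbb{R}^d))$, which requires verifying (a) irreducibility and (b) the presence of at least one non-zero compact operator in $\Pi$. For (a), one computes the commutant: since $\pi_1(C_0(\mathbb{R}^d))$ generates the maximal abelian von Neumann algebra of multiplication operators, any $T$ commuting with $\pi_1(\mathcal{A}_1)$ must be of the form $M_h$ for some $h\in L_\infty(\mathbb{R}^d)$. The additional commutation of $M_h$ with every $\pi_2(g)$, translated to the Fourier side, says that the convolution operator $C_{\hat h}$ commutes with every multiplication operator by a continuous function constant on rays $s\mapsto s/|s|$; a direct Fourier-analytic argument forces $\hat h$ to be supported at the origin, and boundedness of $h$ then forces $h$ to be constant. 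Hence the commutant is scalar and $\Pi$ acts irreducibly on $L_2(\mathbb{R}^d)$.

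For (b) I would take $f_1,f_2\in C^\infty_c(\mathbb{R}^d)$ with \emph{disjoint} supports and a non-constant $g\in C^\infty(\mathbb{S}^{d-1})$, and form
\[ T:=\pi_1(f_1)\pi_2(g)\pi_1(f_2)\in\Pi. \]
The integral kernel of $T$ is $f_1(t)\,k_g(t-t')\,f_2(t')$, where $k_g=\mathcal{F}^{-1}(g(\cdot/|\cdot|))$ is smooth away from the origin (with a homogeneous singularity there). Disjointness of $\operatorname{supp}(f_1)$ and $\operatorname{supp}(f_2)$ keeps $(t,t')$ away from the diagonal, so this kernel is continuous and compactly supported, in particular lies in $L_2(\mathbb{R}^d\times\mathbb{R}^d)$. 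Thus $T$ is Hilbert-Schmidt and plainly non-zero for generic choices, and Lemma \ref{from dixmier lemma} delivers $\mathcal{K}(L_2(\mathbb{R}^d))\subset\Pi$.

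For the inclusion $\mathcal{K}\subset\ker(\mathrm{sym})$, each $(t_0,\omega_0)\in\mathbb{R}^d\times\mathbb{S}^{d-1}$ yields a character $\chi_{t_0,\omega_0}$ of $\mathcal{A}_1\otimes_{\min}\mathcal{A}_2$, and the composition $\chi_{t_0,\omega_0}\circ\mathrm{sym}$ is a one-dimensional $\ast$-representation of $\Pi$. Since $\mathcal{K}$ admits no non-zero one-dimensional $\ast$-representation, each such character must annihilate $\mathcal{K}$; and since point evaluations separate points of $\mathcal{A}_1\otimes_{\min}\mathcal{A}_2\simeq C(\mathbb{R}^d\times\mathbb{S}^{d-1})$, we conclude $\mathrm{sym}(\mathcal{K})=0$. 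The reverse inclusion $\ker(\mathrm{sym})\subset\mathcal{K}$ is the main obstacle: passing to the Calkin quotient one obtains a surjective $\ast$-homomorphism $\widetilde{\mathrm{sym}}\colon\Pi/\mathcal{K}\to\mathcal{A}_1\otimes_{\min}\mathcal{A}_2$, and the content of the claim is that $\widetilde{\mathrm{sym}}$ is an isometry, equivalently $\|T\|_{\mathrm{ess}}=\|\mathrm{sym}(T)\|_\infty$ for all $T\in\Pi$. I would draw on the construction of $\mathrm{sym}$ in \cite{DAO1}, where precisely this norm equality is established on the dense $\ast$-subalgebra generated by $\pi_1(\mathcal{A}_1)\cup\pi_2(\mathcal{A}_2)$ and then extends by continuity to all of $\Pi$.
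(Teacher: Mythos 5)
Your proposal follows the same overall strategy as the paper: establish irreducibility of the defining representation, exhibit one non-zero compact operator, invoke the Dixmier dichotomy (Lemma \ref{from dixmier lemma}) to get $\mathcal{K}\subset\Pi$, and reduce the kernel statement to the construction of $\mathrm{sym}$ in \cite{DAO1}. The individual sub-arguments, however, genuinely diverge, and the comparison is instructive. For the commutant computation, the paper observes that $\mathrm{sgn}(D_k)$ lies in the weak closure of $\pi_2(\mathcal{A}_2)$ (as a WOT-limit of $\pi_2(s\mapsto s_k^{1/(2n+1)})$) and then reduces to the classical fact that a bounded function on $\mathbb{R}$ commuting with the Hilbert transform is constant, applied fibrewise in each coordinate; this avoids the distribution-theoretic care needed to make your Fourier-side support argument precise (since $\hat h$ is a priori only a tempered distribution, the assertion that ``the convolution operator commutes with multiplication by ray-constant functions forces $\mathrm{supp}\,\hat h\subset\{0\}$'' needs justification, though the conclusion is right). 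For the non-zero compact operator, you construct $\pi_1(f_1)\pi_2(g)\pi_1(f_2)$ with $f_1,f_2$ of disjoint compact supports and verify a continuous, compactly supported kernel, hence Hilbert--Schmidt; the paper instead takes a non-zero commutator $[\pi_1(f),\pi_2(g_{n,k})]$, whose compactness is part of the $\mathrm{sym}$-calculus from \cite{DAO1}. Your route has the small virtue of being self-contained and not appealing to the commutator compactness, at the cost of needing ``generic'' choices to guarantee non-vanishing (one can close this by noting that if the kernel vanished identically for all such $f_1,f_2$, then $k_g$ would be supported at the origin and hence $g$ constant). For the kernel statement, your character argument for $\mathcal{K}\subset\ker(\mathrm{sym})$ is fine but more elaborate than necessary; the paper recalls from \cite{DAO2} the factorization $\mathrm{sym}=\theta^{-1}\circ q$ through the Calkin quotient, from which both inclusions drop out at once, which is cleaner than your isometry formulation even though the two are equivalent.
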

\begin{proof}  Since $\Pi$ contains $\pi_1(\mathcal{A}_1),$ it follows (here, $X'$ denotes the commutant of the set $X\subset B(L_2(\mathbb{R}^d))$) that
\[\Pi'\subset \Big(\pi_1(\mathcal{A}_1)\Big)'=\Big(\pi_1(L_{\infty}(\mathbb{R}^d))\Big)'=\pi_1(L_{\infty}(\mathbb{R}^d)).\]
Define $g_{n,k}\in C(\mathbb{S}^{d-1}),$ $1\leq k\leq d,$ $n\in\mathbb{N},$ by setting  $g_k(s)=s_k^{\frac1{2n+1}},$ $s\in\mathbb{S}^{d-1}.$ Clearly, $\pi_2(g_{n,k})\to \mathrm{sgn}(D_k)$ as $n\to\infty$ in weak operator topology. Thus, $\mathrm{sgn}(D_k)$ belongs to the weak closure of $\pi_2(\mathcal{A}_2)$ and, hence, to the weak closure of $\Pi.$ Therefore,
\[\Pi'\subset(\mathrm{sgn}(D_k))',\quad 1\leq k\leq d.\]
Thus,
\[\Pi'\subset\Big(\bigcap_{1\leq k\leq d}(\mathrm{sgn}(D_k))'\Big)\cap \pi_1(L_{\infty}(\mathbb{R}^d)).\]

For $t\in\mathbb{R}^d,$ denote by $\check{t}_k\in\mathbb{R}^{d-1}$ the vector obtained by eliminating the $k$-th component of $t.$ If $f\in L_{\infty}(\mathbb{R}^d)$ is such that $\pi_1(f)$ commutes with $\mathrm{sgn}(D_k),$ then, for almost every $\check{t}_k\in\mathbb{R}^{d-1},$ the function $f(\check{t}_k,\cdot)$ commutes with the Hilbert transform. This easily implies that, for almost every $\check{t}_k\in\mathbb{R}^{d-1},$ the function $f(\check{t}_k,\cdot)$ is constant.  If $f\in L_{\infty}(\mathbb{R}^d)$ is such that $\pi_1(f)$ commutes with \textit{every} $\mathrm{sgn}(D_k),$  $1\leq k\leq d,$ then $f=\mathrm{const}.$ Hence, $\Pi'$ is trivial.

By Proposition II.6.1.8 in \cite{Blackadar-book}, representation $\mathrm{id}:\Pi\to B(L_2(\mathbb{R}^d))$ is irreducible.

We now demonstrate that $\Pi$ contains a non-zero compact operator. As proved above, for every non-zero $f\in C^{\infty}_c(\mathbb{R}^d),$ there exists $1\leq k\leq d$ such that $\pi_1(f)$ does not commutes with $\mathrm{sgn}(D_k)$. Since $\pi_2(g_{n,k})\to \mathrm{sgn}(D_k)$ as $n\to\infty$ in weak operator topology, it follows that $\pi_1(f)$ does not commute with $\pi_2(g_{n,k})$ for some $n,k.$ Thus, the operator $[\pi_1(f),\pi_2(g_{n,k})]$ is a non-zero compact operator, which belongs to $\Pi.$ The first assertion of the lemma follows now from Lemma \ref{from dixmier lemma}.

Let $q:B(L_2(\mathbb{R}^d))\to B(L_2(\mathbb{R}^d))/\mathcal{K}(L_2(\mathbb{R}^d))$ be the canonical quotient map. Recall (see the proof of Theorem 3.3 in \cite{DAO2}) that $\mathrm{sym}$ is constructed as a composition 
\[\mathrm{sym}=\theta^{-1}\circ q,\]
where $\theta^{-1}$ is some linear isomorphism (its definition and properties are irrelevant at the current proof). It follows that the kernel of $\mathrm{sym}$ coincides with the kernel of $q$, which is $\mathcal{K}(L_2(\mathbb{R}^d)).$
\end{proof}

\begin{nota}\label{pre irpl notation} Let $\Phi:\mathbb{R}^d\to\mathbb{R}^d$ be a diffeomorphism such that $\Phi$ is affine outside of some ball. Denote
\[p^{\Phi}(t,s)=\sum_{l_1,l_2=1}^db_{l_1,l_2}^{\Phi}(t)s_{l_1}s_{l_2},\quad r_k^{\Phi}(t,s)=\sum_{l=1}^da_{k,l}^{\Phi}(t)s_l,\quad 1\leq k\leq d.\]
Here, $(a_{k,l}^{\Phi})_{k,l=1}^d$ and $(b_{l_1,l_2}^{\Phi})_{l_1,l_2=1}^d$ are as in Notation \ref{phi conjugate notation}.
\end{nota}

The following two lemmas form the core of our computation.

\begin{lem}\label{inverse root psdo lemma} Let $\Phi:\mathbb{R}^d\to\mathbb{R}^d$ be a diffeomorphism such that $\Phi$ is affine outside of some ball. We have
\[U_{\Phi}^{-1}\frac{D_k}{\sqrt{1-\Delta}}U_{\Phi}\in \mathrm{Op}(\frac{r_k^{\Phi}}{(1+p^{\Phi})^{\frac12}})+\Psi^{-1}(\mathbb{R}^d).\]
\end{lem}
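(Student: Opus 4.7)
The strategy is to express $U_{\Phi}^{-1} D_k U_{\Phi}$ and $U_{\Phi}^{-1}(1-\Delta)^{-1/2} U_{\Phi}$ each as a pseudodifferential operator modulo a term of one order lower, and then multiply them using Lemma \ref{psi product lemma}. From Lemma \ref{uphi conjugation explicit lemma} together with the differential-operator formula \eqref{do vs psdo},
\[U_{\Phi}^{-1} D_k U_{\Phi} = \mathrm{Op}(r_k^{\Phi}) + M_{a_k^{\Phi}} \in \mathrm{Op}(r_k^{\Phi}) + \Psi^0(\mathbb{R}^d),\]
since $M_{a_k^{\Phi}} \in \Psi^0(\mathbb{R}^d)$ by Lemma \ref{musor}, while $r_k^{\Phi} \in \Psi^1(\mathbb{R}^d)$. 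For the Laplacian, applying \eqref{do vs psdo} to the second identity of Lemma \ref{uphi conjugation explicit lemma} and pushing each outer $D_{l_1}$ past $M_{b_{l_1,l_2}^{\Phi}}$ by means of \eqref{dk mf commutator eq} isolates the top-order symbol and throws the remainder into order one:
\[-U_{\Phi}^{-1} \Delta U_{\Phi} \in \mathrm{Op}(p^{\Phi}) + \Psi^{1}(\mathbb{R}^d).\]
Because $\Phi$ is affine outside a ball, $b^{\Phi} = |J_{\Phi}^{\ast} \circ \Phi^{-1}|^2$ is smooth, bounded together with all its derivatives, and uniformly positive definite; hence $p^{\Phi}$ is a positive symbol in $\Psi^2(\mathbb{R}^d)$ and $(1+p^{\Phi})^{-1/2} \in \Psi^{-1}(\mathbb{R}^d)$.

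Next I would apply Theorem \ref{complex power thm} to $T := -U_{\Phi}^{-1}\Delta U_{\Phi}$, with $m = 2$ and $z = -\tfrac{1}{2}$. By Lemma \ref{sobolev uphi invariance lemma}, $U_{\Phi}$ preserves $W^{2,2}(\mathbb{R}^d)$, so $T$ is a positive self-adjoint operator $W^{2,2}(\mathbb{R}^d) \to L_2(\mathbb{R}^d)$ (as a unitary conjugate of $-\Delta$) and, by the computation above, is a second-order differential operator with positive principal symbol $p^{\Phi}$. Theorem \ref{complex power thm} then produces
\[(T+1)^{-1/2} - \mathrm{Op}\bigl((1+p^{\Phi})^{-1/2}\bigr) \in \Psi^{-2}(\mathbb{R}^d),\]
and since unitary conjugation commutes with the Borel functional calculus we have $(T+1)^{-1/2} = U_{\Phi}^{-1}(1-\Delta)^{-1/2} U_{\Phi}$, hence
\[U_{\Phi}^{-1}(1-\Delta)^{-1/2} U_{\Phi} \in \mathrm{Op}\bigl((1+p^{\Phi})^{-1/2}\bigr) + \Psi^{-2}(\mathbb{R}^d).\]

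Finally, Lemma \ref{psi product lemma} applied with $m_1 = 1$, $p_1 = r_k^{\Phi}$ and $m_2 = -1$, $p_2 = (1+p^{\Phi})^{-1/2}$ to the two factors above yields
\[U_{\Phi}^{-1}\frac{D_k}{\sqrt{1-\Delta}} U_{\Phi} - \mathrm{Op}\Bigl(\frac{r_k^{\Phi}}{(1+p^{\Phi})^{1/2}}\Bigr) \in \Psi^{m_1+m_2-1}(\mathbb{R}^d) = \Psi^{-1}(\mathbb{R}^d),\]
which is the claim. The main obstacle is the verification of the hypotheses of Theorem \ref{complex power thm}: $T$ must be a second-order differential operator that is positive self-adjoint on $W^{2,2}(\mathbb{R}^d)$ with strictly positive principal symbol. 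The assumption that $\Phi$ be affine outside a ball is used essentially here, both to ensure (via Lemmas \ref{musor} and \ref{sobolev uphi invariance lemma}) that all coefficients in Lemma \ref{uphi conjugation explicit lemma} are smooth with bounded derivatives and that $U_{\Phi}$ preserves the Sobolev domain, and to guarantee uniform ellipticity of $p^{\Phi}$. Once these are in place the argument reduces to mechanical bookkeeping in the symbolic calculus \eqref{psi product eq}--\eqref{op product eq}.
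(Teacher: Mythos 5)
Your proof follows essentially the same route as the paper: express $U_{\Phi}^{-1}D_kU_{\Phi}$ via Lemma \ref{uphi conjugation explicit lemma}, identify $-U_{\Phi}^{-1}\Delta U_{\Phi}$ as a second-order differential operator with principal symbol $p^{\Phi}$, invoke Theorem \ref{complex power thm} with $z=-\tfrac12$ to handle $U_{\Phi}^{-1}(1-\Delta)^{-1/2}U_{\Phi}$, and combine with Lemma \ref{psi product lemma}. One small notational slip: you write $b^{\Phi}=|J_{\Phi}^{\ast}\circ\Phi^{-1}|^2$ when you mean the matrix $(b_{k,l}^{\Phi})_{k,l=1}^d$ from Notation \ref{phi conjugate notation} (in the paper $b^{\Phi}$ denotes the scalar zero-order coefficient from Lemma \ref{musor}), but this does not affect the argument.
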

\begin{proof} Lemma \ref{uphi conjugation explicit lemma} asserts that
\begin{equation}\label{irpl eq0}
U_{\Phi}^{-1}D_kU_{\Phi}=\mathrm{Op}(r_k^{\Phi})+M_{a_k^{\Phi}}.
\end{equation}
It is immediate that
\begin{equation}\label{irpl eq1}
\mathrm{Op}(r_k^{\Phi})\in\Psi^1(\mathbb{R}^d),\quad M_{a_k^{\Phi}}\in \Psi^0(\mathbb{R}^d).
\end{equation}
	
Lemma \ref{sobolev uphi invariance lemma} yields that $-U_{\Phi}^{-1}\Delta U_{\Phi}$ is a self-adjoint positive operator with the domain $U_{\Phi}^{-1}(W^{2,2}(\mathbb{R}^d))=W^{2,2}(\mathbb{R}^d).$ Lemma \ref{uphi conjugation explicit lemma}  and \eqref{do vs psdo} yield that $-U_{\Phi}^{-1}\Delta U_{\Phi}$ is a differential operator of order $2$ with principal symbol $p^{\Phi}\geq0.$
	
By Theorem \ref{complex power thm} applied with $T=-U_{\Phi}^{-1}\Delta U_{\Phi}$ and $z=-\frac12,$ we have 
\begin{equation}\label{irpl eq2}
(1-U_{\Phi}^{-1}\Delta U_{\Phi})^{-\frac12}-\mathrm{Op}((p^{\Phi}+1)^{-\frac12})\in \Psi^{-2}(\mathbb{R}^d),
\end{equation}
\begin{equation}\label{irpl eq3}
\mathrm{Op}((p^{\Phi}+1)^{-\frac12})\in \Psi^{-1}(\mathbb{R}^d).
\end{equation}
Equations \eqref{irpl eq0}, \eqref{irpl eq1}, \eqref{irpl eq2} and \eqref{irpl eq3} yield that the operators
\[T_1=U_{\Phi}^{-1}D_kU_{\Phi},\quad T_2=U_{\Phi}^{-1}(1-\Delta)^{-\frac12}U_{\Phi}\]
satisfy the assumptions in Lemma \ref{psi product lemma}. By Lemma \ref{psi product lemma}, we have
\[U_{\Phi}^{-1}\frac{D_k}{\sqrt{1-\Delta}}U_{\Phi}=T_1T_2\in \mathrm{Op}(r_k^{\Phi}\cdot (1+p^{\Phi})^{-\frac12})+\Psi^{-1}(\mathbb{R}^d).\]
\end{proof}

In our next lemma, we approximate the operators on the left hand side with pseudo\-differential-like operators on the right hand side. The latter is defined in \eqref{psdo-like operator def} in Subsection \ref{psdo-like subsection}.

\begin{lem}\label{post irpl lemma} Let $\Phi:\mathbb{R}^d\to\mathbb{R}^d$ be a diffeomorphism such that $\Phi$ is affine outside of some ball. If $\phi\in C^{\infty}_c(\mathbb{R}^d),$ then
\[U_{\Phi}^{-1}M_{\phi}\frac{D_k}{\sqrt{1-\Delta}}U_{\Phi}\in T_{(\phi\circ\Phi^{-1}\otimes 1)\cdot q_k^{\Phi}}+\mathcal{K}(L_2(\mathbb{R}^d)),\]
where
\[q_k^{\Phi}(t,s)=(O_{J_{\Phi}^{\ast}(\Phi^{-1}(t))}s)_k,\quad t\in\mathbb{R}^d,\quad s\in\mathbb{S}^{d-1}.\]
\end{lem}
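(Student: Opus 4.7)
The plan is to reduce the statement to a chain of three tools: Lemma \ref{pi1u lemma} to commute $M_{\phi}$ past the unitary conjugation, Lemma \ref{inverse root psdo lemma} combined with Lemma \ref{psi product lemma} to replace $U_{\Phi}^{-1}\frac{D_k}{\sqrt{1-\Delta}}U_{\Phi}$ by $\mathrm{Op}$ of an explicit symbol modulo $\Psi^{-1}(\mathbb{R}^d)$, and finally Lemma \ref{82 lemma} to recognise that $\mathrm{Op}$ as a pseudodifferential-like operator $T_q$ modulo compact operators. The bookkeeping symbol is $q(t,s)=(\phi\circ\Phi^{-1})(t)\cdot q_k^{\Phi}(t,s)$ on $\mathbb{R}^d\times\mathbb{S}^{d-1}$, which lies in $C^{\infty}_c(\mathbb{R}^d\times\mathbb{S}^{d-1})$ since $\phi\circ\Phi^{-1}\in C^{\infty}_c(\mathbb{R}^d)$ and $q_k^{\Phi}$ is smooth because $A(t):=J_{\Phi}^{\ast}(\Phi^{-1}(t))$ is smooth with values in $\mathrm{GL}(d,\mathbb{R})$.

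By Lemma \ref{pi1u lemma} we have $U_{\Phi}^{-1}M_{\phi}U_{\Phi}=M_{\phi\circ\Phi^{-1}}$, so
\[U_{\Phi}^{-1}M_{\phi}\frac{D_k}{\sqrt{1-\Delta}}U_{\Phi}=M_{\phi\circ\Phi^{-1}}\cdot U_{\Phi}^{-1}\frac{D_k}{\sqrt{1-\Delta}}U_{\Phi}.\]
Viewing $M_{\phi\circ\Phi^{-1}}=\mathrm{Op}(\phi\circ\Phi^{-1})\in\Psi^0(\mathbb{R}^d)$ and applying Lemma \ref{psi product lemma} together with Lemma \ref{inverse root psdo lemma}, this product belongs to
\[\mathrm{Op}\Bigl((\phi\circ\Phi^{-1})\cdot\tfrac{r_k^{\Phi}}{(1+p^{\Phi})^{1/2}}\Bigr)+\Psi^{-1}(\mathbb{R}^d).\]
Both the main term and the $\Psi^{-1}$ remainder admit $M_{\tilde\phi}$ on the left, where $\tilde\phi\in C^{\infty}_c(\mathbb{R}^d)$ equals $1$ on $\mathrm{supp}(\phi\circ\Phi^{-1})$; hence the remainder is compactly supported, and \eqref{compactly supported are compact} shows it is compact. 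Next, apply Lemma \ref{82 lemma} with our $q$ and a cutoff $\psi\in C^{\infty}_c(\mathbb{R}^d)$ equal to $1$ near $0$: this gives $T_q-\mathrm{Op}\bigl((\phi\circ\Phi^{-1})(t)\,q_k^{\Phi}(t,s/|s|)(1-\psi(s))\bigr)\in\mathcal{K}(L_2(\mathbb{R}^d))$.

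It remains to compare the two $\mathrm{Op}$ operators. Since $r_k^{\Phi}(t,s)=(A(t)s)_k$ and $p^{\Phi}(t,s)=|A(t)s|^2$, for $s\ne0$ one has
\[\frac{r_k^{\Phi}(t,s)}{(1+p^{\Phi}(t,s))^{1/2}}-q_k^{\Phi}(t,s/|s|)=(A(t)s)_k\Bigl[\tfrac{1}{(1+|A(t)s|^2)^{1/2}}-\tfrac{1}{|A(t)s|}\Bigr]=O(|s|^{-2})\quad\text{as }|s|\to\infty,\]
and the $t$- and $s$-derivatives improve these estimates in the expected way. Thus the difference between $(\phi\circ\Phi^{-1})(t)\frac{r_k^{\Phi}}{(1+p^{\Phi})^{1/2}}$ and $(\phi\circ\Phi^{-1})(t)q_k^{\Phi}(t,s/|s|)(1-\psi(s))$ is smooth on $\mathbb{R}^d\times\mathbb{R}^d$ (the singularity at $s=0$ is killed by $1-\psi(s)$, while near $s=0$ the first term is already smooth) and satisfies the symbol bounds for $\Psi^{-1}(\mathbb{R}^d)$; being compactly supported in $t$, the corresponding $\mathrm{Op}$ is compact by \eqref{compactly supported are compact}. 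Chaining the three equalities modulo compact operators yields $U_{\Phi}^{-1}M_{\phi}\frac{D_k}{\sqrt{1-\Delta}}U_{\Phi}\in T_q+\mathcal{K}(L_2(\mathbb{R}^d))$, as required.

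The main obstacle is precisely the symbol-class verification in the last paragraph: one must check jointly the smoothness across $s=0$ (ensured by the cutoff $1-\psi$) and the correct $|s|^{-1}$ decay of all seminorms at infinity, so that the resulting pseudodifferential operator is compactly supported and of negative order, hence compact. Everything else is a bookkeeping of how the various symbols match under the chain rule built into Notation \ref{phi conjugate notation}.
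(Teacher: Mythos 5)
Your proposal is correct and follows essentially the same route as the paper: decompose $U_{\Phi}^{-1}M_{\phi}\frac{D_k}{\sqrt{1-\Delta}}U_{\Phi}=M_{\phi\circ\Phi^{-1}}\cdot U_{\Phi}^{-1}\frac{D_k}{\sqrt{1-\Delta}}U_{\Phi}$, invoke Lemma \ref{inverse root psdo lemma} to reduce modulo $\Psi^{-1}$, observe the remainder is compactly supported hence compact via \eqref{compactly supported are compact}, and match the resulting $\mathrm{Op}$-symbol with $T_q$ via Lemma \ref{82 lemma} after verifying the $O(|s|^{-2})$ symbol estimate. The only cosmetic deviations are that you introduce a separate cutoff $\tilde\phi$ where the paper reuses its $\psi$ for both purposes, and you route the left multiplication through Lemma \ref{psi product lemma} where the paper uses the exact identity $M_f\cdot\mathrm{Op}(p)=\mathrm{Op}((f\otimes1)p)$.
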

\begin{proof} For every $f\in C^{\infty}(\mathbb{R}^d)$ and for every $p\in C^{\infty}(\mathbb{R}^d\times\mathbb{R}^d),$ we have
\[M_f\cdot \mathrm{Op}(p)=\mathrm{Op}((f\otimes 1)p).\]
Also,
\[U_{\Phi}^{-1}M_{\phi}\frac{D_k}{\sqrt{1-\Delta}}U_{\Phi}=M_{\phi\otimes\Phi^{-1}}\cdot U_{\Phi}^{-1}\frac{D_k}{\sqrt{1-\Delta}}U_{\Phi}.\]
It follows now from Lemma \ref{inverse root psdo lemma} that
\begin{equation}\label{pirpl eq0}
U_{\Phi}^{-1}M_{\phi}\frac{D_k}{\sqrt{1-\Delta}}U_{\Phi}\in \mathrm{Op}((\phi\circ\Phi^{-1}\otimes 1)\frac{r_k^{\Phi}}{(1+p^{\Phi})^{\frac12}})+\Psi^{-1}(\mathbb{R}^d).
\end{equation}

Fix a function $\psi\in C^{\infty}_c(\mathbb{R}^d)$ such that $\psi=1$ near $0$ and such that $(\phi\circ\Phi^{-1})\cdot\psi=\phi\circ\Phi^{-1}.$ Set
\[e_k(t,s)=\phi(\Phi^{-1}(t))\cdot r_k^{\Phi}(t,s)\cdot (1+p^{\Phi}(t,s))^{-\frac12},\quad t,s\in\mathbb{R}^d,\]
\[f_k(t,s)=\phi(\Phi^{-1}(t))\cdot r_k^{\Phi}(t,s)\cdot (p^{\Phi}(t,s))^{-\frac12}\cdot (1-\psi(s)),\quad t,s\in\mathbb{R}^d.\]
We have $e_k-f_k=g_k\cdot h,$ where
\[g_k(t,s)=\phi(\Phi^{-1}(t))\cdot r_k^{\Phi}(t,s),\quad t,s\in\mathbb{R}^d,\]
\[h(t,s)=(1+p^{\Phi}(t,s))^{-\frac12}-(p^{\Phi}(t,s))^{-\frac12}\cdot (1-\psi(s)),\quad t,s\in\mathbb{R}^d.\]
An elementary computation shows that
\[\sup_{t,s\in\mathbb{R}^d}(1+|s|^2)^{\frac{|\beta|_1+2}{2}}|D_t^{\alpha}D_s^{\beta}h(t,s)|<\infty,\quad \alpha,\beta\in\mathbb{Z}_+^d.\]
By the Leibniz rule, we have
\[D_t^{\alpha}D_s^{\beta}(g_k\cdot h)=\sum_{0\leq\gamma\leq\alpha}\sum_{0\leq\delta\leq\beta}c(\alpha,\gamma)c(\beta,\delta)D_t^{\gamma}D_s^{\delta}g_k\cdot D_t^{\alpha-\gamma}D_s^{\beta-\delta}h.\]
This implies
\[\sup_{t,s\in\mathbb{R}^d}(1+|s|^2)^{\frac{|\beta|_1+1}{2}}|D_t^{\alpha}D_s^{\beta}(e_k-f_k)(t,s)|<\infty,\quad \alpha,\beta\in\mathbb{Z}_+^d,\]
so that $\mathrm{Op}(e_k-f_k)\in\Psi^{-1}(\mathbb{R}^d)$ (see \eqref{psim p condition}). It follows now from \eqref{pirpl eq0} that
\begin{equation}\label{pirpl eq1}
U_{\Phi}^{-1}M_{\phi}\frac{D_k}{\sqrt{1-\Delta}}U_{\Phi}-\mathrm{Op}(f_k)\in\Psi^{-1}(\mathbb{R}^d).
\end{equation}
Denote for brevity the left hand side of \eqref{pirpl eq1} by $T$ (so that $T\in\Psi^{-1}(\mathbb{R}^d)$). Due to the choice of $\psi,$ we have that $T=M_{\psi}T.$ It follows now from \eqref{compactly supported are compact} (applied with $m=-1$) that $T$ is compact. In other words, we have
\begin{equation}\label{pirpl eq4}
U_{\Phi}^{-1}M_{\phi}\frac{D_k}{\sqrt{1-\Delta}}U_{\Phi}-\mathrm{Op}(f_k)\in\mathcal{K}(L_2(\mathbb{R}^d)).
\end{equation}

Appealing to the definition of $r_k^{\Phi}$ and $p^{\Phi},$ we note that
\[r_k^{\Phi}(t,s)=(J_{\Phi}^{\ast}(\Phi^{-1}(t))s)_k,\quad p^{\Phi}(t,s)=|J_{\Phi}^{\ast}(\Phi^{-1}(t))s|^2,\quad t,s\in\mathbb{R}^d.\]
Therefore,
\[r_k^{\Phi}(t,s)\cdot (p^{\Phi}(t,s))^{-\frac12}=(O_{J_{\Phi}^{\ast}(\Phi^{-1}(t))}\frac{s}{|s|})_k,\quad t,s\in\mathbb{R}^d.\]
Thus,
\[f_k(t,s)=\phi(\Phi^{-1}(t))\cdot q_k^{\Phi}(t,\frac{s}{|s|})\cdot (1-\psi(s)),\quad t,s\in\mathbb{R}^d.\]

By Lemma \ref{82 lemma} applied with $q=(\phi\circ\Phi^{-1}\otimes1)\cdot q_k^{\Phi},$  we have 
\begin{equation}\label{pirpl eq5}
\mathrm{Op}(f_k)-T_{(\phi\circ\Phi^{-1}\otimes1)\cdot q_k^{\Phi}}\in \mathcal{K}(L_2(\mathbb{R}^d)).
\end{equation}
Combining \eqref{pirpl eq4} and \eqref{pirpl eq5}, we complete the proof.
\end{proof}

\begin{lem}\label{delta to 1-delta lemma} Let $\Phi$ be a diffeomorphism such that $\Phi$ is affine outside of some ball. If $\phi\in C^{\infty}_c(\mathbb{R}^d),$ then
\[U_{\Phi}^{-1}M_{\phi}\frac{D_k}{\sqrt{-\Delta}}U_{\Phi}\in \Pi\]
and
\[\mathrm{sym}\Big(U_{\Phi}^{-1}M_{\phi}\frac{D_k}{\sqrt{-\Delta}}U_{\Phi}\Big)=(\phi\circ\Phi^{-1}\otimes1)\cdot q_k^{\Phi}.\]
\end{lem}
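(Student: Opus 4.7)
The plan is to reduce to Lemma \ref{post irpl lemma} by showing that the passage from $\frac{D_k}{\sqrt{1-\Delta}}$ to $\frac{D_k}{\sqrt{-\Delta}}$ only introduces a compact perturbation after left-multiplication by $M_{\phi}$. Once this is done, the operator $U_{\Phi}^{-1}M_{\phi}\frac{D_k}{\sqrt{-\Delta}}U_{\Phi}$ equals $T_{(\phi\circ\Phi^{-1}\otimes 1)\cdot q_k^{\Phi}}$ modulo a compact operator, and both summands lie in $\Pi$ by Lemmas \ref{81 lemma} and \ref{compact operators lemma}, with the compact part contributing nothing to the symbol.

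The core of the argument is the following compactness statement: the operator
\[
M_{\phi}\Big(\frac{D_k}{\sqrt{-\Delta}}-\frac{D_k}{\sqrt{1-\Delta}}\Big)
\]
is compact on $L_2(\mathbb{R}^d)$. By the functional calculus of $\nabla$, the difference in brackets is the Fourier multiplier $m(\nabla)$ with
\[
m(s)=\frac{s_k}{|s|}-\frac{s_k}{\sqrt{1+|s|^2}},\quad s\in\mathbb{R}^d\setminus\{0\}.
\]
A direct estimate gives $m(s)=O(|s|^{-2})$ as $|s|\to\infty$, while $m$ is bounded near $0$. Splitting $m=\chi m+(1-\chi)m$ with a smooth cutoff $\chi\in C^{\infty}_c(\mathbb{R}^d)$ equal to $1$ near $0$, one has $(1-\chi)m\in C_0(\mathbb{R}^d)$ and $\chi m\in L_{\infty}(\mathbb{R}^d)\cap L_2(\mathbb{R}^d)$ with compact support. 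The operator $M_{\phi}[(1-\chi)m](\nabla)$ is compact by the standard fact that $M_{\phi}f(\nabla)$ is compact whenever $\phi\in C_0(\mathbb{R}^d)$ and $f\in C_0(\mathbb{R}^d)$ (see e.g.\ Theorem 4.1 in \cite{Simon-book}, applied as in \eqref{compactly supported are compact}). The operator $M_{\phi}(\chi m)(\nabla)$ has integral kernel $(t,t')\mapsto \phi(t)\cdot\mathcal{F}^{-1}(\chi m)(t-t')$, whose Hilbert-Schmidt norm is $\|\phi\|_{L_2}\cdot\|\chi m\|_{L_2}<\infty$, so it is compact.

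Combining this with Lemma \ref{post irpl lemma}, and using that conjugation by the unitary $U_{\Phi}$ preserves $\mathcal{K}(L_2(\mathbb{R}^d))$, we get
\[
U_{\Phi}^{-1}M_{\phi}\frac{D_k}{\sqrt{-\Delta}}U_{\Phi}\in T_{(\phi\circ\Phi^{-1}\otimes 1)\cdot q_k^{\Phi}}+\mathcal{K}(L_2(\mathbb{R}^d)).
\]
Since $\Phi$ is a diffeomorphism affine outside a ball, $\phi\circ\Phi^{-1}\in C^{\infty}_c(\mathbb{R}^d)$, and $q_k^{\Phi}$ is smooth on $\mathbb{R}^d\times\mathbb{S}^{d-1}$, so the product $(\phi\circ\Phi^{-1}\otimes 1)\cdot q_k^{\Phi}$ lies in $C^{\infty}_c(\mathbb{R}^d\times\mathbb{S}^{d-1})$. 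Lemma \ref{81 lemma} then gives $T_{(\phi\circ\Phi^{-1}\otimes 1)\cdot q_k^{\Phi}}\in\Pi$ with symbol equal to $(\phi\circ\Phi^{-1}\otimes 1)\cdot q_k^{\Phi}$, and Lemma \ref{compact operators lemma} gives $\mathcal{K}(L_2(\mathbb{R}^d))\subset\Pi$ with vanishing symbol. The lemma follows.

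The only non-routine point is establishing the compactness above; everything else is bookkeeping. The potential subtlety there is the discontinuity of $s_k/|s|$ at the origin, which is handled by the cutoff decomposition turning the low-frequency piece into a Hilbert-Schmidt operator.
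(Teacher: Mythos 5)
Your proof is correct and differs from the paper's only in the method used to establish the key compactness claim, namely that $M_{\phi}\bigl(\frac{D_k}{\sqrt{-\Delta}}-\frac{D_k}{\sqrt{1-\Delta}}\bigr)$ is compact; the remainder (reduction to Lemma \ref{post irpl lemma}, invocation of Lemmas \ref{81 lemma} and \ref{compact operators lemma}) is identical in spirit. The paper observes that the Borel function $s\mapsto (1+|s|^2)\bigl(\frac{s_k}{|s|}-\frac{s_k}{\sqrt{1+|s|^2}}\bigr)$ is bounded, so $(1-\Delta)\bigl(\frac{D_k}{\sqrt{-\Delta}}-\frac{D_k}{\sqrt{1-\Delta}}\bigr)$ is bounded, and then multiplies by the single compact operator $M_{\phi}(1-\Delta)^{-1}$ (Theorem 4.1 in \cite{Simon-book}). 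You instead split the multiplier $m(s)=\frac{s_k}{|s|}-\frac{s_k}{\sqrt{1+|s|^2}}$ by a cutoff near the origin: the high-frequency part lies in $C_0(\mathbb{R}^d)$ so $M_{\phi}[(1-\chi)m](\nabla)$ is compact by the same circle of Cwikel/Simon facts, while the low-frequency part is a compactly supported bounded multiplier, giving a Hilbert--Schmidt operator after multiplication by $M_{\phi}$. Both arguments exploit exactly the same analytic input --- the $O(|s|^{-2})$ decay of $m$ at infinity and its boundedness near $0$ --- but the paper packages it in one stroke via $(1-\Delta)^{-1}$, whereas yours makes the discontinuity of $s_k/|s|$ at the origin explicit and handles it by hand; yours is slightly longer but arguably more transparent about where the compactness comes from.
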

\begin{proof} Applying bounded Borel function
\[t\to (1+|t|^2)\cdot (\frac{t}{|t|}-\frac{t}{\sqrt{1+|t|^2}}),\quad t\in\mathbb{R}^d,\]
to the tuple $\nabla,$ we obtain that
\[(1-\Delta)\cdot \Big(\frac{D_k}{\sqrt{-\Delta}}-\frac{D_k}{\sqrt{1-\Delta}}\Big)\in B(L_2(\mathbb{R}^d)).\]	
Recall that (see e.g. Theorem 4.1 in \cite{Simon-book}) $M_{\phi}(1-\Delta)^{-1}\in \mathcal{K}(L_2(\mathbb{R}^d)).$
Since product of bounded and compact operators is compact, it follows that
\[M_{\phi}\Big(\frac{D_k}{\sqrt{-\Delta}}-\frac{D_k}{\sqrt{1-\Delta}}\Big)\in \mathcal{K}(L_2(\mathbb{R}^d))\]
and
\[U_{\Phi}^{-1}M_{\phi}\Big(\frac{D_k}{\sqrt{-\Delta}}-\frac{D_k}{\sqrt{1-\Delta}}\Big)U_{\Phi}\in \mathcal{K}(L_2(\mathbb{R}^d)).\]

Combining with Lemma \ref{post irpl lemma}, we obtain
\[U_{\Phi}^{-1}M_{\phi}\frac{D_k}{\sqrt{-\Delta}}U_{\Phi}- T_{(\phi\circ\Phi^{-1}\otimes1)\cdot q_k^{\Phi}}\in \mathcal{K}(L_2(\mathbb{R}^d)).\]
By Lemma \ref{81 lemma}, we have 
\[T_{(\phi\circ\Phi^{-1}\otimes1)\cdot q_k^{\Phi}}\in\Pi,\quad \mathrm{sym}\Big(T_{(\phi\circ\Phi^{-1}\otimes1)\cdot q_k^{\Phi}}\Big)=(\phi\circ\Phi^{-1}\otimes1)\cdot q_k^{\Phi}.\]
The assertion follows by combining the last two equations and Lemma \ref{compact operators lemma}.
\end{proof}

\begin{lem}\label{commutator is compact} If $T_1,T_2\in\Pi,$ then
\[[T_1,T_2]\in \mathcal{K}(L_2(\mathbb{R}^d)).\]
\end{lem}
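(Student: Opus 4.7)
The statement is an immediate consequence of two facts already established in the excerpt: the principal symbol mapping $\mathrm{sym}:\Pi\to\mathcal{A}_1\otimes_{\min}\mathcal{A}_2$ is a $\ast$-homomorphism (see \eqref{euclidean symbol definition}), and its target algebra $\mathcal{A}_1\otimes_{\min}\mathcal{A}_2\simeq C(\mathbb{S}^{d-1},\mathbb{C}+C_0(\mathbb{R}^d))$ is commutative. Therefore, for any $T_1,T_2\in\Pi$,
\[
\mathrm{sym}([T_1,T_2])=[\mathrm{sym}(T_1),\mathrm{sym}(T_2)]=0,
\]
so $[T_1,T_2]\in\ker(\mathrm{sym})$.

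The plan is then to invoke Lemma \ref{compact operators lemma}, which identifies $\ker(\mathrm{sym})$ with $\mathcal{K}(L_2(\mathbb{R}^d))$. Combining these two observations yields $[T_1,T_2]\in\mathcal{K}(L_2(\mathbb{R}^d))$, which is the claim. There is no genuine obstacle here: both ingredients — the homomorphism property of $\mathrm{sym}$ and the computation of its kernel — have been recorded just above, and the commutativity of $\mathcal{A}_1\otimes_{\min}\mathcal{A}_2$ follows from the commutativity of each factor.
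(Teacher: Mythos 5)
Your argument matches the paper's proof exactly: both use that $\mathrm{sym}$ is a $\ast$-homomorphism into a commutative algebra to conclude $\mathrm{sym}([T_1,T_2])=0$, and then invoke Lemma \ref{compact operators lemma} to identify $\ker(\mathrm{sym})$ with $\mathcal{K}(L_2(\mathbb{R}^d))$. Correct and the same route.
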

\begin{proof} Since $\mathrm{sym}$ is a $\ast$-homomorphism, it follows that
$\mathrm{sym}([T_1,T_2])=0.$
The assertion is now an immediate consequence of Lemma \ref{compact operators lemma}.
\end{proof}

\begin{lem}\label{ber request lemma1} Let $(f_k)_{k=1}^m\subset\mathcal{A}_1$ and $(g_k)_{k=1}^m\subset\mathcal{A}_2.$ We have
\[\prod_{k=1}^m\pi_1(f_k)\pi_2(g_k)\in\pi_1(\prod_{k=1}^mf_k)\pi_2(\prod_{k=1}^mg_k)+\mathcal{K}(L_2(\mathbb{R}^d)).\]
\end{lem}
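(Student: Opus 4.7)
The plan is a simple induction on $m$, with the two key ingredients being (a) the fact that $\pi_1$ and $\pi_2$ are themselves $\ast$-homomorphisms of the respective commutative algebras, so reordering \emph{within} each type is free, and (b) Lemma \ref{commutator is compact}, which says that any two elements of $\Pi$ commute modulo $\mathcal{K}(L_2(\mathbb{R}^d))$. In particular, since $\pi_1(\mathcal{A}_1)\subset\Pi$ and $\pi_2(\mathcal{A}_2)\subset\Pi$, every commutator of the form $[\pi_2(g),\pi_1(f)]$ lies in $\mathcal{K}(L_2(\mathbb{R}^d))$.

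For the base case $m=1$ there is nothing to prove. For the inductive step, I would write
\[\prod_{k=1}^m\pi_1(f_k)\pi_2(g_k)=\pi_1(f_1)\pi_2(g_1)\cdot\prod_{k=2}^m\pi_1(f_k)\pi_2(g_k),\]
apply the inductive hypothesis to the right-hand factor, and then use the fact that $\mathcal{K}(L_2(\mathbb{R}^d))$ is a two-sided ideal (so multiplication by the bounded operator $\pi_1(f_1)\pi_2(g_1)$ preserves the error term) to reduce the problem to showing
\[\pi_1(f_1)\pi_2(g_1)\pi_1\Bigl(\prod_{k=2}^m f_k\Bigr)\pi_2\Bigl(\prod_{k=2}^m g_k\Bigr)\in \pi_1\Bigl(\prod_{k=1}^m f_k\Bigr)\pi_2\Bigl(\prod_{k=1}^m g_k\Bigr)+\mathcal{K}(L_2(\mathbb{R}^d)).\]

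The only obstruction to equality on the nose is the need to swap the middle factors $\pi_2(g_1)$ and $\pi_1(\prod_{k=2}^m f_k)$. By Lemma \ref{commutator is compact} applied to these two elements of $\Pi$, their commutator is compact; absorbing this commutator into the error term (again using that $\mathcal{K}(L_2(\mathbb{R}^d))$ is an ideal in $B(L_2(\mathbb{R}^d))$) and then combining the resulting adjacent $\pi_1$-factors via $\pi_1(f_1)\pi_1(\prod_{k=2}^m f_k)=\pi_1(\prod_{k=1}^m f_k)$ and similarly for the $\pi_2$-factors completes the step.

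There is no real obstacle here: the only thing to be careful about is that the coefficient algebra $\mathcal{A}_1=\mathbb{C}+C_0(\mathbb{R}^d)$ is unital (so $\pi_1$ is genuinely multiplicative on products that could a priori be constants), and that the remainder terms produced at each swap are multiplied on both sides by bounded, not necessarily compact, operators — which is harmless since $\mathcal{K}(L_2(\mathbb{R}^d))$ is a two-sided ideal. Thus the induction goes through cleanly.
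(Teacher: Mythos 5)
Your proof is correct and follows essentially the same route as the paper: induction on $m$, with the single ingredient being Lemma \ref{commutator is compact} to swap a $\pi_2$-factor past a $\pi_1$-factor modulo compacts. The paper isolates the $m=2$ case as a separate step before running the induction, whereas you perform the needed swap directly inside the inductive step, but this is only a cosmetic difference.
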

\begin{proof} We prove the assertion by induction on $m.$ For $m=1,$ there is nothing to prove. So, we only have to prove the step of induction.
	
Let us prove the assertion for $m=2.$ We have
\[\pi_1(f_1)\pi_2(g_1)\pi_1(f_2)\pi_2(g_2)=[\pi_2(g_1),\pi_1(f_1f_2)]\cdot \pi_2(g_2)+\]
\[+[\pi_1(f_1),\pi_2(g_1)]\cdot \pi_1(f_2)\pi_2(g_2)+\pi_1(f_1f_2)\pi_2(g_1g_2).\]
By Lemma \ref{commutator is compact}, we have
\[[\pi_1(f_1),\pi_2(g_1)],[\pi_2(g_1),\pi_1(f_1f_2)]\in \mathcal{K}(L_2(\mathbb{R}^d)).\]
Therefore,
\[\pi_1(f_1)\pi_2(g_1)\pi_1(f_2)\pi_2(g_2)\in \pi_1(f_1f_2)\pi_2(g_1g_2)+\mathcal{K}(L_2(\mathbb{R}^d)).\]
This proves the assertion for $m=2.$

It remains to prove the step of induction. Suppose the assertion holds for $m\geq 2$ and let us prove it for $m+1.$ Clearly,
\[\prod_{k=1}^{m+1}\pi_1(f_k)\pi_2(g_k)=\pi_1(f_1)\pi_2(g_1)\cdot\prod_{k=2}^{m+1}\pi_1(f_k)\pi_2(g_k).\]
Using the inductive assumption, we obtain
\[\prod_{k=1}^{m+1}\pi_1(f_k)\pi_2(g_k)\in\pi_1(f_1)\pi_2(g_1)\cdot\pi_1(\prod_{k=2}^{m+1}f_k)\pi_2(\prod_{k=2}^{m+1}g_k)+\mathcal{K}(L_2(\mathbb{R}^d)).\]
Using the assertion for $m=2,$ we obtain
\[\pi_1(f_1)\pi_2(g_1)\cdot\pi_1(\prod_{k=2}^{m+1}f_k)\pi_2(\prod_{k=2}^{m+1}g_k)\in\pi_1(\prod_{k=1}^{m+1}f_k)\pi_2(\prod_{k=1}^{m+1}g_k)+\mathcal{K}(L_2(\mathbb{R}^d)).\]

Combining the last two equations, we obtain
\[\prod_{k=1}^{m+1}\pi_1(f_k)\pi_2(g_k)\in\pi_1(\prod_{k=1}^{m+1}f_k)\pi_2(\prod_{k=1}^{m+1}g_k)+\mathcal{K}(L_2(\mathbb{R}^d)).\]
This establishes the step of induction and, hence, completes the proof of the lemma.
\end{proof}

\begin{lem}\label{final lemma} Let $\Phi$ be a diffeomorphism which is affine outside of some ball. If $g\in C(\mathbb{S}^{d-1})$ and $f\in C_c(\mathbb{R}^d),$ then
\[U_{\Phi}^{-1}\pi_1(f)\pi_2(g)U_{\Phi}\in \Pi\]
and
\[\mathrm{sym}\Big(U_{\Phi}^{-1}\pi_1(f)\pi_2(g)U_{\Phi}\Big)=(f\circ\Phi^{-1}\otimes1)\cdot g(q_1^{\Phi},\cdots,q_d^{\Phi}).\]
\end{lem}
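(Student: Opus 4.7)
The strategy is to reduce the general claim to Lemma~\ref{delta to 1-delta lemma} by multiplicativity and density. First I would verify the statement for monomials $g(s) = s_{k_1}\cdots s_{k_m}$ on $\mathbb{S}^{d-1}$ with $f\in C_c^{\infty}(\mathbb{R}^d)$; then extend by linearity to polynomials, then to all of $C(\mathbb{S}^{d-1})$ via Stone--Weierstrass, and finally from $f\in C_c^{\infty}(\mathbb{R}^d)$ to $f\in C_c(\mathbb{R}^d)$ by uniform approximation with fixed compact supports.

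For the monomial step, choose $\phi\in C_c^{\infty}(\mathbb{R}^d)$ equal to $1$ on $\operatorname{supp}(f)$, so that $f\cdot \phi^{m-1}=f$. Since $\pi_2$ is a $\ast$-homomorphism with $\pi_2(s_k)=D_k/\sqrt{-\Delta}$, we have
\[\pi_1(f)\pi_2(g)=\pi_1(f)\pi_2(s_{k_1})\pi_2(s_{k_2})\cdots\pi_2(s_{k_m}).\]
Applying Lemma~\ref{ber request lemma1} in the reverse direction, intercalating factors $\pi_1(\phi)$ can be inserted between the $\pi_2(s_{k_j})$ at the cost of a compact perturbation:
\[\pi_1(f)\pi_2(g)=\pi_1(f)\pi_2(s_{k_1})\cdot\prod_{j=2}^{m}\pi_1(\phi)\pi_2(s_{k_j})+K,\qquad K\in\mathcal{K}(L_2(\mathbb{R}^d)).\]
Conjugating by $U_{\Phi}$, Lemma~\ref{delta to 1-delta lemma} places each of the $m$ conjugated factors in $\Pi$, and $U_{\Phi}^{-1}KU_{\Phi}$ is compact hence in $\Pi$ by Lemma~\ref{compact operators lemma}. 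Multiplicativity of $\mathrm{sym}$ together with $f\phi^{m-1}=f$ yields the desired symbol
\[\mathrm{sym}\Big(U_{\Phi}^{-1}\pi_1(f)\pi_2(g)U_{\Phi}\Big)=(f\circ\Phi^{-1}\otimes 1)\cdot\prod_{j=1}^{m}q_{k_j}^{\Phi}=(f\circ\Phi^{-1}\otimes 1)\cdot g(q_1^{\Phi},\ldots,q_d^{\Phi}).\]
Linearity then gives the claim for all trigonometric polynomials $g$.

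For an arbitrary $g\in C(\mathbb{S}^{d-1})$, Stone--Weierstrass provides polynomials $g_n\to g$ uniformly on $\mathbb{S}^{d-1}$; since $\pi_2$ is contractive we have $\pi_2(g_n)\to\pi_2(g)$ in operator norm, and conjugation by the unitary $U_{\Phi}$ preserves norm convergence. Norm closedness of $\Pi$ places the limit in $\Pi$, and norm continuity of $\mathrm{sym}$ together with the fact that $(q_1^{\Phi},\ldots,q_d^{\Phi})$ takes values in $\mathbb{S}^{d-1}$ allows the symbol formula to pass to the limit. A completely analogous approximation $f_n\to f$ in $C_c(\mathbb{R}^d)$ by $f_n\in C_c^{\infty}(\mathbb{R}^d)$ with supports inside a fixed compact neighborhood of $\operatorname{supp}(f)$ extends the result to all $f\in C_c(\mathbb{R}^d)$. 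The principal technical obstacle is the monomial step: one must introduce exactly the right intercalating cutoffs $\pi_1(\phi)$ so that their joint effect collapses back to $f$ while permitting each individual factor to be processed separately by Lemma~\ref{delta to 1-delta lemma}; the remaining density arguments are routine.
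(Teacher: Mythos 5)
Your proof is correct and follows essentially the same route as the paper: reduce to monomials $g(s)=s_{k_1}\cdots s_{k_m}$, use Lemma~\ref{ber request lemma1} to intercalate cutoffs up to a compact error, conjugate factor-by-factor via Lemma~\ref{delta to 1-delta lemma}, then pass to polynomials by linearity and to general $g\in C(\mathbb{S}^{d-1})$ by norm density of polynomials. The one organizational difference worth noting is how $f$ is handled. You place $\pi_1(f)\pi_2(s_{k_1})$ as the first intercalated factor, so Lemma~\ref{delta to 1-delta lemma} is applied with $\phi=f$; that forces you to first assume $f\in C^{\infty}_c(\mathbb{R}^d)$ and then run a second density pass over $f$. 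The paper avoids this by writing $\pi_1(f)\pi_2(g)=\pi_1(f)\cdot\pi_1(\phi^{m})\pi_2(g)$ (using $f\phi=f$) and applying the product lemma only to $\pi_1(\phi^m)\pi_2(g)$, where $\phi$ is a smooth cutoff; then $U_{\Phi}^{-1}\pi_1(f)U_{\Phi}=\pi_1(f\circ\Phi^{-1})\in\Pi$ by Lemma~\ref{pi1u lemma} for any $f\in C_c(\mathbb{R}^d)$, so $f$ never needs to be smooth and the final approximation step in $f$ is unnecessary. Both arguments are valid; the paper's bookkeeping is slightly leaner.
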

\begin{proof} Let $\mathrm{Poly}(\mathbb{S}^{d-1})$ be the algebra of polynomials on $\mathbb{S}^{d-1}.$
	
Suppose first that $g\in \mathrm{Poly}(\mathbb{S}^{d-1})$ is monomial. Let $g(s)=\prod_{l=1}^ds_l^{n_l}.$ Let $\phi\in C^{\infty}_c(\mathbb{R}^d)$ be such that $f\cdot\phi=f.$ Obviously,
\[\pi_1(f)\pi_2(g)=\pi_1(f\cdot\phi^{\sum_{l=1}^dn_l})\cdot\pi_2(g)=\pi_1(f)\cdot\pi_1(\phi^{\sum_{l=1}^dn_l})\pi_2(g).\]
Setting $m=\sum_{l=1}^dn_l,$ $f_k=\phi,$ $1\leq k\leq m,$
\[g_k(s)=s_l,\quad s\in\mathbb{S}^{d-1},\quad \sum_{i=1}^{l-1}n_i<k\leq \sum_{i=1}^ln_i.\]
In this notations,
\[\pi_1(\phi^{\sum_{l=1}^dn_l})\pi_2(g)=\pi_1(\prod_{k=1}^mf_k)\pi_2(\prod_{k=1}^mg_k).\]
By Lemma \ref{ber request lemma1}, we have
\[\pi_1(\phi^{\sum_{l=1}^dn_l})\pi_2(g)\in \prod_{k=1}^m\pi_1(f_k)\pi_2(g_k)+\mathcal{K}(L_2(\mathbb{R}^d)).\]
Since
\[\pi_1(f_k)\pi_2(g_k)=\pi_1(\phi)\frac{D_l}{\sqrt{-\Delta}},\quad \sum_{i=1}^{l-1}n_i<k\leq \sum_{i=1}^ln_i,\]
it follows that
\[\pi_1(f)\pi_2(g)\in\pi_1(f)\cdot\prod_{l=1}^d\Big(\pi_1(\phi)\frac{D_l}{\sqrt{-\Delta}}\Big)^{n_l}+\mathcal{K}(L_2(\mathbb{R}^d)).\]
Thus,
\[U_{\Phi}^{-1}\pi_1(f)\pi_2(g)U_{\Phi}\in U_{\Phi}^{-1}\pi_1(f)U_{\Phi}\cdot\prod_{l=1}^d\Big(U_{\Phi}^{-1}\pi_1(\phi)\frac{D_l}{\sqrt{-\Delta}}U_{\Phi}\Big)^{n_l}+\mathcal{K}(L_2(\mathbb{R}^d)).\]

By Lemma \ref{delta to 1-delta lemma}, we have
\[U_{\Phi}^{-1}\pi_1(f)U_{\Phi}\cdot\prod_{l=1}^d\Big(U_{\Phi}^{-1}\pi_1(\phi)\frac{D_l}{\sqrt{-\Delta}}U_{\Phi}\Big)^{n_l}\in \Pi\]
and
\[\mathrm{sym}\Big(U_{\Phi}^{-1}\pi_1(f)U_{\Phi}\cdot\prod_{l=1}^d\Big(U_{\Phi}^{-1}\pi_1(\phi)\frac{D_l}{\sqrt{-\Delta}}U_{\Phi}\Big)^{n_l}\Big)=\]
\[=\mathrm{sym}\Big(U_{\Phi}^{-1}\pi_1(f)U_{\Phi}\Big)\cdot\prod_{l=1}^d\Big(\mathrm{sym}\Big(U_{\Phi}^{-1}\pi_1(\phi)\frac{D_l}{\sqrt{-\Delta}}U_{\Phi}\Big)\Big)^{n_l}=\]
\[=(f\circ\Phi^{-1}\otimes 1)\cdot\prod_{l=1}^d\Big((\phi\circ\Phi^{-1}\otimes1)\cdot q_l^{\Phi}\Big)^{n_l}=(f\circ\Phi^{-1}\otimes 1)\cdot\prod_{l=1}^d\big(q_l^{\Phi}\big)^{n_l}.\]
By Lemma \ref{compact operators lemma}, we have
\[U_{\Phi}^{-1}\pi_1(f)\pi_2(g)U_{\Phi}\in \Pi\]
and
\[\mathrm{sym}\Big(U_{\Phi}^{-1}\pi_1(f)\pi_2(g)U_{\Phi}\Big)=(f\circ\Phi^{-1}\otimes1)\cdot g(q_1^{\Phi},\cdots,q_d^{\Phi}).\]

By linearity, the same assertion holds if $g\in\mathrm{Poly}(\mathbb{S}^{d-1}).$ To prove the assertion in general, let $g\in C(\mathbb{S}^{d-1})$ and consider a sequence $\{g_n\}_{n\geq1}\subset\mathrm{Poly}(\mathbb{S}^{d-1})$ such that $g_n\to g$ in the uniform norm. We have
\[U_{\Phi}^{-1}\pi_1(f)\pi_2(g_n)U_{\Phi}\to U_{\Phi}^{-1}\pi_1(f)\pi_2(g)U_{\Phi}\]
in the uniform norm. Since
\[U_{\Phi}^{-1}\pi_1(f)\pi_2(g_n)U_{\Phi}\in \Pi,\quad n\geq1,\]
it follows that
\[U_{\Phi}^{-1}\pi_1(f)\pi_2(g)U_{\Phi}\in \Pi\]
and
\[\mathrm{sym}(U_{\Phi}^{-1}\pi_1(f)\pi_2(g_n)U_{\Phi})\to \mathrm{sym}(U_{\Phi}^{-1}\pi_1(f)\pi_2(g)U_{\Phi})\]
in the uniform norm. In other words,
\[(f\circ\Phi^{-1}\otimes1)\cdot g_n(q_1^{\Phi},\cdots,q_d^{\Phi})\to\mathrm{sym}(U_{\Phi}^{-1}\pi_1(f)\pi_2(g)U_{\Phi})\]
in the uniform norm. Thus,
\[\mathrm{sym}(U_{\Phi}^{-1}\pi_1(f)\pi_2(g)U_{\Phi})=(f\circ\Phi^{-1}\otimes1)\cdot g(q_1^{\Phi},\cdots,q_d^{\Phi}).\]
\end{proof}

\begin{proof}[Proof of Theorem \ref{algebra invariance special}] By the definition of the $C^{\ast}$-algebra $\Pi,$ for every $T\in\Pi,$ there exists a sequence $(T_n)_{n\geq1}$ in the $\ast$-algebra generated by $\pi_1(\mathcal{A}_1)$ and $\pi_2(\mathcal{A}_2)$ such that $T_n\to T$ in the uniform norm. We can write
\[T_n=\sum_{l=1}^{l_n}\prod_{k=1}^{k_n}\pi_1(f_{n,k,l})\pi_2(g_{n,k,l}).\]
By Lemma \ref{ber request lemma1}, we have
\[T_n\in\sum_{l=1}^{l_n}\pi_1(\prod_{k=1}^{k_n}f_{n,k,l})\pi_2(\prod_{k=1}^{k_n}g_{n,k,l})+\mathcal{K}(L_2(\mathbb{R}^d)).\]
Denote for brevity,
\[f_{n,l}=\prod_{k=1}^{k_n}f_{n,k,l}\in\mathcal{A}_1,\quad g_{n,l}=\prod_{k=1}^{k_n}g_{n,k,l}\in\mathcal{A}_2.\]
We have
\[T_n=S_n+\sum_{l=1}^{l_n}\pi_1(f_{n,l})\pi_2(g_{n,l}),\quad S_n\in\mathcal{K}(L_2(\mathbb{R}^d)).\]
By Lemma \ref{compact operators lemma}, we have
\begin{equation}\label{tn symbol eq}
\mathrm{sym}(T_n)=\sum_{l=1}^{l_n}f_{n,l}\otimes g_{n,l}.
\end{equation}

Suppose in addition that $T$ is compactly supported. In particular, $T=M_{\phi}T$ for some $\phi\in C^{\infty}_c(\mathbb{R}^d).$ Replacing $S_n$ with $M_{\phi}S_n$ and $f_{n,l}$ with $\phi\cdot f_{n,l}$ if necessary, we may assume without loss of generality that $f_{n,l}\in C^{\infty}_c(\mathbb{R}^d)$ for every $n\geq1$ and for every $1\leq l\leq l_n.$ 
	
By Lemma \ref{final lemma}, we have
\[\sum_{l=1}^{l_n}U_{\Phi}^{-1}\pi_1(f_{n,l})\pi_2(g_{n,l})U_{\Phi}\in \Pi\]
and
\[\mathrm{sym}\Big(\sum_{l=1}^{l_n}U_{\Phi}^{-1}\pi_1(f_{n,l})\pi_2(g_{n,l})U_{\Phi}\Big)=(\sum_{l=1}^{l_n}f_{n,l}\otimes g_{n,l})\circ\Theta_{\Phi},\]
where $\Theta_{\Phi}$ is introduced in Notation \ref{s2 notation}.

By Lemma \ref{compact operators lemma}, we have $U_{\Phi}^{-1}S_nU_{\Phi}\in \Pi$
and
\[\mathrm{sym}(U_{\Phi}^{-1}S_nU_{\Phi})=0.\]
Thus, $U_{\Phi}^{-1}T_nU_{\Phi}\in \Pi$ and
\[\mathrm{sym}\Big(U_{\Phi}^{-1}T_nU_{\Phi}\Big)=(\sum_{l=1}^{l_n}f_{n,l}\otimes g_{n,l})\circ\Theta_{\Phi}=\mathrm{sym}(T_n)\circ\Theta_{\Phi}.\]

Since $\Pi$ is a $C^{\ast}$-algebra and since $U_{\Phi}^{-1}T_nU_{\Phi}\to U_{\Phi}^{-1}TU_{\Phi}$ in the uniform norm, it follows that $U_{\Phi}^{-1}TU_{\Phi}\in \Pi$ and
\[\mathrm{sym}\Big(U_{\Phi}^{-1}T_nU_{\Phi}\Big)\to \mathrm{sym}\Big(U_{\Phi}^{-1}TU_{\Phi}\Big)\]
in the uniform norm. In other words, 
\[\mathrm{sym}(T_n)\circ\Theta_{\Phi}\to \mathrm{sym}\Big(U_{\Phi}^{-1}TU_{\Phi}\Big)\]
in the uniform norm. Since $\mathrm{sym}(T_n)\to\mathrm{sym}(T)$ in the uniform norm, it follows that
\[\mathrm{sym}\Big(U_{\Phi}^{-1}TU_{\Phi}\Big)=\mathrm{sym}(T)\circ\Theta_{\Phi}.\]
\end{proof}

\section{Invariance of principal symbol under local diffeomorphisms}\label{ait section}

Theorem \ref{algebra invariance theorem} is supposed to be a corollary of Theorem \ref{algebra invariance special}. To demonstrate this is indeed the case, we need an extension result for diffeomorphisms.

The following fundamental result is due to Palais \cite{PalaisTAMS} (see Corollary 4.3 there).

\begin{thm}\label{palais diffeomorphism theorem} Let $\Phi:\mathbb{R}^d\to\mathbb{R}^d$ be a smooth mapping. Necessary and sufficient conditions for $\Phi$ to be a  diffeomorphism are as follows:
\begin{enumerate}
\item for every $t\in\mathbb{R}^d,$ we have $\mathrm{det}(J_{\Phi}(t))\neq0$;
\item we have $\Phi(t)\to\infty$ as $|t|\to\infty;$  
\end{enumerate}
\end{thm}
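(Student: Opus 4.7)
The plan is to split into necessity and sufficiency. Necessity is routine: if $\Phi$ is a diffeomorphism, differentiating $\Phi^{-1}\circ\Phi=\mathrm{id}$ forces $J_{\Phi^{-1}}(\Phi(t))\cdot J_\Phi(t)=I$ for every $t\in\mathbb{R}^d$, so $\det(J_\Phi(t))\neq 0$. If moreover some sequence $|t_n|\to\infty$ had $\Phi(t_n)$ bounded, then extracting a convergent subsequence $\Phi(t_{n_k})\to y$ and applying continuity of $\Phi^{-1}$ would give $t_{n_k}\to\Phi^{-1}(y)$, contradicting $|t_n|\to\infty$.

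For sufficiency, I would proceed in three steps. First, by condition (1) and the inverse function theorem, $\Phi$ is a local diffeomorphism; in particular, it is a smooth open map. Second, I would observe that condition (2) is equivalent to $\Phi$ being proper: the preimage $\Phi^{-1}(K)$ of any compact $K\subset\mathbb{R}^d$ is closed by continuity and bounded (otherwise one could exhibit $|t_n|\to\infty$ with $\Phi(t_n)\in K$, violating (2)), hence compact. Third, I would invoke the classical topological fact that a proper local homeomorphism between connected, locally connected, locally compact Hausdorff spaces is a covering map, and then appeal to the simple connectedness of $\mathbb{R}^d$ to conclude that any such covering is a homeomorphism. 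Combined with the local diffeomorphism property, this upgrades $\Phi$ to a global diffeomorphism.

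The main obstacle is the covering-map step. The idea, for a fixed $y\in\mathbb{R}^d$, is that $\Phi^{-1}(y)$ is compact by properness and discrete by local injectivity, hence finite, say $\{t_1,\dots,t_N\}$; by the inverse function theorem there exist disjoint open neighborhoods $V_j\ni t_j$ on each of which $\Phi$ restricts to a homeomorphism onto an open set containing $y$. The delicate point, and the one where properness is essentially used, is to produce an open neighborhood $W\ni y$ such that $\Phi^{-1}(W)\subset\bigcup_j V_j$; this follows from a compactness argument applied to the closed set $\mathbb{R}^d\setminus\bigcup_j V_j$ and the properness of $\Phi$. Once such a $W$ is in hand, $\Phi$ evenly covers $W$, which completes the covering property; the remaining step—that a connected cover of a simply connected space is trivial—is standard.
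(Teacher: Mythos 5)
The paper does not prove Theorem \ref{palais diffeomorphism theorem}; it quotes it verbatim from Palais \cite{PalaisTAMS} (Corollary 4.3), so there is no in-paper argument to compare against.

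Your argument is correct and follows the standard Hadamard--Caccioppoli route: condition (1) together with the inverse function theorem makes $\Phi$ a local diffeomorphism; condition (2) says exactly that $\Phi$ is proper; a proper local homeomorphism of $\mathbb{R}^d$ to itself is a covering map; and simple connectedness of $\mathbb{R}^d$ forces that cover to be one-sheeted, hence a homeomorphism, which the local diffeomorphism property upgrades to a diffeomorphism. The one step you should make explicit is surjectivity of $\Phi$: the image is open (as $\Phi$ is a local homeomorphism), closed (a proper map into a locally compact Hausdorff space is closed), and nonempty, hence all of $\mathbb{R}^d$ by connectedness. Logically this belongs before the clause ``$\Phi^{-1}(y)$ is \dots hence finite, say $\{t_1,\dots,t_N\}$,'' since without it $N$ might a priori be zero; if instead you really are invoking the packaged theorem that a proper local homeomorphism onto a connected, locally compact Hausdorff space is a (surjective) covering map, then surjectivity is part of its conclusion and your text is fine as a sketch. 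Your description of the delicate compactness step is the right one: if no neighborhood $W\ni y$ satisfied $\Phi^{-1}(W)\subset\bigcup_j V_j$, one could produce $x_n\notin\bigcup_j V_j$ with $\Phi(x_n)\to y$; properness confines $\{x_n\}$ to a compact set, so some subsequence converges, and its limit would lie in $\Phi^{-1}(y)\subset\bigcup_j V_j$, a contradiction. Palais's own derivation in \cite{PalaisTAMS} proceeds by different means (the statement appears there as a corollary of results developed for another purpose), so yours is a genuinely distinct and, in the present context, more elementary and self-contained route to the same conclusion.
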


The next lemma is also due to Palais \cite{PalaisTAMS}. We provide a proof for convenience of the reader. Note that $B(t,r)$ is the open ball with radius $r$ centered at $t.$ 

\begin{lem}\label{palais lemma} Let $\Omega\subset\mathbb{R}^d$ be an open set and let $\Phi:\Omega\to\mathbb{R}^d$ be a smooth mapping. If $t\in\Omega$ is such that $\mathrm{det}(J_{\Phi}(t))\neq0,$ then there exists a diffeomorphism $\Phi_t:\mathbb{R}^d\to\mathbb{R}^d$ such that
\begin{enumerate}
\item $\Phi_t=\Phi$ on $B(t,r_1(t))$ with some $r_1(t)>0;$
\item $\Phi_t$ is affine outside $B(t,r_2(t))$ for some $r_2(t)<\infty;$ 
\end{enumerate}	
\end{lem}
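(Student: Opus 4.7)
The plan is to construct $\Phi_t$ explicitly by interpolating between $\Phi$ near $t$ and its first-order Taylor polynomial at $t$ far from $t$, and then to verify the hypotheses of Theorem \ref{palais diffeomorphism theorem}. Set
\[
L(s)=\Phi(t)+J_\Phi(t)(s-t),\quad s\in\mathbb{R}^d,
\]
so that $L$ is an affine bijection (since $\det(J_\Phi(t))\neq 0$). Fix a smooth cutoff $\chi\in C^\infty_c(\mathbb{R})$ with $\chi=1$ on $[0,1],$ $\chi=0$ on $[2,\infty),$ $0\leq\chi\leq 1.$ Choose $\epsilon>0$ small enough that the closed ball $\overline{B(t,2\epsilon)}\subset\Omega,$ and define
\[
\Phi_t(s)=L(s)+\chi\!\left(\tfrac{|s-t|}{\epsilon}\right)\bigl(\Phi(s)-L(s)\bigr),\quad s\in\mathbb{R}^d,
\]
with the convention that the correction is zero outside $B(t,2\epsilon).$ By construction $\Phi_t\in C^\infty(\mathbb{R}^d,\mathbb{R}^d),$ $\Phi_t=\Phi$ on $B(t,\epsilon),$ and $\Phi_t=L$ outside $B(t,2\epsilon),$ so conditions (1) and (2) of the lemma will hold with $r_1(t)=\epsilon$ and $r_2(t)=2\epsilon.$

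It then remains to verify that $\Phi_t$ is a diffeomorphism via Theorem \ref{palais diffeomorphism theorem}. Condition (2) of that theorem is immediate, since $\Phi_t$ agrees with the affine bijection $L$ on the complement of a bounded set, whence $|\Phi_t(s)|\to\infty$ as $|s|\to\infty.$ The core of the argument is condition (1): we must show that, for $\epsilon$ sufficiently small, $\det(J_{\Phi_t}(s))\neq 0$ for every $s\in\mathbb{R}^d.$ Writing $\psi(s)=\chi(|s-t|/\epsilon)$ and computing directly,
\[
J_{\Phi_t}(s)-J_\Phi(t)=\psi(s)\bigl(J_\Phi(s)-J_\Phi(t)\bigr)+\bigl(\Phi(s)-L(s)\bigr)(\nabla\psi(s))^{\ast}.
\]
Outside $B(t,2\epsilon)$ both terms vanish. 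Inside, smoothness of $\Phi$ gives $|J_\Phi(s)-J_\Phi(t)|=O(\epsilon),$ while Taylor's theorem gives $|\Phi(s)-L(s)|\leq C|s-t|^2\leq 4C\epsilon^2,$ and the chain rule yields $|\nabla\psi(s)|\leq \epsilon^{-1}\|\chi'\|_\infty.$ Hence
\[
\sup_{s\in\mathbb{R}^d}\bigl|J_{\Phi_t}(s)-J_\Phi(t)\bigr|=O(\epsilon).
\]
Since $\det$ is continuous and $\det(J_\Phi(t))\neq 0,$ we can shrink $\epsilon$ so that $\det(J_{\Phi_t}(s))\neq 0$ for every $s\in\mathbb{R}^d,$ and the lemma follows from Theorem \ref{palais diffeomorphism theorem}.

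The main obstacle is the Jacobian estimate above: a naive cutoff on a fixed scale would yield a contribution from $(\Phi-L)(\nabla\psi)^{\ast}$ that is merely bounded, potentially destroying nondegeneracy. The quadratic vanishing $\Phi(s)-L(s)=O(|s-t|^2)$ from the fact that $L$ is the first-order Taylor polynomial of $\Phi$ at $t$ is essential, as it precisely compensates the factor $\epsilon^{-1}$ coming from $\nabla\psi$ and yields overall smallness of order $\epsilon.$ All other steps are routine.
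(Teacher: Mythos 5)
Your proof is correct and follows essentially the same route as the paper: interpolate between $\Phi$ and its first-order Taylor polynomial at $t$ via a cutoff supported on a small ball, then verify the two conditions of Theorem \ref{palais diffeomorphism theorem}. The only cosmetic difference is that the paper first normalises $t=0,$ $\Phi(0)=0,$ $J_\Phi(0)=1$ (so that $L=\mathrm{id}$) and merely asserts that ``a direct calculation shows $\det(J_{\Psi_r})\to 1$ uniformly,'' whereas you skip the normalisation and carry out that direct calculation explicitly, in particular isolating the quadratic Taylor-remainder bound $|\Phi(s)-L(s)|=O(|s-t|^2)$ that compensates the $\epsilon^{-1}$ from $\nabla\psi$ --- which is precisely the point the paper leaves implicit.
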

\begin{proof} Without loss of generality, $t=0,$ $\Phi(0)=0$ and $J_{\Phi}(0)=1_{M_d(\mathbb{R})},$ the unity in the algebra of real $d\times d$ matrices.  Let $\theta\in C^{\infty}_c(\mathbb{R}^d)$ be such that $\theta=1$ on the unit ball. Set 
\[\Psi_r(u)=u+\theta(\frac{u}{r})\cdot (\Phi(u)-u),\quad u\in\mathbb{R}^d.\]
It is clear that $\Psi_r$ is well-defined smooth mapping for every sufficiently small $r>0.$ A direct calculation shows that $\mathrm{det}(J_{\Psi_r})\to 1$ in the uniform norm as $r\to0.$ In particular, for sufficiently small $r>0,$ $\mathrm{det}(J_{\Psi_r})$ never vanishes. It follows from Theorem \ref{palais diffeomorphism theorem} that, for sufficiently small $r>0,$ $\Psi_r:\mathbb{R}^d\to\mathbb{R}^d$ is a diffeomorphism. Choose any such $r$ and denote it by $r(0).$ Set $\Phi_0=\Psi_{r(0)}.$ This diffeomorphism obviously satisfies the required properties.	
\end{proof}

\begin{lem}\label{passing from open set to rd} Let $\Omega,\Omega'\subset\mathbb{R}^d$ and let $\Phi:\Omega\to\Omega'$ be a diffeomorphism. Let $B\subset\Omega$ be a ball and let $\Phi_0:\mathbb{R}^d\to\mathbb{R}^d$ be a diffeomorphism such that $\Phi_0=\Phi$ on $B.$ If $T\in B(L_2(\mathbb{R}^d))$ is supported on $B,$ then
\[\mathrm{Ext}_{\Omega'}\Big(U_{\Phi}^{-1}\cdot \mathrm{Rest}_{\Omega}(T)\cdot U_{\Phi}\Big)=U_{\Phi_0}^{-1}TU_{\Phi_0}.\]
\end{lem}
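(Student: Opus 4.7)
The equality is between two operators in $B(L_2(\mathbb{R}^d))$, so I would verify it by showing that they agree on any test vector $\xi \in L_2(\mathbb{R}^d)$. Unpacking the Rest/Ext notation, the left-hand side applied to $\xi$ equals the vector $U_\Phi^{-1}\bigl(\mathrm{Rest}_\Omega(T)\bigl(U_\Phi(M_{\chi_{\Omega'}}\xi)\bigr)\bigr) \in L_2(\Omega')$, re-embedded into $L_2(\mathbb{R}^d)$ by extension by zero. The right-hand side is $U_{\Phi_0}^{-1} T U_{\Phi_0} \xi \in L_2(\mathbb{R}^d)$. I will sandwich the comparison in two steps, working outwards from the operator $T$.

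\textbf{Step 1 (innermost layer).} First I would show the pointwise identity
\[
M_{\chi_B}\cdot U_\Phi(M_{\chi_{\Omega'}}\xi) = M_{\chi_B}\cdot U_{\Phi_0}\xi
\]
as elements of $L_2(\mathbb{R}^d)$. This is immediate from the definitions of $U_\Phi$ and $U_{\Phi_0}$: for $t\in B$ one has $\Phi(t)=\Phi_0(t)\in\Phi(B)\subset\Omega'$ and $J_\Phi(t)=J_{\Phi_0}(t)$, so both sides equal $|\det J_\Phi|^{1/2}(t)\,\xi(\Phi(t))$ on $B$, and the factor $\chi_{\Omega'}(\Phi_0(t))=1$ is harmless. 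Because $T$ is supported on $B$, meaning $T = M_{\chi_B} T M_{\chi_B}$, this gives $T U_{\Phi_0}\xi = \mathrm{Rest}_\Omega(T)\bigl(U_\Phi(M_{\chi_{\Omega'}}\xi)\bigr)$, where the equality holds in $L_2(B)\subset L_2(\Omega)\subset L_2(\mathbb{R}^d)$ under the natural embeddings.

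\textbf{Step 2 (outermost layer).} Next I would check that applying $U_\Phi^{-1}$ (and extending by zero from $L_2(\Omega')$ to $L_2(\mathbb{R}^d)$) agrees with applying $U_{\Phi_0}^{-1}$, provided the input lies in $L_2(B)$. Take any $\eta\in L_2(B)$. Since $\Phi_0$ restricted to $B$ equals $\Phi$, its inverse $\Phi_0^{-1}$ restricted to $\Phi_0(B)=\Phi(B)\subset\Omega'$ equals $\Phi^{-1}$; moreover, both $(U_\Phi^{-1}\eta)$ and $(U_{\Phi_0}^{-1}\eta)$ vanish outside $\Phi(B)$ because $\eta$ is supported in $B$. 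For $s\in\Phi(B)$ the pointwise formulas give
\[
(U_\Phi^{-1}\eta)(s) = |\det J_{\Phi^{-1}}|^{1/2}(s)\,\eta(\Phi^{-1}(s)) = |\det J_{\Phi_0^{-1}}|^{1/2}(s)\,\eta(\Phi_0^{-1}(s)) = (U_{\Phi_0}^{-1}\eta)(s),
\]
so the two outputs coincide as elements of $L_2(\mathbb{R}^d)$.

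\textbf{Conclusion.} Combining Steps 1 and 2 with the Rest/Ext bookkeeping yields the claimed equality. There is no serious obstacle here; the only delicate point is keeping track of the identifications between $L_2(\Omega')$, $L_2(\Omega)$ and $L_2(\mathbb{R}^d)$ induced by $\mathrm{Rest}$ and $\mathrm{Ext}$, and verifying that the characteristic functions $\chi_{\Omega'}$, $\chi_\Omega$ appearing in these identifications act as $1$ on the relevant supports $\Phi(B)$ and $B$ (which follows from $B\subset\Omega$ and $\Phi(B)\subset\Omega'$). Once this is observed, the proof is a direct pointwise verification.
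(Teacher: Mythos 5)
Your proof is correct, and it takes a modestly different route from the paper. The paper first observes that both sides are continuous in the weak operator topology, reduces to the case where $T$ is a rank-one operator $\xi \mapsto \langle\xi,\xi_1\rangle\xi_2$ with $\xi_1,\xi_2$ supported in $B$, and then verifies the equality by writing out what each side does to a test vector in terms of $\xi_1,\xi_2$; the insertion of characteristic functions $\chi_{\Omega'}$ and the replacement of $\Phi$ by $\Phi_0$ are justified by the support assumptions. You instead verify the equality directly on an arbitrary test vector $\xi\in L_2(\mathbb{R}^d)$ by peeling through the $\mathrm{Rest}$/$\mathrm{Ext}$ identifications and using $T=M_{\chi_B}TM_{\chi_B}$ to localize the computation to $B$ and $\Phi(B)$. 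This avoids the WOT-density/continuity step entirely, which arguably makes the argument slightly more elementary; what the paper's rank-one reduction buys is a cleaner "split" of $T$ into an incoming and an outgoing factor, which keeps the bookkeeping in one display rather than two. The essential facts used are identical: $\Phi=\Phi_0$ and $J_\Phi=J_{\Phi_0}$ on $B$, $\Phi^{-1}=\Phi_0^{-1}$ on $\Phi(B)$, and the characteristic functions of $\Omega$ and $\Omega'$ act as the identity on the relevant supports because $B\subset\Omega$ and $\Phi(B)\subset\Omega'$.
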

\begin{proof} Indeed, since both sides are continuous in weak operator topology, it suffices to prove the assertion for the case when $T$ is rank $1$ operator.
	
Let 
\[T\xi=\langle \xi,\xi_1\rangle \xi_2,\quad \xi\in L_2(\mathbb{R}^d).\]
where $\xi_1,\xi_2\in L_2(\mathbb{R}^d)$ are supported in $B.$ It is immediate that
\[\Big(U_{\Phi_0}^{-1}TU_{\Phi_0}\Big)\xi=\langle\xi, U_{\Phi_0}^{-1}\xi_1\rangle\cdot U_{\Phi_0^{-1}}\xi_2,\quad \xi\in L_2(\mathbb{R}^d),\]
\[\Big(U_{\Phi}^{-1}\cdot \mathrm{Rest}_{\Omega}(T)\cdot U_{\Phi}\Big)\xi=\langle\xi, U_{\Phi}^{-1}\xi_1\rangle\cdot U_{\Phi^{-1}}\xi_2,\quad \xi\in L_2(U'),\]
\[\Big(\mathrm{Ext}_{\Omega'}\Big(U_{\Phi}^{-1}\cdot \mathrm{Rest}_{\Omega}(T)\cdot U_{\Phi}\Big)\Big)\xi=\langle\xi\cdot\chi_{\Omega'}, U_{\Phi}^{-1}\xi_1\rangle\cdot U_{\Phi^{-1}}\xi_2\cdot\chi_{\Omega'},\quad \xi\in L_2(\mathbb{R}^d).\]
Since $\xi_1$ and $\xi_2$ are supported in $B,$ it follows that expressions in the first and last displays coincide. This proves the assertion for every rank $1$ operator $T$ and, therefore, for every $T.$ 
\end{proof}

\begin{proof}[Proof of Theorem \ref{algebra invariance theorem}] Let the operator $T$ be supported on a compact set $K\subset \Omega.$ Let $t\in K.$ Let diffeomorphism $\Phi_t:\mathbb{R}^d\to\mathbb{R}^d$ and numbers $r_1(t)$ and $r_2(t)$ be as in Lemma \ref{palais lemma}.
	
The collection $\{B(t,r_1(t))\}_{t\in K}$ is an open cover of $K.$ By compactness, one can choose a finite sub-cover. So, let $\{t_n\}_{n=1}^N$ be such that $\{B(t_n,r_1(t_n))\}_{n=1}^N$ be such finite sub-cover. Let $\{\Phi_{t_n}\}_{n=1}^N$ be diffeomorphisms from $\mathbb{R}^d$ to $\mathbb{R}^d$ given by Lemma \ref{palais lemma} so that
\[\Phi(t)=\Phi_{t_n}(t),\quad t\in B(t_n,r_1(t_n)).\]
Let $\{\phi_{n}\}_{n=1}^N$ be such that $\phi_n\in C^{\infty}_c(B(t_n,r_1(t_n)))$ and 
\[\sum_{n=1}^N\phi_n^2=1 \quad \text{on}\ K.\]
Set
\[T_0=\sum_{m=1}^NM_{\phi_m}[M_{\phi_m},T],\quad T_n=M_{\phi_n}TM_{\phi_n},\quad 1\leq n\leq N.\]
We write
\begin{equation}\label{ait eq0}
T=\sum_{n=1}^NM_{\phi_n^2}T=\sum_{n=0}^NT_n.
\end{equation}
If $1\leq n\leq N,$ then $T_n$ is supported on the ball $B(t_n,r_1(t_n)).$ By Lemma \ref{passing from open set to rd}, we have
\[\mathrm{Ext}_{\Omega'}\Big(U_{\Phi}^{-1}\cdot \mathrm{Rest}_{\Omega}(T_n)\cdot U_{\Phi}\Big)=U_{\Phi_{t_n}}^{-1}T_nU_{\Phi_{t_n}}.\]
By Theorem \ref{algebra invariance special}, we have $U_{\Phi_{t_n}}^{-1}T_nU_{\Phi_{t_n}}\in \Pi$ and
\[\mathrm{sym}(U_{\Phi_{t_n}}^{-1}T_nU_{\Phi_{t_n}})=\mathrm{sym}(T_n)\circ\Theta_{\Phi_{t_n}}.\]
Thus,
\begin{equation}\label{ait eq1}
\mathrm{Ext}_{\Omega'}\Big(U_{\Phi}^{-1}\cdot \mathrm{Rest}_{\Omega}(T_n)\cdot U_{\Phi}\Big)\in\Pi
\end{equation}
and
\begin{equation}\label{ait eq2}
\mathrm{sym}\Big(\mathrm{Ext}_{\Omega'}\Big(U_{\Phi}^{-1}\cdot \mathrm{Rest}_{\Omega}(T_n)\cdot U_{\Phi}\Big)\Big)=\mathrm{sym}(T_n)\circ\Theta_{\Phi}.
\end{equation}

If $n=0,$ then $T_0$ is compact by Lemma \ref{commutator is compact}. Clearly, $T_0$ is compactly supported in $\Omega.$ If $A\in B(L_2(\Omega'))$ is compact, then $\mathrm{Ext}_{\Omega'}(A)\in B(L_2(\mathbb{R}^d))$ is also compact (see Notation \ref{s3 first nota} and recall that a composition of bounded and compact operators is compact).  Therefore,
\begin{equation}\label{ait eq3}
\mathrm{Ext}_{\Omega'}\Big(U_{\Phi}^{-1}\cdot \mathrm{Rest}_{\Omega}(T_0)\cdot U_{\Phi}\Big)\in\mathcal{K}(L_2(\mathbb{R}^d))\subset \Pi
\end{equation}
and
\begin{equation}\label{ait eq4}
\mathrm{sym}\Big(\mathrm{Ext}_{\Omega'}\Big(U_{\Phi}^{-1}\cdot \mathrm{Rest}_{\Omega}(T_0)\cdot U_{\Phi}\Big)\Big)=0.
\end{equation}

Combining \eqref{ait eq0}, \eqref{ait eq1} and \eqref{ait eq3}, we obtain
\[\mathrm{Ext}_{\Omega'}\Big(U_{\Phi}^{-1}\cdot \mathrm{Rest}_{\Omega}(T)\cdot U_{\Phi}\Big)=\sum_{n=0}^N\mathrm{Ext}_{\Omega'}\Big(U_{\Phi}^{-1}\cdot \mathrm{Rest}_{\Omega}(T_n)\cdot U_{\Phi}\Big)\in \Pi.\]
Now, combining \eqref{ait eq0}, \eqref{ait eq2} and \eqref{ait eq4}, we obtain
\[\mathrm{sym}\Big(\mathrm{Ext}_{\Omega'}\Big(U_{\Phi}^{-1}\cdot \mathrm{Rest}_{\Omega}(T)\cdot U_{\Phi}\Big)\Big)=\]
\[=\sum_{n=0}^N\mathrm{sym}\Big(\mathrm{Ext}_{\Omega'}\Big(U_{\Phi}^{-1}\cdot \mathrm{Rest}_{\Omega}(T_n)\cdot U_{\Phi}\Big)\Big)=\sum_{n=1}^N\mathrm{sym}(T_n)\circ\Theta_{\Phi}=\]
\[=\mathrm{sym}(T)\circ\Theta_{\Phi}-\mathrm{sym}(T_0)\circ\Theta_{\Phi}=\mathrm{sym}(T)\circ\Theta_{\Phi}.\]
\end{proof}

\section{Principal symbol on compact manifolds}\label{symbol construction section}

\subsection{Globalisation theorem}\label{globalisation subsection}

Globalisation theorem is a folklore. We provide its proof in Appendix \ref{appendix section} for convenience of the reader.

\begin{defi}\label{local algebras def} Let $X$ be a compact manifold with an atlas $\{(\mathcal{U}_i,h_i)\}_{i\in\mathbb{I}}.$ Let $\mathfrak{B}$ be the Borel $\sigma$-algebra on $X$ and let $\nu$ be a countably additive measure on $\mathfrak{B}.$ We say that $\{\mathcal{A}_i\}_{i\in\mathbb{I}}$ are local algebras if
\begin{enumerate}
\item\label{lada} for every $i\in\mathbb{I},$ $\mathcal{A}_i$ is a $\ast$-subalgebra in $B(L_2(X,\nu));$
\item\label{ladb} for every $i\in\mathbb{I},$ elements of $\mathcal{A}_i$ are compactly supported\footnote{This notion is introduced immediately before the Definition \ref{rough principal symbol for compact manifolds}.} in $\mathcal{U}_i;$
\item\label{ladc} for every $i,j\in\mathbb{I},$ if $T\in\mathcal{A}_i$ is compactly supported in $\mathcal{U}_i\cap\mathcal{U}_j,$ then $T\in\mathcal{A}_j;$
\item\label{ladd} for every $i\in\mathbb{I},$ if $T\in \mathcal{K}(L_2(X,\nu))$ is compactly supported in $\mathcal{U}_i,$ then $T\in\mathcal{A}_i;$
\item\label{lade} for every $i\in\mathbb{I},$ if $\phi\in C_c(\mathcal{U}_i),$ then $M_{\phi}\in\mathcal{A}_i;$
\item\label{ladf} for every $i\in\mathbb{I},$ if $\phi\in C_c(\mathcal{U}_i),$ then the closure of $M_{\phi}\mathcal{A}_iM_{\phi}$ in the uniform norm is contained in  $\mathcal{A}_i;$
\item\label{ladg} for every $i\in\mathbb{I},$ if $T\in\mathcal{A}_i$ and if $\phi\in C_c(\mathcal{U}_i),$ then $[T,M_{\phi}]\in \mathcal{K}(L_2(X,\nu)).$
\end{enumerate}
\end{defi}

\begin{defi}\label{global algebra def} In the setting of Definition \ref{local algebras def}, we say that $T\in\mathcal{A}$ if
\begin{enumerate}
\item\label{gada} for every $i\in\mathbb{I}$ and for every $\phi\in C_c(\mathcal{U}_i),$ we have $M_{\phi}TM_{\phi}\in\mathcal{A}_i;$
\item\label{gadb} for every $\psi\in C(X),$ the commutator $[T,M_{\psi}]$ is compact.
\end{enumerate}
\end{defi}

\begin{defi}\label{local homomorphism def} Let $\mathcal{B}$ be a $\ast$-algebra. In the setting of Definition \ref{local algebras def}, $\{\mathrm{hom}_i\}_{i\in\mathbb{I}}$ are called local homomorphisms if 
\begin{enumerate}
\item\label{lhda} for every $i\in\mathbb{I},$ $\mathrm{hom}_i:\mathcal{A}_i\to  \mathcal{B}$ is a $\ast$-homomorphism;
\item\label{lhdb} for every $i,j\in\mathbb{I},$ we have
$\mathrm{hom}_i=\mathrm{hom}_j$ on $\mathcal{A}_i\cap\mathcal{A}_j;$
\item\label{lhdc} $T\in\mathcal{A}_i$ is compact iff $\mathrm{hom}_i(T)=0;$
\item\label{lhdd} there exists a $\ast$-homomorphism $\mathrm{Hom}:C(X)\to\mathcal{B}$ such that 
\[\mathrm{hom}_i(M_{\phi})=\mathrm{Hom}(\phi),\quad \phi\in C_c(\mathcal{U}_i),\quad i\in\mathbb{I}.\]
\end{enumerate}
\end{defi}

\begin{thm}\label{globalisation theorem} In the setting of Definitions \ref{local algebras def}, \ref{local homomorphism def} and \ref{global algebra def}, we have
\begin{enumerate}
\item\label{gta} $\mathcal{A}$ is a unital $C^{\ast}$-subalgebra in $B(L_2(X,\nu))$ which contains $\mathcal{A}_i$ for every $i\in\mathbb{I}$ and $\mathcal{K}(L_2(X,\nu));$
\item\label{gtb} there exists a $\ast$-homomorphism $\mathrm{hom}:\mathcal{A}\to \mathcal{B}$ such that
\begin{enumerate}
\item $\mathrm{hom}=\mathrm{hom}_i$ on $\mathcal{A}_i$ for every $i\in\mathbb{I};$
\item $\mathrm{ker}(\mathrm{hom})=\mathcal{K}(L_2(X,\nu));$
\end{enumerate}
\item\label{gtc} $\ast$-homomorphism as in \eqref{gtb} is unique.
\end{enumerate}
\end{thm}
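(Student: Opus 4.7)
The proof hinges on patching via a good finite partition of unity $(\phi_n)_{n=1}^N$ on $X$, with $\phi_n \geq 0$ and $\mathrm{supp}(\phi_n) \subset \mathcal{U}_{i_n}$ for some choice $i_n \in \mathbb{I}$. The fundamental identity, valid for every $T \in \mathcal{A}$, reads
\[
T - \sum_{n=1}^N M_{\phi_n^{1/2}} T M_{\phi_n^{1/2}} = \sum_{n=1}^N M_{\phi_n^{1/2}}[M_{\phi_n^{1/2}}, T] \in \mathcal{K}(L_2(X,\nu)),
\]
since $\sum_n \phi_n = 1$ and each commutator is compact by condition (gadb). Because each summand on the left lies in $\mathcal{A}_{i_n}$ by (gada), every element of $\mathcal{A}$ equals, modulo compacts, a sum of local pieces drawn from $\bigcup_n \mathcal{A}_{i_n}$. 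This decomposition drives all three assertions.

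For (gta), closure of $\mathcal{A}$ under $\ast$, addition and scalar multiplication is immediate. For products, with $\psi \in C_c(\mathcal{U}_i)$ equal to $1$ on $\mathrm{supp}(\phi)$, one writes
\[
M_\phi T_1 T_2 M_\phi = M_\phi T_1 M_\psi T_2 M_\phi + M_\phi T_1 (1-M_\psi) T_2 M_\phi;
\]
the second term is compact via (gadb) together with $M_\phi(1-M_\psi)=0$, while the first reduces, after further commutator adjustments, to an element of the uniform closure of $M_\phi\mathcal{A}_iM_\phi$, hence to $\mathcal{A}_i$ by (ladf). The Leibniz rule yields (gadb) for products, and uniform closure again invokes (ladf). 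The containments $\mathcal{A}_i,\ \mathcal{K}(L_2(X,\nu)) \subseteq \mathcal{A}$ follow from (ladd), (lade) and (ladg), using the rewrite $[T, M_\psi] = [T, M_{\eta\psi}]$ for a cutoff $\eta \in C_c(\mathcal{U}_i)$ of $T \in \mathcal{A}_i$.

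For (gtb), define $\mathrm{hom}:\mathcal{A} \to \mathcal{B}$ by
\[
\mathrm{hom}(T) := \sum_{n=1}^N \mathrm{hom}_{i_n}(M_{\phi_n^{1/2}} T M_{\phi_n^{1/2}}),
\]
which is well-posed by (gada) and is linear and $\ast$-preserving. The central preliminary is the commutation $[\mathrm{hom}_i(T'), \mathrm{Hom}(\chi)] = 0$ for any $T' \in \mathcal{A}_i$ and $\chi \in C_c(\mathcal{U}_i)$, obtained from (ladg) together with (lhdc) and (lhdd). For $T \in \mathcal{A}_i$ with cutoff $\eta \in C_c(\mathcal{U}_i)$ satisfying $M_\eta T M_\eta = T$, the equality $M_{\phi_n^{1/2}} T M_{\phi_n^{1/2}} = M_{\phi_n^{1/2}\eta} T M_{\phi_n^{1/2}\eta}$ places each summand in $\mathcal{A}_i \cap \mathcal{A}_{i_n}$ by (ladc) and (ladf), and by (lhdb), (lhdd) and the commutativity,
\[
\mathrm{hom}_{i_n}(M_{\phi_n^{1/2}\eta} T M_{\phi_n^{1/2}\eta}) = \mathrm{Hom}(\phi_n\eta^2)\,\mathrm{hom}_i(T);
\]
summing and using $M_{\eta^2}T = T$ gives $\mathrm{hom}(T) = \mathrm{hom}_i(T)$. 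The identity $\ker(\mathrm{hom}) = \mathcal{K}(L_2(X,\nu))$ follows from (lhdc) applied summand-by-summand, combined with (ladd) and the fundamental identity.

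The main obstacle is multiplicativity, which demands extensive commutator bookkeeping. The key auxiliary identities $\mathrm{hom}(TM_\psi) = \mathrm{hom}(T)\mathrm{Hom}(\psi)$ and its left analogue, for $T \in \mathcal{A}$ and $\psi \in C(X)$, follow from the definition by reducing to the corresponding property of each $\mathrm{hom}_{i_n}$ on local cutoffs. Writing $T = K + \sum_n T_n$, $S = K'+\sum_m S_m$ modulo compacts with $T_n \in \mathcal{A}_{i_n}$, the general case reduces to $\mathrm{hom}(T_nS_m) = \mathrm{hom}_{i_n}(T_n)\,\mathrm{hom}_{i_m}(S_m)$: when $\mathrm{supp}(\phi_n)\cap\mathrm{supp}(\phi_m)=\varnothing$ both sides vanish (the left from $\phi_n^{1/2}\phi_m^{1/2}=0$, the right by choosing cutoffs $\tilde\phi_n,\tilde\phi_m$ with disjoint supports so that $\mathrm{Hom}(\tilde\phi_n\tilde\phi_m)=0$ inserted between the two factors); when the supports overlap, the auxiliary identities combined with (ladc) relocalise the product into $\mathcal{A}_{i_n}\cap\mathcal{A}_{i_m}$, where (lhdb) reconciles the two applications of local homomorphisms and the multiplicativity of each $\mathrm{hom}_{i_n}$ then closes the argument. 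Uniqueness (gtc) is forced: any $\ast$-homomorphism satisfying (a) and (b) is determined on $\mathcal{K}(L_2(X,\nu)) + \sum_n\mathcal{A}_{i_n}$, which by the fundamental identity equals $\mathcal{A}$.
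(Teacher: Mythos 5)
Your proposal is correct and follows the same global strategy as the paper: patching via a good partition of unity, the same explicit formula for $\mathrm{hom}$, and the key observation that $T - \sum_n M_{\phi_n^{1/2}}TM_{\phi_n^{1/2}}$ is compact. The main divergence is in how multiplicativity of $\mathrm{hom}$ is organised. The paper decomposes only the first factor: it first proves $\mathrm{hom}(TM_\phi)=\mathrm{hom}(T)\mathrm{hom}(M_\phi)$, then $\mathrm{hom}=\mathrm{hom}_j$ on $\mathcal{A}_j$, then multiplicativity for both factors in a single $\mathcal{A}_j$, and finally multiplicativity when only the first factor is compactly supported in one chart (its Lemma on ``second homomorphism''), from which the general case follows by the single decomposition $T=\sum_n T_n$ modulo compacts. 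You instead decompose \emph{both} $T$ and $S$, reducing to $\mathrm{hom}(T_nS_m)=\mathrm{hom}_{i_n}(T_n)\mathrm{hom}_{i_m}(S_m)$ with a case split on whether $\mathrm{supp}\,\phi_n$ and $\mathrm{supp}\,\phi_m$ meet. Both routes work; the paper's asymmetric decomposition avoids the double index and the disjoint-support case (and the attendant need to relocalise $T_nS_m$ modulo compacts into $\mathcal{A}_{i_n}\cap\mathcal{A}_{i_m}$), so it is a bit leaner, but yours is a legitimate alternative. One genuine contribution of your write-up is that you make explicit the commutation $[\mathrm{hom}_i(T'),\mathrm{Hom}(\chi)]=0$ for $T'\in\mathcal{A}_i$, $\chi\in C_c(\mathcal{U}_i)$, derived from conditions (ladg), (lhdc), (lhdd); the paper uses this commutativity silently when reordering $\mathrm{hom}_j(M_{\phi_n^{1/2}\phi})\mathrm{hom}_j(T)\mathrm{hom}_j(M_{\phi\phi_n^{1/2}})$, so stating it as a lemma is an improvement in transparency. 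The treatment of (gta), the kernel identification, and uniqueness (gtc) all match the paper in substance.
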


\subsection{Construction of the principal symbol mapping}\label{construction subsection}

Let $\mathfrak{B}$ be the Borel $\sigma$-algebra on the manifold $X$ and let $\nu:\mathfrak{B}\to\mathbb{R}$ be a continuous positive density. 

It is immediate that the mapping $h_i : (\mathcal{U}_i,\nu)\to(\Omega_i,\nu\circ h_i^{-1})$ preserves the measure. Define an isometry $W_i:L_2(\mathcal{U}_i,\nu)\to L_2(\Omega_i,\nu\circ h_i^{-1})$ by setting
\[W_if=f\circ h_i^{-1},\quad f\in L_2(\mathcal{U}_i,\nu).\]
If $T$ is compactly supported in $\mathcal{U}_i,$ then $W_iTW_i^{-1}$ is understood as an element of the algebra $B(L_2(\Omega_i,\nu\circ h_i^{-1})).$ The latter operator is compactly supported in $\Omega_i.$ By Definition \ref{condition on nu}, exactly the same operator also belongs to $B(L_2(\Omega_i))$ and, therefore, can be extended to an element $\mathrm{Ext}_{\Omega_i}(W_iTW_i^{-1})$ of $B(L_2(\mathbb{R}^d)).$ For the notion $\mathrm{Ext}_{\Omega_i}$ we refer to Notation \ref{s3 second nota}.

\begin{defi}\label{localised principal-symbol-able operators} Let $X$ be a smooth compact manifold and let $\nu$ be a continuous positive density on $X.$ For every $i\in\mathbb{I},$ let $\Pi_i$ consist of the operators $T\in B(L_2(X,\nu))$ compactly supported in $\mathcal{U}_i$ and such that
\[\mathrm{Ext}_{\Omega_i}(W_iTW_i^{-1})\in \Pi.\]
\end{defi}

For example, every operator $M_{\phi},$ $\phi\in C_c(\mathcal{U}_i)$ belongs to $\Pi_i.$

For notation $C(S^{\ast}X)$ below we refer to Definition \ref{cosphere bundle def}. For the notion $\mathrm{sym},$ we refer to \eqref{euclidean symbol definition}.

\begin{defi}\label{principal symbol for compact manifold} Let $X$ be a smooth compact manifold and let $\nu$ be a continuous positive density on $X.$ For every $i\in\mathbb{I},$ the mapping $\mathrm{sym}_i:\Pi_i\to C(S^{\ast}X)$ is defined by the formula
\[\mathrm{sym}_i(T)=\mathrm{sym}(\mathrm{Ext}_{\Omega_i}(W_iTW_i^{-1}))\circ H_i,\quad T\in\Pi_i.\]
\end{defi}

\begin{thm}\label{existence of principal symbol thm} Let $X$ be a smooth compact manifold and let $\nu$ be a continuous positive density on $X.$
\begin{enumerate}
\item\label{epsta} Collection $\{\Pi_i\}_{i\in\mathbb{I}}$ introduced in Definition \ref{localised principal-symbol-able operators} satisfies all the conditions in Definition \ref{local algebras def};
\item\label{epstb} Collection $\{\mathrm{sym}_i\}_{i\in\mathbb{I}}$ introduced in Definition \ref{principal symbol for compact manifold} satisfies all the conditions in Definition \ref{local homomorphism def}.
\end{enumerate}
\end{thm}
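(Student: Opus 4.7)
The plan is to verify, one by one, each of the seven conditions of Definition \ref{local algebras def} for the family $\{\Pi_i\}_{i\in\mathbb{I}}$ and each of the four conditions of Definition \ref{local homomorphism def} for the family $\{\mathrm{sym}_i\}_{i\in\mathbb{I}}$. Both families are defined via pullback through the chart map, so almost all of these conditions reduce to corresponding properties of the Euclidean $C^{\ast}$-algebra $\Pi$ and its symbol mapping $\mathrm{sym}$. The substantive content lies in the two cross-chart consistency conditions, namely \eqref{ladc} in Definition \ref{local algebras def} and \eqref{lhdb} in Definition \ref{local homomorphism def}, which are precisely what Theorem \ref{algebra invariance theorem} is designed to deliver.

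My first step would be to set up the transfer map uniformly: for every $i\in\mathbb{I}$, write $T^{(i)}:=\mathrm{Ext}_{\Omega_i}(W_iTW_i^{-1})$. Since the density $a_i$ from Definition \ref{condition on nu} is continuous and strictly positive, this assignment is a $\ast$-algebra isomorphism from compactly-supported-in-$\mathcal{U}_i$ operators on $L_2(X,\nu)$ onto compactly-supported-in-$\Omega_i$ operators on $L_2(\mathbb{R}^d)$, up to conjugation by a bounded localised multiplication by $a_i^{\pm 1/2}$ (which lies in $\pi_1(\mathcal{A}_1)\subset \Pi$ and so is harmless). With this in hand, conditions \eqref{lada} and \eqref{ladb} of Definition \ref{local algebras def} are immediate, \eqref{ladd} follows from Lemma \ref{compact operators lemma}, \eqref{lade} from the inclusion $\pi_1(\mathcal{A}_1)\subset\Pi$, \eqref{ladf} from $\Pi$ being uniformly closed, and \eqref{ladg} reduces, after transfer, to a commutator of two elements of $\Pi$ being compact, which is Lemma \ref{commutator is compact}.

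The heart of part \eqref{epsta} is condition \eqref{ladc}: if $T\in\Pi_i$ is compactly supported in $\mathcal{U}_i\cap\mathcal{U}_j$, then $T^{(i)}$ and $T^{(j)}$ are intertwined by the chart-change unitary $U_{\Phi_{i,j}}$ (modulo the density correction just mentioned), so Theorem \ref{algebra invariance theorem} applied to $\Phi_{i,j}:\Omega_{i,j}\to\Omega_{j,i}$ yields $T^{(j)}\in\Pi$, i.e.\ $T\in\Pi_j$. For part \eqref{epstb}, the $\ast$-homomorphism property \eqref{lhda} of each $\mathrm{sym}_i$ follows from that of $\mathrm{sym}$, together with the observation that $\mathrm{sym}(T^{(i)})$ is supported in a compact subset of $\Omega_i\times\mathbb{S}^{d-1}$ (because $T^{(i)}=\pi_1(\phi)T^{(i)}\pi_1(\phi)$ for some $\phi\in C_c^{\infty}(\Omega_i)$), so $\mathrm{sym}(T^{(i)})\circ H_i$ extends continuously by zero to the whole cosphere bundle $S^{\ast}X$. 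The consistency statement \eqref{lhdb} combines the equivariance formula $\mathrm{sym}(T^{(j)})=\mathrm{sym}(T^{(i)})\circ\Theta_{\Phi_{i,j}}$ from Theorem \ref{algebra invariance theorem} with the explicit transition formula for $H_j\circ H_i^{-1}$ from Fact \ref{cotangent bundle mapping properties thm}(2); after noting that the two formulas for the sphere-direction agree up to a positive dilation that is killed by the homogeneous-of-degree-zero symbol, the identity $\mathrm{sym}_i(T)=\mathrm{sym}_j(T)$ on $\pi^{-1}(\mathcal{U}_i\cap\mathcal{U}_j)$ drops out. Condition \eqref{lhdc} follows because the transfer map preserves compactness and $\mathrm{ker}(\mathrm{sym})=\mathcal{K}(L_2(\mathbb{R}^d))$ by Lemma \ref{compact operators lemma}. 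For \eqref{lhdd}, I would define $\mathrm{Hom}(\phi):=\phi\circ\pi$ where $\pi:S^{\ast}X\to X$ is the projection, and verify directly that for $\phi\in C_c(\mathcal{U}_i)$ one has $\mathrm{sym}_i(M_\phi)=((\phi\circ h_i^{-1})\otimes 1)\circ H_i=\phi\circ\pi$, since $H_i$ projects to $h_i$ in the first coordinate.

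The main obstacle is the book-keeping for the three Hilbert spaces $L_2(X,\nu)$, $L_2(\Omega_i,\nu\circ h_i^{-1})$, and $L_2(\mathbb{R}^d)$ with Lebesgue measure. The chart map $h_i$ intertwines the first two isometrically, but identifying the second with the third requires multiplication by the square root of the density $a_i$. Correspondingly, the unitary $U_{\Phi_{i,j}}$ appearing in Theorem \ref{algebra invariance theorem} is not itself the intertwiner between $W_iTW_i^{-1}$ and $W_jTW_j^{-1}$; it differs by a factor which, after cutting off in the common support, is a smooth positive localised function, hence lies in $\pi_1(\mathcal{A}_1)\subset\Pi$. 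None of these corrections causes any real trouble --- they commute with elements of $\Pi$ modulo compacts and have trivial principal symbol (being constant in the cosphere direction) --- but writing the argument cleanly is what will make the full proof lengthy rather than one-line.
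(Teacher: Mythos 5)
Your proposal follows essentially the same route as the paper: transfer to $L_2(\mathbb{R}^d)$ via the chart map, invoke Theorem \ref{algebra invariance theorem} for the cross-chart conditions \eqref{ladc} and \eqref{lhdb}, and absorb the density/Jacobian corrections into $\Pi$ by observing that they are localized multiplications commuting with elements of $\Pi$ modulo compacts. One small inaccuracy worth flagging: because the paper identifies $W_iTW_i^{-1}$ as the \emph{same} operator on $L_2(\Omega_i,\nu\circ h_i^{-1})$ and on $L_2(\Omega_i)$ (rather than conjugating by $M_{a_i^{1/2}}$), the adjoint is twisted by $M_{a_i^{\pm 1}}$, not $M_{a_i^{\pm 1/2}}$; consequently, condition \eqref{lada} (that $\Pi_i$ is a $\ast$-subalgebra) is not "immediate" but needs the explicit formula $W_iT^{\ast}W_i^{-1}=M_{a_i^{-1}}(W_iTW_i^{-1})^{\ast}M_{a_i}$, which the paper proves as Lemma \ref{pii is staralgebra}. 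This does not affect the overall correctness of your strategy, since the conjugating factors still lie in $\pi_1(\mathcal{A}_1)\subset\Pi$ after cutoff, but the factor of $1/2$ should be dropped.
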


That is, the collection $\{\Pi_i\}_{i\in\mathbb{I}}$ of $\ast$-algebras and the collection $\{\mathrm{sym}_i\}_{i\in\mathbb{I}}$ of $\ast$-homo\-morphisms satisfy the  conditions in Theorem \ref{globalisation theorem}. 

Definition \ref{principal symbol definition thm} below is the culmination of the paper. Having this definition at hands, we easily prove Theorem \ref{symbol manifold intro thm}.

\begin{defi}\label{principal symbol definition thm} Let $X$ be a smooth compact Riemannian manifold and let $\nu$ be a continuous positive density on $X.$
\begin{enumerate}
\item The domain $\Pi_X$ of the principal symbol mapping is the $C^{\ast}$-algebra constructed in Theorem \ref{globalisation theorem} from the collection $\{\Pi_i\}_{i\in\mathbb{I}}.$
\item The principal symbol mapping $\mathrm{sym}_X:\Pi_X\to C(S^{\ast}X)$ is the $\ast$-homomorphism constructed in Theorem \ref{globalisation theorem} from the collection $\{\mathrm{sym}_i\}_{i\in\mathbb{I}}.$
\end{enumerate}
\end{defi}

\subsection{Proof of Theorem \ref{existence of principal symbol thm}}

Lemma \ref{pii is staralgebra} below delivers verification of condition \eqref{lada} in Definitions \ref{local algebras def} and \ref{local homomorphism def}.

\begin{lem}\label{pii is staralgebra} For every $i\in\mathbb{I},$ we have
\begin{enumerate}
\item $\Pi_i$ is a $\ast$-subalgebra in $B(L_2(X,\nu));$
\item $\mathrm{sym}_i:\Pi_i\to C(S^{\ast}X)$ is a $\ast$-homomorphism.
\end{enumerate}	
\end{lem}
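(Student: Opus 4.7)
My plan is to factor both parts through the transfer map $\Phi_i(T):=\mathrm{Ext}_{\Omega_i}(W_iTW_i^{-1})$, realising $\Pi_i$ as the preimage of $\Pi$ under $\Phi_i$ and $\mathrm{sym}_i$ as the composition $\mathrm{sym}\circ\Phi_i$ followed by pullback via $H_i$.

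First I would show that $\Phi_i$ is a $\ast$-homomorphism from the $\ast$-algebra $\mathcal{C}_i\subset B(L_2(X,\nu))$ of operators compactly supported in $\mathcal{U}_i$ into $B(L_2(\mathbb{R}^d))$. That $\mathcal{C}_i$ is itself closed under sums, products and adjoints is a one-line check: given $T,S\in\mathcal{C}_i$ with localising functions $\phi_1,\phi_2\in C_c(\mathcal{U}_i)$, pick $\phi\in C_c(\mathcal{U}_i)$ equal to $1$ on a neighbourhood of $\mathrm{supp}(\phi_1)\cup\mathrm{supp}(\phi_2)$ and observe that $T+S$, $TS$ and $T^\ast$ are all sandwiched by $M_\phi$. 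Linearity and $\ast$-preservation of $\Phi_i$ are immediate, since $W_i$ is unitary between $L_2(\mathcal{U}_i,\nu)$ and $L_2(\Omega_i,\nu\circ h_i^{-1})$ and since $\mathrm{Ext}_p(T)^\ast=\mathrm{Ext}_p(T^\ast)$. Multiplicativity reduces to $\mathrm{Ext}_p(AB)=\mathrm{Ext}_p(A)\mathrm{Ext}_p(B)$, which holds because both factors leave $pH$ invariant. Consequently $\Pi_i=\Phi_i^{-1}(\Pi)\cap\mathcal{C}_i$ is a $\ast$-subalgebra, proving (1).

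For (2), note that $\widetilde{\mathrm{sym}}_i:=\mathrm{sym}\circ\Phi_i:\Pi_i\to\mathcal{A}_1\otimes_{\min}\mathcal{A}_2$ is a $\ast$-homomorphism as a composition of two. By Definition \ref{principal symbol for compact manifold}, $\mathrm{sym}_i(T)=\widetilde{\mathrm{sym}}_i(T)\circ H_i$, regarded, after homogeneous extension in the cotangent fibre, as a dilation-invariant continuous function on $\pi^{-1}(\mathcal{U}_i)\setminus 0_{T^\ast X}$ and then extended by zero to all of $T^\ast X\setminus 0_{T^\ast X}$. To verify this extension lies in $C(S^\ast X)$ I would use that $T=M_\phi TM_\phi$ with $\phi\in C_c(\mathcal{U}_i)$ forces $\widetilde{\mathrm{sym}}_i(T)=((\phi\circ h_i^{-1})^2\otimes 1)\cdot\widetilde{\mathrm{sym}}_i(T)$, so the local representative vanishes in the base variable outside a compact subset of $\Omega_i$, making extension by zero continuous. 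Pointwise addition, multiplication and conjugation commute both with pullback along $H_i$ and with extension by zero outside $\pi^{-1}(\mathcal{U}_i)$, so $\mathrm{sym}_i$ inherits the $\ast$-homomorphism property from $\widetilde{\mathrm{sym}}_i$.

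The main technical obstacle is justifying that $\Phi_i$ genuinely lands in $B(L_2(\mathbb{R}^d))$ and is multiplicative despite the change of measure: the operator $W_iTW_i^{-1}$ naturally acts on $L_2(\Omega_i,\nu\circ h_i^{-1})$, not on $L_2(\Omega_i,dm)$. This is resolved by the continuous positive density hypothesis, which forces the Radon--Nikodym derivative $a_i$ to be bounded above and below on any fixed compact subset of $\Omega_i$, so the two $L_2$-norms are equivalent there and any operator compactly supported in $\Omega_i$ is simultaneously bounded for both, with the products in either picture given by the same set-theoretic composition. Modulo this bookkeeping, both (1) and (2) are routine functoriality.
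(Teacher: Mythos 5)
Your transfer-map factorisation is the right framework and agrees with the paper on the algebra and multiplicativity parts, but the claim that ``$\ast$-preservation of $\Phi_i$ [is] immediate'' is false, and this is precisely the only point at which the paper's proof has to do real work. You correctly flag the change-of-measure subtlety for boundedness and multiplicativity and resolve it by noting that the set-theoretic composition does not see the inner product. But the adjoint \emph{does} see the inner product, and that is what breaks. The operator $W_iTW_i^{-1}$ has one adjoint in $B(L_2(\Omega_i,\nu\circ h_i^{-1}))$ (namely $W_iT^\ast W_i^{-1}$, because $W_i$ is unitary between the weighted spaces) and a different adjoint in $B(L_2(\Omega_i,dm))$; since $\Pi\subset B(L_2(\mathbb{R}^d))$ carries the Lebesgue adjoint, $\Phi_i(T)^\ast$ means the latter, and in general $\Phi_i(T)^\ast\neq\Phi_i(T^\ast)$. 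Your chain ``$W_i$ unitary, $\mathrm{Ext}_p$ commutes with $\ast$'' silently switches between the two adjoints.

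The correct relation (which the paper derives) is $W_iT^\ast W_i^{-1}=M_{a_i^{-1}}\,(W_iTW_i^{-1})^\ast\,M_{a_i}$, where $a_i$ is the Radon--Nikodym density. Inserting a cutoff $\phi\in C_c(\Omega_i)$ equal to $1$ on the support of $W_iTW_i^{-1}$ gives $\Phi_i(T^\ast)=M_{a_i^{-1}\phi}\cdot\Phi_i(T)^\ast\cdot M_{a_i\phi}$; since $a_i\phi,a_i^{-1}\phi\in C_c(\mathbb{R}^d)$ these factors lie in $\Pi$, so $T^\ast\in\Pi_i$, which is how (1) is actually completed. For (2), $\Phi_i$ is therefore not a $\ast$-homomorphism and $\mathrm{sym}\circ\Phi_i$ is not automatically $\ast$-preserving; one must observe that $\mathrm{sym}(M_{a_i^{-1}\phi})\cdot\mathrm{sym}(M_{a_i\phi})=\mathrm{sym}(M_{\phi^2})$ acts as the identity on $\mathrm{sym}(\Phi_i(T))^\ast$, which cancels the density factors and yields $\mathrm{sym}_i(T^\ast)=\mathrm{sym}_i(T)^\ast$. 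Your proof omits this entirely, so it has a genuine gap at the adjoint step, even though the surrounding bookkeeping (extension by zero off $\pi^{-1}(\mathcal{U}_i)$, continuity of the locally compactly supported symbol) is fine.
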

\begin{proof} It is immediate that $\Pi_i$ is a subalgebra in $B(L_2(X,\nu))$ and that $\mathrm{sym}_i:\Pi_i\to C(S^{\ast}X)$ is a homomorphism. We need to show that $\Pi_i$ is closed with respect to taking adjoints and that $\mathrm{sym}_i$ is invariant with respect to this operation.

Let $T\in\Pi_i$ and let us show that $T^{\ast}\in\Pi_i.$ Recall that, due to the Condition \ref{condition on nu}, $\nu\circ h_i^{-1}$ is absolutely continuous and that its density denoted by $a_i$ as well as its inverse  $a_i^{-1}$ are assumed to be continuous in $\Omega_i.$ The following equality\footnote{The operators $M_{a_i}$ and $M_{a_i^{-1}}$ are unbounded. The equality should be understood as
$LHS\xi=RHS\xi$ for every compactly supported $\xi\in L_2(\Omega_i).$

Indeed, for such $\xi,$ we have $\xi_1=M_{a_i}\xi\in L_2(\Omega_i).$ Since $T$ is compactly supported in $\mathcal{U}_i,$ it follows that $(W_iTW_i^{-1})^{\ast}$ is compactly supported in $\Omega_i.$ Hence, the function $\xi_2=(W_iTW_i^{-1})^{\ast}\xi_1$ is compactly supported in $\Omega_i.$ Hence, the function $M_{a_i^{-1}}\xi_2$ belongs to $L_2(\Omega_i)$ and the right hand side of \eqref{wi with adjoints} makes sense.} is easy to verify directly.
\begin{equation}\label{wi with adjoints}
W_iT^{\ast}W_i^{-1}=M_{a_i^{-1}}\cdot (W_iTW_i^{-1})^{\ast}\cdot M_{a_i}.
\end{equation}
However, by Definition \ref{localised principal-symbol-able operators}, the operator $T$ is compactly supported in $\mathcal{U}_i.$ Hence, the operator $(W_iTW_i^{-1})^{\ast}$ is compactly supported in $\Omega_i.$ Choose $\phi\in C_c(\Omega_i)$ such that 
\[(W_iTW_i^{-1})^{\ast}=M_{\phi}\cdot (W_iTW_i^{-1})^{\ast}=(W_iTW_i^{-1})^{\ast}\cdot M_{\phi}.\]
Thus,
\[W_iT^{\ast}W_i^{-1}=M_{a_i^{-1}\phi}\cdot (W_iTW_i^{-1})^{\ast}\cdot M_{a_i\phi}.\]
Thus,
\begin{equation}\label{pii eq0}
\mathrm{Ext}_{\Omega_i}(W_iT^{\ast}W_i^{-1})=M_{a_i^{-1}\phi}\cdot (\mathrm{Ext}_{\Omega_i}(W_iTW_i^{-1}))^{\ast}\cdot M_{a_i\phi}.
\end{equation}
Since $a_i\phi,a_i^{-1}\phi\in C_c(\mathbb{R}^d),$ it follows that every factor in the right hand side of \eqref{pii eq0} belongs to $\Pi.$ Hence, so is the expression on the left hand side. In other words, $T^{\ast}\in\Pi_i.$ Thus, $\Pi_i$ is closed with respect to taking adjoints.

Recall that (by \cite{DAO1}) $\mathrm{sym}$ is a $\ast$-homomorphism. Applying $\mathrm{sym}$ to the equality \eqref{pii eq0}, we obtain
\[\mathrm{sym}_i(T^{\ast})=\mathrm{sym}(\mathrm{Ext}_{\Omega_i}(W_iT^{\ast}W_i^{-1}))=\]
\[=\mathrm{sym}(M_{a_i^{-1}\phi})\cdot \mathrm{sym}((\mathrm{Ext}_{\Omega_i}(W_iTW_i^{-1}))^{\ast})\cdot \mathrm{sym}(M_{a_i\phi})=\]
\[=\mathrm{sym}(M_{\phi^2})\cdot \mathrm{sym}(\mathrm{Ext}_{\Omega_i}(W_iTW_i^{-1}))^{\ast}=\]
\[=\mathrm{sym}(\mathrm{Ext}_{\Omega_i}(M_{\phi^2}\cdot W_iTW_i^{-1}))^{\ast}.\]
It is clear that
\[M_{\phi^2}\cdot W_iTW_i^{-1}=W_iTW_i^{-1}.\]
Thus,
\[\mathrm{sym}_i(T^{\ast})=\mathrm{sym}(\mathrm{Ext}_{\Omega_i}(W_iTW_i^{-1}))^{\ast}=\mathrm{sym}_i(T)^{\ast}.\]
\end{proof}

In the following lemma, $\Xi_{\Phi_{i,j}}$ is defined according to the Notation \ref{s3 notation}.

\begin{lem}\label{xi vs h lemma} For every $i,j\in\mathbb{I},$ we have
\[\Xi_{\Phi_{i,j}}=H_i\circ H_j^{-1}\]
on $\Omega_{j,i}\times\mathbb{R}^d.$	
\end{lem}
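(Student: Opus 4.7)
The plan is to prove this by directly unpacking both sides from their definitions and recognizing them as inverses of the same map. The only nontrivial input is Fact~\ref{cotangent bundle mapping properties thm}, which gives an explicit formula for $H_j \circ H_i^{-1}$ on $\Omega_{i,j} \times \mathbb{R}^d$, namely
\[(H_j \circ H_i^{-1})(t,s) = (\Phi_{i,j}(t), (J^{\ast}_{\Phi_{i,j}}(t))^{-1} s).\]

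First I would check the domain. Since $\Phi_{i,j}: \Omega_{i,j} \to \Omega_{j,i}$ is a diffeomorphism, $H_j \circ H_i^{-1}$ is a bijection from $\Omega_{i,j} \times \mathbb{R}^d$ onto $\Omega_{j,i} \times \mathbb{R}^d$, and its inverse $H_i \circ H_j^{-1}$ is defined on precisely $\Omega_{j,i} \times \mathbb{R}^d$, which matches the domain of $\Xi_{\Phi_{i,j}}$ as declared in Notation~\ref{s3 notation}.

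Next I would invert the formula above. Setting $(u,v) = (\Phi_{i,j}(t), (J^{\ast}_{\Phi_{i,j}}(t))^{-1} s)$ and solving for $(t,s)$, we get $t = \Phi_{i,j}^{-1}(u)$ and, substituting back, $s = J^{\ast}_{\Phi_{i,j}}(\Phi_{i,j}^{-1}(u))\, v$. Hence
\[(H_i \circ H_j^{-1})(u,v) = \bigl(\Phi_{i,j}^{-1}(u),\, J^{\ast}_{\Phi_{i,j}}(\Phi_{i,j}^{-1}(u))\, v\bigr), \quad u \in \Omega_{j,i},\ v \in \mathbb{R}^d.\]

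Finally, comparing with the definition
\[\Xi_{\Phi_{i,j}}(u,v) = \bigl(\Phi_{i,j}^{-1}(u),\, J^{\ast}_{\Phi_{i,j}}(\Phi_{i,j}^{-1}(u))\, v\bigr)\]
from Notation~\ref{s3 notation} (applied with $\Phi = \Phi_{i,j}$, $\Omega = \Omega_{i,j}$, $\Omega' = \Omega_{j,i}$), the two coincide. There is no real obstacle here; the lemma is a bookkeeping identity, and its only content is that the matrix transition rule in Fact~\ref{cotangent bundle mapping properties thm} inverts to precisely the formula packaged into $\Xi_{\Phi}$.
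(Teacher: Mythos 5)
Your proof is correct, and it takes a genuinely (if mildly) different route from the paper's. Both arguments rest on the transition formula of Fact~\ref{cotangent bundle mapping properties thm}, but they use it differently. The paper applies the Fact with the indices swapped to write
\[(H_i\circ H_j^{-1})(t,s)=\bigl(\Phi_{j,i}(t),(J^{\ast}_{\Phi_{j,i}}(t))^{-1}s\bigr),\]
and then invokes the chain rule $J_{\Phi_{i,j}}(\Phi_{i,j}^{-1}(t))\cdot J_{\Phi_{j,i}}(t)=1$ to convert $(J^{\ast}_{\Phi_{j,i}}(t))^{-1}$ into $J^{\ast}_{\Phi_{i,j}}(\Phi_{i,j}^{-1}(t))$, which is what appears in $\Xi_{\Phi_{i,j}}$. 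You instead keep the Fact with the original indices and invert the map $H_j\circ H_i^{-1}$ algebraically; the inversion produces $J^{\ast}_{\Phi_{i,j}}(\Phi_{i,j}^{-1}(u))$ directly, so no chain rule is needed and the comparison with $\Xi_{\Phi_{i,j}}$ is immediate. Your route is a touch cleaner; the paper's route amounts to the same computation packaged as a consistency check of the transition formula under index swap (which is precisely the chain rule). Both are complete and correct.
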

\begin{proof} By Notation \ref{s3 notation},
\[\Xi_{\Phi_{i,j}}(t,s)=(\Phi_{i,j}^{-1}(t),J_{\Phi_{i,j}^{\ast}(\Phi_{i,j}^{-1}(t))}s).\]
By the chain rule, we have
\[J_{\Phi_{i,j}}(\Phi_{i,j}^{-1}(t))\cdot J_{\Phi_{i,j}^{-1}}(t)=J_{\Phi_{i,j}\circ\Phi_{i,j}^{-1}}(t)=1_{M_d(\mathbb{C})}.\]
Taking into account that $\Phi_{i,j}^{-1}=\Phi_{j,i},$ we write 
\[J_{\Phi_{i,j}(\Phi_{i,j}^{-1}(t))}=(J_{\Phi_{j,i}})^{-1}(t)\mbox{ and }J_{\Phi_{i,j}(\Phi_{i,j}^{-1}(t))}^{\ast}=(J_{\Phi_{j,i}}^{\ast})^{-1}(t).\]
\end{proof}

The following lemma verifies the condition \eqref{ladc} in Definition \ref{local algebras def} and condition \eqref{lhdb} in Definition \ref{local homomorphism def}.

\begin{lem}\label{symbol on manifold transformation lemma} Let $(\mathcal{U}_i,h_i)$ and $(\mathcal{U}_j,h_j)$ be charts. Let $T\in B(L_2(X,\nu))$ be compactly supported in $\mathcal{U}_i\cap\mathcal{U}_j.$ 
\begin{enumerate}
\item If $T\in\Pi_i,$ then $T\in\Pi_j;$
\item We have $\mathrm{sym}_i(T)=\mathrm{sym}_j(T).$
\end{enumerate}
\end{lem}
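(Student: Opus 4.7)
I plan to reduce this to Theorem~\ref{algebra invariance theorem} applied to the transition diffeomorphism $\Phi:=\Phi_{i,j}\colon\Omega_{i,j}\to\Omega_{j,i}$. Set $A_i:=W_iTW_i^{-1}$ and $A_j:=W_jTW_j^{-1}$, compactly supported in $\Omega_{i,j}$ and $\Omega_{j,i}$ respectively, and let $\tilde A_i:=\mathrm{Ext}_{\Omega_i}(A_i)$, $\tilde A_j:=\mathrm{Ext}_{\Omega_j}(A_j)$. By hypothesis $\tilde A_i\in\Pi$, and the two tasks are to place $\tilde A_j$ in $\Pi$ and to match the symbols on the overlap.

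The first step is an explicit identity between $A_i$ and $A_j$. From $(W_if)(t)=f(h_i^{-1}(t))=(W_jf)(\Phi(t))$ for $f$ supported in $\mathcal{U}_i\cap\mathcal{U}_j$, we read off $W_i=V_{\Phi}W_j$ on the overlap (with $V_{\Psi}\xi:=\xi\circ\Psi$), whence $A_j=V_{\Phi^{-1}}A_iV_{\Phi}$. Substituting the factorisation $V_{\Psi}=M_{|\det(J_{\Psi})|^{-1/2}}U_{\Psi}$, invoking $U_{\Phi}^{-1}M_fU_{\Phi}=M_{f\circ\Phi^{-1}}$ (the local analogue of Lemma~\ref{pi1u lemma}), and using the chain-rule identity $\det(J_{\Phi})\circ\Phi^{-1}=\det(J_{\Phi^{-1}})^{-1}$, I simplify this to the twisted conjugation
\[
A_j=M_{h^{-1}}\cdot U_{\Phi}^{-1}A_iU_{\Phi}\cdot M_h,\qquad h:=|\det(J_{\Phi^{-1}})|^{1/2}\in C^{\infty}(\Omega_{j,i},(0,\infty)).
\]

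Next, I lift this to $L_2(\mathbb{R}^d)$ and feed it into the invariance theorem. The $t$-support of the symbol of $Y:=\mathrm{Ext}_{\Omega_{j,i}}(U_{\Phi}^{-1}\mathrm{Rest}_{\Omega_{i,j}}(\tilde A_i)U_{\Phi})$ is the $\Phi$-image of the $t$-support of $\mathrm{sym}(\tilde A_i)$, a compact set $K\subset\Omega_{j,i}$. Choose $\phi\in C_c^{\infty}(\Omega_{j,i})$ equal to $1$ on a neighbourhood of $K$ and set $\tilde h:=h\phi+(1-\phi)$, $\tilde h':=h^{-1}\phi+(1-\phi)$; both lie in $\mathcal{A}_1=\mathbb{C}+C_0(\mathbb{R}^d)$. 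The identity of the previous paragraph lifts to $\tilde A_j=M_{\tilde h'}\cdot Y\cdot M_{\tilde h}$. By Theorem~\ref{algebra invariance theorem}, $Y\in\Pi$ with homogeneous symbol $\mathrm{sym}(\tilde A_i)\circ\Xi_{\Phi}$, and since $M_{\tilde h},M_{\tilde h'}\in\pi_1(\mathcal{A}_1)\subset\Pi$, we obtain $\tilde A_j\in\Pi$, i.e.\ $T\in\Pi_j$. Because $\mathrm{sym}$ is multiplicative into a commutative algebra, $\mathrm{sym}(\tilde A_j)=\tilde h\tilde h'\cdot\mathrm{sym}(Y)$; since $\tilde h\tilde h'=1$ on $K$ (which carries $\mathrm{supp}\,\mathrm{sym}(Y)$) while both sides vanish off $K$, this equals $\mathrm{sym}(\tilde A_i)\circ\Xi_{\Phi}$. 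Lemma~\ref{xi vs h lemma} ($\Xi_{\Phi}=H_i\circ H_j^{-1}$ on $\Omega_{j,i}\times\mathbb{R}^d$) then delivers $\mathrm{sym}_j(T)=\mathrm{sym}(\tilde A_j)\circ H_j=\mathrm{sym}(\tilde A_i)\circ\Xi_{\Phi}\circ H_j=\mathrm{sym}(\tilde A_i)\circ H_i=\mathrm{sym}_i(T)$ on $\pi^{-1}(\mathcal{U}_i\cap\mathcal{U}_j)$.

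The main obstacle will be the bookkeeping behind the twisted-conjugation identity: the ``same operator'' convention used to pass from $L_2(\Omega_i,\nu\circ h_i^{-1})$ to $L_2(\Omega_i,dt)$ entangles the densities $a_i,a_j$ (linked on the overlap by $a_j\circ\Phi=a_i\cdot|\det(J_{\Phi^{-1}})|$) with the Jacobians already built into $U_{\Phi}$, and one must arrange the residual correction factors into the reciprocal pair $h,h^{-1}$ so that they collapse to $1$ at the symbol level.
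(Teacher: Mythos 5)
Your proposal is correct and follows essentially the same route as the paper: both reduce to the factorisation $V_{\Phi_{i,j}} = M_{|\det J_{\Phi_{i,j}}|^{-1/2}}U_{\Phi_{i,j}}$ and then invoke Theorem~\ref{algebra invariance theorem} together with Lemma~\ref{xi vs h lemma}; the only cosmetic difference is that the paper leaves the Jacobian correction inside the conjugation and discards it modulo compacts via Lemma~\ref{commutator is compact}, whereas you push it outside, obtain an exact identity $\tilde A_j=M_{\tilde h'}YM_{\tilde h}$, and remove it at the symbol level using multiplicativity. One small correction to your bookkeeping: you should choose $\phi$ equal to $1$ on a neighbourhood of the \emph{operator} support of $Y$ (the $\Phi$-image of the operator support of $\tilde A_i$), not merely on the $t$-support of $\mathrm{sym}(Y)$ — otherwise the claimed exact identity $\tilde A_j = M_{\tilde h'}YM_{\tilde h}$ need not hold, though the symbol-support observation still does the job at the final step.
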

\begin{proof} Let $V_{\Phi}\xi=\xi\circ\Phi$ (provided that the image of the mapping $\Phi$ is contained in the domain of the function $\xi$). Since $W_j=V_{\Phi_{i,j}}^{-1}W_i,$ it follows that (using the Notation \ref{s3 notation})
\[W_jTW_j^{-1}=V_{\Phi_{i,j}}^{-1}\cdot W_iTW_i^{-1}\cdot V_{\Phi_{i,j}}=\]
\[=U_{\Phi_{i,j}}^{-1}\cdot M_{|J_{\Phi_{i,j}}|^{\frac12}}\cdot W_iTW_i^{-1} \cdot M_{|J_{\Phi_{i,j}}|^{-\frac12}}\cdot U_{\Phi_{i,j}}.\]
	
Since $T$ is compactly supported in $\mathcal{U}_i\cap\mathcal{U}_j,$ it follows that $W_iTW_i^{-1}\in \mathrm{Rest}_{\Omega_i}(\Pi)$ is compactly supported in $\Omega_{i,j}.$ Let $A\subset\Omega_{i,j}$ be compact and such that
\[M_{\chi_A}\cdot W_iTW_i^{-1}\cdot M_{\chi_A}=W_iTW_i^{-1}.\]
Using Tietze extension theorem, choose $\phi\in C(\mathbb{R}^d)$ such that $\phi^{-1}\in C(\mathbb{R}^d)$ and such that $\phi=|J_{\Phi_{i,j}}|^{\frac12}$ on $A.$ It follows that
\[M_{|J_{\Phi_{i,j}}|^{\frac12}}\cdot W_iTW_i^{-1} \cdot M_{|J_{\Phi_{i,j}}|^{-\frac12}}=M_{\phi}\cdot W_iTW_i^{-1} \cdot M_{\phi^{-1}}=\]
\[=W_iTW_i^{-1}+[M_{\phi},W_iTW_i^{-1}] \cdot M_{\phi^{-1}}\stackrel{L.\ref{commutator is compact}}{\in} W_iTW_i^{-1}+\mathcal{K}(L_2(\Omega_i)).\]

Combining the preceding paragraphs, we conclude that 
\[W_jTW_j^{-1}\in U_{\Phi_{i,j}}^{-1}\cdot W_iTW_i^{-1}\cdot U_{\Phi_{i,j}}+\mathcal{K}(L_2(\Omega_j)).\]
Denote for brevity 
\[T_i=\mathrm{Ext}_{\Omega_i}(W_iTW_i^{-1}),\quad T_j=\mathrm{Ext}_{\Omega_j}(W_jTW_j^{-1}).\]
The preceding display can be now re-written as
\[\mathrm{Rest}_{\Omega_j}(T_j)\in U_{\Phi_{i,j}}^{-1}\cdot \mathrm{Rest}_{\Omega_i}(T_i)\cdot U_{\Phi_{i,j}}+\mathcal{K}(L_2(\Omega_j)).\]
Thus,
\[T_j\in \mathrm{Ext}_{\Omega_j}\Big(U_{\Phi_{i,j}}^{-1}\cdot \mathrm{Rest}_{\Omega_i}(T_i)\cdot U_{\Phi_{i,j}}\Big)+\mathcal{K}(L_2(\mathbb{R}^d)).\]
By Theorem \ref{algebra invariance theorem}, we have
\[\mathrm{Ext}_{\Omega_j}\Big(U_{\Phi_{i,j}}^{-1}\cdot \mathrm{Rest}_{\Omega_i}(T_i)\cdot U_{\Phi_{i,j}}\Big)\in \Pi\]
and
\[\mathrm{sym}\Big(\mathrm{Ext}_{\Omega_j}\Big(U_{\Phi_{i,j}}^{-1}\cdot \mathrm{Rest}_{\Omega_i}(T_i)\cdot U_{\Phi_{i,j}}\Big)\Big)=\mathrm{sym}(T_i)\circ\Xi_{\Phi_{i,j}}.\]
By Lemma \ref{compact operators lemma}, compact operators belong to $\Pi.$ Therefore, $T_j\in \Pi$ and
\[\mathrm{sym}(T_j)=\mathrm{sym}(T_i)\circ\Xi_{\Phi_{i,j}}\stackrel{L.\ref{xi vs h lemma}}{=}\mathrm{sym}(T_i)\circ H_i\circ H_j^{-1}.\]
Finally,
\[\mathrm{sym}_j(T)=\mathrm{sym}(T_j)\circ H_j=\mathrm{sym}(T_i)\circ H_i\circ H_j^{-1}\circ H_j=\mathrm{sym}(T_i)\circ H_i=\mathrm{sym}_i(T).\]
\end{proof}

\begin{proof}[Proof of Theorem \ref{existence of principal symbol thm}  \eqref{epsta}] The condition \eqref{lada} in Definition \ref{local algebras def} is verified in Lemma \ref{pii is staralgebra}. The condition \eqref{ladb} in Definition \ref{local algebras def} is immediate. The condition \eqref{ladc} in Definition \ref{local algebras def} is verified in Lemma \ref{symbol on manifold transformation lemma}.

Let us verify the condition \eqref{ladd} in Definition \ref{local algebras def}. If $i\in\mathbb{I}$ and if $T\in\mathcal{K}(L_2(X,\nu))$ is compactly supported in $\mathcal{U}_i,$ then $\mathrm{Ext}_{\Omega_i}(W_iTW_i^{-1})\in\mathcal{K}(L_2(\mathbb{R}^d)).$ Using Lemma \ref{compact operators lemma}, we conclude that $\mathrm{Ext}_{\Omega_i}(W_iTW_i^{-1})\in\Pi.$ In other words, $T\in\Pi_i.$

The condition \eqref{lade} in Definition \ref{local algebras def} is immediate.

Let us verify the condition \eqref{ladf} in Definition \ref{local algebras def}. Let $i\in\mathbb{I}$ and let $\phi\in C_c(\mathcal{U}_i).$ Suppose $\{T_n\}_{n\geq1}\subset M_{\phi}\Pi_iM_{\phi}$ are such that $T_n\to T$ in the uniform norm. It follows that $T$ is compactly supported in $\mathcal{U}_i$ and
\[\mathrm{Ext}_{\Omega_i}(W_iT_nW_i^{-1})\to \mathrm{Ext}_{\Omega_i}(W_iTW_i^{-1}),\quad n\to\infty,\]
in the uniform norm. The sequence on the left hand side is in $\Pi.$ Hence, so is its limit. In other words, $T\in\Pi_i.$

Let us verify the condition \eqref{ladg} in Definition \ref{local algebras def}. Let $i\in\mathbb{I}$ and let $T\in\Pi_i$ and $\phi\in C_c(\mathcal{U}_i).$ Let $\psi=\phi\circ h_i^{-1}\in C_c(\mathbb{R}^d).$ We have
\[\mathrm{Ext}_{\Omega_i}(W_i[T,M_{\phi}]W_i^{-1})=[\mathrm{Ext}_{\Omega_i}(W_iTW_i^{-1}),M_{\psi}].\]
Since $\mathrm{Ext}_{\Omega_i}(W_iTW_i^{-1})\in\Pi,$ it follows that the commutator on the right hand side is compact by Lemma \ref{commutator is compact}. Therefore, the operator on the left hand side is compact and, therefore, so is $[T,M_{\phi}].$
\end{proof}

\begin{proof}[Proof of Theorem \ref{existence of principal symbol thm} \eqref{epstb}] The condition \eqref{lhda} in Definition \ref{local homomorphism def} is verified in Lemma \ref{pii is staralgebra}. The condition \eqref{lhdb} in Definition \ref{local homomorphism def} is verified in Lemma \ref{symbol on manifold transformation lemma}.

Let us verify the condition \eqref{lhdc} in Definition \ref{local homomorphism def}. If $T\in\Pi_i$ is compact, then so is $\mathrm{Ext}_{\Omega_i}(W_iTW_i^{-1}).$ Since $\mathrm{sym}$ vanishes on compact operators, it follows that
\[\mathrm{sym}_i(T)\stackrel{D.\ref{principal symbol for compact manifold}}{=}\mathrm{sym}(\mathrm{Ext}_{\Omega_i}(W_iTW_i^{-1}))\circ H_i=0\circ H_i=0.\]
Conversely, if $T\in\Pi_i$ is such that $\mathrm{sym}_i(T)=0,$ then
\[\mathrm{sym}(\mathrm{Ext}_{\Omega_i}(W_iTW_i^{-1}))=0.\]
Since $\mathrm{ker}(\mathrm{sym})=\mathcal{K}(L_2(\mathbb{R}^d)),$ it follows that 
\[\mathrm{Ext}_{\Omega_i}(W_iTW_i^{-1})\in \mathcal{K}(L_2(\mathbb{R}^d)).\]
Thus, $T\in\mathcal{K}(L_2(X,\nu)).$

The condition \eqref{lhdd} in Definition \ref{local homomorphism def} is immediate if we take $\mathrm{Hom}$ to be the natural embedding $C(X)\to C(S^{\ast}X).$
\end{proof}

\subsection{Proof of Theorem \ref{symbol manifold intro thm}}

\begin{proof}[Proof of Theorem \ref{symbol manifold intro thm}] By Definition \ref{principal symbol definition thm}, $\Pi_X$ is a $C^{\ast}$-algebra and the mapping $\mathrm{sym}_X:\Pi_X\to C(S^{\ast}X)$ is a $\ast$-homomorphism. By Definition \ref{principal symbol definition thm} and Theorem \ref{globalisation theorem} \eqref{gtb}, $\mathrm{ker}(\mathrm{sym}_X)=\mathcal{K}(L_2(X,\nu)).$
	
Let us show that $\mathrm{sym}_X$ is surjective. Denote the image of $\mathrm{sym}_X$ by $A$ and note that $A$ is a $C^{\ast}$-subalgebra in $C(S^{\ast}X).$ Let $F\in C^{\infty}(S^{\ast}X).$ Let $(\phi_n)_{n=1}^N$ be a good\footnote{See Definition \ref{good partition of unity}.} partition of unity so that $\phi_n\in C^{\infty}_c(\mathcal{U}_{i_n})$ for $1\leq n\leq N.$ It follows that
\[q_n=(F\phi_n)\circ H_{i_n}^{-1}\in C_c(\Omega_{i_n}\times\mathbb{R}^d).\]
By Lemma \ref{81 lemma}, we have $T_{q_n}\in \Pi$ and $\mathrm{sym}(T_{q_n})=q_n.$ Let $\psi_n\in C_c(\Omega_{i_n})$ be such that $\phi_n=\phi_n\psi_n.$ We have $T_n=M_{\psi_n}T_{q_n}M_{\psi_n}\in\Pi$ and $\mathrm{sym}(T_n)=q_n\psi_n^2=q_n.$ Since $T_n$ is (bounded and) compactly supported in $\Omega_{i_n},$ it follows that $S_n=W_{i_n}^{-1}\mathrm{Rest}_{\Omega_i}(T_n)W_{i_n}$ is bounded and compactly supported in $\mathcal{U}_{i_n}.$ It is clear that $S_n\in \Pi_{i_n}\subset \Pi_X$ and that
\[\mathrm{sym}_X(S_n)=\mathrm{sym}(\mathrm{Ext}_{\Omega_{i_n}}(W_{i_n}T_nW_{i_n}^{-1}))\circ H_{i_n}=\mathrm{sym}(T_n)\circ H_{i_n}=q_n\circ H_{i_n}=F\phi_n.\]
Thus, $S=\sum_{n=1}^NS_n\in\Pi_X$ and
\[\mathrm{sym}_X(S)=\sum_{n=1}^N\mathrm{sym}_X(S_n)=\sum_{n=1}^NF\phi_n=F.\]
Hence, every $F\in C^{\infty}(S^{\ast}X)$ belongs to the $A.$ In other words, $C^{\infty}(S^{\ast}X)\subset A.$ Since $A$ is a $C^{\ast}$-subalgebra in $C(S^{\ast}X),$ it follows that $A=C(S^{\ast}X).$ Hence, $\mathrm{sym}_X$ is surjective.
\end{proof}

\section{Proof of the Connes Trace Theorem}\label{ctt section}

\begin{lem}\label{do power lemma} Let $g$ be as in Theorem \ref{lb rd self-adjoint}. Let $\phi\in C^{\infty}_c(\mathbb{R}^d).$ We have
\[M_{\phi}(1-\Delta_g)^{-\frac{r}{2}}\in\mathcal{L}_{\frac{d}{r},\infty}.\]
\end{lem}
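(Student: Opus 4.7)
The plan is to realize $\Delta_g$ as a classical pseudodifferential operator of order $2$ and then exploit the pseudodifferential calculus together with a Cwikel-type singular value estimate. Expanding the formula in Notation \ref{lb nota} via \eqref{dk mf commutator eq} one writes $\Delta_g=\sum_{|\alpha|_1\leq 2}M_{c_\alpha}D^{\alpha}$, where each coefficient $c_\alpha$ is a polynomial in the entries of $g$, $g^{-1}$, $\det(g)^{\pm 1/2}$ and their first derivatives. The hypotheses of Theorem \ref{lb rd self-adjoint} (namely $g\in C^\infty(\mathbb{R}^d,M_d(\mathbb{C}))$ with all derivatives bounded, together with $\det(g)\geq c>0$) ensure that every $c_\alpha$ is smooth with all derivatives bounded, so \eqref{do vs psdo} delivers $\Delta_g\in\Psi^2(\mathbb{R}^d)$ with principal symbol $p(t,s)=\langle g(t)^{-1}s,s\rangle$, bounded below by a positive multiple of $|s|^2$.

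Next, Theorem \ref{lb rd self-adjoint} asserts self-adjointness of $\Delta_g$ on $W^{2,2}(\mathbb{R}^d)$, so that (in the sign convention of the excerpt) $1-\Delta_g$ is a positive self-adjoint element of $\Psi^2(\mathbb{R}^d)$. Applying Theorem \ref{complex power thm} with $z=-r/2$ then gives
\[
(1-\Delta_g)^{-r/2}\in\Psi^{-r}(\mathbb{R}^d),
\]
and analogously $(1-\Delta)^{r/2}\in\Psi^r(\mathbb{R}^d)$.

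The key step is the factorisation
\[
M_\phi(1-\Delta_g)^{-r/2}=\bigl[M_\phi(1-\Delta)^{-r/2}\bigr]\cdot\bigl[(1-\Delta)^{r/2}(1-\Delta_g)^{-r/2}\bigr].
\]
By \eqref{psi product eq} the second bracketed factor lies in $\Psi^0(\mathbb{R}^d)$, hence is bounded on $L_2(\mathbb{R}^d)$ by Calderon--Vaillancourt. For the first bracketed factor, the classical Cwikel--Birman--Solomyak estimate gives $M_\phi(1-\Delta)^{-r/2}\in\mathcal{L}_{d/r,\infty}$: the function $\phi$ is bounded and compactly supported, while the Fourier multiplier symbol $(1+|s|^2)^{-r/2}$ lies in $L_{d/r,\infty}(\mathbb{R}^d)$ as a function of $s$. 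The H\"older-type ideal property $\mathcal{L}_{d/r,\infty}\cdot B(L_2(\mathbb{R}^d))\subset\mathcal{L}_{d/r,\infty}$ recalled in Subsection 2.1 then completes the proof.

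The main nontrivial input is the Cwikel-type statement $M_\phi(1-\Delta)^{-r/2}\in\mathcal{L}_{d/r,\infty}$, which I would cite from the literature (for instance the systematic treatment in \cite{LSZ-book}, or \cite{Simon-book}); beyond that, the argument is routine bookkeeping in the pseudodifferential calculus set up in Section \ref{prelim section}.
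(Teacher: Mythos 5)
Your proof is essentially the same as the paper's: the identical factorisation $M_\phi(1-\Delta_g)^{-r/2}=M_\phi(1-\Delta)^{-r/2}\cdot(1-\Delta)^{r/2}(1-\Delta_g)^{-r/2}$, the same appeal to Theorem \ref{complex power thm} to place $(1-\Delta_g)^{-r/2}$ in $\Psi^{-r}$, boundedness of the second factor via $\Psi^0$, and a Cwikel-type estimate for the first. The one place where the paper is more careful is the Cwikel step: because the weak Schatten exponent $d/r$ crosses the threshold $2$ as $r$ varies, the paper invokes three different theorems from \cite{LeSZ} (for $r<\tfrac d2$, $r=\tfrac d2$, $r>\tfrac d2$), exploiting that $\phi$ is smooth and compactly supported rather than merely in $L_{d/r}$; your catch-all reference to ``the classical Cwikel--Birman--Solomyak estimate'' (and to \cite{Simon-book}, which only covers $d/r\geq 2$) glosses over this range issue.
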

\begin{proof} By definition, the principal symbol of $1-\Delta_g$ is 
\[p_0:(t,s)\to \langle g(t)^{-1}s,s\rangle,\quad t,s\in\mathbb{R}^d.\]
By Theorem \ref{complex power thm}, we have
\[(1-\Delta_g)^{-\frac{r}{2}}\in\Psi^{-r}(\mathbb{R}^d).\]
We now write
\[M_{\phi}(1-\Delta_g)^{-\frac{r}{2}}=M_{\phi}(1-\Delta)^{-\frac{r}{2}}\cdot (1-\Delta)^{\frac{r}{2}}(1-\Delta_g)^{-\frac{r}{2}}.\]
When $r>\frac{d}{2},$ the first factor belongs to $\mathcal{L}_{\frac{d}{r},\infty}$ by Theorem 1.4 in \cite{LeSZ}. When $r=\frac{d}{2},$ the first factor belongs to $\mathcal{L}_{\frac{d}{r},\infty}$ by Theorem 1.3 in \cite{LeSZ}. When $r<\frac{d}{2},$ the first factor belongs to $\mathcal{L}_{\frac{d}{r},\infty}$ by Theorem 1.1 in \cite{LeSZ} (applied with $r<\frac{d}{2}$). The second factor belongs to $\Psi^0(\mathbb{R}^d)$ and is, therefore, bounded.
\end{proof}

\begin{lem}\label{do power psymbol lemma} Let $g$ be as in Theorem \ref{lb rd self-adjoint}. We have
\[M_{\psi}(1-\Delta_g)^{-\frac{d}{2}}(1-\Delta)^{\frac{d}{2}}\in\Pi,\]
\[\mathrm{sym}\big(M_{\psi}(1-\Delta_g)^{-\frac{d}{2}}(1-\Delta)^{\frac{d}{2}}\big)(t,s)=\psi(t)\langle g(t)^{-1}s,s\rangle^{-\frac{d}{2}}.\]
\end{lem}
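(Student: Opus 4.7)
The plan is to show that $M_\psi(1-\Delta_g)^{-d/2}(1-\Delta)^{d/2}$ agrees modulo a compact operator with a pseudodifferential-like operator $T_{\tilde q}$ from Subsection~\ref{psdo-like subsection}, and then to invoke Lemma~\ref{81 lemma} and Lemma~\ref{compact operators lemma}. The natural candidate is
\[\tilde q(t,\sigma)=\psi(t)\langle g(t)^{-1}\sigma,\sigma\rangle^{-d/2},\quad t\in\mathbb{R}^d,\ \sigma\in\mathbb{S}^{d-1}.\]
Since $\psi\in C^\infty_c(\mathbb{R}^d)$ and the hypotheses on $g$ (boundedness of all derivatives of $g$ together with $\det(g)\ge c>0$) force the eigenvalues of $g(t)^{-1}$ to lie in a fixed compact subinterval of $(0,\infty)$ for all $t\in\mathbb{R}^d$, one checks that $\tilde q\in C^\infty_c(\mathbb{R}^d\times\mathbb{S}^{d-1})$.

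On the pseudodifferential side, the proof of Lemma~\ref{do power lemma} already identifies $1-\Delta_g$ as a differential operator of order $2$ with positive principal symbol $p_0(t,s)=\langle g(t)^{-1}s,s\rangle$, so Theorem~\ref{complex power thm} with $z=-d/2$ gives
\[(1-\Delta_g)^{-d/2}-\mathrm{Op}\bigl((p_0+1)^{-d/2}\bigr)\in\Psi^{-d-1}(\mathbb{R}^d),\]
and the same theorem applied to $1-\Delta$ with $z=d/2$ gives
\[(1-\Delta)^{d/2}-\mathrm{Op}\bigl((|s|^2+1)^{d/2}\bigr)\in\Psi^{d-1}(\mathbb{R}^d).\]
Noting that $M_\psi=\mathrm{Op}(\psi(t))\in\Psi^0(\mathbb{R}^d)$ and applying Lemma~\ref{psi product lemma} twice, the product will satisfy
\[M_\psi(1-\Delta_g)^{-d/2}(1-\Delta)^{d/2}-\mathrm{Op}(q_0)\in\Psi^{-1}(\mathbb{R}^d),\quad q_0(t,s)=\psi(t)\frac{(|s|^2+1)^{d/2}}{(p_0(t,s)+1)^{d/2}}.\]

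The last step is to replace $\mathrm{Op}(q_0)$ by $T_{\tilde q}$ modulo compact operators. Lemma~\ref{82 lemma} applied with $q=\tilde q$ and a cutoff $\phi\in C^\infty_c(\mathbb{R}^d)$ equal to $1$ near the origin yields $\mathrm{Op}(r)-T_{\tilde q}\in\mathcal{K}(L_2(\mathbb{R}^d))$, where $r(t,s)=\psi(t)(1-\phi(s))|s|^d p_0(t,s)^{-d/2}$. I expect the main obstacle to be the verification $\mathrm{Op}(q_0-r)\in\Psi^{-1}(\mathbb{R}^d)$, i.e.\ checking the Calder\'on--Vaillancourt-type estimates \eqref{psim p condition} with $m=-1$ for $q_0-r$. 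The key identity
\[\frac{(|s|^2+1)^{d/2}}{(p_0+1)^{d/2}}-|s|^d p_0^{-d/2}=|s|^d p_0^{-d/2}\Bigl[(1+|s|^{-2})^{d/2}(1+p_0^{-1})^{-d/2}-1\Bigr]\]
is $O(|s|^{-2})$ uniformly in $t$ on $\{|s|\ge 1\}$ (by the two-sided bound $p_0(t,s)\asymp|s|^2$ coming from the eigenvalue bounds on $g^{-1}$), with an additional $|s|^{-k}$ gain for each $D_s^\beta$-derivative of total order $k$; the factor $\psi$ keeps everything compactly supported in $t$ and absorbs the low-$|s|$ contribution coming from $1-\phi$. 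Once this symbol estimate is established, the accumulated $\Psi^{-1}(\mathbb{R}^d)$-error in the chain of identifications is left-multiplied by $M_\psi$, hence compactly supported, and therefore compact by~\eqref{compactly supported are compact}. Putting everything together yields
\[M_\psi(1-\Delta_g)^{-d/2}(1-\Delta)^{d/2}-T_{\tilde q}\in\mathcal{K}(L_2(\mathbb{R}^d)),\]
and the conclusion $M_\psi(1-\Delta_g)^{-d/2}(1-\Delta)^{d/2}\in\Pi$ with symbol $\tilde q$ then follows from Lemma~\ref{81 lemma} combined with Lemma~\ref{compact operators lemma}.
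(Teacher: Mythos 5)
Your proposal is correct and follows essentially the same route as the paper: Theorem~\ref{complex power thm} to replace $(1-\Delta_g)^{-d/2}$ by $\mathrm{Op}((p_0+1)^{-d/2})$ modulo $\Psi^{-d-1}(\mathbb{R}^d)$, then Lemma~\ref{82 lemma} to pass to $T_{\tilde q}$, and Lemmas~\ref{81 lemma} and~\ref{compact operators lemma} to conclude (the paper saves one invocation of Lemma~\ref{psi product lemma} by noting that $(1-\Delta)^{d/2}=\mathrm{Op}((1+|s|^2)^{d/2})$ exactly, so right-multiplication by it commutes with $\mathrm{Op}$ without error). The symbol estimate you single out as the main obstacle, namely $\mathrm{Op}(q_0-r)\in\Psi^{-1}(\mathbb{R}^d)$ (compactly supported from the left, hence compact), is precisely the step the paper elides when it invokes Lemma~\ref{82 lemma} directly for $\mathrm{Op}(p_1)$, so your explicit reduction and decay estimate supplies a detail the paper leaves implicit rather than being a detour.
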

\begin{proof} By definition, the principal symbol of $1-\Delta_g$ is 
\[p_0:(t,s)\to \langle g(t)^{-1}s,s\rangle,\quad t,s\in\mathbb{R}^d.\]
By Theorem \ref{complex power thm}, we have
\[(1-\Delta_g)^{-\frac{d}{2}}=\mathrm{Op}((p_0+1)^{-\frac{d}{2}})+\mathrm{Err}_0,\quad \mathrm{Err}_0\in\Psi^{-d-1}(\mathbb{R}^d).\]
Let
\[p_1:(t,s)\to \psi(t)(1+\langle g(t)^{-1}s,s\rangle)^{-\frac{d}{2}}(1+|s|^2)^{-\frac{d}{2}},\quad t,s\in\mathbb{R}^d.\]
Clearly,
\[M_{\psi}\cdot \mathrm{Op}((p_0+1)^{-\frac{d}{2}})\cdot (1-\Delta)^{\frac{d}{2}}=\mathrm{Op}(p_1).\]
Thus,
\[M_{\psi}(1-\Delta_g)^{-\frac{d}{2}}(1-\Delta)^{\frac{d}{2}}=\mathrm{Op}(p_1)+\mathrm{Err}_1,\quad \mathrm{Err}_1\in\Psi^{-1}(\mathbb{R}^d).\]
Since both 
\[M_{\psi}(1-\Delta_g)^{-\frac{d}{2}}(1-\Delta)^{\frac{d}{2}}\mbox{ and }\mathrm{Op}(p_1)\]
are compactly supported from the left, it follows that so is $\mathrm{Err}_1.$ Thus, $\mathrm{Err}_1$ is a compact operator. Consequently, $\mathrm{Err}_1\in\Pi$ and $\mathrm{sym}(\mathrm{Err}_1)=0.$

Let
\[p_2(t,s)=\psi(t)\langle g(t)^{-1}s,s\rangle^{-\frac{d}{2}},\quad t\in\mathbb{R}^d,\quad s\in\mathbb{S}^{d-1}.\]
By Lemma \ref{82 lemma}, we have that $\mathrm{Op}(p_1)-T_{p_2}$ is compact. So, our operator belongs to $\Pi$ and its symbol equals that of $T_{p_2},$ i.e. equals $p_2.$
\end{proof}

\begin{lem}\label{ctt simplification lemma} Let $(X,G)$ be a compact Riemannian manifold. Let $\psi\in C^{\infty}(X)$ be compactly supported in the chart $(\mathcal{U}_i,h_i).$ Let $\hat{g}_i:\mathbb{R}^d\to \mathrm{GL}^+(d,\mathbb{R})$ be as in Theorem \ref{lb rd self-adjoint} and such that $\hat{g}_i=g_i$ in the neighborhood of the support of $\psi\circ h_i^{-1}.$  We have
\[\mathrm{Ext}_{\Omega_i}(W_iM_{\psi}^2(1-\Delta_G)^{-1}M_{\psi}^2W_i^{-1})-M_{\psi\circ h_i^{-1}}^2(1-\Delta_{\hat{g}_i})^{-1}M_{\psi\circ h_i^{-1}}^2\in\mathcal{L}_{\frac{d}{3},\infty}.\]
\end{lem}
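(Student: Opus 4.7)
Put $\varphi = \psi \circ h_i^{-1}$ and let
\[
A := M_{\varphi^2}(1-\Delta_{\hat g_i})^{-1}M_{\varphi^2}, \qquad \hat A := \mathrm{Ext}_{\Omega_i}\!\bigl(W_i M_{\psi^2}(1-\Delta_G)^{-1}M_{\psi^2}W_i^{-1}\bigr),
\]
so the required conclusion is $A - \hat A \in \mathcal L_{d/3,\infty}.$ The plan is to show that both $A$ and $\hat A$ are compactly supported classical pseudodifferential operators on $\mathbb R^d$ of order $-2$ with the same leading symbol $\varphi(t)^4/\langle g_i(t)^{-1}s,s\rangle.$ Their difference is then a compactly supported element of $\Psi^{-3}(\mathbb R^d),$ and the $\mathcal L_{d/3,\infty}$ conclusion follows by the argument from the proof of Lemma \ref{do power lemma}: write $A-\hat A = M_\chi(1-\Delta)^{-3/2}\cdot (1-\Delta)^{3/2}(A - \hat A)$ with $\chi\in C^\infty_c(\mathbb R^d)$ equal to $1$ on the supports; the first factor lies in $\mathcal L_{d/3,\infty}$ by Theorem 1.1 of \cite{LeSZ}, while the second is a bounded order-zero PsiDO.

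For $A,$ Theorem \ref{lb rd self-adjoint} gives self-adjointness of $1-\Delta_{\hat g_i}\in\Psi^2(\mathbb R^d)$ with positive principal symbol $\langle\hat g_i(t)^{-1}s,s\rangle,$ so Theorem \ref{complex power thm} yields $(1-\Delta_{\hat g_i})^{-1}\in\Psi^{-2}(\mathbb R^d)$ with leading homogeneous symbol $\langle\hat g_i(t)^{-1}s,s\rangle^{-1};$ multiplying by $M_{\varphi^2}$ on both sides and using $\hat g_i=g_i$ on $\mathrm{supp}(\varphi)$ yields the claim. For $\hat A,$ let $a_i\in C^\infty(\Omega_i)$ be the Radon--Nikodym density of $\nu\circ h_i^{-1}$ relative to Lebesgue and set $\tilde W_i := M_{a_i^{1/2}}W_i,$ a unitary from $L_2(\mathcal U_i,\nu)$ to $L_2(\Omega_i, dt).$ A direct computation from Definition \ref{lb manifold def} and the chain rule shows $\tilde W_i \Delta_G \tilde W_i^{-1} = M_{a_i^{1/2}}\Delta_{g_i}M_{a_i^{-1/2}}$ on smooth compactly supported functions in $\Omega_i,$ and one obtains
\[
\hat A = M_{\varphi^2}M_{a_i^{-1/2}}\,\mathrm{Ext}_{\Omega_i}\!\bigl(\tilde W_i(1-\Delta_G)^{-1}\tilde W_i^{-1}\bigr)\,M_{a_i^{1/2}}M_{\varphi^2}.
\]
Choosing an open $\mathcal O\subset\Omega_i$ neighborhood of $\mathrm{supp}(\varphi)$ on which $\hat g_i=g_i,$ and a cutoff $\eta\in C^\infty_c(\mathcal O)$ with $\eta\varphi=\varphi,$ a local parametrix (or direct resolvent-identity) argument identifies $\hat A$ as a classical compactly supported element of $\Psi^{-2}(\mathbb R^d);$ conjugation by the smooth positive $M_{a_i^{\pm 1/2}}\in\Psi^0$ is an invertible order-zero PsiDO and does not affect principal symbols, which by Notation \ref{qi nota} then gives the asserted leading symbol.

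The principal obstacle lies in this second step: showing rigorously that the globally defined $(1-\Delta_G)^{-1},$ after localization by $M_{\psi^2},$ chart transfer by $W_i,$ and density correction by $M_{a_i^{\pm 1/2}},$ indeed defines a classical compactly supported PsiDO on $\mathbb R^d$ whose leading symbol coincides with that of $(1-\Delta_{\hat g_i})^{-1}$ after the same sandwiching. A clean route is via elliptic regularity: any two exact local inverses of the elliptic operator $1-\Delta_{\hat g_i}$ on the open set $\mathcal O$ differ only by a smoothing operator on $\mathcal O,$ while the commutator errors introduced by the cutoffs $M_{\varphi^2}$ are themselves of order $-3$ and compactly supported (they arise from $[\Delta_{\hat g_i},M_{\varphi^2}]$ composed with a transferred resolvent). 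Together these facts deliver $A - \hat A \in \Psi^{-3}(\mathbb R^d)$ compactly supported, and the $\mathcal L_{d/3,\infty}$ conclusion is then immediate from the factorization stated above.
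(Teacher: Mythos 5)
Your approach differs substantially from the paper's. You aim to show that both $A := M_{\varphi^2}(1-\Delta_{\hat g_i})^{-1}M_{\varphi^2}$ and $\hat A := \mathrm{Ext}_{\Omega_i}(W_iM_\psi^2(1-\Delta_G)^{-1}M_\psi^2 W_i^{-1})$ lie in $\Psi^{-2}(\mathbb R^d)$ with matching leading symbols, deduce $A - \hat A \in \Psi^{-3}(\mathbb R^d)$, and then apply a Cwikel-type factorization. The paper avoids the question of whether $\hat A$ is a pseudodifferential operator altogether: it writes $\hat A = M_\phi(1-\Delta_{\hat g_i})^{-1} \cdot \mathrm{Ext}_{\Omega_i}(W_i(1-\Delta_G)M_\psi^2(1-\Delta_G)^{-1}M_\psi^2 W_i^{-1})$, expands the last factor via $(1-\Delta_G)M_\psi^2(1-\Delta_G)^{-1}M_\psi^2 = M_\psi^4 - [\Delta_G,M_\psi^2](1-\Delta_G)^{-1}M_\psi^2$, and controls every remainder by Lemma \ref{do power lemma}, boundedness of the commutators between Sobolev scales, and the H\"older property $\mathcal L_{d/2,\infty}\cdot\mathcal L_{d,\infty}=\mathcal L_{d/3,\infty}$. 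This is more elementary: no parametrix is constructed and no symbol-level identification of $\hat A$ is needed.

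The gap in your sketch is the step you yourself flag as ``the principal obstacle.'' Your justification -- that ``any two exact local inverses of the elliptic operator $1-\Delta_{\hat g_i}$ on $\mathcal O$ differ only by a smoothing operator'' -- does not apply as stated: the transferred $(1-\Delta_G)^{-1}$ and $(1-\Delta_{\hat g_i})^{-1}$ are exact inverses of two different globally-defined operators ($1-\Delta_G$ on $X$ with the density $\nu$, versus $1-\Delta_{\hat g_i}$ on all of $\mathbb R^d$ with Lebesgue measure), not two inverses of one operator on $\mathcal O$. To conclude that their cutoff differences are of order $-3$ you would first have to show that the cutoff, transferred, density-corrected resolvent is a right parametrix of $1-\Delta_{\hat g_i}$ modulo $\Psi^{-3}$; unwinding that claim amounts precisely to the $[\Delta_G,M_\psi^2](1-\Delta_G)^{-1}$ and $[(1-\Delta_{\hat g_i})^{-1},M_{\varphi^2}]$ commutator estimates that the paper's proof carries out explicitly. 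So your plan is not wrong, but it relocates the entire analytic content into an unproved parametrix assertion; as written, the argument is incomplete. A minor additional point: for the factorization $A-\hat A = M_\chi(1-\Delta)^{-3/2}\cdot(1-\Delta)^{3/2}(A-\hat A)$ to land in $\mathcal L_{d/3,\infty}$ you must cite the appropriate case of \cite{LeSZ} depending on the sign of $3-\tfrac{d}{2}$, exactly as in Lemma \ref{do power lemma}, not Theorem 1.1 alone.
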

\begin{proof} Let $\Omega_i'\subset\Omega_i$ be a compact set such that $\psi\circ h_i^{-1}$ is supported in $\Omega_i'$ and such that $g_i=\hat{g}_i$ on $\Omega_i'.$ Let $\phi\in C_c(\mathbb{R}^d)$ be supported in $\Omega_i'$ and such that $\phi\cdot (\psi\circ h_i^{-1})=\psi\circ h_i^{-1}.$ We write
\[\mathrm{Ext}_{\Omega_i}(W_iM_{\psi}^2(1-\Delta_G)^{-1}M_{\psi}^2W_i^{-1})=M_{\phi}^2\cdot\mathrm{Ext}_{\Omega_i}(W_iM_{\psi}^2(1-\Delta_G)^{-1}M_{\psi}^2W_i^{-1})=\]
\[=M_{\phi}(1-\Delta_{\hat{g}_i})^{-1}\cdot (1-\Delta_{\hat{g}_i})M_{\phi}\mathrm{Ext}_{\Omega_i}(W_iM_{\psi}^2(1-\Delta_G)^{-1}M_{\psi}^2W_i^{-1}).\]
It follows directly from Definition \ref{lb manifold def} that
\[(1-\Delta_{\hat{g}_i})M_{\phi}=\mathrm{Ext}_{\Omega_i}(W_i(1-\Delta_G)M_{\phi\circ h_i}W_i^{-1}).\]
Thus,
\[(1-\Delta_{\hat{g}_i})M_{\phi}\mathrm{Ext}_{\Omega_i}(W_iM_{\psi}^2(1-\Delta_G)^{-1}M_{\psi}^2W_i^{-1})=\]
\[=\mathrm{Ext}_{\Omega_i}(W_i(1-\Delta_g)M_{\phi\circ h_i}W_i^{-1})\cdot\mathrm{Ext}_{\Omega_i}(W_iM_{\psi}^2(1-\Delta_G)^{-1}M_{\psi}^2W_i^{-1})=\]
\[=\mathrm{Ext}_{\Omega_i}(W_i(1-\Delta_G)M_{\psi}^2(1-\Delta_G)^{-1}M_{\psi}^2W_i^{-1}).\]
Combining these equalities, we obtain
\[\mathrm{Ext}_{\Omega_i}(W_iM_{\psi}^2(1-\Delta_g)^{-1}M_{\psi}^2W_i^{-1})=\]
\[=M_{\phi}(1-\Delta_{\hat{g}_i})^{-1}\cdot\mathrm{Ext}_{\Omega_i}(W_i(1-\Delta_G)M_{\psi}^2(1-\Delta_G)^{-1}M_{\psi}^2W_i^{-1}).\]

Now,
\[(1-\Delta_G)M_{\psi}^2(1-\Delta_G)^{-1}M_{\psi}^2=M_{\psi}^4-[\Delta_G,M_{\psi}^2](1-\Delta_G)^{-1}M_{\psi}^2.\]
Thus,
\[\mathrm{Ext}_{\Omega_i}(W_iM_{\psi}^2(1-\Delta_G)^{-1}M_{\psi}^2W_i^{-1})=M_{\phi}(1-\Delta_{\hat{g}_i})^{-1}M_{\psi\circ h_i^{-1}}^4-\]
\[-M_{\phi}(1-\Delta_{\hat{g}_i})^{-1}\cdot \mathrm{Ext}_{\Omega_i}(W_i[\Delta_G,M_{\psi}^2](1-\Delta_G)^{-1}M_{\psi}^2W_i^{-1}).\]

Since $X$ is compact, it follows that
\[(1-\Delta_G)^{-1}:L_2(X)\to W^{2,2}(X),\quad [\Delta_G,M_{\psi}^2]:W^{2,2}(X)\to W^{1,2}(X)\]
are bounded operators. We now write
\[[\Delta_G,M_{\psi}^2](1-\Delta_G)^{-1}=(1-\Delta_G)^{-\frac12}\cdot (1-\Delta_G)^{\frac12}[\Delta_G,M_{\psi}^2](1-\Delta_G)^{-1},\]
where the first factor is in $\mathcal{L}_{d,\infty}$ and the second factor is bounded. By Lemma \ref{do power lemma}, we have
\[M_{\phi}(1-\Delta_{\hat{g}_i})^{-1}\in\mathcal{L}_{\frac{d}{2},\infty}.\]
By H\"older inequality, we have
\[M_{\phi}(1-\Delta_{\hat{g}_i})^{-1}\cdot \mathrm{Ext}_{\Omega_i}(W_i[\Delta_g,M_{\psi}^2](1-\Delta_G)^{-1}M_{\psi}^2W_i^{-1})\in\mathcal{L}_{\frac{d}{3},\infty}.\]
Thus,
\[\mathrm{Ext}_{\Omega_i}(W_iM_{\psi}^2(1-\Delta_G)^{-1}M_{\psi}^2W_i^{-1})-M_{\phi}(1-\Delta_{\hat{g}_i})^{-1}M_{\psi\circ h_i^{-1}}^4\in\mathcal{L}_{\frac{d}{3},\infty}.\]
Note that
\[M_{\phi}(1-\Delta_{\hat{g}_i})^{-1}M_{\psi\circ h_i^{-1}}^4=M_{\psi\circ h_i^{-1}}^2(1-\Delta_{\hat{g}_i})^{-1}M_{\psi\circ h_i^{-1}}^2+\]
\[+M_{\phi}\cdot [(1-\Delta_{\hat{g}_i})^{-1},M_{\psi\circ h_i^{-1}}^2]\cdot M_{\psi\circ h_i^{-1}}^2.\]
Let $\theta\in C^{\infty}_c(\mathbb{R}^d)$ be such that $\theta\cdot (\psi\circ h_i^{-1})=\psi\circ h_i^{-1}.$ We have
\[[(1-\Delta_{\hat{g}_i})^{-1},M_{\psi\circ h_i^{-1}}^2]=(1-\Delta_{\hat{g}_i})^{-1}[\Delta_{\hat{g}_i},M_{\psi\circ h_i^{-1}}^2](1-\Delta_{\hat{g}_i})^{-1}=\]
\[=(1-\Delta_{\hat{g}_i})^{-1}M_{\theta}\cdot [\Delta_{\hat{g}_i},M_{\psi\circ h_i^{-1}}^2](1-\Delta_{\hat{g}_i})^{-1}\stackrel{L.\ref{do power lemma}}{\in}\mathcal{L}_{\frac{d}{2},\infty}\cdot \mathcal{L}_{d,\infty}=\mathcal{L}_{\frac{d}{3},\infty}.\]
Combining the last three formulae, we complete the proof.
\end{proof}

\begin{lem}\label{ctt power lemma} Let $(X,G)$ be a compact Riemannian manifold. If $0\leq\psi\in C^{\infty}(X),$ then
\[M_{\psi}^d(1-\Delta_G)^{-\frac{d}{2}}M_{\psi}^d-\Big(M_{\psi}^2(1-\Delta_G)^{-1}M_{\psi}^2\Big)^{\frac{d}{2}}\in\mathcal{L}_{\frac{2d}{2d+1},\infty}.\]
The same assertion holds for $\Delta_g,$ where $g$ is as in Theorem \ref{lb rd self-adjoint} and for $\psi\in C_c^{\infty}(\mathbb{R}^d).$
\end{lem}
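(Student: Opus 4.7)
\emph{Plan.} Write $B = (1-\Delta_G)^{-1}$ in the compact case (resp.\ $B = (1-\Delta_g)^{-1}$ in the Euclidean case, with a cutoff $M_\phi$ absorbed into $M_\psi^2$ so that $M_\phi B \in \mathcal{L}_{d/2,\infty}$ by Lemma \ref{do power lemma}). The workhorse estimate will be the commutator bound
\[
[B, M_f] \;=\; -B\,[\Delta_G, M_f]\,B \;\in\; \mathcal{L}_{d/3,\infty}
\]
for $f \in C^\infty(X)$ (resp.\ $f \in C_c^\infty(\mathbb{R}^d)$ with appropriate cutoff). This holds because $[\Delta_G, M_f]$ is a first-order differential operator, so $[\Delta_G, M_f]\,B$ is of order $-1$ and hence lies in $\mathcal{L}_{d,\infty}$; combining with $B \in \mathcal{L}_{d/2,\infty}$ via H\"older yields the claim. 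A parallel estimate $[B^{1/2}, M_f] \in \mathcal{L}_{d/2,\infty}$ follows from Theorem \ref{complex power thm} applied with $z=-1/2$, and more generally $[B^s, M_f]\in \mathcal{L}_{d/(2s+1),\infty}$ for $0<s\leq d/2$.

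For \emph{even} $d = 2k$, set $A := M_\psi^2 B M_\psi^2$ and expand
\[
A^k \;=\; M_\psi^2 \cdot B \cdot M_\psi^4 \cdot B \cdot M_\psi^4 \cdots B \cdot M_\psi^4 \cdot B \cdot M_\psi^2
\]
(with $k$ copies of $B$ and $k-1$ interior $M_\psi^4$-blocks). Telescoping via $BM_\psi^4 = M_\psi^4 B + [B, M_\psi^4]$, I would inductively pull each interior $M_\psi^4$-block outward to one of the two boundaries (moving half to the left and half to the right, splitting $M_\psi^4 = M_\psi^2\cdot M_\psi^2$ when needed) so that the \textbf{principal} term is $M_\psi^{2k} B^k M_\psi^{2k} = M_\psi^d B^{d/2} M_\psi^d$ and every remaining \textbf{error} term contains at least one commutator factor $[B, M_\psi^n] \in \mathcal{L}_{d/3,\infty}$ replacing one of the original $B$'s. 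By H\"older, any such error term lies in $\mathcal{L}_{p,\infty}$ with
\[
\tfrac{1}{p} \;=\; (k-1)\cdot\tfrac{2}{d} + \tfrac{3}{d} \;=\; \tfrac{d+1}{d},
\qquad p \;=\; \tfrac{d}{d+1}.
\]
Since $\tfrac{d}{d+1} < \tfrac{2d}{2d+1}$, one has $\mathcal{L}_{d/(d+1),\infty} \subset \mathcal{L}_{2d/(2d+1),\infty}$, establishing the claim.

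For \emph{odd} $d$ write $d/2 = k+\tfrac12$ with $k = (d-1)/2$, and apply the functional-calculus factorisation $A^{d/2} = A^k \cdot A^{1/2}$. Compare $A^{1/2}$ with $M_\psi B^{1/2} M_\psi$ using the commutator estimate $[B^{1/2}, M_f]\in \mathcal{L}_{d/2,\infty}$: the same telescoping as in the even case contributes an extra $\mathcal{L}_{2d,\infty}$ remainder beyond the $\mathcal{L}_{(d-1)/(d-1+1),\infty}$ estimate for $A^k$. H\"older then places the full error in $\mathcal{L}_{d/(d+1),\infty}\subset \mathcal{L}_{2d/(2d+1),\infty}$. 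Alternatively, the non-integer power can be handled uniformly via the integral representation $A^{d/2} = c_d\int_0^\infty \lambda^{d/2-1}(A+\lambda)^{-1}A^{\lceil d/2\rceil}\,d\lambda$, commuting $M_\psi$ past the resolvents.

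The \textbf{main obstacle} is the bookkeeping of the telescoping expansion: one must verify that after all commutators are extracted the principal (commutator-free) term is exactly the symmetric expression $M_\psi^{2k} B^k M_\psi^{2k}$ rather than some asymmetric variant such as $M_\psi^{4k-2} B^k M_\psi^2$. I would handle this by a careful induction on $k$, moving a single $M_\psi^2$ past a single $B$ at each stage and always targeting whichever boundary currently has fewer $M_\psi$'s, ensuring the configuration remains symmetric throughout. The odd-$d$ case, where non-integer functional calculus intervenes, is the technically most delicate part and is where one must use either Theorem \ref{complex power thm} or the integral representation above to reduce to commutator estimates already established for integer powers.
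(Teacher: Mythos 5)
Your proposal is correct in outline but takes a genuinely different route from the paper, and the route you chose is noticeably more painful in the odd-$d$ case. Your even-$d$ telescoping argument is essentially the paper's Step 1: the paper proves by induction on the integer $n$ that
\[
M_{\psi}^{2n}(1-\Delta_G)^{-n}M_{\psi}^{2n}-\bigl(M_{\psi}^2(1-\Delta_G)^{-1}M_{\psi}^2\bigr)^n\in\mathcal{L}_{\frac{d}{2n+1},\infty},
\]
which for $n=d/2$ gives exactly your even-$d$ conclusion with the (slightly better) ideal $\mathcal{L}_{d/(d+1),\infty}$. Where the two proofs part ways is the fractional power. You split into cases and, for odd $d$, attack $A^{1/2}$ head-on via commutator estimates with $(1-\Delta_G)^{-1/2}$ or an integral representation; you correctly flag that the resulting bookkeeping is delicate (and indeed, after commuting $B^k$ past the interior $M_\psi$'s, the principal term you obtain is not automatically the symmetric $M_\psi^{2k+1}B^{d/2}M_\psi^{2k+1}$, so a further fractional-power commutator is needed to re-balance — this is left unresolved in your sketch). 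The paper avoids this entirely by a clean trick: it applies the integer-power estimate with $n=d$ (not $n=d/2$), establishes in Step~2 that
\[
\bigl(M_{\psi}^d(1-\Delta_G)^{-\frac{d}{2}}M_{\psi}^d\bigr)^2-M_{\psi}^{2d}(1-\Delta_G)^{-d}M_{\psi}^{2d}\in\mathcal{L}_{\frac{d}{2d+1},\infty}
\]
(using only a \emph{boundedness} statement about $[M_\psi^d,(1-\Delta_G)^{-d/2}](1-\Delta_G)^{(d+1)/2}$, never an ideal estimate on a fractional-power commutator), and then invokes the Birman--Koplienko--Solomyak inequality to pass from the square to the square root, landing in $\mathcal{L}_{2d/(2d+1),\infty}$ uniformly in $d$ with no parity distinction. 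Your route buys a slightly sharper ideal for even $d$, but at the cost of a case split and a fractional-power analysis whose error terms you have not fully tracked; the paper's BKS route is uniform, avoids fractional powers except in one boundedness claim, and matches the stated ideal exactly. If you want to complete your odd-$d$ argument as sketched, the missing step is an ideal estimate for $[M_\psi^{2k},B^{d/2}]$ (a genuine fractional-power commutator), which the paper's approach is specifically designed to avoid.
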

\begin{proof} \textbf{Step 1:} We prove by induction that
\begin{equation}\label{ctt power induction eq}
M_{\psi}^{2n}(1-\Delta_G)^{-n}M_{\psi}^{2n}-(M_{\psi}^2(1-\Delta_G)^{-1}M_{\psi}^2)^n\in\mathcal{L}_{\frac{d}{2n+1},\infty},\quad n\geq0.
\end{equation}

Base of induction (i.e., the case $n=1$) is obvious. It remains to prove step of induction. Suppose \eqref{ctt power induction eq} holds for $n$ and let us prove it for $n+1.$ We write
\[(M_{\psi}^2(1-\Delta_G)^{-1}M_{\psi}^2)^{n+1}-M_{\psi}^{2n+2}(1-\Delta_G)^{-n-1}M_{\psi}^{2n+2}=\]
\[=M_{\psi}^2(1-\Delta_G)^{-1}M_{\psi}^2\cdot \Big((M_{\psi}^2(1-\Delta_G)^{-1}M_{\psi}^2)^n-M_{\psi}^{2n}(1-\Delta_G)^{-n}M_{\psi}^{2n}\Big)+\]
\[+M_{\psi}^2(1-\Delta_G)^{-\frac32}\cdot (1-\Delta_G)^{\frac12}[\Delta_G,M_{\psi}^{2n}](1-\Delta_G)^{-1}\cdot M_{\psi}^2(1-\Delta_G)^{-n}M_{\psi}^{2n}-\]
\[-M_{\psi}^{2n+2}(1-\Delta_G)^{-n-1}\cdot [M_{\psi}^2,(1-\Delta_G)^n](1-\Delta_G)^{\frac12-n}\cdot (1-\Delta_G)^{-\frac12}M_{\psi}^{2n}.\]
The first term on the right hand side belongs to $\mathcal{L}_{\frac{d}{2n+3},\infty}$ by inductive assumption and H\"older inequality. Note that the operators
\[(1-\Delta_G)^{\frac12}[\Delta_G,M_{\psi}^{2n}](1-\Delta_G)^{-1},\quad [M_{\psi}^2,(1-\Delta_G)^n](1-\Delta_G)^{\frac12-n},\]
are bounded. Hence, the second and third terms on the right hand side belong to $\mathcal{L}_{\frac{d}{2n+3},\infty}$ by  H\"older inequality. This establishes the step of induction and, hence, proves the claim in Step 1.

\textbf{Step 2:} Note that
\[M_{\psi}^d(1-\Delta_G)^{-\frac{d}{2}}M_{\psi}^d-M_{\psi}^{2d}(1-\Delta_G)^{-\frac{d}{2}}=\]
\[=M_{\psi}^d\cdot [M_{\psi}^d,(1-\Delta_G)^{-\frac{d}{2}}](1-\Delta_G)^{\frac{d+1}{2}}\cdot (1-\Delta_G)^{-\frac{d+1}{2}}.\]
Since the operator
\[[M_{\psi}^d,(1-\Delta_G)^{-\frac{d}{2}}](1-\Delta_G)^{\frac{d+1}{2}}\]
is bounded, it follows that
\[M_{\psi}^d(1-\Delta_G)^{-\frac{d}{2}}M_{\psi}^d-M_{\psi}^{2d}(1-\Delta_G)^{-\frac{d}{2}}\in\mathcal{L}_{\frac{d}{d+1},\infty}.\]
Taking adjoints, we obtain
\[M_{\psi}^d(1-\Delta_G)^{-\frac{d}{2}}M_{\psi}^d-(1-\Delta_G)^{-\frac{d}{2}}M_{\psi}^{2d}\in\mathcal{L}_{\frac{d}{d+1},\infty}.\]
Therefore,
\begin{equation}\label{cpl eq0}
\Big(M_{\psi}^d(1-\Delta_G)^{-\frac{d}{2}}M_{\psi}^d\Big)^2-M_{\psi}^{2d}(1-\Delta_G)^{-d}M_{\psi}^{2d}\in\mathcal{L}_{\frac{d}{2d+1},\infty}.
\end{equation}
Applying \eqref{ctt power induction eq} with $n=d$ and using \eqref{cpl eq0}, we obtain 
\[\Big(M_{\psi}^d(1-\Delta_G)^{-\frac{d}{2}}M_{\psi}^d\Big)^2-\Big(M_{\psi}^2(1-\Delta_G)^{-1}M_{\psi}^2\Big)^d\in\mathcal{L}_{\frac{d}{2d+1},\infty}.\]
The assertion follows now from Birman-Koplienko-Solomyak inequality.
\end{proof}

We remind the reader the following version of Connes Trace Theorem on Euclidean space established in \cite{DAO1}.

\begin{thm}\label{ctt rd} Let $\varphi$ be a normalised continuous trace on $\mathcal{L}_{1,\infty}.$ If $T\in\Pi$ is compactly supported from the right (i.e., there exists $\phi\in C^{\infty}_c(\mathbb{R}^d)$ such that $T=T\pi_1(\phi)$), then 
\[\varphi(T(1-\Delta)^{-\frac{d}{2}})=c_d'\int_{\mathbb{R}^d\times\mathbb{S}^{d-1}}\mathrm{sym}(T)dm,\]
where $m$ is the product of Lebesgue measure on $\mathbb{R}^d$ and Haar measure on $\mathbb{S}^{d-1}.$
\end{thm}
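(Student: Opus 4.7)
The strategy is to view $L(T) := \varphi(T(1-\Delta)^{-d/2})$ as a linear functional on the compactly-supported-from-the-right elements of $\Pi$, show that it factors through the principal symbol, and then identify the induced functional on $C_c(\mathbb{R}^d \times \mathbb{S}^{d-1})$ as integration against a constant multiple of the product of Lebesgue and Haar measure. Well-definedness requires $T(1-\Delta)^{-d/2} \in \mathcal{L}_{1,\infty}$: if $T = T\pi_1(\phi)$ for $\phi \in C^\infty_c(\mathbb{R}^d)$, then by H\"older it suffices to know $\pi_1(\phi)(1-\Delta)^{-d/2} \in \mathcal{L}_{1,\infty}$, which is a Cwikel-type estimate (a special case of Lemma \ref{do power lemma} with $g = I$, invoking Theorem 1.3 of \cite{LeSZ}).

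Next I would show that $L$ factors through $\mathrm{sym}$. If $\mathrm{sym}(T) = 0$ then $T$ is compact by Lemma \ref{compact operators lemma}, so $T(1-\Delta)^{-d/2}$ lies in $\mathcal{K}(L_2(\mathbb{R}^d)) \cdot \mathcal{L}_{1,\infty}$, which is contained in the separable part of $\mathcal{L}_{1,\infty}$ (the quasi-norm closure of finite-rank operators), on which every continuous trace vanishes. Hence $L$ descends to a positive continuous linear functional $\widetilde L$ on $\mathrm{sym}(\Pi_c) \subseteq C_c(\mathbb{R}^d \times \mathbb{S}^{d-1})$, and by Riesz--Markov $\widetilde L(q) = \int q\, d\mu$ for some Radon measure $\mu$. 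Translation-invariance of $\mu$ in the $\mathbb{R}^d$-factor follows from the trace property together with translation-invariance of $(1-\Delta)^{-d/2}$; rotation-invariance in the $\mathbb{S}^{d-1}$-factor is obtained by applying Theorem \ref{algebra invariance special} to orthogonal linear $\Phi$, which commute with $\Delta$ and so fix $(1-\Delta)^{-d/2}$, while the transformation law for $\mathrm{sym}$ supplies the required equivariance on the sphere. Uniqueness of invariant measures then forces $\mu = c_d' \cdot (m_{\mathbb{R}^d} \times m_{\mathbb{S}^{d-1}})$ for some $c_d' \geq 0$.

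The main obstacle is the implication $\mathrm{sym}(T) = 0 \Rightarrow L(T) = 0$ above. For Dixmier traces this is classical, but for arbitrary normalised continuous traces it requires the structural fact that $\mathcal{K}(L_2(\mathbb{R}^d)) \cdot \mathcal{L}_{1,\infty}$ lies inside the quasi-norm closure of finite-rank operators in $\mathcal{L}_{1,\infty}$, to be extracted from \cite{LSZ-book}. Once this is in hand, the constant $c_d'$ — which by construction does not depend on the particular normalised continuous trace $\varphi$ within the normalised class — is pinned down by evaluating both sides on a single convenient test operator, for instance $T = \pi_1(\phi)$ with a fixed $\phi \in C^\infty_c(\mathbb{R}^d)$ of prescribed mean, where $\mathrm{sym}(T) = \phi \otimes 1$ and $\varphi(\pi_1(\phi)(1-\Delta)^{-d/2})$ is a universal spectral asymptotic.
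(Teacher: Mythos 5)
The paper does not actually prove Theorem~\ref{ctt rd}; it is quoted verbatim from \cite{DAO1} (``We remind the reader the following version of Connes Trace Theorem on Euclidean space established in \cite{DAO1}''). So there is no internal proof in this paper to compare against, and your argument has to be assessed on its own merits.

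Your strategy (well-definedness via Cwikel, factoring through $\mathrm{sym}$ using that $\mathcal{K}\cdot\mathcal{L}_{1,\infty}$ sits in the separable part of $\mathcal{L}_{1,\infty}$ on which every continuous trace vanishes, then identifying the induced functional by invariance) is a standard and sound route, and the fact that bounded traces kill the quasi-norm closure of finite-rank operators is indeed correct: a bounded trace restricted to finite rank is a multiple $c\cdot\mathrm{Tr}$, and applying it to $\mathrm{diag}(1/(k+1))_{k<n}$, which has $\mathcal{L}_{1,\infty}$-quasi-norm $1$, forces $c=0$. However two steps are stated too loosely. First, you invoke Riesz--Markov for a ``positive'' functional, but a normalised continuous trace is not assumed positive (the paper's normalisation condition is purely an evaluation condition), and $L(T)=\varphi(T(1-\Delta)^{-d/2})$ is not manifestly real even for $T\geq 0$; you need the Riesz representation theorem for complex Radon measures, and then a slightly more careful disintegration argument to go from translation-invariance plus diagonal rotation-invariance to the product $m_{\mathbb{R}^d}\otimes m_{\mathbb{S}^{d-1}}$ times a complex scalar. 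Second, the final sentence --- that ``$\varphi(\pi_1(\phi)(1-\Delta)^{-d/2})$ is a universal spectral asymptotic'' and that $c_d'$ is independent of $\varphi$ ``by construction'' --- is precisely the nontrivial content being proved, not a given. What is actually needed is: (i) pass via the trace property to $\varphi\big((1-\Delta)^{-d/4}M_\phi(1-\Delta)^{-d/4}\big)$ for $0\leq\phi\in C_c^\infty$, a positive operator; (ii) quote a Weyl-type asymptotic $n\,\mu\big(n,(1-\Delta)^{-d/4}M_\phi(1-\Delta)^{-d/4}\big)\to c_0\int\phi$; (iii) use the normalisation condition, unitary invariance, and the already-established vanishing on the separable part to conclude that $\varphi$ assigns the value $c_0\int\phi$ to any positive operator with that eigenvalue asymptotic. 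Without (ii)--(iii) spelled out, the constant-pinning is circular. (Minor aesthetic point: for orthogonal $\Phi$ one does not need the full strength of Theorem~\ref{algebra invariance special} --- $U_\Phi$ commutes with $\Delta$ and conjugates $\pi_1,\pi_2$ equivariantly by a direct computation, which is also cleaner since Theorem~\ref{algebra invariance special} is proved in this paper, after the cited result from \cite{DAO1}.)
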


\begin{lem}\label{ctt main lemma} Let $(X,G)$ be a compact Riemannian manifold. Let $T\in\Pi_X$ be compactly supported in the chart $(\mathcal{U}_i,h_i).$ Let $\varphi$ be a continuous normalised trace on $\mathcal{L}_{1,\infty}.$ We have 
\[\varphi(T(1-\Delta_G)^{-\frac{d}{2}})=c_d\int_{\Omega_i\times\mathbb{R}^d}\mathrm{sym}(T_i)(t,\frac{s}{|s|})e^{-q_i(t,s)}dtds,\quad T_i=\mathrm{Ext}_{\Omega_i}(W_iTW_i^{-1}).\]
Here, $q_i$ is as in Notation \ref{qi nota}.
\end{lem}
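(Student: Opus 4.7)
We plan to reduce the trace computation on $L_2(X)$ to one on $L_2(\mathbb{R}^d)$ where the Euclidean Connes trace theorem (Theorem \ref{ctt rd}) is available. Choose $\psi \in C^\infty(X)$ compactly supported in $\mathcal{U}_i$ with $\psi \equiv 1$ on a neighbourhood of $\mathrm{supp}\,T$, and set $\tilde\psi = \psi \circ h_i^{-1}$. Cyclicity of $\varphi$ together with $TM_\psi^d = M_\psi^d T = T$ gives $\varphi(T(1-\Delta_G)^{-d/2}) = \varphi(T M_\psi^d(1-\Delta_G)^{-d/2} M_\psi^d)$. The first assertion of Lemma \ref{ctt power lemma}, combined with the standard fact that every continuous trace on $\mathcal{L}_{1,\infty}$ vanishes on $\mathcal{L}_1 \supset \mathcal{L}_{2d/(2d+1),\infty}$, replaces this with $\varphi(T(M_\psi^2(1-\Delta_G)^{-1}M_\psi^2)^{d/2})$. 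Since both factors are compactly supported in $\mathcal{U}_i$, passing through $W_i$ to $L_2(\mathbb{R}^d)$ preserves the trace (the density $a_i$ arising in the identification $L_2(\Omega_i, a_i dt) \leftrightarrow L_2(\Omega_i, dt)$ is absorbed by cyclicity). Denoting the transfer of $M_\psi^2(1-\Delta_G)^{-1}M_\psi^2$ by $\tilde S_1$, Lemma \ref{ctt simplification lemma} gives $\tilde S_1 - B \in \mathcal{L}_{d/3,\infty}$ with $B := M_{\tilde\psi}^2(1-\Delta_{\hat g_i})^{-1}M_{\tilde\psi}^2$; a telescoping estimate for integer powers (supplemented, when $d$ is odd, by a Birman-Koplienko-Solomyak integral representation of the half-power) upgrades this to $\tilde S_1^{d/2} - B^{d/2} \in \mathcal{L}_{d/(d+1),\infty} \subset \mathcal{L}_1$, which $\varphi$ ignores. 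Running the second assertion of Lemma \ref{ctt power lemma} in reverse, together with cyclicity and $T_i M_{\tilde\psi}^d = T_i$, collapses the expression to
\[\varphi(T(1-\Delta_G)^{-d/2}) = \varphi(T_i(1-\Delta_{\hat g_i})^{-d/2}).\]

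Next we cast this as $\varphi(S(1-\Delta)^{-d/2})$ with $S \in \Pi$ so that Theorem \ref{ctt rd} applies. Pick $\phi_0 \in C^\infty_c(\Omega_i)$ with $\phi_0 \equiv 1$ on $\mathrm{supp}\,\tilde\psi$ (so $T_i M_{\phi_0} = T_i$) chosen so that $\hat g_i = g_i$ on $\mathrm{supp}\,\phi_0$, and factor
\[T_i(1-\Delta_{\hat g_i})^{-d/2} = S\cdot (1-\Delta)^{-d/2}, \qquad S := T_i M_{\phi_0}(1-\Delta_{\hat g_i})^{-d/2}(1-\Delta)^{d/2}.\]
Lemma \ref{do power psymbol lemma} shows that $M_{\phi_0}(1-\Delta_{\hat g_i})^{-d/2}(1-\Delta)^{d/2} \in \Pi$ with symbol $\phi_0(t)\langle g_i(t)^{-1}\omega,\omega\rangle^{-d/2}$, whence $S \in \Pi$ with
\[\mathrm{sym}(S)(t,\omega) = \mathrm{sym}(T_i)(t,\omega)\,\phi_0(t)\,\langle g_i(t)^{-1}\omega,\omega\rangle^{-d/2}, \quad (t,\omega) \in \mathbb{R}^d\times\mathbb{S}^{d-1}.\]
A short commutator argument (using Lemma \ref{commutator is compact} to move $M_{\phi_0}$ past $(1-\Delta_{\hat g_i})^{-d/2}(1-\Delta)^{d/2}$ modulo a compact error) renders $S$ compactly supported from the right, and Theorem \ref{ctt rd} then delivers
\[\varphi(T_i(1-\Delta_{\hat g_i})^{-d/2}) = c_d'\int_{\mathbb{R}^d\times\mathbb{S}^{d-1}}\mathrm{sym}(S)\,dm.\]

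Since $\phi_0 = 1$ on $\mathrm{supp}\,\mathrm{sym}(T_i)$, we may drop $\phi_0$ from the integrand. Passing to polar coordinates $s = r\omega$ on $\mathbb{R}^d$ with $ds = r^{d-1}dr\,d\sigma(\omega)$ and applying the Gaussian identity
\[\int_0^\infty r^{d-1}e^{-r^2 A}\,dr = \frac{\Gamma(d/2)}{2}A^{-d/2}, \qquad A = q_i(t,\omega) > 0,\]
converts the spherical integral into $c_d\int_{\Omega_i\times\mathbb{R}^d}\mathrm{sym}(T_i)(t,s/|s|)\,e^{-q_i(t,s)}\,dt\,ds$ with $c_d = 2c_d'/\Gamma(d/2)$, which is the claimed formula. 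The chief technical obstacle is the power estimate $\tilde S_1^{d/2} - B^{d/2} \in \mathcal{L}_{p,\infty}$ for some $p < 1$ used in the first step: for $d$ even it follows at once from the telescoping identity $A^n - B^n = \sum A^k(A-B)B^{n-1-k}$ and H\"older in the $\mathcal{L}_{p,\infty}$-scale, but for $d$ odd one must supplement this with the integral representation $X^{1/2} = \pi^{-1}\int_0^\infty \lambda^{-1/2}X(X+\lambda)^{-1}d\lambda$ and a Birman-Koplienko-Solomyak type inequality to control the half-power contribution.
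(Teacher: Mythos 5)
Your proof follows the paper's overall strategy: use cyclicity and Lemma \ref{ctt power lemma} to pass from $(1-\Delta_G)^{-d/2}$ to $(M_\psi^2(1-\Delta_G)^{-1}M_\psi^2)^{d/2}$; transfer to $L_2(\mathbb{R}^d)$ and swap $\Delta_G$ for the Euclidean model $\Delta_{\hat g_i}$ via Lemma \ref{ctt simplification lemma} and a power estimate; then apply Theorem \ref{ctt rd}. The preliminary steps are fine (your power estimate gives the bound $\mathcal{L}_{d/(d+1),\infty}$ rather than the paper's $\mathcal{L}_{2d/(2d+1),\infty}$, but either suffices since both exponents are below $1$). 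However, there is a genuine gap at the step where you invoke Theorem \ref{ctt rd}.

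You factor $T_i(1-\Delta_{\hat g_i})^{-d/2}=S\,(1-\Delta)^{-d/2}$ with $S=T_iM_{\phi_0}(1-\Delta_{\hat g_i})^{-d/2}(1-\Delta)^{d/2}$. This places $T_i$ on the \emph{left}, so $S$ is compactly supported from the left but not from the right — and Theorem \ref{ctt rd} requires the latter. Your proposed fix, a ``commutator argument using Lemma \ref{commutator is compact}'', does not close this. First, $(1-\Delta_{\hat g_i})^{-d/2}(1-\Delta)^{d/2}$ is not compactly based and is not (obviously) in $\Pi$, so Lemma \ref{commutator is compact} does not apply to $[M_{\phi_0}, (1-\Delta_{\hat g_i})^{-d/2}(1-\Delta)^{d/2}]$ without first inserting a further cutoff $\phi_0'$. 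Second, even after that repair, Lemma \ref{commutator is compact} only yields \emph{compactness} of the error, and compactness is not enough: writing $S=S'+K$ with $S'$ compactly supported on the right and $K$ compact, one must still check that $\varphi(K(1-\Delta)^{-d/2})=0$, but a general element of $\mathcal{K}\cap\mathcal{L}_{1,\infty}$ need not be annihilated by $\varphi$. To make that vanish one has to upgrade the commutator estimate from ``compact'' to ``order $-1$ pseudodifferential with compactly based symbol,'' giving $K(1-\Delta)^{-d/2}\in\mathcal{L}_{d/(d+1),\infty}\subset\mathcal{L}_1$ — but that is a quantitative fact from the symbol calculus, not a consequence of the lemma you cite.

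The paper sidesteps all this by using the tracial property to rotate $T_i$ to the \emph{right}: after reaching $\varphi(T_i M_{\tilde\psi}^d(1-\Delta_{\hat g_i})^{-d/2}M_{\tilde\psi}^d)$, it writes this as $\varphi\big(T_i(1-\Delta)^{-d/2}X_i\big)=\varphi\big(X_iT_i(1-\Delta)^{-d/2}\big)$ with $X_i=(1-\Delta)^{d/2}(1-\Delta_{\hat g_i})^{-d/2}M_{\tilde\psi}^d$; here $X_i$ is the adjoint of the operator from Lemma \ref{do power psymbol lemma}, so $X_i\in\Pi$, and $X_iT_i$ inherits compact support from the right from $T_i$. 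This makes Theorem \ref{ctt rd} applicable with no further analysis of commutator errors. You should replace your ``move $M_{\phi_0}$ past the symbol'' step by this cyclic permutation; the rest of your argument, including the polar-coordinate computation of the constant, then goes through.
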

\begin{proof} Fix $0\leq\psi\in C^{\infty}(X)$ such that $T=M_{\psi}TM_{\psi}$ and such that $\psi$ is compactly supported in $\mathcal{U}_i.$ 
	
By the tracial property we have
\[\varphi(T(1-\Delta_G)^{-\frac{d}{2}})=\varphi(M_{\psi}^dTM_{\psi}^d(1-\Delta_G)^{-\frac{d}{2}})=\varphi(TM_{\psi}^d(1-\Delta_G)^{-\frac{d}{2}}M_{\psi}^d).\]
Since $\varphi$ vanishes on $\mathcal{L}_{\frac{2d}{2d+1},\infty},$ it follows from Lemma \ref{ctt power lemma} that
\[\varphi(T(1-\Delta_G)^{-\frac{d}{2}})=\varphi(TA^{\frac{d}{2}}),\quad A=M_{\psi}^2(1-\Delta_G)^{-1}M_{\psi}^2.\]
Since both operators $T$ and $A$ are compactly supported in the chart $(\mathcal{U}_i,h_i),$ it follows that
\[\varphi(TA^{\frac{d}{2}})=\varphi(W_iTW_i^{-1}\cdot (W_iAW_i^{-1})^{\frac{d}{2}})=\]
\[=\varphi(\mathrm{Ext}_{\Omega_i}(W_iTW_i^{-1})\cdot(\mathrm{Ext}_{\Omega_i}( W_iAW_i^{-1}))^{\frac{d}{2}}).\]
	
Denote for brevity
\[B_i=M_{\psi\circ h_i^{-1}}^2(1-\Delta_{\hat{g}_i})^{-1}M_{\psi\circ h_i^{-1}}^2.\]
By Lemma \ref{ctt simplification lemma} and Birman-Koplienko-Solomyak inequality, we have
\[(\mathrm{Ext}_{\Omega_i}( W_iAW_i^{-1}))^{\frac{d}{2}}-B_i^{\frac{d}{2}}\in\mathcal{L}_{\frac{2d}{2d+1},\infty}.\]
Since $\varphi$ vanishes on $\mathcal{L}_{\frac{2d}{2d+1},\infty},$ it follows
\[\varphi(TA^{\frac{d}{2}})=\varphi(T_iB_i^{\frac{d}{2}}).\]
Using the second assertion in Lemma \ref{ctt power lemma}, we obtain
\[\varphi(TA^{\frac{d}{2}})=\varphi(T_i M_{\psi\circ h_i^{-1}}^d(1-\Delta_{\hat{g}_i})^{-\frac{d}{2}}M_{\psi\circ h_i^{-1}}^d)=\]
\[=\varphi(T_i(1-\Delta)^{-\frac{d}{2}}X_i)=\varphi(X_iT_i(1-\Delta)^{-\frac{d}{2}}),\]
where
\[X_i=(1-\Delta)^{\frac{d}{2}}(1-\Delta_{\hat{g}_i})^{-\frac{d}{2}}M_{\psi\circ h_i^{-1}}^d.\]

By Lemma \ref{do power psymbol lemma}, we have $X_i\in\Pi.$ Hence, the operator $X_iT_i\in\Pi$ is compactly supported from the right. By Theorem \ref{ctt rd} we have
\[\varphi(T(1-\Delta_g)^{-\frac{d}{2}})=\varphi(TA^{\frac{d}{2}})=c_d'\int_{\mathbb{R}^d\times\mathbb{S}^{d-1}}\mathrm{sym}(X_iT_i)dm,\]
where $m$ is the product of Lebesgue measure on $\mathbb{R}^d$ and Haar measure on $\mathbb{S}^{d-1}.$ By Lemma \ref{do power psymbol lemma}, we have
\[\mathrm{sym}(X_iT_i)(t,s)=\mathrm{sym}(T_i)(t,s)\cdot \psi(h_i^{-1}(t))^d\cdot\langle \hat{g}_i(t)^{-1}s,s\rangle^{-\frac{d}{2}}=\]
\[=\mathrm{sym}(T_i)(t,s)\cdot \langle g_i(t)^{-1}s,s\rangle^{-\frac{d}{2}}.\]
Thus,
\[\varphi(T(1-\Delta_g)^{-\frac{d}{2}})=c_d'\int_{\mathbb{R}^d\times\mathbb{S}^{d-1}}\mathrm{sym}(T_i)(t,s)\cdot\langle g_i(t)^{-1}s,s\rangle^{-\frac{d}{2}}dtds.\]
By passing to spherical coordinates we obtain
\[\int_{\mathbb{R}^d\times\mathbb{S}^{d-1}}\mathrm{sym}(T_i)(t,s)\cdot\langle g_i(t)^{-1}s,s\rangle^{-\frac{d}{2}}dtds=\]
\[=c_d''\int_{\mathbb{R}^d\times\mathbb{R}^d}\mathrm{sym}(T_i)(t,\frac{s}{|s|})e^{-q_i(t,s)}dtds.\]
Combining these $2$ equalities, we complete the proof.
\end{proof}

\begin{proof}[Proof of Theorem \ref{ctt manifold}] Suppose first that $T\in\Pi_X$ is compactly supported in the chart $(\mathcal{U}_i,h_i).$ By Lemma \ref{ctt main lemma}, we have
\[\varphi(T(1-\Delta_G)^{-\frac{d}{2}})=c_d\int_{\Omega_i\times\mathbb{R}^d}\mathrm{sym}(T_i)(t,\frac{s}{|s|})\cdot e^{-q_i(t,s)}dtds.\]
By \eqref{liouville measure defining eq} and \eqref{qi comptibility riemann lemma}, we have
\[\int_{\Omega_i\times\mathbb{R}^d}\mathrm{sym}(T_i)(t,\frac{s}{|s|})\cdot e^{-q_i(t,s)}dtds=\int_{T^{\ast}X}\mathrm{sym}_X(T)e^{-q_X}d\lambda.\]
A combination of these two equalities proves the assertion for $T$ compacly supported in some $\mathcal{U}_i.$

Let now $T\in\Pi_X$ be arbitrary. Let $(\phi_n)_{n=1}^N$ be a fixed good partition of unity. 
	
We write
\[T=\sum_{n=0}^NT_n,\quad T_0=\sum_{m=1}^NM_{\phi_m}^{\frac12}\cdot [M_{\phi_m}^{\frac12},T],\quad T_n=M_{\phi_n}^{\frac12}TM_{\phi_n}^{\frac12},\quad n\geq1.\]	
By assumption, $[T,M_{\psi}]$ is compact for every $\psi\in C(X).$ In particular, $T_0$ is compact. Thus,
\[\varphi(T_0(1-\Delta_G)^{-\frac{d}{2}})=0,\quad \mathrm{sym}_X(T_0)=0.\]
By the first paragraph, we have
\[\varphi(T(1-\Delta_G)^{-\frac{d}{2}})=\sum_{n=0}^N\varphi(T_n(1-\Delta_G)^{-\frac{d}{2}})=\]
\[=\sum_{n=1}^Nc_d\int_{T^{\ast}X}\mathrm{sym}_X(T_n)e^{-q_X}d\lambda=c_d\int_{T^{\ast}X}\mathrm{sym}_X(T)e^{-q_X}d\lambda.\]
\end{proof}

\appendix

\section{Proof of globalisation theorem}\label{appendix section}

We prove Theorem \ref{globalisation theorem} in the following series of lemmas.

\begin{lem}\label{A is a starsubalgebra} In the setting of Definitions \ref{local algebras def} and \ref{global algebra def},  $\mathcal{A}$ is a unital $\ast$-subalgebra in $B(L_2(X,\nu)).$
\end{lem}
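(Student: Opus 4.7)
The plan is to check each of the algebra axioms directly from Definitions \ref{local algebras def} and \ref{global algebra def}. Closure under linear combinations is immediate: conditions \eqref{gada} and \eqref{gadb} are preserved by finite sums because each $\mathcal{A}_i$ is a $\ast$-algebra by \eqref{lada} and because $\mathcal{K}(L_2(X,\nu))$ is a linear subspace. The unit $1\in B(L_2(X,\nu))$ lies in $\mathcal{A}$ since $M_{\phi}\cdot 1\cdot M_{\phi}=M_{\phi^2}\in\mathcal{A}_i$ for $\phi\in C_c(\mathcal{U}_i)$ by \eqref{lade}, and $[1,M_{\psi}]=0$ is compact. For the involution, given $T\in\mathcal{A}$ and $\phi\in C_c(\mathcal{U}_i),$ write $M_{\phi}T^{\ast}M_{\phi}=(M_{\bar\phi}TM_{\bar\phi})^{\ast};$ since $M_{\bar\phi}TM_{\bar\phi}\in\mathcal{A}_i$ by \eqref{gada} and $\mathcal{A}_i$ is a $\ast$-algebra by \eqref{lada}, the right hand side lies in $\mathcal{A}_i.$ The commutator condition \eqref{gadb} for $T^{\ast}$ follows from $[T^{\ast},M_{\psi}]=-[T,M_{\bar\psi}]^{\ast}$ and the fact that the adjoint of a compact operator is compact.

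The main step is closure under multiplication. Let $T_1,T_2\in\mathcal{A}$ and fix $i\in\mathbb{I}$ and $\phi\in C_c(\mathcal{U}_i).$ Choose $\phi'\in C_c(\mathcal{U}_i)$ with $\phi'=1$ on the support of $\phi,$ so that $M_{\phi}(1-M_{\phi'}^2)=0.$ The idea is to split
\[M_{\phi}T_1T_2M_{\phi}=(M_{\phi}T_1M_{\phi'})(M_{\phi'}T_2M_{\phi})+M_{\phi}T_1(1-M_{\phi'}^2)T_2M_{\phi}\]
and handle the two summands separately. For the first summand, pick $\chi\in C_c(\mathcal{U}_i)$ with $\chi=1$ on the support of $\phi'$; then $M_{\phi}T_1M_{\phi'}=M_{\phi}(M_{\chi}T_1M_{\chi})M_{\phi'}$ and similarly for the other factor. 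Since $M_{\chi}T_1M_{\chi},M_{\chi}T_2M_{\chi}\in\mathcal{A}_i$ by \eqref{gada} applied to $T_1,T_2,$ and the multiplication operators $M_{\phi},M_{\phi'}$ lie in $\mathcal{A}_i$ by \eqref{lade}, the algebra property of $\mathcal{A}_i$ yields that each factor lies in $\mathcal{A}_i$, hence so does their product. For the second summand, commute $T_1$ past $1-M_{\phi'}^2,$ writing
\[M_{\phi}T_1(1-M_{\phi'}^2)T_2M_{\phi}=-M_{\phi}[T_1,M_{\phi'}^2]T_2M_{\phi}+M_{\phi}(1-M_{\phi'}^2)T_1T_2M_{\phi};\]
the second term vanishes by the choice of $\phi',$ and the first term is compact by \eqref{gadb}, and it is compactly supported in $\mathcal{U}_i$, so by \eqref{ladd} it belongs to $\mathcal{A}_i.$ Adding the two summands shows $M_{\phi}T_1T_2M_{\phi}\in\mathcal{A}_i,$ verifying \eqref{gada} for $T_1T_2.$ Finally, \eqref{gadb} for $T_1T_2$ follows from the Leibniz identity $[T_1T_2,M_{\psi}]=T_1[T_2,M_{\psi}]+[T_1,M_{\psi}]T_2$ together with the ideal property of $\mathcal{K}(L_2(X,\nu)).$

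The key technical obstacle is the multiplication closure; the trick of inserting $M_{\phi'}^2\approx 1$ near $\mathrm{supp}(\phi)$ and throwing the error into the compact ideal (via \eqref{gadb}) is what allows localisation to take place without leaving the chart $\mathcal{U}_i.$ All other axioms reduce to routine applications of \eqref{lada}--\eqref{ladg} and the ideal property of the compact operators.
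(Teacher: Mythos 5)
Your proof is correct and follows essentially the same strategy as the paper's: the only nontrivial axiom is closure under products, and both proofs localise $M_{\phi}T_1T_2M_{\phi}$ into a "main" term lying in $\mathcal{A}_i$ because $\mathcal{A}_i$ is an algebra plus an "error" term that is compact (via Definition~\ref{global algebra def}\eqref{gadb}) and compactly supported, hence in $\mathcal{A}_i$ by Definition~\ref{local algebras def}\eqref{ladd}. The only difference is cosmetic: the paper splits via $\phi^{1/2}$ (and therefore takes $0\leq\phi$), whereas you insert an auxiliary cutoff $\phi'$ equal to $1$ on $\mathrm{supp}(\phi)$; your variant has the small advantage of handling an arbitrary $\phi\in C_c(\mathcal{U}_i)$ directly, without a separate reduction to the nonnegative case.
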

\begin{proof} Suppose $T,S\in\mathcal{A}.$ It is immediate that $1,T+S,T^{\ast}\in\mathcal{A}.$ It suffices to show that also $TS\in\mathcal{A}.$
	
Let $i\in\mathbb{I}$ and let $0\leq \phi\in C_c(\mathcal{U}_i).$ We write
\[M_{\phi}TSM_{\phi}=M_{\phi^{\frac12}}TM_{\phi^{\frac12}}\cdot M_{\phi^{\frac12}}SM_{\phi^{\frac12}}+M_{\phi^{\frac12}}\cdot [M_{\phi}^{\frac12},T]\cdot SM_{\phi}+M_{\phi}^{\frac12}TM_{\phi}^{\frac12}\cdot[S,M_{\phi}^{\frac12}]M_{\phi}^{\frac12}.\]
By Definition \ref{global algebra def} \eqref{gada}, we have
\[M_{\phi^{\frac12}}TM_{\phi^{\frac12}},M_{\phi^{\frac12}}SM_{\phi^{\frac12}}\in\mathcal{A}_i.\]
Since $\mathcal{A}_i$ is a subalgebra, it follows that
\[M_{\phi^{\frac12}}TM_{\phi^{\frac12}}\cdot M_{\phi^{\frac12}}SM_{\phi^{\frac12}}\in\mathcal{A}_i.\]
By Definition \ref{global algebra def} \eqref{gadb}, the operators $[M_{\phi}^{\frac12},T]$ and $[S,M_{\phi}^{\frac12}]$ are compact. Therefore,
\[M_{\phi^{\frac12}}\cdot [M_{\phi}^{\frac12},T]\cdot SM_{\phi}+M_{\phi}^{\frac12}TM_{\phi}^{\frac12}\cdot[S,M_{\phi}^{\frac12}]M_{\phi}^{\frac12}\]
is compact. However, the latter operator is compactly supported in $\mathcal{U}_i.$ By Definition \ref{local algebras def} \eqref{ladd}, we have
\[M_{\phi^{\frac12}}\cdot [M_{\phi}^{\frac12},T]\cdot SM_{\phi}+M_{\phi}^{\frac12}TM_{\phi}^{\frac12}\cdot[S,M_{\phi}^{\frac12}]M_{\phi}^{\frac12}\in\mathcal{A}_i.\]
Therefore,
\[M_{\phi}TSM_{\phi}\in\mathcal{A}_i,\quad \phi\in C_c(\mathcal{U}_i),\quad i\in\mathbb{I}.\]
Since also
\[[TS,M_{\psi}]=T\cdot [S,M_{\psi}]+[T,M_{\psi}]\cdot S\]
is compact for every $\psi\in C(X),$ it follows that $TS\in\mathcal{A}.$
\end{proof}

\begin{lem}\label{A is a cstarsubalgebra} In the setting of Definitions \ref{local algebras def} and \ref{global algebra def},  $\mathcal{A}$ is a unital $C^{\ast}$-subalgebra in $B(L_2(X,\nu)).$
\end{lem}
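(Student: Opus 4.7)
The plan is to combine Lemma \ref{A is a starsubalgebra} with a verification that $\mathcal{A}$ is closed in the uniform operator norm; since it is already a unital $\ast$-subalgebra, norm-closedness upgrades it to a unital $C^{\ast}$-subalgebra of $B(L_2(X,\nu))$. So the entire task is: take a sequence $(T_n)_{n\geq 1}\subset\mathcal{A}$ with $T_n\to T$ uniformly, and check that $T$ satisfies the two conditions in Definition \ref{global algebra def}.

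For condition \eqref{gada}, fix $i\in\mathbb{I}$ and $\phi\in C_c(\mathcal{U}_i)$. Choose $\psi\in C_c(\mathcal{U}_i)$ with $\psi\phi=\phi$ (possible since $\phi$ is compactly supported in $\mathcal{U}_i$). Each $M_{\phi}T_nM_{\phi}$ lies in $\mathcal{A}_i$ by hypothesis, and the identity $M_{\phi}T_nM_{\phi}=M_{\psi}(M_{\phi}T_nM_{\phi})M_{\psi}$ shows $M_{\phi}T_nM_{\phi}\in M_{\psi}\mathcal{A}_iM_{\psi}$. Since $M_{\phi}T_nM_{\phi}\to M_{\phi}TM_{\phi}$ in the uniform norm, the limit lies in the uniform closure of $M_{\psi}\mathcal{A}_iM_{\psi}$, which by condition \eqref{ladf} of Definition \ref{local algebras def} is contained in $\mathcal{A}_i$. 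Hence $M_{\phi}TM_{\phi}\in\mathcal{A}_i$.

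For condition \eqref{gadb}, fix $\psi\in C(X)$. Each commutator $[T_n,M_{\psi}]$ is compact by the assumption $T_n\in\mathcal{A}$, and $[T_n,M_{\psi}]\to[T,M_{\psi}]$ in the uniform norm because the commutator bracket with a fixed bounded operator is continuous. Since $\mathcal{K}(L_2(X,\nu))$ is uniformly closed in $B(L_2(X,\nu))$, the limit $[T,M_{\psi}]$ is compact, so $T$ satisfies \eqref{gadb}. Combining with Lemma \ref{A is a starsubalgebra} then finishes the proof.

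There is no real obstacle here; the two conditions defining $\mathcal{A}$ were specifically designed to pass to uniform limits, the key being the closure property \eqref{ladf} of the local algebras $\mathcal{A}_i$, which handles the only non-automatic step. The only mild care needed is the auxiliary cutoff $\psi$ with $\psi\phi=\phi$, which lets us write the sequence $M_{\phi}T_nM_{\phi}$ in the form $M_{\psi}(\cdot)M_{\psi}$ required to invoke \eqref{ladf}.
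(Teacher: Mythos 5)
Your proof is correct and follows essentially the same route as the paper's: reduce to norm-closedness via Lemma \ref{A is a starsubalgebra}, use an auxiliary cutoff to place $M_{\phi}T_nM_{\phi}$ inside $M_{\phi_0}\mathcal{A}_iM_{\phi_0}$ and invoke condition \eqref{ladf}, then pass the compactness of $[T_n,M_{\psi}]$ to the uniform limit.
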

\begin{proof} It is established in Lemma \ref{A is a starsubalgebra} that $\mathcal{A}$ is a unital $\ast$-subalgebra in $B(L_2(X,\nu)).$ It suffices to show that $\mathcal{A}$ is closed in the uniform norm.
	
Let $\{T_n\}_{n\geq1}\subset\mathcal{A}$ and let $T\in B(L_2(X,\nu))$ be such that $T_n\to T$ in the uniform norm. Let us show that $T\in\mathcal{A}.$
	
Let $i\in\mathbb{I}$ and let $\phi\in C_c(\mathcal{U}_i).$ Take $\phi_0\in C_c(\mathcal{U}_i)$ such that $\phi\phi_0=\phi.$ We have $M_{\phi}T_nM_{\phi}\in\mathcal{A}_i$ and, therefore,
\[M_{\phi}T_nM_{\phi}=M_{\phi_0}\cdot M_{\phi}T_nM_{\phi}\cdot M_{\phi_0}\in M_{\phi_0}\mathcal{A}_iM_{\phi_0},\quad n\geq 1.\]
By assumption, $M_{\phi}T_nM_{\phi}\to M_{\phi}TM_{\phi}$ in the uniform norm. Hence, $M_{\phi}TM_{\phi}$ belongs to the closure of $M_{\phi_0}\mathcal{A}_iM_{\phi_0}$ in the uniform norm. By Definition \ref{local algebras def} \eqref{ladf}, $M_{\phi}TM_{\phi}\in\mathcal{A}_i.$
	
If $\psi\in C(X),$ then
\[[T,M_{\psi}]=\lim_{n\to\infty}[T_n,M_{\psi}],\]
is the limit of compact operators in the uniform norm and is, therefore, compact.
	
Combining the results in the preceding paragraphs, we conclude that $T\in\mathcal{A}.$ This completes the proof. 
\end{proof}

\begin{proof}[Proof of Theorem \ref{globalisation theorem} \eqref{gta}] We already demonstrated in Lemma \ref{A is a cstarsubalgebra} that $\mathcal{A}$ is a unital $C^{\ast}$-subalgebra in $B(L_2(X,\nu)).$ It remains to show that $\mathcal{A}_i\subset\mathcal{A}$ for every $i\in\mathbb{I}$ and that $\mathcal{K}(L_2(X,\nu))\subset\mathcal{A}.$

Let $T\in\mathcal{A}_i$ and $\phi\in C_c(\mathcal{U}_j)$. By Definition \ref{local algebras def} \eqref{ladb}, $T$ is compactly supported in $\mathcal{U}_i.$ Choose $\phi_0\in C_c(\mathcal{U}_i)$ such that $T=M_{\phi_0}T=TM_{\phi_0}.$ Then $\phi\phi_0\in C_c(\mathcal{U}_i\cap \mathcal{U}_j)$ and $M_{\phi\phi_0}\in\mathcal{A}_i.$ Since $\mathcal{A}_i$ is an algebra, it follows that  $M_{\phi}TM_{\phi}=M_{\phi\phi_0}TM_{\phi\phi_0}\in\mathcal{A}_i.$ Since $M_{\phi}TM_{\phi}$ is compactly supported in $\mathcal{U}_i\cap\mathcal{U}_j,$ it follows from Definition \ref{local algebras def} \eqref{ladc} that $M_{\phi}TM_{\phi}\in\mathcal{A}_j.$  This verifies the condition \eqref{gada} in Definition \ref{global algebra def} for the operator $T.$ Let $\psi\in C(X).$ As above, choose $\phi_0\in C_c(\mathcal{U}_i)$ such that $T=M_{\phi_0}T=TM_{\phi_0}.$ It follows that $[T,M_{\psi}]=[T,M_{\psi\phi_0}].$ Since $\psi\phi_0\in C_c(\mathcal{U}_i),$ it follows from the condition \eqref{ladg} in Definition \ref{local algebras def} that $[T,M_{\psi}]$ is compact.  This verifies the condition \eqref{gadb} in Definition \ref{global algebra def} for the operator $T.$ Hence, $T\in\mathcal{A}$ and, therefore, $\mathcal{A}_i\subset\mathcal{A}.$
	
Let $T\in \mathcal{K}(L_2(X,\nu)).$ For every $i\in\mathbb{I}$ and for every $\phi\in C_c(\mathcal{U}_i),$ the operator $M_{\phi}TM_{\phi}$ is simultaneously compact and compactly supported in $\mathcal{U}_i.$ By Definition \ref{local algebras def} \eqref{ladd}, we have $M_{\phi}TM_{\phi}\in\mathcal{A}_i.$ Clearly, $[T,M_{\psi}]\in \mathcal{K}(L_2(X,\nu))$ for every $\psi\in C(X).$ Therefore, $T\in\mathcal{A}.$ Since $T\in \mathcal{K}(L_2(X,\nu))$ is arbitrary, it follows that $\mathcal{K}(L_2(X,\nu))\subset\mathcal{A}.$
\end{proof}

The purpose of the next lemma is twofold: to establish Theorem \ref{globalisation theorem} \eqref{gtc} and to provide a concrete form of $\mathrm{hom}$ used in the proof of Theorem \ref{globalisation theorem} \eqref{gtb}. 

\begin{lem}\label{global homomorphism def} Suppose we are in the setting of Theorem \ref{globalisation theorem}. Let $(\phi_n)_{n=1}^N$ be a good\footnote{See Definition \ref{good partition of unity}.} partition of unity so that $\phi_n$ is compactly supported in $\mathcal{U}_{i_n}$ for $1\leq n\leq N.$ If $\mathrm{hom}:\mathcal{A}\to\mathcal{B}$ is a $\ast$-homomorphism as in Theorem \ref{globalisation theorem} \eqref{gtb}, then
\begin{equation}\label{hom concrete expr}
\mathrm{hom}(T)=\sum_{n=1}^N\mathrm{hom}_{i_n}(M_{\phi_n^{\frac12}}TM_{\phi_n^{\frac12}}),\quad T\in\mathcal{A}.
\end{equation}
In particular, $\mathrm{hom}$ is unique.
\end{lem}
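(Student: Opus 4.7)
The plan is to use the partition of unity to write $T$ as the sum $\sum_n M_{\phi_n^{1/2}} T M_{\phi_n^{1/2}}$ modulo a compact operator, and then apply the two defining properties of $\mathrm{hom}$.

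First I would exploit $\sum_{n=1}^N \phi_n = 1$ to write
\[
T = \sum_{n=1}^N M_{\phi_n} T = \sum_{n=1}^N M_{\phi_n^{1/2}} M_{\phi_n^{1/2}} T = \sum_{n=1}^N M_{\phi_n^{1/2}} T M_{\phi_n^{1/2}} + \sum_{n=1}^N M_{\phi_n^{1/2}} [M_{\phi_n^{1/2}}, T].
\]
By condition \eqref{gadb} in Definition \ref{global algebra def}, each commutator $[M_{\phi_n^{1/2}}, T]$ is compact since $\phi_n^{1/2} \in C(X)$. Therefore $K := \sum_{n=1}^N M_{\phi_n^{1/2}}[M_{\phi_n^{1/2}}, T]$ lies in $\mathcal{K}(L_2(X,\nu))$.

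Next I would apply $\mathrm{hom}$ to both sides. By Theorem \ref{globalisation theorem} \eqref{gtb}(b), we have $\mathrm{ker}(\mathrm{hom}) = \mathcal{K}(L_2(X,\nu))$, so $\mathrm{hom}(K) = 0$. Moreover, by condition \eqref{gada} in Definition \ref{global algebra def}, each operator $M_{\phi_n^{1/2}} T M_{\phi_n^{1/2}}$ belongs to $\mathcal{A}_{i_n}$ (using the fact that $\phi_n^{1/2} \in C_c(\mathcal{U}_{i_n})$). By the compatibility condition in Theorem \ref{globalisation theorem} \eqref{gtb}(a), $\mathrm{hom}$ restricted to $\mathcal{A}_{i_n}$ coincides with $\mathrm{hom}_{i_n}$. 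Consequently,
\[
\mathrm{hom}(T) = \sum_{n=1}^N \mathrm{hom}(M_{\phi_n^{1/2}} T M_{\phi_n^{1/2}}) = \sum_{n=1}^N \mathrm{hom}_{i_n}(M_{\phi_n^{1/2}} T M_{\phi_n^{1/2}}),
\]
which is the desired formula \eqref{hom concrete expr}.

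Finally, uniqueness of $\mathrm{hom}$ follows immediately: the right-hand side of \eqref{hom concrete expr} depends only on the fixed data $\{\mathrm{hom}_i\}_{i\in\mathbb{I}}$ and on the chosen good partition of unity, so any two $\ast$-homomorphisms satisfying the conditions of Theorem \ref{globalisation theorem} \eqref{gtb} must agree on every $T\in\mathcal{A}$. There is no genuine obstacle here; the only subtlety is that a good partition of unity is guaranteed to exist because $X$ is compact (so that the formula makes sense), and that the decomposition $T = \sum_n M_{\phi_n^{1/2}} T M_{\phi_n^{1/2}} + K$ is valid within $\mathcal{A}$ thanks to the stability properties established in Theorem \ref{globalisation theorem} \eqref{gta}.
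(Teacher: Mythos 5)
Your proof is correct and uses essentially the same decomposition as the paper: writing $T=\sum_{n}M_{\phi_n^{1/2}}TM_{\phi_n^{1/2}}+T_0$ with $T_0$ a sum of compact commutators, then invoking $\mathrm{hom}|_{\mathcal{A}_{i_n}}=\mathrm{hom}_{i_n}$ and $\mathrm{ker}(\mathrm{hom})=\mathcal{K}(L_2(X,\nu))$. No gaps; the argument and the uniqueness conclusion match the paper's.
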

\begin{proof} For every $T\in B(L_2(X,\nu)),$ we write
\[T=\sum_{n=0}^NT_n,\quad T_n=M_{\phi_n^{\frac12}}TM_{\phi_n^{\frac12}},\quad 1\leq n\leq N,\quad T_0=\sum_{k=1}^NM_{\phi_k^{\frac12}}\cdot [M_{\phi_k^{\frac12}},T].\]
Every $T_n,$ $1\leq n\leq N,$ is compactly supported in the chart $(\mathcal{U}_{i_n},h_{i_n}).$ If $T\in\mathcal{A},$ then $T_n\in\mathcal{A}_{i_n}$ for $1\leq n\leq N.$ Hence,
\[\mathrm{hom}(T_n)=\mathrm{hom}_{i_n}(T_n),\quad 1\leq n\leq N.\]
If $T\in\mathcal{A},$ then $T_0$ is compact by Definition \ref{global algebra def} \eqref{gadb}. Since $\mathrm{hom}$ vanish on compact operators, it follows that
\[\mathrm{hom}(T_0)=0.\]
Thus,
\[\mathrm{hom}(T)=\sum_{n=0}^N\mathrm{hom}(T_n)=\sum_{n=1}^N\mathrm{hom}_{i_n}(M_{\phi_n^{\frac12}}TM_{\phi_n^{\frac12}}),\quad T\in\mathcal{A}.\]
\end{proof}

We now fix a good partition of unity and prove that the concrete map $\mathrm{hom}$ introduced in Lemma \ref{global homomorphism def} is $\ast$-homomorphism.

\begin{lem}\label{zeroth homomorphism lemma} Let $\mathrm{hom}$ be the mapping on the right hand side in \eqref{hom concrete expr}. We have 
\[\mathrm{hom}(TM_{\phi})=\mathrm{hom}(T)\cdot \mathrm{hom}(M_{\phi}),\quad T\in\mathcal{A},\quad \phi\in C(X).\]	
\end{lem}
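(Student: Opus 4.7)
The plan is to verify the factorization directly from the concrete formula \eqref{hom concrete expr}. It is convenient first to compute $\mathrm{hom}(M_\phi)$ for $\phi\in C(X)$. Since multiplication operators commute, the $n$-th summand in \eqref{hom concrete expr} applied to $M_\phi$ collapses to $\mathrm{hom}_{i_n}(M_{\phi\phi_n})$, and $\phi\phi_n\in C_c(\mathcal{U}_{i_n})$, so Definition \ref{local homomorphism def} \eqref{lhdd} gives $\mathrm{hom}_{i_n}(M_{\phi\phi_n})=\mathrm{Hom}(\phi\phi_n)$. Summing and using $\sum_n\phi_n=1$ together with the $\ast$-homomorphism property of $\mathrm{Hom}$ yields $\mathrm{hom}(M_\phi)=\mathrm{Hom}(\phi)$.

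For the main identity, fix $T\in\mathcal{A}$ and $\phi\in C(X)$, and for each $n\in\{1,\ldots,N\}$ choose $\psi_n\in C_c(\mathcal{U}_{i_n})$ such that $\psi_n\equiv 1$ on an open neighborhood of $\mathrm{supp}(\phi_n)$ (possible since $\mathrm{supp}(\phi_n)$ is compact in $\mathcal{U}_{i_n}$). In particular $\psi_n\phi_n^{1/2}=\phi_n^{1/2}$, so $M_{\phi_n^{1/2}}M_{\psi_n}=M_{\phi_n^{1/2}}$. Using commutativity of multiplication operators, the $n$-th summand of $\mathrm{hom}(TM_\phi)$ rewrites as
\[
\mathrm{hom}_{i_n}\!\bigl(M_{\phi_n^{1/2}}TM_\phi M_{\phi_n^{1/2}}\bigr)=\mathrm{hom}_{i_n}\!\bigl(M_{\phi_n^{1/2}}TM_{\phi_n^{1/2}}\cdot M_{\psi_n\phi}\bigr),
\]
because both sides act by the same scalar function after the outer $M_{\phi_n^{1/2}}$ factors. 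Both factors on the right belong to $\mathcal{A}_{i_n}$: the first by Definition \ref{global algebra def} \eqref{gada}, the second by Definition \ref{local algebras def} \eqref{lade}. Multiplicativity of $\mathrm{hom}_{i_n}$ and \eqref{lhdd} then give the expression
\[
\mathrm{hom}_{i_n}\!\bigl(M_{\phi_n^{1/2}}TM_{\phi_n^{1/2}}\bigr)\cdot\mathrm{Hom}(\psi_n\phi)=\mathrm{hom}_{i_n}\!\bigl(M_{\phi_n^{1/2}}TM_{\phi_n^{1/2}}\bigr)\cdot\mathrm{Hom}(\psi_n)\cdot\mathrm{Hom}(\phi).
\]

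The auxiliary factor $\mathrm{Hom}(\psi_n)=\mathrm{hom}_{i_n}(M_{\psi_n})$ is then absorbed back into the first factor: since $M_{\phi_n^{1/2}}TM_{\phi_n^{1/2}}\cdot M_{\psi_n}=M_{\phi_n^{1/2}}TM_{\phi_n^{1/2}}$, multiplicativity of $\mathrm{hom}_{i_n}$ gives $\mathrm{hom}_{i_n}(M_{\phi_n^{1/2}}TM_{\phi_n^{1/2}})\cdot\mathrm{Hom}(\psi_n)=\mathrm{hom}_{i_n}(M_{\phi_n^{1/2}}TM_{\phi_n^{1/2}})$. Hence the $n$-th summand simplifies to $\mathrm{hom}_{i_n}(M_{\phi_n^{1/2}}TM_{\phi_n^{1/2}})\cdot\mathrm{Hom}(\phi)$. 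Summing over $n$ and invoking the preliminary identity $\mathrm{hom}(M_\phi)=\mathrm{Hom}(\phi)$ yields $\mathrm{hom}(TM_\phi)=\mathrm{hom}(T)\cdot\mathrm{hom}(M_\phi)$.

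The only genuine obstacle is the bookkeeping around $\psi_n$: one must introduce it to place the two factors inside the algebra $\mathcal{A}_{i_n}$ where multiplicativity of $\mathrm{hom}_{i_n}$ can legitimately be applied, and then remove the resulting $\mathrm{Hom}(\psi_n)$ so that the factor $\mathrm{Hom}(\phi)$ is independent of $n$ and can be pulled out of the sum.
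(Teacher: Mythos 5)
Your proof is correct and follows essentially the same route as the paper's: introduce a cutoff $\psi_n\in C_c(\mathcal{U}_{i_n})$ with $\psi_n\phi_n^{1/2}=\phi_n^{1/2}$, rewrite the $n$-th summand as $\mathrm{hom}_{i_n}(M_{\phi_n^{1/2}}TM_{\phi_n^{1/2}}\cdot M_{\psi_n\phi})$, apply multiplicativity of $\mathrm{hom}_{i_n}$ together with property \eqref{lhdd}, and absorb $\mathrm{Hom}(\psi_n)$ back into the first factor before summing. The only cosmetic difference is that you compute $\mathrm{hom}(M_\phi)=\mathrm{Hom}(\phi)$ explicitly at the outset, whereas the paper leaves that step compressed inside its chain of equalities.
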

\begin{proof} Let $\psi_n\in C_c(\mathcal{U}_{i_n})$ be such that $\phi_n\psi_n=\phi_n.$ We write
\[M_{\phi_n^{\frac12}}TM_{\phi}M_{\phi_n^{\frac12}}=M_{\phi_n^{\frac12}}TM_{\phi_n^{\frac12}}\cdot M_{\phi\psi_n}.\]
Since $\mathrm{hom}_{i_n}$ is a homomorphism, it follows that
\[\mathrm{hom}_{i_n}(M_{\phi_n^{\frac12}}TM_{\phi}M_{\phi_n^{\frac12}})=\mathrm{hom}_{i_n}(M_{\phi_n^{\frac12}}TM_{\phi_n^{\frac12}})\cdot \mathrm{hom}_{i_n}(M_{\phi\psi_n}).\]
It follows from Definition \ref{local homomorphism def} \eqref{lhdd} and \eqref{hom concrete expr} that
\[\mathrm{hom}_{i_n}(M_{\phi\psi_n})=\mathrm{Hom}(\phi\psi_n)=\mathrm{Hom}(\phi)\cdot\mathrm{Hom}(\psi_n)=\mathrm{hom}_{i_n}(M_{\psi_n})\cdot \mathrm{hom}(M_{\phi}).\]
Again using the fact that $\mathrm{hom}_{i_n}$ is a homomorphism, we obtain
\[\mathrm{hom}_{i_n}(M_{\phi_n^{\frac12}}TM_{\phi}M_{\phi_n^{\frac12}})=\mathrm{hom}_{i_n}(M_{\phi_n^{\frac12}}TM_{\phi_n^{\frac12}}\cdot M_{\psi_n})\cdot \mathrm{hom}(M_{\phi}).\]
However, by the choice of $\psi_n,$ we have
\[M_{\phi_n^{\frac12}}TM_{\phi_n^{\frac12}}\cdot M_{\psi_n}=M_{\phi_n^{\frac12}}TM_{\phi_n^{\frac12}}.\]
Therefore,
\[\mathrm{hom}_{i_n}(M_{\phi_n^{\frac12}}TM_{\phi}M_{\phi_n^{\frac12}})=\mathrm{hom}_{i_n}(M_{\phi_n^{\frac12}}TM_{\phi_n^{\frac12}})\cdot \mathrm{hom}(M_{\phi}),\quad 1\leq n\leq N.\]
Summing over $1\leq n\leq N,$ we complete the proof.
\end{proof}

\begin{lem}\label{hom vs homj lemma} Let $\mathrm{hom}$ be the mapping on the right hand side in \eqref{hom concrete expr}. We have $\mathrm{hom}=\mathrm{hom}_j$ on $\mathcal{A}_j$ for every $j\in\mathbb{I}.$
\end{lem}
\begin{proof} Let $T\in\mathcal{A}_j$ and let $(\phi_n)_{n=1}^N$ be as in Lemma \ref{global homomorphism def}. For every $1\leq n\leq N,$ the operator $M_{\phi_n^{\frac12}}TM_{\phi_n^{\frac12}}$ is compactly supported both in the chart $(\mathcal{U}_j,h_j)$ and in the chart $(\mathcal{U}_{i_n},h_{i_n}).$ By Definition \ref{local homomorphism def} \eqref{lhdb}, we have
\[\mathrm{hom}_{i_n}(M_{\phi_n^{\frac12}}TM_{\phi_n^{\frac12}})=\mathrm{hom}_j(M_{\phi_n^{\frac12}}TM_{\phi_n^{\frac12}}).\]
Let $\phi\in C_c(\mathcal{U}_j)$ be such that $T=M_{\phi}T=TM_{\phi}.$ Since $\mathrm{hom}_j$ is a homomorphism, it follows that
\[\mathrm{hom}_j(M_{\phi_n^{\frac12}}TM_{\phi_n^{\frac12}})=\mathrm{hom}_j(M_{\phi_n^{\frac12}\phi}\cdot T\cdot M_{\phi\phi_n^{\frac12}})=\]
\[=\mathrm{hom}_j(T)\cdot \mathrm{hom}_j(M_{\phi_n^{\frac12}\phi})\cdot \mathrm{hom}_j(M_{\phi\phi_n^{\frac12}})=\mathrm{hom}_j(T)\cdot \mathrm{hom}_j(M_{\phi_n\phi^2}).\]
Therefore,
\[\mathrm{hom}_{i_n}(M_{\phi_n^{\frac12}}TM_{\phi_n^{\frac12}})=\mathrm{hom}_j(T)\cdot \mathrm{hom}_j(M_{\phi_n\phi^2}),\quad 1\leq n\leq N.\]
Summing over $1\leq n\leq N,$ we obtain
\[\mathrm{hom}(T)=\mathrm{hom}_j(T)\cdot \big(\sum_{n=1}^N\mathrm{hom}_j(M_{\phi_n\phi^2})\big)=\mathrm{hom}_j(T)\cdot \mathrm{hom}_j(M_{\phi^2}).\]
Again using the fact the $\mathrm{hom}_j$ is a homomorphism, we obtain 
\[\mathrm{hom}(T)=\mathrm{hom}_j(TM_{\phi^2}).\]
Taking into account that $TM_{\phi^2}=T,$ we complete the proof.
\end{proof}

\begin{lem}\label{first homomorphism lemma} We have
\[\mathrm{hom}(TS)=\mathrm{hom}(T)\cdot \mathrm{hom}(S),\quad T,S\in\mathcal{A}_j,\quad j\in\mathbb{I}.\]
\end{lem}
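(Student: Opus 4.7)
The plan is to reduce everything to $\mathrm{hom}_j$ using the compatibility already proved in Lemma \ref{hom vs homj lemma}, and then invoke the fact that $\mathrm{hom}_j$ is itself a $\ast$-homomorphism. The key observation is that since $\mathcal{A}_j$ is a $\ast$-subalgebra of $B(L_2(X,\nu))$ by Definition \ref{local algebras def} \eqref{lada}, the product $TS$ also lies in $\mathcal{A}_j$ whenever $T,S \in \mathcal{A}_j$. Hence Lemma \ref{hom vs homj lemma} applies to each of the three elements $T$, $S$, and $TS$.

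Explicitly, I would write
\[\mathrm{hom}(TS) = \mathrm{hom}_j(TS) = \mathrm{hom}_j(T) \cdot \mathrm{hom}_j(S) = \mathrm{hom}(T) \cdot \mathrm{hom}(S),\]
where the first and third equalities use Lemma \ref{hom vs homj lemma} (noting $TS \in \mathcal{A}_j$), and the middle equality uses that $\mathrm{hom}_j$ is a homomorphism by Definition \ref{local homomorphism def} \eqref{lhda}.

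There is essentially no obstacle here; the work of identifying $\mathrm{hom}$ with $\mathrm{hom}_j$ on each local algebra has already been done in Lemma \ref{hom vs homj lemma}, and this lemma is simply the payoff. The only thing to verify is the membership $TS \in \mathcal{A}_j$, which is immediate from Definition \ref{local algebras def} \eqref{lada}.
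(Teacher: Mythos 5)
Your proof is correct and matches the paper's argument exactly: both reduce to $\mathrm{hom}_j$ via Lemma \ref{hom vs homj lemma} and then use that $\mathrm{hom}_j$ is a homomorphism. Your additional remark that $TS\in\mathcal{A}_j$ (so Lemma \ref{hom vs homj lemma} applies to the product) is a reasonable point to spell out, though the paper leaves it implicit.
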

\begin{proof} Using Lemma \ref{hom vs homj lemma} and taking into account that $\mathrm{hom}_j$ is a homomorphism, we write
\[\mathrm{hom}(TS)=\mathrm{hom}_j(TS)=\mathrm{hom}_j(T)\cdot\mathrm{hom}_j(S)=\mathrm{hom}(T)\cdot\mathrm{hom}(S).\]
\end{proof}

\begin{lem}\label{second homomorphism lemma} If $T,S\in\mathcal{A}$ and if $T$ is compactly supported in the chart $(\mathcal{U}_j,h_j),$ then
\[\mathrm{hom}(TS)=\mathrm{hom}(T)\cdot \mathrm{hom}(S).\]
\end{lem}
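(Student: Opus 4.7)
Since $T$ is compactly supported in $\mathcal{U}_j,$ I fix $\phi\in C_c(\mathcal{U}_j)$ with $T=M_\phi T=TM_\phi.$ Condition \eqref{gada} of Definition \ref{global algebra def} applied with this $\phi$ shows that $T=M_\phi TM_\phi\in\mathcal{A}_j;$ the same condition applied to $S\in\mathcal{A}$ shows $M_\phi SM_\phi\in\mathcal{A}_j.$ Since both factors lie in $\mathcal{A}_j,$ Lemma \ref{first homomorphism lemma} will yield
\[\mathrm{hom}\bigl(T\cdot M_\phi SM_\phi\bigr)=\mathrm{hom}(T)\cdot\mathrm{hom}(M_\phi SM_\phi),\]
and because $TM_\phi=T$ the left side equals $\mathrm{hom}(TSM_\phi).$

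Next I will bridge $TSM_\phi$ and $TS.$ Writing $TS=TM_\phi S=TSM_\phi+T[M_\phi,S],$ the last term is compact (bounded times compact, using condition \eqref{gadb} of Definition \ref{global algebra def}), so $\mathrm{hom}$ vanishes on it and $\mathrm{hom}(TS)=\mathrm{hom}(TSM_\phi).$ I then expand the right side: applying Lemma \ref{zeroth homomorphism lemma} to strip the right $M_\phi,$ together with its left-handed analogue (which follows by taking adjoints, since the explicit formula in Lemma \ref{global homomorphism def} shows $\mathrm{hom}$ preserves adjoints because each $\mathrm{hom}_{i_n}$ does), I obtain $\mathrm{hom}(M_\phi SM_\phi)=\mathrm{hom}(M_\phi)\mathrm{hom}(S)\mathrm{hom}(M_\phi).$ Combining,
\[\mathrm{hom}(TS)=\mathrm{hom}(T)\mathrm{hom}(M_\phi)\mathrm{hom}(S)\mathrm{hom}(M_\phi)=\mathrm{hom}(T)\mathrm{hom}(S)\mathrm{hom}(M_\phi),\]
where the second equality uses $\mathrm{hom}(T)\mathrm{hom}(M_\phi)=\mathrm{hom}(TM_\phi)=\mathrm{hom}(T).$

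Finally I must absorb the surplus $\mathrm{hom}(M_\phi).$ Here I invoke once more that $[S,M_\phi]$ is compact: it gives $\mathrm{hom}(SM_\phi)=\mathrm{hom}(M_\phi S),$ which via Lemma \ref{zeroth homomorphism lemma} and its adjoint form translates to the commutation $\mathrm{hom}(S)\mathrm{hom}(M_\phi)=\mathrm{hom}(M_\phi)\mathrm{hom}(S).$ Thus
\[\mathrm{hom}(T)\mathrm{hom}(S)\mathrm{hom}(M_\phi)=\mathrm{hom}(T)\mathrm{hom}(M_\phi)\mathrm{hom}(S)=\mathrm{hom}(T)\mathrm{hom}(S),\]
closing the argument. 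The one delicate step — and the point where the structure of Definition \ref{global algebra def} is really used — is this final cancellation: the leftover $\mathrm{hom}(M_\phi)$ can only be moved past $\mathrm{hom}(S)$ and reabsorbed into $\mathrm{hom}(T)$ because condition \eqref{gadb} forces $[S,M_\phi]$ to be compact. Without it, $\mathrm{hom}(S)$ and $\mathrm{hom}(M_\phi)$ would not commute, and the argument would stall at $\mathrm{hom}(T)\mathrm{hom}(S)\mathrm{hom}(M_\phi).$
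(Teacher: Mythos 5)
Your proof is correct and follows essentially the same route as the paper: split $TS$ into a piece compactly supported in $\mathcal{U}_j$ plus a compact remainder coming from a commutator with $M_\phi$, apply Lemma \ref{first homomorphism lemma} to the chart-supported piece, then strip the surplus $\mathrm{hom}(M_\phi)$ using Lemma \ref{zeroth homomorphism lemma}, its adjoint form, and the commutation $\mathrm{hom}(S)\mathrm{hom}(M_\phi)=\mathrm{hom}(M_\phi)\mathrm{hom}(S)$ forced by compactness of $[S,M_\phi]$. The paper's write-up uses the decomposition $TS=T\bigl(M_{\phi^{1/2}}SM_{\phi^{1/2}}\bigr)+T\bigl(M_{\phi^{1/2}}[M_{\phi^{1/2}},S]\bigr)$ rather than your $TS=TSM_\phi+T[M_\phi,S]$, and is terser about the commutation step you rightly flag as the crux, but the two arguments are the same in substance.
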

\begin{proof} Let $\phi\in C_c(\mathcal{U}_j)$ be such that $T=TM_{\phi}.$ We write
\[TS=TS_1+TS_2,\quad S=M_{\phi^{\frac12}}SM_{\phi^{\frac12}},\quad S_2=M_{\phi^{\frac12}}[M_{\phi^{\frac12}},S].\]
By Definition \ref{global algebra def} \eqref{gadb}, $S_2$ is compact.  By construction, $\mathrm{hom}$ vanishes on compact operators. It follows that
\[\mathrm{hom}(TS)=\mathrm{hom}(TS_1).\]
Since $T$ and $S_1$ are compactly supported in the chart $(\mathcal{U}_j,h_j),$ it follows from Lemma \ref{first homomorphism lemma} that
\[\mathrm{hom}(TS)=\mathrm{hom}(T)\cdot \mathrm{hom}(S_1).\]
By Lemma \ref{zeroth homomorphism lemma}, we have
\[\mathrm{hom}(TS)=\mathrm{hom}(T)\cdot \mathrm{hom}(S)\cdot \mathrm{hom}(M_{\phi^{\frac12}})^2=\]
\[=\mathrm{hom}(T)\cdot \mathrm{hom}(M_{\phi})\cdot \mathrm{hom}(S)=\mathrm{hom}(TM_{\phi})\cdot \mathrm{hom}(S).\]
Since $T=TM_{\phi},$ the assertion follows.
\end{proof}

\begin{proof}[Proof of Theorem \ref{globalisation theorem} \eqref{gtb}] It is immediate that $\mathrm{hom}$ is a linear $\ast$-preserving mapping. We now prove that $\mathrm{hom}$ preserves multiplication.
	
Let $T,S\in\mathcal{A}$ and let $(\phi_n)_{n=1}^N$ be as in Lemma \ref{global homomorphism def}. We write $T=\sum_{n=0}^NT_n,$ where	\[T_n=M_{\phi_n^{\frac12}}TM_{\phi_n^{\frac12}},\quad 1\leq n\leq N,\quad T_0=\sum_{k=1}^NM_{\phi_k^{\frac12}}[M_{\phi_k^{\frac12}},T].\]
Every $T_n,$ $1\leq n\leq N,$ is compactly supported in the chart $(\mathcal{U}_{i_n},h_{i_n}).$ By Lemma \ref{second homomorphism lemma}, we have
\[\mathrm{hom}(T_nS)=\mathrm{hom}(T_n)\cdot\mathrm{hom}(S),\quad 1\leq n\leq N.\]
The operators $T_0$ and $T_0S$ are compact. By construction, $\mathrm{hom}$ vanishes on compact operators. It follows that
\[\mathrm{hom}(T_0S)=0=0\cdot\mathrm{hom}(S)=\mathrm{hom}(T_0)\cdot\mathrm{hom}(S).\]
By linearity, we have
\[\mathrm{hom}(TS)=\sum_{n=0}^N\mathrm{hom}(T_nS)=\sum_{n=0}^N\mathrm{hom}(T_n)\cdot \mathrm{hom}(S)=\mathrm{hom}(T)\cdot \mathrm{hom}(S).\]
Thus, $\mathrm{hom}$ is a $\ast$-homomorphism.

Let us now show that $\mathrm{ker}(\mathrm{hom})=\mathcal{K}(L_2(X,\nu)).$ If $T\in\mathcal{A}$ is such that $\mathrm{hom}(T)=0,$ then $\mathrm{hom}(T^{\ast}T)=0.$ By construction of $\mathrm{hom},$ we have
\[\sum_{n=1}^n\mathrm{hom}_{i_n}(M_{\phi_n^{\frac12}}T^{\ast}TM_{\phi_n^{\frac12}})=0.\]
Every summand on the left hand side is positive. Therefore,
\[\mathrm{hom}_{i_n}(M_{\phi_n^{\frac12}}T^{\ast}TM_{\phi_n^{\frac12}})=0,\quad 1\leq n\leq N.\]
By Definition \ref{local homomorphism def} \eqref{lhdc}, we have
\[M_{\phi_n^{\frac12}}T^{\ast}TM_{\phi_n^{\frac12}}\in\mathcal{K}(L_2(X,\nu)),\quad 1\leq n\leq N.\]
In other words,
\[TM_{\phi_n^{\frac12}}\in\mathcal{K}(L_2(X,\nu)),\quad 1\leq n\leq N.\]
Multiplying on the right by $M_{\phi_n^{\frac12}}$ and summing over $1\leq n\leq N,$ we conclude that $T\in\mathcal{K}(L_2(X,\nu)).$
\end{proof}

Acknowledgement: The authors sincerely thank Professor N. Higson for the Question \ref{higson question} which was the starting point of this article. 
	
The special thanks are due to Professor A. Connes for his insightful comments on the earlier version of this manuscript. In particular, the observation that diffeomorphisms of $\mathbb{R}^d$ considered in this text extend to diffeomorphisms of $P^d(\mathbb{R})$ is due to Professor Connes.

Funding: The second author was partially supported by the ARC.

\end{document}